\newcommand{\PP}{\mathbb{P}}
\newcommand{\LL}{\mathbb{L}}
\newcommand{\RR}{\mathbb{R}}
\newcommand{\cO}{\mathcal{O}}
\newcommand{\cS}{\mathcal{S}}
\newcommand{\cK}{\mathcal{K}}
\newcommand{\cU}{\mathcal{U}}
\newcommand{\cD}{\mathcal{D}}
\newcommand{\cE}{\mathcal{E}}
\newcommand{\cI}{\mathcal{I}}
\newcommand{\cF}{\mathcal{F}}
\newcommand{\cL}{\mathcal{L}}
\newcommand{\cG}{\mathcal{G}}
\newcommand{\cm}{\mathfrak{m}}
\newcommand{\cC}{\mathcal{C}}
\newcommand{\bC}{\mathbf{C}}
\newcommand{\bQ}{\mathbf{Q}}
\newcommand{\hQ}{\mathbf{Q}}
\newcommand{\tcC}{\tilde{\mathcal{C}}}
\newcommand{\cR}{\mathcal{R}}
\newcommand{\cT}{\mathcal{T}}
\newcommand{\CG}{\mathbf{CG}}
\newcommand{\rG}{\mathbb{G}}
\newcommand{\qQ}{\mathsf{Q}}
\newcommand{\cQ}{\mathcal{Q}}
\newcommand{\Gad}{\mathbb{G}_2^{\mathrm{ad}}}
\newcommand{\br}{[\ -,-\ ]}
\newcommand{\cB}{\mathcal{B}}
\newcommand{\cP}{\mathcal{P}}
\newcommand{\tpi}{\widetilde{\pi}_2}
\newcommand{\tg}{\tilde{g}}
\newcommand{\tE}{\widetilde{E}}
\newcommand{\tcE}{\tilde{\mathcal{K}}}
\DeclareMathOperator{\OGr}{OGr}
\DeclareMathOperator{\Lie}{LieGr}
\newcommand{\Db}{\mathrm{D}^{b}}
\DeclareMathOperator{\IGr}{IGr}
\DeclareMathOperator{\Gr}{Gr}
\newcommand{\restr}[1]{|_{#1}}
\newcommand{\conv}{\,\lrcorner\,}
\newtheorem{lemma}{Lemma}[section]
\newtheorem{theorem}[lemma]{Theorem}
\newtheorem{proposition}[lemma]{Proposition}
\newtheorem{corollary}[lemma]{Corollary}
\theoremstyle{definition}
\newtheorem{definition}[lemma]{Definition}
\newtheorem{remark}[lemma]{Remark}
\newtheorem{example}[lemma]{Example}
\title{On the Derived Category of the Cayley Grassmannian}
\author{Lyalya Guseva}
\thanks{This work was supported by the Russian Science Foundation under grant no. 19-11-00164}
\address{Steklov Mathematical Institute of Russian Academy of Sciences, Moscow, Russia}
\email{lyalya.guseva1994@gmail.com}
\begin{document}
\maketitle{}
\begin{abstract}
We construct a full exceptional collection consisting of vector bundles in the derived category of coherent sheaves on the so-called Cayley Grassmannian, the subvariety of the Grassmannian~$\Gr(3, 7)$ parameterizing 3-subspaces that are annihilated by a general 4-form. The main step in the proof
of fullness is a construction of two self-dual vector bundles which is obtained from two operations with quadric bundles that might be interesting in themselves.
\end{abstract}
\section{Introduction}
The bounded derived category of coherent sheaves $\Db(X)$ is an important invariant of a smooth projective variety $X$.
In general, its structure is quite difficult to describe. One of the approaches to this problem is to split $\Db(X)$ into smaller pieces, which leads to the notion of a semiorthogonal decomposition.

In the important case where $\Db(X)$ possesses a full exceptional collection~$(E_{1}, E_{2}, \ldots, E_{m})$ the decomposition is as simple as possible: in this case every object of $\Db(X)$ admits a unique filtration with $i$-th subquotient being a direct sum of shifts of the objects $E_i$. Therefore, an exceptional collection can be considered as a kind of basis for the bounded derived category. The study of exceptional collections goes back to works of Beilinson~\cite{6}  and Kapranov~\cite{7}, where it was shown that projective spaces and, more generally, Grassmannians admit full exceptional collections consisting of vector bundles.

Having a full exceptional collection is a very restrictive condition on a variety $X$. For example, the Grothendieck group of the variety must necessarily be free, and the Hodge numbers $h^{p,q}(X)$ should be zero whenever $p\ne q$. 

Since the work of Beilinson and Kapranov, it has been conjectured that the bounded derived category
of a rational homogeneous variety admits a full exceptional collection. Even stronger, Lunts conjectures that the bounded derived category of a cellular variery possesses a full exceptional collection, see \cite[Conjecture 1.2]{EL}. While the general conjecture remains open, it is interesting to provide new examples of full exceptional collections that could shed some light upon the general case.

The aim of this article is to construct a full exceptional collection in the bounded derived category of coherent sheaves on the so called Cayley Grassmannian $\CG$, the subvariety of the Grassmannian~$\Gr(3,7)$ parametrizing 3-dimensional subspaces that are annihilated by a general 4-form. The Cayley Grassmannian $\CG$ is a spherical variety with respect to an action of the exceptional simple Lie group $\rG_2$. In fact,~$\CG$ is a smooth projective symmetric $\rG_2$-variety and it first appeared in \cite{R}, where such varieties with Picard number one were classified. Most of the varieties in this classification are homogeneous under their full automorphism group, or are just hyperplane sections of homogeneous spaces. One of the remarkable properties of $\CG$ is that it is not homogeneous, but as we prove in this paper it still possesses a full exceptional collection consisting of vector bundles. 

The geometry and cohomology of $\CG$ were studied in \cite{M} and \cite{BM}. In particular, in \cite{BM} the semisimplicity of the small quantum cohomology ring of $\CG$ was proved. Thus, our result confirms in this case Dubrovin’s conjecture predicting that the semisimplicity of the quantum cohomology ring implies the existence of a full exceptional collection. In \cite{BM} an exceptional collection closely related to ours was proposed, but it was not proved that the collection generates the whole derived category. 

It is worth mentioning that the K{\"u}chle fivefolds of type ($c5$) --- subvarieties of the Grassmannian~$\Gr(3, 7)$ parameterizing 3-subspaces that are isotropic for a given 2-form and are annihilated by a given 4-form --- are embedded into the Cayley Grassmannian $\CG$. This was the first motivation for the author to investigate the structure of $\Db(\CG)$.

\subsection{The Cayley Grassmannian}
There are several description of the Cayley Grassmannian $\CG$.

The first description explains the name. We consider the complexified nonassociative 8-dimensional Cayley algebra $\mathbb{O}$. The Cayley Grassmannian~$\CG$ can be defined as the set of 4-dimensional subalgebras of $\mathbb{O}$; it is a closed subvariety in the Grassmannian $\Gr(4,\mathbb{O})\simeq \Gr(4,8)$ of 4-dimensional vector subspaces in a 8-dimensional vector space $\mathbb{O}$. All these subalgebras contain the unit element $e$ of $\mathbb{O}$, so we can instead define $\CG$ as a closed subvariety in $\Gr(3,\mathbb{O}/(\mathbb{C}\cdot e))\simeq \Gr(3,7)$, that parametrizes the imaginary parts of the four-dimensional subalgebras of $\mathbb{O}$. 

The second description makes sense over any algebraically closed field $\Bbbk$ of characteristic 0. We consider the Grassmannian $\Gr(3,V)$ of 3-dimensional vector subspaces in a 7-dimensional vector space~$V$. Let us denote by $\cU\subset V\otimes \cO$ and $\cU^{\perp}\subset V^{\vee}\otimes \cO$ the tautological vector bundles on $\Gr(3,V)$, of ranks 3 and 4 respectively. A global section of $\cU^{\perp}(1)$ is given by a skew-symmetric 4-form $\lambda\in \Lambda^4V^{\vee}$. By definition, the Cayley Grassmannian $\CG$ is the zero locus of a general global section~$\lambda$ of $\cU^{\perp}(1)$. Explicitly, $\CG$ parametrizes 3-dimensional vector subspaces of $V$ annihilated by $\lambda$, i.e., $U\subset V$ such that $\lambda(u_1,u_2,u_3, -)=0$ for any $u_1,u_2,u_3\in U$. From this description we immediately deduce that the Cayley Grassmannian is a smooth Fano eightfold of index 4.

The equivalence of these two descriptions comes from the fact that the stabilizer
of a general 4-form on $V$ is isomorphic to the algebraic group $\rG_2$, that is the automorphism group of the octonions, see~\cite{M} for more details.

Also the Cayley Grassmannian can be described as the Hilbert scheme of conics on the adjoint homogeneous variety~$\Gad$, see Subsection \ref{section:Hilbert} for the details. 

Throughout the paper we will mostly use the second description of $\CG$. In particular, we will denote~by
\begin{equation*}
    \lambda\in \Lambda^4V^{\vee}
\end{equation*}
a general skew-symmetric 4-form that defines $\CG$.

\subsection{The main result} 

To state the main result, we need to make some preparations. The exceptional collection on $\CG$ that we construct is a Lefschetz collection, \cite{2,K06,8}. Recall that a Lefschetz exceptional collection with respect to an ample line bundle $\cO(1)$ is just an exceptional collection which consists of several blocks, each of them is a sub-block of the previous one twisted by $\cO(1)$, see Definition~\ref{def:lefschetz} for more details. The collection on $\CG$ that we are going to describe consists of four blocks with respect to the Pl{\"u}cker line bundle $\cO(1)$ of $\Gr(3,V)$ restricted to $\CG$. The common part of these four blocks (the rectangular part of the Lefschetz collection, see Definition~\ref{def:lefschetz}) consists of three vector bundles $(\cO, \cU^{\vee}, \Lambda^2\cU^{\vee})$.

To describe the nonrectangular part of the exceptional collection on $\CG$ we need to define two additional vector bundles. The first one is  $$\Sigma^{2,1}\cU^\vee \coloneqq (\cU^\vee \otimes \Lambda^2 \cU^{\vee})/\Lambda^3\cU^{\vee}.$$
To describe the second vector bundle note that on $\CG$ we have an embedding of vector bundles $$i_{\lambda}\colon\Lambda^2\cU\hookrightarrow \Lambda^2\cU^{\perp}$$ given by $\lambda$, see Lemma \ref{lemma:embedding} for more details. So on $\CG$ we can define the quotient bundle $\Lambda^2\cU^{\perp}/ \Lambda^2\cU$. For the exceptional collection we will need its dual
\begin{equation*} 
  \cR\coloneqq(\Lambda^2\cU^{\perp}/ \Lambda^2\cU)^{\vee}.
\end{equation*}
The main result of this paper is the following theorem.
\begin{theorem} \label{Theorem:introduction:main}
The collection of $15$ vector bundles on $\CG$
\begin{equation} \label{collection}
   \{ \underbrace{\cO, \cU^{\vee}, \Lambda^2\cU^{\vee}, \cR,\Sigma^{2,1}\cU^{\vee};}_{\mbox{block $1$}}\ 
    \underbrace{\cO(1), \cU^{\vee}(1), \Lambda^2\cU^{\vee}(1),\cR(1);}_{\mbox{block $2$}}
    \underbrace{\cO(2), \cU^{\vee}(2), \Lambda^2\cU^{\vee}(2);}_{\mbox{block $3$}} \underbrace{\cO(3), \cU^{\vee}(3), \Lambda^2\cU^{\vee}(3)}_{\mbox{block $4$}} \}
\end{equation}
is a full Lefschetz collection with respect to $\cO(1)$.
\end{theorem}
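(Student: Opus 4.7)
The plan is to verify three properties of \eqref{collection}: that it is a Lefschetz collection with respect to $\cO(1)$, that it is exceptional, and that it is full. The Lefschetz structure is visible from \eqref{collection} itself, since blocks $2,3,4$ are obtained from the rectangular triple $(\cO, \cU^{\vee}, \Lambda^2\cU^{\vee})$ by consecutive twists by $\cO(1)$, and block $1$ extends this triple by the two extra bundles $\cR$ and $\Sigma^{2,1}\cU^{\vee}$. The main content is therefore the verification of exceptionality and of fullness.

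For exceptionality, I would exploit the fact that $\CG$ is the zero locus on $\Gr(3,V)$ of a regular section of $\cU^{\perp}(1)$, which yields a Koszul resolution of $\cO_\CG$ over $\Gr(3,V)$. Thus for any vector bundles $E, F$ on $\Gr(3,V)$ one has
\begin{equation*}
\mathrm{RHom}_{\CG}\bigl(E|_{\CG}, F|_{\CG}\bigr) \cong \mathrm{RHom}_{\Gr(3,V)}\bigl(E,\ F \otimes \Lambda^{\bullet}(\cU^{\perp}(1))^{\vee}\bigr),
\end{equation*}
reducing the required Ext-vanishings to cohomology computations of Schur bundles on $\Gr(3,V)$ that are handled by Borel--Weil--Bott. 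For pairs $(E_i, E_j)$ involving the non-standard bundles $\Sigma^{2,1}\cU^{\vee}$ and $\cR$, I would first replace them using their defining short exact sequences,
\begin{equation*}
0 \to \Lambda^3\cU^{\vee} \to \cU^{\vee} \otimes \Lambda^2\cU^{\vee} \to \Sigma^{2,1}\cU^{\vee} \to 0, \qquad 0 \to \Lambda^2\cU \xrightarrow{i_{\lambda}} \Lambda^2\cU^{\perp} \to \cR^{\vee} \to 0,
\end{equation*}
and reduce to computations involving only standard Schur bundles together with the embedding $i_{\lambda}$ of Lemma~\ref{lemma:embedding}.

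For fullness, the plan is to show that the triangulated hull of \eqref{collection} contains a generating set of $\Db(\CG)$. A natural choice is the restriction to $\CG$ of Kapranov's exceptional collection on $\Gr(3,V)$, consisting of the Schur bundles $\Sigma^{\alpha}\cU^{\vee}$ with $\alpha$ in the $3\times 4$ box; their restrictions generate $\Db(\CG)$ because the pullback functor from $\Db(\Gr(3,V))$ is essentially surjective onto a generating subcategory. Many of these restrictions are already entries of \eqref{collection} or are related to them by the Koszul complex and by standard Schur functor relations. The missing ones have to be built from $\cR$ and $\Sigma^{2,1}\cU^{\vee}$, and the key input is the construction of the ``two self-dual vector bundles'' flagged in the abstract: concretely, the $4$-form $\lambda$ equips natural bundles on $\CG$ (such as $\cR^{\vee} = \Lambda^2\cU^{\perp}/\Lambda^2\cU$) with quadric structures, and two operations with the associated quadric bundles produce short exact sequences expressing higher Schur powers of $\cU^{\vee}$ in terms of $\cR$, $\Sigma^{2,1}\cU^{\vee}$, and the rectangular bundles.

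The main obstacle, as already hinted in the abstract, will be the construction of these quadric-bundle resolutions. Unlike the tautological exact sequences on the ambient Grassmannian, the required self-duality depends essentially on $\lambda$; producing the correct sequences will demand identifying the right quadric bundle formalism, controlling the degeneration of the associated quadric maps, and describing the resulting isotropic and discriminant loci in a form adapted to $\CG$. Once these sequences are in place, an induction on the partition $\alpha$ and on the twist by $\cO(1)$, combined with the relative Koszul complex for $\CG \subset \Gr(3,V)$, should show that every restricted Kapranov generator lies in the triangulated hull of \eqref{collection}, completing the proof of fullness and hence of Theorem~\ref{Theorem:introduction:main}.
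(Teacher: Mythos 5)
Your outline of the Lefschetz structure and of exceptionality matches the paper: exceptionality is indeed checked in Appendix~\ref{section:computations} by pushing everything to $\Gr(3,V)$ via the Koszul resolution \eqref{resolution} and applying Borel--Bott--Weil, with $\cR$ and $\Sigma^{2,1}\cU^{\vee}$ handled through their defining sequences (and, for $\cR$, through the mutation $\cR=\RR_{\Lambda^2\cU^{\vee}}\Lambda^2\cQ$). The fullness part, however, has a genuine gap. The paper does \emph{not} try to generate the restrictions of all $35$ Kapranov bundles $\Sigma^{\alpha}\cU^{\vee}|_{\CG}$, $\alpha$ in the $3\times 4$ box; it instead covers $\CG$ by the zero loci $\CG_f$ of general sections $f\in \mathrm{H}^0(\CG,\cU^{\vee})$, identifies each $\CG_f$ with a smooth hyperplane section of $\IGr(3,6)$ carrying Samokhin's full collection \eqref{exceptional_collection_hyperplane}, and then, for $F$ in the orthogonal $\cD^{\perp}$, tensors the Koszul resolution \eqref{krM} of $i_{f*}\cO_{\CG_f}$ by $\cO(-k)\otimes F$ and $\cU(-k)\otimes F$ to conclude $i_f^*F=0$, hence $F=0$. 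This covering argument is the central idea of Section~\ref{section:fullness} and is entirely absent from your proposal.

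The reason this matters is quantitative: the covering argument reduces fullness to showing only that $\Lambda^j\cU^{\vee}(k)$, $S^2\cU^{\vee}(k)$ and $\Sigma^{2,1}\cU^{\vee}(k)$ for $0\le k\le 2$ lie in $\cD$, and these are exactly the objects that the self-dualities $\cE_{10}(1)\simeq\cE_{10}^{\vee}$ and $\cE_{16}(-1)\simeq\cE_{16}^{\vee}$ deliver (Propositions~\ref{proposition:S^2} and \ref{proposition:21}). Your plan instead requires placing \emph{every} restricted Kapranov generator --- including $S^3\cU^{\vee}$, $S^4\cU^{\vee}$, $\Sigma^{3,1}\cU^{\vee}$, $\Sigma^{2,2}\cU^{\vee}$, and all their twists up to $\cO(4)$ --- into the hull of $15$ bundles, and you give no mechanism for the claimed ``induction on the partition $\alpha$'': the quadric-bundle constructions of Section~\ref{section:selfdualities} produce precisely the two extensions \eqref{extension_E} and \eqref{extension_1}, which do not obviously propagate to higher Schur powers. (Your preliminary reduction is fine: if every $i^*\Sigma^{\alpha}\cU^{\vee}$ lay in $\cD$, then $\cD^{\perp}=0$ by adjunction and conservativity of $i_*$ for the closed immersion $i\colon\CG\hookrightarrow\Gr(3,V)$; the problem is that establishing the hypothesis is much harder than the theorem as the paper proves it.) To repair the proposal you would need either to supply those additional resolutions explicitly or to adopt the paper's strategy of restricting to the family $\CG_f\simeq\IGr(3,6)\cap H$.
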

We will denote by $\mathfrak{E}(i)$ the collection consisting of three vector bundles:
\begin{equation} \label{mathfrak_E}
    \mathfrak{E}(i)\coloneqq (\cO(i),\cU^{\vee}(i), \Lambda^2\cU^{\vee}(i)).
\end{equation}

Let us sketch the idea of the proof of Theorem \ref{Theorem:introduction:main}. First, it will be more convenient for us to prove the fullness of a slightly different collection \eqref{ec}. We cover $\CG$ with the family of subvarieties~$\CG_f \stackrel{i_f}{\hookrightarrow}\CG$ defined as zero loci of sufficiently general global sections $f\in \mathrm{H}^0(\CG,\cU^{\vee})$. It turns out that $\CG_f$ is isomorphic to the isotropic Grassmannian $\mathrm{IGr}(3,6)$ and $\Db(\CG_f)$ posseses a full exceptional collection, so using standard arguments from \cite{2} we reduce the problem to the checking that some objects lie in the subcategory $\cD\subset \Db(\CG)$ generated by the collection \eqref{ec}: it is enough to show that $S^2\cU^{\vee}(m)\in \cD$ for~$m=0,1,2$ and that $\Sigma^{2,1}\cU^{\vee},\Sigma^{2,1}\cU^{\vee}(1) \in \cD$. This check is the most interesting part of the proof, so let us describe it more precisely.

First, we present two general constructions with quadric bundles. Roughly speaking, the first construction shows that we can glue two quadric bundles with isomorphic cokernel sheaves into a quadric bundle that determines self-dual isomorphism, see Proposition \ref{lemma:quadric_bundles_cokernel} for the details; and the second one allows to construct a new quadric bundle from a quadric bundle with the cokernel sheaf supported on a Cartier divisor, see Proposition \ref{quadric_construction}. Using these contructions we obtain several interesting $\rG_2$-equivariant quadric bundles on $\CG$.  Using again Proposition \ref{lemma:quadric_bundles_cokernel} we glue the obtained quadric bundles into the following self-dual vector bundles on $\CG$
\begin{equation} \label{self-dualities}
  \cE_{10}(1)\simeq \cE_{10}^{\vee} \quad \mbox{and} \quad \cE_{16}(-1)\simeq \cE_{16}^{\vee},
\end{equation}
where $\cE_{10}$ is an extension of $\cU^{\perp}$ by $S^2\cU$ and $\cE_{16}$ is an extension of $\Lambda^2\cU_3^{\vee} \oplus \cO(1)$ by $\cU_3^{\perp}\otimes \Lambda^2\cU_3^{\vee}$. Using \eqref{self-dualities} and some standard exact sequences we prove that $S^2\cU^{\vee}(m)\in \cD$ for~$m=0,1,2$ and that~$\Sigma^{2,1}\cU^{\vee},\Sigma^{2,1}\cU^{\vee}(1) \in \cD$.
\subsection{Organization of the paper}
The work is organized as follows. In Section \ref{preliminaries} we discuss background material on
semiorthogonal decompositions and recall some basic facts about the algebraic group~$\rG_2$.
In Section~\ref{section:quadric_constructions} we present general constructions with quadric bundles that will allow us to prove the self-dualities \eqref{self-dualities}.
In Section~\ref{section:G2action} we collect some useful results concerning the $\rG_2$-action on the Cayley Grassmannian.
In Section \ref{section:selfdualities} using the general constructions from Section~\ref{section:quadric_constructions} we describe some special quadric bundles on $\CG$ and prove the necessary self-dualities. 
In Section~\ref{section:fullness} we give a proof of fullness of the constructed collection.
In Appendix \ref{section:geometric_construction} we present several well-known to specialists geometric constructions for the Cayley Grassmannian: we show that~$\CG$ is isomorphic to the Hilbert scheme of conics on $\Gad$ and describe the Hilbert scheme of lines on $\CG$; these results are not directly related to the main theorem but are quite interesting by themselves.
In Appendix~\ref{section:computations} we collect all necessary cohomology computations for $\CG$ including exceptionality of the collection~\eqref{collection}, and describe the residual category for \eqref{collection}.

{\bf Acknowledgements:}
I am very grateful to my advisor, Alexander Kuznetsov, for many important suggestions, which led to significant improvements. I would also like to thank Sasha Petrov and Grzegorz Kapustka for useful discussions, Vladimir Arutyunov for writing the Sage script that helped to instruct our search for the exceptional collection, and Artem Avilov for his constant support. 

\section{Preliminaries} \label{preliminaries}
\subsection{Notation and conventions} \label{notations}
Let $\Bbbk$ be an algebraically closed field of characteristic zero. For any $\Bbbk$-vector space $W$ we denote by $\wedge$ the wedge product operation for skew forms and polyvectors, and by~$\conv$ the convolution operation
$$\Lambda^pW\otimes \Lambda^qW^{\vee}\stackrel{\conv}{\longrightarrow}\Lambda^{p-q}W\qquad (\text{if}\ p\ge q),$$
induced by the natural pairing $W\otimes W^{\vee}\to \Bbbk$.

If $p=n=\mathrm{dim}(W)$ and $0\ne \omega \in \Lambda^nW$, the convolution with $\omega$ gives an isomorphism
$$\Lambda^qW^{\vee}\simeq \Lambda^{n-q}W, \quad \xi \mapsto \xi^{\vee}:=\omega \conv \xi.$$
This isomorphism is canonical up to rescaling (since $\omega$ is unique up to rescaling). We say that $\xi^{\vee}$ is the \textbf{dual} of $\xi$.

We say that a $q$-form $\xi\in \Lambda^q W^{\vee}$ \textbf{annihilates} a $k$-dimensional subspace $U\subset W$, if $k\le q$  and $\xi \conv \Lambda^k U =0$. Analogously, we say that $U\subset W$ is \textbf{isotropic} for $\xi$ if $q\le k$ and $\Lambda^k U\conv \xi =0$, i.e. $\xi\restr{U}=0$.

We denote by $\Gr(k,W)$ the Grassmannian of $k$-dimensional vector subspaces in $W$. The
tautological vector subbundle of rank $k$ on $\Gr(k,W)$ is denoted by $\cU_k \subset W \otimes \cO_{\Gr(k,W)}$. The quotient
bundle is denoted simply by $\cQ_k$, and for its dual we use the notation $\cU_k^{\perp}:= \cQ_k^{\vee}$. We often use the following tautological exact sequences on $\Gr(k,W)$:
\begin{gather} \label{tautological_exact_sequence}
    0\to \cU_k \to W\otimes \cO \to \cQ_k \to 0\\ \label{tautological_exact_sequence_1}
    0\to \cU^{\perp}_k \to W^{\vee}\otimes \cO \to \cU^{\vee}_k \to 0.
\end{gather}
The point of the Grassmannian corresponding to a
subspace $U_k\subset W$ is denoted by~$[U_k]$, or even just~$U_k$. We recall that the line bundle
$\mathrm{det}\ \cU^{\vee}_k \simeq  \mathrm{det}\ \cQ_k$ is the ample generator of $\mathrm{Pic}(\Gr(k,W))$; we will denote it by $\cO(H_k)$.

Since we will mostly work with $\Gr(3,V)$ for a fixed 7-dimensional vector space $V$, we abbreviate its tautological and dual quotient
bundles by $\cU$ and $\cU^{\perp}$ respectively, unless this leads to a confusion. The line bundle $\cO(H_3)$ will be denoted by $\cO(1)$.

We will frequently use the following natural isomorphisms on $\Gr(3,V)$:
\begin{align} \label{preliminaries_isomorphisms}
   & \Lambda^2\cU\simeq \cU^{\vee}(-1);\\ \label{preliminaries_isomorphisms2}
   & \Lambda^2 \cU^{\perp} \simeq \Lambda^2 \cQ(-1).
\end{align}

The canonical line bundle of a variety $X$ will be denoted by $\omega_{X}$.
\subsection{Semiorthogonal decompositions and exceptional collections} \label{subsection:semiorthogonal}
We recall some well-known facts about semiorthogonal decompositions. Let $\mathcal{T}$ be a $\Bbbk$-linear triangulated category.
\begin{definition}
A sequence of full triangulated subcategories $\mathcal{A}_1, \ldots ,\mathcal{A}_m \in \mathcal{T}$ is \textbf{semiorthogonal}
if for all $1\le i {}<{} j\le m$ and all $G\in \mathcal{A}_i$, $H\in \mathcal{A}_j$ one has $$\mathrm{Hom}_{\mathcal{T}} (H,G) = 0.$$
Let $\langle \mathcal{A}_1, \ldots ,\mathcal{A}_m\rangle$ denote
the smallest full triangulated subcategory in $\mathcal{T}$ containing all $\mathcal{A}_i$.
If $\mathcal{A}_i$ are semiorthogonal and $\langle \mathcal{A}_1, \ldots ,\mathcal{A}_m \rangle = \mathcal{T}$,
we say that the subcategories $\mathcal{A}_i$ form a \textbf{semiorthogonal decomposition} of $\mathcal{T}$.
\end{definition}

\begin{definition}
An object $E$ of  $\mathcal{T}$ is \textbf{exceptional} if $\mathrm{Ext}^{\bullet}(E, E) = \Bbbk$
(i.e., $E$ is simple and has no non-trivial self-extensions).
\end{definition}

If $E$ is exceptional, the minimal triangulated subcategory $\langle E \rangle$ of $\cT$ containing $E$
is equivalent to~$\Db(\Bbbk)$, the bounded derived category of $\Bbbk$-vector spaces,
via the functor $\Db(\Bbbk) \to \cT$ that takes a graded vector space $W$ to $W \otimes E \in \cT$.

\begin{definition}
A sequence of objects $E_{1},\ldots, E_{m}$ in  $\mathcal{T}$ is an \textbf{exceptional collection}
if each $E_{i}$ is exceptional and $\mathrm{Ext}^{\bullet}(E_{i}, E_{j} ) = 0$ for all $i > j$.
A collection $(E_{1},E_{2},\ldots ,E_{m})$ is \textbf{full} if the minimal triangulated subcategory of $\mathcal{T}$
containing $(E_{1},E_{2},\ldots, E_{m})$ coincides with $\mathcal{T}$.
\end{definition}
A full exceptional collection in $\cT$ is a special case of a semiorthogonal decomposition of $\cT$ with~$\mathcal{A}_k=\langle E_{k}\rangle$.

For an exceptional object $E \in \cT$ we denote by $\LL_E$ and $\RR_E$ the \textbf{left} and \textbf{right mutation} functors through $E$,
which are defined by taking an object $G \in \cT$ to
\begin{equation} \label{definition:mutation}
\mathbb{L}_{E}(G):=\mathrm{Cone}(\mathrm{Ext}^{\bullet}(E, G)\otimes E \to G),
\qquad\text{and}\qquad
\mathbb{R}_{E}(G):= \mathrm{Cone}(G \to \mathrm{Ext}^{\bullet}(G,E)^{\vee} \otimes E)[-1],
\end{equation}
where the morphisms are given by the evaluation and coevaluation, respectively.

It is well known (see \cite{3}) that if $(E,E')$ is an exceptional pair then $(E',\mathbb{R}_{E'}(E))$ and $(\mathbb{L}_{E}(E'),E)$
are also exceptional pairs each of which generates the same subcategory in $\mathcal{T}$ as the initial pair $(E,E').$

More generally, if $(E_1, \ldots , E_m)$ is an exceptional collection of arbitrary length in $\mathcal{T}$ then
one can define the left and right mutations of an object through the category $\langle E_1, \ldots , E_n\rangle$
as the compositions of the corresponding mutations through the generating objects:
\begin{equation*}
\LL_{\langle E_1, \ldots , E_m\rangle} = \LL_{E_1} \circ \ldots \circ \LL_{E_m},
\qquad
\RR_{\langle E_1, \ldots , E_m\rangle} = \RR_{E_m} \circ \ldots \circ \RR_{E_1}.
\end{equation*}
\begin{proposition}[\cite{3}]
\label{proposition:mutations}
The functors of left and right mutations through an exceptional collection
induce mutually inverse equivalences of the left and the right orthogonals to the collection:
\begin{equation*}
\xymatrix@1@C=7em{
{}^\perp\langle E_1, \ldots , E_m\rangle \ar@<.5ex>[r]^{\LL_{\langle E_1, \ldots , E_m\rangle}} &
\langle E_1, \ldots , E_m\rangle^\perp \ar@<.5ex>[l]^{\RR_{\langle E_1, \ldots , E_m\rangle}}
}
\end{equation*}
Mutation of a \textup(full\textup) exceptional collection is a \textup(full\textup) exceptional collection.
\end{proposition}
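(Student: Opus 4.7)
The plan is to reduce the proposition to the case of a single exceptional object and then extend to a general collection by induction on the length $m$, exploiting the factorizations $\LL_{\langle E_1,\ldots,E_m\rangle}=\LL_{E_1}\circ\cdots\circ\LL_{E_m}$ and $\RR_{\langle E_1,\ldots,E_m\rangle}=\RR_{E_m}\circ\cdots\circ\RR_{E_1}$.

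For a single exceptional object $E$, the first step is to verify $\LL_E(G)\in E^\perp$ for \emph{every} $G\in\cT$. Applying $\mathrm{Hom}^\bullet(E,-)$ to the defining triangle $\mathrm{Ext}^\bullet(E,G)\otimes E\xrightarrow{\mathrm{ev}} G\to \LL_E(G)\to \mathrm{Ext}^\bullet(E,G)\otimes E[1]$ and using the exceptionality $\mathrm{Ext}^\bullet(E,E)=\Bbbk$, the induced map on the first two terms is the identity of $\mathrm{Ext}^\bullet(E,G)$, so the long exact sequence forces $\mathrm{Hom}^\bullet(E,\LL_E(G))=0$. A symmetric argument shows $\RR_E(G)\in{}^\perp E$. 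Next, for $G\in{}^\perp E$, I would check $\RR_E(\LL_E(G))\simeq G$: applying $\mathrm{Hom}^\bullet(-,E)$ to the $\LL_E$-triangle and using $\mathrm{Hom}^\bullet(G,E)=0$ produces $\mathrm{Ext}^\bullet(\LL_E(G),E)^\vee\simeq\mathrm{Ext}^\bullet(E,G)[1]$, and the rotated $\LL_E$-triangle is then isomorphic, as a distinguished triangle, to the defining $\RR_E$-triangle of $\LL_E(G)$. The dual identity $\LL_E(\RR_E(G))\simeq G$ for $G\in E^\perp$ is analogous.

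For a general collection $\mathcal{B}=\langle E_1,\ldots,E_m\rangle$ and $G\in{}^\perp\mathcal{B}$, I would induct on $m$. After one mutation $\LL_{E_m}(G)$ lies in $E_m^\perp$ by the base case, and for each $i<m$ it also lies in $E_i^\perp$, since both outer terms of the long exact sequence produced from $\mathrm{Hom}^\bullet(-,E_i)$ applied to the $\LL_{E_m}$-triangle vanish: $\mathrm{Hom}^\bullet(G,E_i)=0$ by the orthogonality hypothesis, and $\mathrm{Hom}^\bullet(E_m,E_i)=0$ by semiorthogonality of the original collection. Hence $\LL_{E_m}(G)\in{}^\perp\langle E_1,\ldots,E_{m-1}\rangle$, and the inductive hypothesis concludes that $\LL_\mathcal{B}(G)\in\mathcal{B}^\perp$. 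Assembling the single-object inversions in the reverse order yields the mutually inverse equivalences. The final assertion then follows because an elementary mutation replaces an exceptional pair by an exceptional pair generating the same 2-object subcategory; inserting this into the ambient collection preserves all semiorthogonality relations and the total subcategory generated, so exceptionality and fullness are both preserved.

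The main obstacle will be the verification, in the single-object case, that the rotated $\LL_E$-triangle agrees with the $\RR_E$-triangle of $\LL_E(G)$ \emph{as distinguished triangles} rather than merely on underlying objects; pinning down the connecting morphism amounts to matching the coevaluation $\LL_E(G)\to\mathrm{Ext}^\bullet(\LL_E(G),E)^\vee\otimes E$ with the dualized evaluation, which is a naturality statement about the $\mathrm{Hom}$-pairing. Once this identification is secured, the extension to arbitrary collections is a bookkeeping exercise in long exact sequences.
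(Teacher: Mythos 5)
Your argument is the standard proof of this classical result of Bondal; the paper itself offers no proof and simply cites \cite{3}, so there is nothing internal to compare against, and your reduction to the single-object case followed by induction on $m$ is exactly how the reference proceeds. Two small points to tidy up. First, in the induction step you write that $\LL_{E_m}(G)$ lies in $E_i^\perp$ for $i<m$, but the long exact sequence you invoke (obtained from $\mathrm{Hom}^\bullet(-,E_i)$) computes $\mathrm{Hom}^\bullet(\LL_{E_m}(G),E_i)$ and therefore places $\LL_{E_m}(G)$ in ${}^\perp E_i$, which is what you actually need and what you correctly conclude in the next clause --- so this is only a notational slip. Second, the inductive hypothesis delivers $\LL_{\langle E_1,\ldots,E_{m-1}\rangle}(\LL_{E_m}(G))\in\langle E_1,\ldots,E_{m-1}\rangle^\perp$, but to land in $\langle E_1,\ldots,E_m\rangle^\perp$ you must also check that each subsequent mutation $\LL_{E_i}$ with $i<m$ preserves the subcategory $E_m^\perp$; this follows from the same long exact sequence argument using $\mathrm{Hom}^\bullet(E_m,E_i)=0$ for $i<m$, and should be stated explicitly rather than absorbed into the closing remark about bookkeeping.
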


Let $X$ be a smooth projective variety over a field $\Bbbk$.
We denote by $\mathrm{D}^{b}(X)$ the bounded derived category of coherent sheaves on $X$.
The following result is useful when dealing with exceptional collections on $X$.

\begin{proposition}
\label{proposition:long-mutations}
If $(E_1, E_2, \ldots, E_{m-1}, E_m)$ is an exceptional collection in $\mathrm{D}^{b}(X)$ then
\begin{equation*}
(E_2, \ldots, E_{m-1}, E_m, E_1 \otimes \omega_X^{-1})
\qquad\text{and}\qquad
(E_m \otimes \omega_X, E_1, E_2, \ldots, E_{m-1})
\end{equation*}
are exceptional collections too.
If one of these collections is full then so are the others.
\end{proposition}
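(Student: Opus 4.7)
The argument rests on Serre duality on the smooth projective variety $X$: for $F, G \in \Db(X)$,
\[
\mathrm{Ext}^i(F, G) \cong \mathrm{Ext}^{n-i}(G, F \otimes \omega_X)^\vee, \qquad n = \dim X,
\]
or equivalently on the Serre functor $S := (-) \otimes \omega_X[n]$. My plan is to first check exceptionality and semiorthogonality of the two rotated collections directly, and then deduce fullness from the fact that subcategories generated by exceptional collections are admissible.

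For the collection $(E_2, \ldots, E_m, E_1 \otimes \omega_X^{-1})$, the object $E_1 \otimes \omega_X^{-1}$ is exceptional because tensoring by a line bundle is an autoequivalence of $\Db(X)$. The only new semiorthogonality conditions involve this last object, and Serre duality yields
\[
\mathrm{Ext}^i(E_1 \otimes \omega_X^{-1}, E_k) \cong \mathrm{Ext}^{n-i}(E_k, E_1)^\vee = 0 \qquad (k = 2, \ldots, m),
\]
using the assumed vanishing $\mathrm{Ext}^\bullet(E_k, E_1) = 0$. The collection $(E_m \otimes \omega_X, E_1, \ldots, E_{m-1})$ is handled by the mirror computation $\mathrm{Ext}^i(E_k, E_m \otimes \omega_X) \cong \mathrm{Ext}^{n-i}(E_m, E_k)^\vee = 0$ for $k < m$.

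For fullness, I would assume $(E_1, \ldots, E_m)$ is full and set $\cA := \langle E_2, \ldots, E_m \rangle$. As an admissible subcategory of $\Db(X)$, it yields two semiorthogonal decompositions $\Db(X) = \langle \cA^\perp, \cA \rangle = \langle \cA, {}^\perp\cA \rangle$. Fullness of the original collection identifies $\cA^\perp = \langle E_1 \rangle$, and the standard identity $S({}^\perp\cA) = \cA^\perp$ --- a direct consequence of Serre duality --- then gives ${}^\perp\cA = \langle S^{-1}(E_1) \rangle = \langle E_1 \otimes \omega_X^{-1} \rangle$, the shift being absorbed into the triangulated subcategory. Hence $\Db(X) = \langle E_2, \ldots, E_m, E_1 \otimes \omega_X^{-1} \rangle$. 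The second rotated collection is handled symmetrically (now with $\cA' = \langle E_1, \ldots, E_{m-1} \rangle$, ${}^\perp \cA' = \langle E_m \rangle$, and $\cA'^\perp = \langle E_m \otimes \omega_X \rangle$), and the reverse implications hold because the two rotations are mutually inverse.

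The only thing to be careful about is the direction of the Serre twist, namely that it is $\omega_X^{-1}$ rather than $\omega_X$ that must appear in the first rotated collection. Once one tracks the signs through Serre duality correctly, the argument is formal.
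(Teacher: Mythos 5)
Your proposal is correct and follows the same route as the paper: exceptionality of the rotated collections via Serre duality, and fullness via the Serre functor exchanging left and right orthogonals of the admissible subcategory $\langle E_2,\ldots,E_m\rangle$ (resp.\ $\langle E_1,\ldots,E_{m-1}\rangle$). The paper simply cites Bondal's Theorem 4.1 for the fullness statement, and your argument is the standard self-contained version of that proof, with the twist directions tracked correctly.
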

\begin{proof}
The first part follows easily from Serre duality.
The second part is~\cite[Theorem 4.1]{3}.
\end{proof}

The next definition is also quite useful.
\begin{definition}[\cite{8,2,KS}]
\label{def:lefschetz}
Let $\mathcal{O}_{X}(1)$ be an ample line bundle on $X$.
\begin{enumerate}
    \item A \textbf{Lefschetz collection} in $\mathrm{D}^{b}(X)$ with respect to
$\cO_{X}(1)$ is an exceptional collection of objects of $\mathrm{D}^{b}(X)$ which has a block structure
\begin{equation*}
\underbrace{E_{1}, E_{2},\ldots ,E_{\vartheta_{0}}}_{\mbox{block $1$}},
\underbrace{E_{1}(1), E_{2}(1),\ldots,E_{\vartheta_{1}}(1)}_{\mbox{block $2$}},\ldots,
\underbrace{E_{1}(i-1), E_{2}(  i-1),\ldots,E_{\vartheta_{i-1}}( i-1)}_{\mbox{block $i$}},
\end{equation*}
where $\vartheta= (\vartheta_0 \ge \vartheta_1 \ge \ldots \ge \vartheta_{i-1} > 0)$ is a nonincreasing sequence of positive integers that is called the \textbf{support partition} of the Lefschetz collection.
\item The \textbf{rectangular part} of the Lefschetz collection is the subcollection
    $$\underbrace{E_{1}, E_{2},\ldots ,E_{\vartheta_{i-1}}}_{\mbox{block $1$}},
\underbrace{E_{1}(1), E_{2}(1),\ldots,E_{\vartheta_{i-1}}(1)}_{\mbox{block $2$}},\ldots,
\underbrace{E_{1}(i-1), E_{2}(  i-1),\ldots,E_{\vartheta_{i-1}}( i-1)}_{\mbox{block $i$}}.$$
\item The subcategory of $\mathrm{D}^{b}(X)$ orthogonal to the rectangular part of a given Lefschetz
collection is called its \textbf{residual category}:
\begin{multline*}
    \mathsf{Res}\coloneqq \langle E_{1}, E_{2},\ldots,E_{\vartheta_{i-1}},E_{1}(1), E_{2}(1),\ldots,E_{\vartheta_{i-1}}(1),\ldots,E_{1}(i-1), E_{2}(i-1),\ldots,E_{\vartheta_{i-1}}(i-1) \rangle^{\perp}.
\end{multline*}
\end{enumerate}
\end{definition}
We should point out that being Lefschetz is not a property of an exceptional collection, but rather a structure expressed as the block decomposition.

The next example will be used for the proof of Theorem \ref{Theorem:introduction:main}. Let $W_6$ be a symplectic vector space of dimension $6$ and $\mathrm{IGr}(3,W_6)$ be the isotropic Grassmannian of three-dimensional subspaces in $W_6$ (these are varieties $\CG_f$).
\begin{example}[\cite{S01}, Theorem 2.3]
The bounded derived category of a smooth hyperplane section of~$\mathrm{IGr}(3,W_6)$ has the following full Lefschetz exceptional collection
\begin{equation}  \label{exceptional_collection_hyperplane}
    \underbrace{\cO,\cU_{3}^{\vee}}_{\mbox{block $1$}},\underbrace{\cO(1),\cU_{3}^{\vee}(1)}_{\mbox{block $2$}},\underbrace{\cO(2),\cU_{3}^{\vee}(2)}_{\mbox{block $3$}}.
\end{equation}
\end{example}

\subsection{The algebraic group $\rG_2$} \label{section:G2}
Recall some basic facts about the simple algebraic group $\rG_2$. We use \cite[Lecture 22]{FH} as a convenient reference about it.

The algebraic group $\rG_2$ is a simple simply connected group of rank 2
and dimension 14. We denote by $\mathfrak{g}_2$ its Lie algebra. The root system of $\mathfrak{g}_2$ has the form:
\begin{center}
\begin{equation}  \label{root_g2}
\begin{tikzpicture} 
    \foreach\ang in {60,120,...,360}{
     \draw[->] (0,0) -- (\ang:2cm);
    }
    \foreach\ang in {30,90,...,330}{
     \draw[->] (0,0) -- (\ang:3cm);
    }
    \node[anchor=west,scale=0.6] at (2,0) {$\alpha_1$};
     \node[anchor=north,scale=0.6] at (-2.8,1.7) {$\alpha_2$};
     \node[anchor=north,scale=0.6] at (1.1,2) {$\omega_1$=$\alpha$};
     \node[anchor=east,scale=0.6] at (-2,0) {$\beta$};
     \node[anchor=north,scale=0.6] at (1.1,-1.7) {$\gamma$};
      \node[anchor=west,scale=0.6] at (0,3) {$\omega_2$};
      \end{tikzpicture}
      \end{equation}
  \end{center}
where $\alpha_1$ and $\alpha_2$ are the simple roots, $\omega_1=2\alpha_1+\alpha_2$ and $\omega_2=3\alpha_1+2\alpha_2$ are the fundamental weights. We will denote the six short roots of $\rG_2$ as $\pm\alpha, \pm\beta , \pm\gamma$  with $\alpha+\beta+\gamma=0$, in order to emphazise the natural $\mathbf{S}_3$-symmetry.

Let us consider first the fundamental representation of $\rG_2$ with highest weight $\omega_1$. Its weights are the six short roots $ \{\pm \alpha, \pm \beta, \pm \gamma\}$ and zero, all with multiplicity one, so this fundamental representation has dimension 7, we will denote it by $V$. Consider the weight decomposition
\begin{equation} \label{weight_decomposition}
V=\Bbbk e_0\oplus \Bbbk e_{\alpha}\oplus \Bbbk e_{-\alpha}\oplus \Bbbk e_{\beta}\oplus \Bbbk e_{-\beta}\oplus 
 \Bbbk e_{\gamma}\oplus \Bbbk e_{-\gamma},
\end{equation}
or, dually 
\begin{equation} \label{weight_decomposition_dual}
    V^{\vee}=\Bbbk e^{\vee}_0\oplus \Bbbk e^{\vee}_{\alpha}\oplus \Bbbk e^{\vee}_{-\alpha}\oplus \Bbbk e^{\vee}_{\beta}\oplus \Bbbk e^{\vee}_{-\beta}\oplus \Bbbk e^{\vee}_{\gamma}\oplus \Bbbk e^{\vee}_{-\gamma}.
\end{equation}
It is a well known fact, that the action of $\rG_2$ on $V$ preserves a skew-symmetric 4-form $$\lambda\in \Lambda^4V^{\vee},$$
its dual (in the sense of Section \ref{notations}) 3-vector $$\lambda^{\vee}\in \Lambda^3V,$$
and a nondegenerate quadratic form $$q\in S^2V^{\vee}.$$
In particular, $\rG_2\subset \mathrm{SO}(V,q)$.
In fact, $\rG_2$ can be defined as the stabilizer in $\mathrm{GL}(V)$ of a general 4-form. 

All this forms are canonical up to rescaling. We choose a basis \eqref{weight_decomposition_dual} in a such a way that the 4-form~$\lambda$ and the quadratic form~$q$ have the following expressions
\begin{equation} \label{lambda}
    \lambda=2e_{0}^{\vee}\wedge e_{\alpha}^{\vee}\wedge e_{\beta}^{\vee}\wedge e_{\gamma}^{\vee}-2e_{0}^{\vee}\wedge e_{-\alpha}^{\vee}\wedge e_{-\beta}^{\vee}\wedge e_{-\gamma}^{\vee}+e_{\beta}^{\vee}\wedge e_{-\beta}^{\vee}\wedge e_{\gamma}^{\vee}\wedge e_{-\gamma}^{\vee}+e_{\alpha}^{\vee}\wedge e_{-\alpha}^{\vee}\wedge e_{\gamma}^{\vee}\wedge e_{-\gamma}^{\vee}+e_{\alpha}^{\vee}\wedge e_{-\alpha}^{\vee}\wedge e_{\beta}^{\vee}\wedge e_{-\beta}^{\vee},
\end{equation}
\begin{equation} \label{q:equation}
    q=e_{0}^{\vee}e_{0}^{\vee}-e_{\alpha}^{\vee} e_{-\alpha}^{\vee}-e_{\beta}^{\vee} e_{-\beta}^{\vee}-e_{\gamma}^{\vee} e_{-\gamma}^{\vee},
\end{equation}
Note that $q$ defines the canonical identification $$q\colon V\simeq V^{\vee}.$$
In particular, $q$ induces an isomorphism $\Lambda^i V\simeq \Lambda^i V^{\vee}$.  Let us denote by~$\nu\in \Lambda^3V^{\vee}$ a skew-symmetric 3-form that is preserved by~$\rG_2$
$$\nu\coloneqq q(\lambda^{\vee})\in\Lambda^3V^{\vee}.$$
In the chosen basis the 3-form $\nu$ has the following expression:
\begin{equation}
    \nu=e_{0}^{\vee} \wedge e_{\alpha}^{\vee} \wedge e_{-\alpha}^{\vee}+e_{0}^{\vee} \wedge e_{\beta}^{\vee} \wedge e_{-\beta}^{\vee}+e_{0}^{\vee} \wedge e_{\gamma}^{\vee} \wedge e_{-\gamma}^{\vee}+e_{\alpha}^{\vee} \wedge e_{\beta}^{\vee} \wedge e_{\gamma}^{\vee}+e_{-\alpha}^{\vee} \wedge e_{-\beta}^{\vee} \wedge e_{-\gamma}^{\vee}.
\end{equation}

The second fundamental representation of $\rG_2$ that corresponds to the weight $\omega_2$ is the adjoint representation. We will denote it by $V_{14}$. 

We have the following direct sum decompositions of $\rG_2$-representations, see \cite[Table 5]{OV}:
\begin{gather} \label{V14}
 \Lambda^2 V= V\oplus V_{14};\\
\label{decomposition1}
 S^2V=V_{27}\oplus \Bbbk;\\ \label{decomposition2}
 \Lambda^3V = V_{27} \oplus \Bbbk \oplus V = S^2V \oplus V ,
\end{gather}
where we denote by $V_{n}$ the irreducible $n$-dimensional representation of $\rG_2$ when this notation is unambiguous. The 1-dimensional representation $\Bbbk$ in \eqref{decomposition1} corresponds to $q^{-1}\in S^2V$ and in \eqref{decomposition2} to $\lambda^{\vee}\in \Lambda^3 V$.
Note also that the embedding $ V\simeq V^{\vee}\hookrightarrow \Lambda^2V$ in \eqref{V14} is given by the 3-vector $\lambda^{\vee}$.

Let us recall that the two minimal compact homogeneous spaces of $\rG_2$ can be described as the highest weight vector orbits in the projectivizations of the fundamental representations, they are unique closed orbits for the action.

In the projectivization $\PP(V)$ the action of $\rG_2$ has only two orbits: the closed one is a smooth 5-dimensional quadric with equation $q$, we denote it
$$\qQ\subset \PP(V).$$
We denote the minimal compact homogeneous space for the adjoint representation by
$$\Gad \subset \PP(\mathfrak{g}_2) =\PP(V_{14}).$$
The homogeneous variety $\Gad$ is called the \textbf{adjoint variety} of $\rG_2$. 

Let us recall a more geometric description of $\Gad$.  
By \eqref{V14} the adjoint representation $V_{14}$ is contained in $\Lambda^2V$, so that $\Gad\subset \PP(\Lambda^2V)$. Since  $\Gr(2,V)\subset \PP(\Lambda^2 V)$ is closed and $\rG_2$-invariant in $ \PP(\Lambda^2 V)$, it follows that $\Gad\subset \Gr(2,V)\cap \PP(V_{14})\subset \PP(\Lambda^2 V)$. Denote by $\cO(H_2)$ the restriction from $\Gr(2,V)$ to $\Gad$ of the Pl{\"u}cker line bundle. In what follows we will use the following well-known properties of $\Gad$. 
\begin{lemma}[\cite{LM}, Theorem 3.1] \label{adjoint_properties}
The adjoint variety $\Gad\subset \Gr(2,V)$ is the zero locus of the global section $\nu \in \mathrm{H}^0(\Gr(2,V),\cU_{2}^{\perp}(1))$, and $\Gad$ is a linear section of $\Gr(2,V)$, that is
\begin{equation*}
   \Gad=\Gr(2,V)\cap \PP(V_{14}).
\end{equation*} 
We have $\Gad\subset \mathrm{OGr}_{q}(2,V)$, the Grassmannian of $q$-isotropic planes in $V$. The Picard group of $\Gad$ is generated by $\cO(H_2)$, i.e. $\mathrm{Pic}(\Gad)=\mathbb{Z}\cdot \cO(H_2)$. The canonical line bundle of $\Gad$ is $\omega_{\Gad}\simeq \cO(-3H_2)$.
\end{lemma}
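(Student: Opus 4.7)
The plan is to establish the four claims of the lemma in order, exploiting the $\rG_2$-representation decomposition $\Lambda^2 V = V \oplus V_{14}$ from \eqref{V14}.

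First I would interpret $\nu \in \Lambda^3 V^\vee$ as the global section of $\cU_2^\perp(1)$ on $\Gr(2,V)$ corresponding to the morphism $\Lambda^2 \cU_2 \to \cU_2^\perp$, $u_1 \wedge u_2 \mapsto \nu(u_1,u_2,-)$ (the image does land in $\cU_2^\perp\subset V^\vee\otimes\cO$, by antisymmetry of $\nu$). A point $[U_2]$ lies in its zero locus iff the $\rG_2$-equivariant linear map
\[
\varphi\colon \Lambda^2 V \longrightarrow V^\vee,\qquad u_1\wedge u_2\longmapsto \nu(u_1,u_2,-),
\]
annihilates $\Lambda^2 U_2$. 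Composing $\varphi$ with $q^{-1}\colon V^\vee\xrightarrow{\sim} V$ yields a $\rG_2$-equivariant map $\Lambda^2 V\to V$; by \eqref{V14} and Schur's lemma this map is unique up to scalar, and an explicit evaluation on $e_\alpha\wedge e_\beta$ (which by the formula for $\nu$ produces a nonzero multiple of $e_\gamma^\vee$) shows it is nonzero. Hence $\ker\varphi = V_{14}$, and the zero locus of $\nu$ is exactly $\Gr(2,V)\cap \PP(V_{14})$.

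Next I would show this linear section equals $\Gad$. The variety $\Gad$ lies in both: in $\PP(V_{14})$ by definition, and in $\Gr(2,V)$ because its highest-weight point $[X_\theta]\in V_{14}\subset \Lambda^2 V$ (with $\theta$ the highest root) is represented by a decomposable bivector $e_\mu\wedge e_\nu$ with $\mu+\nu=\theta$. Since $\Gad\subset \PP(V_{14})$ is the unique closed $\rG_2$-orbit, equality reduces to a dimension count: the expected, and by regularity of $\nu$ actual, dimension of the zero locus is $\dim\Gr(2,V) - \mathrm{rk}\,\cU_2^\perp(1) = 10-5 = 5 = \dim\Gad$, and no other $\rG_2$-orbit in $\PP(V_{14})$ of this dimension is contained in $\Gr(2,V)$.

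Isotropy $\Gad\subset \mathrm{OGr}_q(2,V)$ is then immediate: because $\rG_2\subset \mathrm{SO}(V,q)$ and $\Gad$ is a single orbit, it suffices to check one plane, and the highest-weight plane $\Bbbk e_\mu\oplus \Bbbk e_\nu$ is $q$-isotropic by \eqref{q:equation} since $\mu,\nu$ are nonzero weights with $\mu\ne -\nu$. For the Picard group, $\Gad = \rG_2/P$ is a generalized Grassmannian with maximal $P$, so $\mathrm{Pic}(\Gad)\simeq\mathbb{Z}$, and $\cO(H_2)|_{\Gad}$ is the primitive ample generator because $\Gad\hookrightarrow\PP(V_{14})$ is the minimal homogeneous embedding associated to a fundamental weight. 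For the canonical bundle I apply adjunction: the exact sequence $0\to \cU_2^\perp\to V^\vee\otimes\cO\to \cU_2^\vee\to 0$ gives $\det\cU_2^\perp = \cO(-H_2)$, hence $\det(\cU_2^\perp(1)) = \cO(4H_2)$, and combining with $\omega_{\Gr(2,V)} = \cO(-7H_2)$ yields
\[
\omega_{\Gad} \simeq \omega_{\Gr(2,V)}|_{\Gad}\otimes \det(\cU_2^\perp(1))|_{\Gad}\simeq \cO(-3H_2).
\]
The main obstacle is the irreducibility/dimension step in the third paragraph; the cleanest route is via the classification of $\rG_2$-orbits on $\PP(V_{14})$, which identifies $\Gad$ as the unique closed orbit of dimension $5$.
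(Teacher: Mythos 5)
The paper offers no proof of this lemma at all --- it is imported wholesale as \cite[Theorem 3.1]{LM} --- so there is no internal argument to compare yours against; you are supplying a proof from scratch. Most of what you write is correct and well organized: the identification of the zero locus of $\nu$ with $\Gr(2,V)\cap\PP(V_{14})$ via Schur's lemma applied to $\Lambda^2V=V\oplus V_{14}$ is clean (and evaluating on $e_\alpha\wedge e_\beta$ is the right way to see $\varphi\neq 0$, hence $\ker\varphi=V_{14}$ exactly); the inclusion $\Gad\subset\Gr(2,V)$ via decomposability of the highest weight vector $e_\alpha\wedge e_{-\gamma}$ of weight $\omega_2=\alpha-\gamma$ is correct; and the isotropy check, the Picard group argument, and the adjunction computation $\omega_{\Gad}\simeq\cO(-7H_2)\otimes\cO(4H_2)$ are all fine as written.

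The one genuine gap is the sentence ``the expected, and by regularity of $\nu$ actual, dimension of the zero locus is $5$.'' For a section of a rank-$5$ bundle on the smooth (hence Cohen--Macaulay) $10$-fold $\Gr(2,V)$, regularity is \emph{equivalent} to the zero locus having codimension $5$, so invoking it here is circular: it is precisely the statement you still need to prove. The auxiliary claim that ``no other $\rG_2$-orbit of this dimension is contained in $\Gr(2,V)$'' does not repair this (besides being unproved): the danger is not a second $5$-dimensional orbit but an irreducible component of $Z(\nu)$ of dimension $\ge 6$, which would be an infinite union of orbits of various dimensions. Two ways to close the gap: (a) observe that $\nu$ lies in the open dense $\mathrm{GL}(V)$-orbit in $\Lambda^3V^{\vee}=\mathrm{H}^0(\Gr(2,V),\cU_2^{\perp}(1))$ (its stabilizer is $14$-dimensional and $49-35=14$), while the set of sections whose zero locus is smooth of the expected dimension is open, $\mathrm{GL}(V)$-invariant, and nonempty by Bertini since $\cU_2^{\perp}(1)$ is globally generated, hence contains $\nu$; or (b) compute the tangent space of $Z(\nu)$ at $[\langle e_\alpha,e_{-\gamma}\rangle]$ directly to conclude that $\Gad$ is a connected component of $Z(\nu)$, and then rule out further components because any nonempty closed $\rG_2$-invariant subset of $\PP(V_{14})$ must contain the unique closed orbit $\Gad$, which the union of the remaining components cannot. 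Either repair also legitimizes the adjunction step, which presupposes that $\nu$ is regular.
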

Denote by $\cB$ a Borel subgroup of $\rG_2$ and by $\cP_1$, $\cP_2$ the parabolic subgroups that contain $\cB$ and such that
\begin{equation*}
    \qQ\simeq \rG_2/\cP_2; \qquad \Gad \simeq \rG_2/\cP_1.
\end{equation*}
Then we have the following diagram 
\begin{equation} \label{g2}
\xymatrix{
& \rG_2/\cB\ar[dl]_{\PP^1} \ar[dr]^{\PP^1}\\ 
\qQ\simeq \rG_2/\cP_2 &&  \rG_2/\cP_1\simeq \Gad.
}
\end{equation}
\subsection{Conics on Grassmannians} \label{classes_of_conics}
Let $W$ be a vector space. Recall, see \cite{IM}, that conics in $\Gr(2,W)$ can be divided into three different
classes, according to the type of their linear span in~$\PP(\Lambda^2W)$:
\begin{itemize}
    \item \textbf{$\tau$-conics} are conics spanning a plane in $\PP(\Lambda^2 W)$ which is not contained in $\Gr(2,W)$;
    \item \textbf{$\sigma$-conics} are conics spanning a plane in $\Gr(2,W)$ that corresponds to a point of $\mathrm{Fl}(1,4;W)$;
    \item \textbf{$\rho$-conics} are conics spanning a plane in $\Gr(2,W)$ that corresponds to a point of $\Gr(3,W)$.
\end{itemize}
\section{Quadric bundles constructions.} \label{section:quadric_constructions}
Let $\hQ \to S$ be a \textbf{quadric bundle} over a scheme~$S$, that is a proper morphism which can be presented as a composition $\hQ \hookrightarrow \PP_{S}(\cF) \to S$, where $\PP_{S}(\cF) \to S$ is the projectivization of a vector bundle $\cF$ and~$\hQ \to \PP_{S}(\cF)$ is a divisorial embedding of relative degree~2 over~$S$. A quadric bundle is determined by a quadratic form $S^2 \cF \to \cL$ with values in a line bundle~$\cL$, or, equivalently, by a self-dual morphism
$$\cF\stackrel{f}{\rightarrow} \cF^{\vee}\otimes \cL.$$
With a quadric bundle one associates the coherent sheaf
$$\mathrm{Coker}(\cF\stackrel{f}{\rightarrow} \cF^{\vee}\otimes \cL)$$
on $S$, which we call its \textbf{cokernel sheaf}. Sometimes we will also denote the cokernel sheaf of $\bQ$ by~$\mathrm{Coker}(\bQ)$.
\subsection{Two general constructions} The constructions that we describe in the following two propositions are crucial for our proof of Theorem \ref{Theorem:introduction:main}.
\begin{proposition} \label{lemma:quadric_bundles_cokernel}
There is a bijection between the set of isomorphism classes of triples
\begin{equation*}
  \left\{ (f_1, f_2, g) \mid f_1 \colon \cF_1\to \cF^{\vee}_1\otimes \cL,\quad f_2\colon \cF_2\to \cF^{\vee}_2\otimes \cL \quad \mbox{and} \quad g\colon \mathrm{Coker}(f_1)\xrightarrow{\sim} \mathrm{Coker}(f_2) \right\},
\end{equation*}
where $f_1$ and $f_2$ are self-dual, and the set of pairs
\begin{equation*}
  \left\{(f, \epsilon)\mid f\colon \cF\simeq \cF^{\vee}\otimes \cL, \quad \epsilon\colon 0\to \cF_1 \to \cF\to \cF^{\vee}_2\otimes \cL\to 0 \right\},
\end{equation*}
where $f$ is self-dual, which maps $(f_1, f_2, g)$ to $(f, \epsilon)$.
\end{proposition}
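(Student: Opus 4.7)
I will establish the bijection by constructing explicit maps in both directions and verifying they are mutually inverse. \emph{Forward direction (pair to triple).} Given $(f,\epsilon)$ with $\iota_1\colon \cF_1\hookrightarrow \cF$ and $\pi_2\colon \cF\twoheadrightarrow \cF_2^{\vee}\otimes \cL$, restrict $f$ to the subbundle $\cF_1$ to obtain $f_1 \coloneqq (\iota_1^\vee\otimes \cL)\circ f\circ \iota_1$, which is self-dual since $f$ is. Dualizing $\epsilon$ and tensoring with $\cL$ gives a dual exact sequence $0\to \cF_2\xrightarrow{\iota_2} \cF^\vee\otimes \cL\xrightarrow{\pi_1} \cF_1^\vee\otimes \cL\to 0$ with $\iota_2=\pi_2^\vee\otimes \cL$; set $f_2 \coloneqq \pi_2\circ f^{-1}\circ \iota_2$, again self-dual. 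A short diagram chase identifies $\mathrm{Coker}(f_1)\simeq (\cF^\vee\otimes \cL)/(f(\cF_1)+\iota_2(\cF_2))$ and $\mathrm{Coker}(f_2)\simeq \cF/(\cF_1+f^{-1}(\iota_2(\cF_2)))$, and the isomorphism $f\colon \cF\simeq \cF^\vee\otimes \cL$ descends to the canonical isomorphism $g\colon \mathrm{Coker}(f_1)\simeq \mathrm{Coker}(f_2)$.

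\emph{Reverse direction (triple to pair).} Given $(f_1,f_2,g)$, I will assemble $(\cF,f,\epsilon)$ by gluing the two quadric bundles along $g$. Choose a chain-level lift of $g$ to a morphism between the $2$-term complexes $[\cF_i\xrightarrow{f_i}\cF_i^\vee\otimes \cL]$ (placed in degrees $-1$ and $0$): this provides maps $g_0\colon \cF_1^\vee\otimes \cL\to \cF_2^\vee\otimes \cL$ and $g_1\colon \cF_1\to \cF_2$ satisfying $g_0\circ f_1=f_2\circ g_1$ with $g_0$ inducing $g$ on cokernels. Using $(g_0,g_1,f_1,f_2)$, build $\cF$ as an extension of $\cF_2^\vee\otimes \cL$ by $\cF_1$ carrying a self-dual isomorphism $f\colon \cF\simeq \cF^\vee\otimes \cL$: on the split model $\cF=\cF_1\oplus \cF_2^\vee\otimes \cL$, one takes $f$ to be a block matrix with diagonal blocks $f_1$ and a self-dual correction in the second slot, and off-diagonal blocks related by duality and determined from $(g_0,g_1)$, arranged so that $f$ is an isomorphism. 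The extension $\epsilon$ is read off from the inclusion of the first summand and the projection onto the second.

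Finally, verify the two constructions are mutually inverse on isomorphism classes: applying the forward direction to a triple-assembled pair recovers the original triple directly by construction, and conversely the pair built from the triple extracted from $(f,\epsilon)$ is canonically isomorphic to $(f,\epsilon)$, with the isomorphism class of the output independent of the chain-level lift $(g_0,g_1)$ of $g$. The main obstacle is the reverse direction: verifying that the assembled $f$ is actually an isomorphism (not merely self-dual), that the map $\cF_1\to \cF$ is injective so that $\epsilon$ is genuinely exact, and that the isomorphism class of the resulting pair does not depend on the chosen lift. This amounts to a careful diagram chase propagating $g$ through the construction and controlling the invariants of the self-dual block matrix.
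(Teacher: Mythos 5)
Your forward direction (extracting $(f_1,f_2,g)$ from $(f,\epsilon)$) is correct and coincides with the paper's argument. The reverse direction, however, has a genuine gap, and it occurs precisely in the cases the proposition is designed to handle. You begin by choosing a chain-level lift of $g$, i.e.\ maps $g_0\colon \cF_1^{\vee}\otimes\cL\to\cF_2^{\vee}\otimes\cL$ and $g_1\colon\cF_1\to\cF_2$ with $g_0\circ f_1=f_2\circ g_1$ inducing $g$ on cokernels. Such a lift amounts to lifting the composite $\cF_1^{\vee}\otimes\cL\twoheadrightarrow\mathrm{Coker}(f_1)\xrightarrow{g}\mathrm{Coker}(f_2)$ through the surjection $\cF_2^{\vee}\otimes\cL\twoheadrightarrow\mathrm{Coker}(f_2)$; the obstruction lives in $\mathrm{Ext}^1(\cF_1^{\vee}\otimes\cL,\cF_2)$ and is exactly the class of (the twisted dual of) the extension $\epsilon$ that the bijection is supposed to produce. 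So the lift exists if and only if $\epsilon$ splits. Your second step compounds this: building $f$ as a block matrix on the split model $\cF=\cF_1\oplus\cF_2^{\vee}\otimes\cL$ can only ever output pairs $(f,\epsilon)$ with $\epsilon$ split, so the proposed map cannot be inverse to the forward direction. In the paper's applications (Corollaries \ref{proposition_E_E} and \ref{corollary_extension}) the extensions $\cE_{10}$ and $\cE_{16}$ are genuinely nonsplit, so this is not a corner case.

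The paper sidesteps both problems by defining $\cF$ without any choice of lift: it is the kernel of $(g_1,g_2)\colon\cF_1^{\vee}\otimes\cL\oplus\cF_2^{\vee}\otimes\cL\to\mathrm{Coker}(f_1)$, i.e.\ the fiber product of the two presentations over the common cokernel. The two projections then give the exact sequences $0\to\cF_1\to\cF\to\cF_2^{\vee}\otimes\cL\to 0$ and $0\to\cF_1\oplus\cF_2\to\cF\to\mathrm{Coker}(f_1)\to 0$, and comparing the defining sequence of $\cF$ with the latter exhibits the self-duality $\cF\simeq\cF^{\vee}\otimes\cL$. If you want to repair your argument, replace the split model and the chain-level lift by this fiber-product construction; the rest of your plan (checking the two constructions are mutually inverse) then goes through as you describe.
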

\begin{proof}
Suppose that we have a pair $(f, \epsilon)$. Denote by $i_1\colon \cF_1 \to \cF$ the embedding.
Consider the dual exact sequence of $\epsilon$ (twisted by $\cL$)
\begin{equation} \label{exact_sequence_F_dual}
  0\to \cF_2 \xrightarrow{i_2} \cF^{\vee}\otimes \cL \xrightarrow{i_1^{\vee}} \cF^{\vee}_1\otimes \cL\to 0.  
\end{equation}
We define self-dual maps $f_1$ and $f_2$ as compositions
\begin{gather*}
f_1\colon  \cF_1 \stackrel{i_1}{\hookrightarrow} \cF \stackrel{f}{\rightarrow} \cF^{\vee}\otimes \cL  \stackrel{i_1^{\vee}}{\twoheadrightarrow} \cF_1^{\vee}\otimes \cL;\\
f_2\colon  \cF_2 \stackrel{i_2}{\hookrightarrow} \cF^{\vee}\otimes \cL \stackrel{f^{-1}}{\rightarrow} \cF \stackrel{i_2^{\vee}}{\twoheadrightarrow} \cF_2^{\vee}\otimes \cL.
\end{gather*}
Let us prove that we have an isomorphism of cokernel sheaves $g\colon \mathrm{Coker}(f_1)\xrightarrow{\sim} \mathrm{Coker}(f_2)$.
Indeed, we have~$\mathrm{Coker}(f\circ i_1)\simeq \cF_2^{\vee}\otimes \cL$ and the cokernel map is $\cF^{\vee}\otimes \cL\stackrel{f^{-1}}{\rightarrow}\cF \xrightarrow{i_2^{\vee}} \cF_2^{\vee}\otimes \cL$. Thus, we obtain
\begin{multline*}
\mathrm{Coker}(f_1) \simeq \mathrm{Coker}(\cF_1\stackrel{f\circ i_1}{\hookrightarrow} \cF^{\vee}\otimes \cL \xrightarrow{i_1^{\vee}} \cF_1^{\vee}\otimes \cL) \simeq \\
\simeq \mathrm{Coker}(\cF_1\oplus \cF_2 \xrightarrow{(f\circ i_1, i_2)}\cF^{\vee}\otimes \cL)\simeq \\
\simeq \mathrm{Coker}(\cF_2\stackrel{i_2}{\hookrightarrow} \cF^{\vee}\otimes \cL \stackrel{f^{-1}}{\rightarrow}\cF \xrightarrow{i_2^{\vee}} \cF_2^{\vee}\otimes \cL)\simeq \mathrm{Coker}(f_2).
\end{multline*}

Suppose now that we have a triple $(f_1, f_2, g)$. We have two exact sequences
\begin{gather} \label{es:f_1}
  0\to  \cF_1\stackrel{f_1}{\longrightarrow} \cF^{\vee}_1\otimes \cL \stackrel{g_1}{\longrightarrow} \mathrm{Coker}(f_1) \to 0\\ \label{es:f_2}
 0\to \cF_2\stackrel{f_2}{\longrightarrow} \cF^{\vee}_2\otimes \cL \stackrel{g_2}{\longrightarrow} \mathrm{Coker}(f_1) \to 0.
\end{gather}
We define $\cF$ from the exact sequence
\begin{equation} \label{ec:f}
    0\to \cF \to \cF^{\vee}_1\otimes \cL \oplus \cF^{\vee}_2\otimes \cL \stackrel{(g_1,g_2)}{\longrightarrow} \mathrm{Coker}(f_1) \to 0.
\end{equation}
From the exact sequences \eqref{es:f_1} and \eqref{ec:f} we obtain the exact sequence
\begin{equation*}
    0\to \cF_1 \to \cF \to \cF^{\vee}_2\otimes \cL \to 0,
\end{equation*}
so that $\cF$ is an extension of $\cF^{\vee}_2\otimes \cL$ by $\cF_1$. Using the exact sequence that is the direct sum of \eqref{es:f_1} and~\eqref{es:f_2} and the exact sequence \eqref{ec:f} we obtain the following exact sequence for $\cF$
\begin{equation} \label{ec:F_dual}
 0\to \cF_1\oplus \cF_2 \to \cF \to \mathrm{Coker}(f_1)\to 0.
\end{equation}
Using the exact sequences \eqref{ec:f} and \eqref{ec:F_dual} we conclude that $\cF$ is self-dual
\begin{equation*}
    f\colon \cF\xrightarrow{\sim} \cF^{\vee}\otimes \cL.
\end{equation*}
The constructions are mutually inverse, so we obtain the statement of the proposition. 
\end{proof}
Let $i\colon D\hookrightarrow S$ be a Cartier divisor on a scheme $S$, let $\xi\colon \cO\to \cO(D)$ be a section whose zero locus is $D$ and let $\xi^{\vee}\colon \cO(-D)\to \cO$ be the dual map to $\xi$.
\begin{proposition} \label{quadric_construction}
Let $f\colon \cF\otimes \cL^{\vee}\to \cF^{\vee}$ be a quadric bundle on $S$. Suppose that the scheme support of the cokernel sheaf of $f$ is $D$, i.e. $\mathrm{Coker}(f)\simeq i_*\cC(f)$ for a coherent sheaf $\cC(f)$ on $D$. Then there exists a quadric bundle on $S$
$$f'\colon \cF^{\vee}\otimes \cO(-D) \to \cF\otimes \cL^{\vee},$$
such that the composition $\cF^{\vee}\otimes \cO(-D)\xrightarrow{f'} \cF\otimes \cL^{\vee}\xrightarrow{f}\cF^{\vee}$ is equal to $id\otimes\xi^{\vee}$, and the composition $\cF\otimes \cL^{\vee}\xrightarrow{f}\cF^{\vee} \xrightarrow{f'} \cF\otimes \cL^{\vee}\otimes \cO(D)$ is equal to $id\otimes \xi$. Moreover,~$\mathrm{Coker}(f')\simeq i_*\cC(f')$ and $\cC(f')\simeq \mathrm{Im}(f\restr{D})$,~$\cC(f)\simeq \mathrm{Im}(f'\restr{D})\otimes \cO_{D}(D)$, and $\cC(f')$ and $\cC(f)$ admit self-dual isomorhisms on~$D$:
\begin{align*}
  & \cC(f') \simeq \cC(f')^{\vee} \otimes i^*\cL^{\vee}\\
  & \cC(f) \simeq \cC(f)^{\vee} \otimes i^*\cO(D) \otimes i^*\cL^{\vee}.
\end{align*}
In particular, the sheaves $\cC(f)$ and $\cC(f')$ are reflexive.
\end{proposition}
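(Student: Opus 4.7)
The plan is to construct $f'$ as the essentially unique lift of $\mathrm{id}\otimes\xi^\vee$ through $f$, and then derive the remaining assertions via the snake lemma and Grothendieck duality along the Cartier divisor $D$.

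First I would check that $f$ is injective. Indeed, $\mathrm{Coker}(f)\simeq i_*\cC(f)$ is torsion, so $f$ is generically an isomorphism; its kernel is then a subsheaf of the locally free sheaf $\cF\otimes\cL^\vee$ supported on the proper closed subscheme $D$, and hence vanishes. Because $\cO(-D)$ annihilates $\mathrm{Coker}(f)$, the canonical inclusion $\mathrm{id}\otimes\xi^\vee\colon \cF^\vee\otimes\cO(-D)\hookrightarrow \cF^\vee$ factors through $\mathrm{Im}(f)$, so injectivity of $f$ produces a unique $f'\colon \cF^\vee\otimes\cO(-D)\to \cF\otimes\cL^\vee$ with $f\circ f'=\mathrm{id}\otimes\xi^\vee$. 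The second composition identity and the self-duality of $f'$ can then be verified on the dense open $U=S\setminus D$, where $f$ is invertible and everything reduces to a formal inversion of $f$ combined with multiplication by $\xi^\vee$, and extended from $U$ to all of $S$ since both sides take values in locally free, hence torsion-free, sheaves; alternatively, self-duality of $f'$ follows by dualizing the defining equation $f\circ f'=\mathrm{id}\otimes\xi^\vee$ and invoking the uniqueness of the lift.

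To compute $\mathrm{Coker}(f')$, I would apply the snake lemma to the commutative diagram whose upper row is the defining short exact sequence $0\to \cF^\vee\otimes\cO(-D)\xrightarrow{f'}\cF\otimes\cL^\vee\to \mathrm{Coker}(f')\to 0$, whose lower row is the tautological $0\to \cF^\vee\otimes\cO(-D)\xrightarrow{\mathrm{id}\otimes\xi^\vee}\cF^\vee\to i_*(\cF^\vee|_D)\to 0$, and whose vertical maps are $\mathrm{id}$, $f$, $\bar{f}$. Injectivity of $f$ forces $\bar{f}$ to be injective with cokernel $\mathrm{Coker}(f)=i_*\cC(f)$, so $\mathrm{Coker}(f')=i_*\cC(f')$ with $\cC(f')\simeq \ker(\cF^\vee|_D\twoheadrightarrow \cC(f))=\mathrm{Im}(f|_D)$. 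Running the same argument after interchanging the roles of $f$ and $f'$ (which is legitimate because a second application of the construction recovers $f$ from $f'$ up to a canonical twist by $\cO(-D)$) gives $\cC(f)\simeq \mathrm{Im}(f'|_D)\otimes\cO_D(D)$.

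The main obstacle, and the subtlest point of the statement, is the pair of self-duality isomorphisms for $\cC(f)$ and $\cC(f')$ on $D$. I would derive them by dualizing the short exact sequences $0\to \cF\otimes\cL^\vee\xrightarrow{f}\cF^\vee\to i_*\cC(f)\to 0$ and the analogous sequence for $f'$. Since the cokernels are supported on the proper closed subscheme $D$, the $\mathcal{H}\mathrm{om}$-terms vanish, while Grothendieck duality along a Cartier divisor gives
\begin{equation*}
\mathcal{E}\mathrm{xt}^1_S\bigl(i_*\mathcal{G},\cO_S\bigr)\simeq i_*\bigl(\mathcal{G}^\vee\otimes\cO_D(D)\bigr)
\end{equation*}
for any coherent sheaf $\mathcal{G}$ on $D$. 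Comparing the resulting exact sequences with the original ones, twisted by $\cL$ and by $\cL^\vee\otimes\cO(D)$ respectively, via the self-dualities of $f$ and $f'$, produces the claimed isomorphisms $\cC(f)\simeq \cC(f)^\vee\otimes i^*\cO(D)\otimes i^*\cL^\vee$ and $\cC(f')\simeq \cC(f')^\vee\otimes i^*\cL^\vee$. Reflexivity of $\cC(f)$ and $\cC(f')$ is then automatic: the dual of any coherent sheaf is reflexive, and each of these sheaves is a line-bundle twist of its own dual.
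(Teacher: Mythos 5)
Your proposal is correct, and for the construction of $f'$ and the identification of the cokernels it follows essentially the same route as the paper: both arguments produce $f'$ by lifting $\mathrm{id}\otimes\xi^{\vee}$ through the injective map $f$ (equivalently, from the commutative diagram comparing the short exact sequences with cokernels $i_*i^*\cF^{\vee}$ and $i_*\cC(f)$), apply the snake lemma to obtain $\mathrm{Coker}(f')\simeq i_*\cC(f')$ with $\cC(f')\simeq \mathrm{Im}(f\restr{D})$, and use the invertibility of the construction to deduce $\cC(f)\simeq \mathrm{Im}(f'\restr{D})\otimes\cO_D(D)$. Where you genuinely diverge is in the self-duality isomorphisms: the paper reads them off from the fact that $\cC(f')$ is the image of the self-dual morphism $f\restr{D}$ (and likewise for $f'$ after twisting), whereas you dualize the two defining short exact sequences and invoke local duality $\mathcal{E}\mathrm{xt}^1_S(i_*\cG,\cO_S)\simeq i_*(\cG^{\vee}\otimes\cO_D(D))$ for the Cartier embedding, then match the resulting sequences with the originals via the self-duality of $f$ and $f'$. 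Your route has the merit of producing the two isomorphisms directly and independently, and of sidestepping the slightly delicate point that the image of a self-dual morphism of locally free sheaves on $D$ inherits a self-dual structure; the paper's route is shorter and never leaves the snake-lemma diagram. Both arguments end with the same terse step for reflexivity --- a sheaf isomorphic to a line-bundle twist of its own dual is a dual, hence reflexive --- which, strictly speaking, uses that duals of coherent sheaves on $D$ are reflexive (true when $D$ is integral, as in the application), but this caveat applies equally to the paper's proof.
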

\begin{proof}
Consider the following diagram 
\begin{equation} \label{diagram_cF}
\xymatrix@R=1pc{
0\ar[r]& \cF^{\vee}\otimes \cO(-D)\ar[r]^(.7){id\otimes\xi^{\vee}} \ar@{-->}[d]_{f'} & \cF^{\vee}\ar@{=}[d] \ar[r]& i_*i^*\cF^{\vee} \ar[d] \ar[r] & 0 \\
0\ar[r]& \cF\otimes \cL^{\vee}\ar[r]^(.55){f} & \cF^{\vee} \ar[r] & i_*\cC(f) \ar[r] & 0,
}   
\end{equation}
where the right vertical arrow comes from adjunction. We define a self-dual map
$$f'\colon \cF^{\vee}\otimes \cO(-D)\to \cF\otimes \cL^{\vee}$$
from \eqref{diagram_cF}. Using the snake lemma for \eqref{diagram_cF} we obtain that $\mathrm{Coker}(f')=i_*\cC(f')$, where~$\cC(f')=\mathrm{Ker}(i^*\cF^{\vee}\to \cC(f))$. Restricting the bottom row to $D$ we obtain $\cC(f')\simeq \mathrm{Im}(f\restr{D})$, so that $\tcC(f') \simeq \tcC(f')^{\vee} \otimes i^*\cL^{\vee}$ is self-dual.

Note that the construction described above is invertible, i.e. if we apply it to $f'$ we obtain the self-dual morphism $f$ twisted by $\cO(-D)$. In particular, we deduce the isomorphisms~$\cC(f)\simeq \mathrm{Im}(f'\restr{D})\otimes \cO_{D}(D)$ and $\cC(f) \simeq \cC(f)^{\vee} \otimes i^*\cO(D) \otimes i^*\cL^{\vee}$. So we get the statement. 
\end{proof}
\subsection{Quadrics containing the Veronese surface}
Recall the well-known geometric construction that we will use later. Consider a 3-dimensional space~$U_3$ and the Veronese embedding $\nu_2\colon \PP(U_3)\hookrightarrow \PP(S^2U_3)$. A quadratic polynomial on the projective space $\PP(S^2U_3)$ will restrict to a quartic polynomial on the Veronese surface~$\nu_2(\PP(U_3))$, so that the kernel of the evaluation map 
$$S^2(S^2U_3^{\vee})\to S^4U_3^{\vee}$$
is the vector space of quadratic polynomials in $\PP(S^2U_3)$ vanishing on the Veronese surface $\nu_2(\PP(U_3))$. A decomposition (see for example \cite[Section 13.3]{FH})
\begin{equation*}
  S^2(S^2 U^{\vee}_3)=S^4 U_3^{\vee}\oplus S^2U_3 
\end{equation*}
shows that $\mathrm{Ker}(S^2(S^2U_3^{\vee})\to S^4U_3^{\vee})\simeq S^2U_3$, so we obtain that the space of quadrics in~$\PP(S^2 U_3)$ containing the Veronese surface $\nu_2(\PP(U_3))$ is isomorphic to~$\PP(S^2 U_3)$, the space of conics in $\PP(U^{\vee}_3)$. For an equation~$f\in S^2 U_3$ we will denote by $C_f\subset \PP(U^{\vee}_3)$ and $Q_f\subset \PP(S^2 U_3)$ the corresponding conic and quadric.
\begin{proposition} \cite[Section 13.4]{FH} \label{proposition_veronese}
Under the identification described above a smooth conic $C_f\subset \PP(U^{\vee}_3)$ correspond to smooth quadric $Q_f\subset \PP(S^2 U_3)$ and a conic $C_{f'}\subset \PP(U^{\vee}_3)$ of rank 2 corresponds to a quadric~$Q_{f'}\subset \PP(S^2 U_3)$ of rank 4. Moreover, for $f'= u\otimes v +v\otimes u $ the vertex of the quadric cone $Q_{f'}$ is equal to $\langle u\otimes u,v\otimes v \rangle\subset  \PP(S^2 U_3)$.
\end{proposition}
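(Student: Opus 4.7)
The plan is to realize the identification $S^2 U_3 \simeq \ker\bigl(S^2(S^2 U_3^\vee)\to S^4 U_3^\vee\bigr)$ by an explicit $\GL(U_3)$-equivariant formula and then verify both claims by reducing to $\GL(U_3)$-normal forms.

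Choose a volume form $\omega \in \det U_3$. Then for $\phi\in S^2 U_3$ the adjugate $\mathrm{adj}(\phi)\in S^2 U_3^\vee$ is well-defined, is quadratic in $\phi$, and vanishes precisely on rank-at-most-one tensors, i.e.\ on the affine cone over $\nu_2(\PP(U_3))$. Hence the rule
\begin{equation*}
   Q_f(\phi) := \langle f,\,\mathrm{adj}(\phi)\rangle, \qquad f \in S^2 U_3,\ \phi \in S^2 U_3,
\end{equation*}
defines a $\GL(U_3)$-equivariant linear map $S^2 U_3 \to S^2(S^2 U_3^\vee)$ whose image consists of quadratic forms on $\PP(S^2 U_3)$ vanishing on the Veronese surface, so it lies in $\ker\bigl(S^2(S^2 U_3^\vee)\to S^4 U_3^\vee\bigr)$. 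This map is nonzero (as one checks on $f = e_1^2$), and by $\GL(U_3)$-irreducibility of $S^2 U_3$ together with the dimension match $\dim S^2 U_3 = 6 = \dim\ker$, it realizes the identification of the excerpt up to a nonzero scalar, which is irrelevant since rank and vertex are projective invariants of the quadric.

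Both claims depend only on the $\GL(U_3)$-orbit of $f$, so by equivariance of $f \mapsto Q_f$ it suffices to verify them on normal forms. For the smooth case take $f = e_1^2 + e_2^2 + e_3^2$; then $Q_f(\phi) = \mathrm{tr}(\mathrm{adj}(\phi))$ is the sum of the three principal $2\times 2$ minors of the symmetric matrix $(\phi_{ij})$. A direct computation of its Hessian in the basis $(\phi_{11},\phi_{22},\phi_{33},\phi_{12},\phi_{13},\phi_{23})$ of $S^2 U_3$ yields a block-diagonal matrix with a $3\times 3$ block of determinant $2$ (diagonal zeros, off-diagonal ones) and a diagonal block $-2\,I_3$, so $Q_f$ has full rank $6$ and the quadric is smooth.

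For the rank-two case it suffices, by equivariance, to take $f' = e_1 \otimes e_2 + e_2 \otimes e_1$. One computes
\begin{equation*}
   Q_{f'}(\phi) \;=\; 2\,\mathrm{adj}(\phi)_{12} \;=\; -2\bigl(\phi_{12}\phi_{33} - \phi_{13}\phi_{23}\bigr),
\end{equation*}
whose partial derivatives vanish precisely when $\phi_{12}=\phi_{13}=\phi_{23}=\phi_{33}=0$. This cuts out the 2-dimensional subspace $\langle e_1\otimes e_1,\,e_2\otimes e_2\rangle\subset S^2 U_3$, so $\mathrm{rk}(Q_{f'}) = 4$ and the vertex is as claimed. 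For a general $f' = u\otimes v + v\otimes u$ with $u,v$ linearly independent, pick $g\in\GL(U_3)$ with $g(e_1)=u$, $g(e_2)=v$; then $S^2 g$ carries the normal-form vertex to $\langle u\otimes u,\,v\otimes v\rangle$, finishing the proof. The only mildly subtle point is pinning down the explicit formula for $f \mapsto Q_f$; once this is in hand, the rest is routine linear algebra.
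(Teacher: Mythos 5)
Your proof is correct. The paper itself offers no argument for this proposition, delegating it entirely to \cite[Section 13.4]{FH}, and your adjugate construction $Q_f(\phi)=\langle f,\mathrm{adj}(\phi)\rangle$ is the natural equivariant realization of the summand $S^2U_3\subset S^2(S^2U_3^\vee)$ used there: it manifestly vanishes on rank-one tensors, and the irreducibility-plus-dimension-count argument correctly pins it down as the space of quadrics through the Veronese. The two normal-form computations (Hessian of $\mathrm{tr}(\mathrm{adj}(\phi))$ for the smooth case; the rank-$4$ form $-2(\phi_{12}\phi_{33}-\phi_{13}\phi_{23})$ with singular locus $\langle e_1\otimes e_1, e_2\otimes e_2\rangle$ for the rank-$2$ case) check out, and the reduction to normal forms by $\GL(U_3)$-equivariance is legitimate since rank and vertex are the only invariants at stake.
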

We also discuss a family version of the above construction. Consider a vector bundle $\cF_3$ of rank 3 on a scheme $S$ and the Veronese embedding $\nu_2\colon \PP_{S}(\cF_3)\hookrightarrow \PP_{S}(S^2\cF_3)$. We have
$$\mathrm{Ker}(S^2(S^2\cF_3^{\vee})\otimes \cL \to S^4\cF_3^{\vee}\otimes\cL)\simeq S^2\cF_3\otimes \mathrm{det}(\cF^{\vee})^{\otimes 2}\otimes \cL.$$
\begin{corollary} \label{proposition_veronese_family}
There is a bijection between the set of conic bundles in $\PP_{S}(\cF_3^{\vee})$
$$ g \colon \cF_3^{\vee}\otimes \mathrm{det}(\cF)^{\otimes 2}\otimes \cL^{\vee}\to \cF_3 $$
and the set of quadric bundles in $\PP_{S}(S^2\cF_3)$
$$\tg\colon S^2\cF_3\to  S^2\cF_3^{\vee}\otimes \cL$$
containing $\nu_2(\PP_{S}(\cF_3))\subset \PP_{S}(S^2\cF_3)$. The set-theoretic supports of cokernel sheaves $\mathrm{Coker}(g)$ and~$\mathrm{Coker}(\tg)$ coincide. Moreover, over the locus where the fibers of $g$ are reducible conics the fibers of~$\tg$ have rank 4. 
\end{corollary}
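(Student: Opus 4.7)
My plan is to upgrade the pointwise statement in Proposition~\ref{proposition_veronese} to a family version, by re-expressing both sides of the bijection as global sections of suitable locally free sheaves on~$S$ and matching them via the $\GL_3$-equivariant decomposition recalled just before the corollary.

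First, I translate the two sides into sections. A self-dual morphism $g\colon \cF_3^\vee \otimes \mathrm{det}(\cF)^{\otimes 2}\otimes \cL^\vee \to \cF_3$ is, by standard symmetry, the same datum as a global section of $S^2\cF_3 \otimes \mathrm{det}(\cF^\vee)^{\otimes 2}\otimes \cL$; similarly a self-dual morphism $\tg\colon S^2\cF_3 \to S^2\cF_3^\vee\otimes \cL$ is the same datum as a global section of $S^2(S^2\cF_3^\vee)\otimes \cL$. On $\PP_S(S^2\cF_3)$ with projection $\pi$ one has $\pi_*\cO(2) = S^2(S^2\cF_3^\vee)$, and on $\PP_S(\cF_3)$ with projection $\pi'$ one has $\pi'_*\cO(4) = S^4\cF_3^\vee$. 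Since $\nu_2^*\cO_{\PP_S(S^2\cF_3)}(1) \simeq \cO_{\PP_S(\cF_3)}(2)$, restriction to the Veronese of the equation defining $\tg$ is induced by the evaluation map
$$S^2(S^2\cF_3^\vee)\otimes \cL \longrightarrow S^4\cF_3^\vee\otimes \cL.$$
Hence $\tg$ contains $\nu_2(\PP_S(\cF_3))$ if and only if its defining section lies in the kernel of this map, which, by the displayed decomposition preceding the corollary, is exactly $S^2\cF_3 \otimes \mathrm{det}(\cF^\vee)^{\otimes 2}\otimes \cL$. Combined with the reinterpretation of $g$, this produces the required canonical bijection.

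For the remaining two assertions, I argue fiberwise. At any point $s\in S$ the fiber $g_s$ is a conic in $\PP((\cF_3^\vee)_s)$ and $\tg_s$ is the associated quadric in $\PP((S^2\cF_3)_s)$ produced by Proposition~\ref{proposition_veronese}; that proposition says $g_s$ is smooth (rank~$3$) iff $\tg_s$ is smooth (rank~$6$), and $g_s$ has rank~$2$ iff $\tg_s$ has rank~$4$. Hence $g$ and $\tg$ fail to be isomorphisms on exactly the same closed subset of~$S$. Because $\mathrm{Coker}(g)$ and $\mathrm{Coker}(\tg)$ are set-theoretically supported precisely on these rank-drop loci, their supports coincide, and the final rank-$4$ claim is immediate.

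I do not anticipate a serious obstacle: the only delicate bookkeeping is tracking the line bundles through the $\GL_3$-decomposition and through the identification $\nu_2^*\cO(1)\simeq\cO(2)$, which is routine once the conventions for $\PP_S(-)$ and $\pi_*\cO(k)$ are fixed as in Subsection~\ref{classes_of_conics} and the discussion preceding the corollary.
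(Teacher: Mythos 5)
Your proposal is correct and follows essentially the same route as the paper, which deduces the corollary from the displayed kernel identity $\mathrm{Ker}(S^2(S^2\cF_3^{\vee})\otimes \cL \to S^4\cF_3^{\vee}\otimes\cL)\simeq S^2\cF_3\otimes \mathrm{det}(\cF^{\vee})^{\otimes 2}\otimes \cL$ together with the pointwise Proposition~\ref{proposition_veronese} applied fiberwise. The only minor point to note is that for the coincidence of supports you also need that conics of rank $\le 1$ give degenerate quadrics, which follows at once by semicontinuity of rank from the rank-$2$ case already treated in Proposition~\ref{proposition_veronese}.
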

Denote by $S'\subset S$ the locus where $g$ has rank 2 and by $\cF_2\coloneqq \mathrm{Im}(g)\restr{S'}$ the 2-dimensional image of~$g$ over $S'$. Over $S'$ the conic bundle $g$ induces a self-dual isomorphism
$$ \cF^{\vee}_2\otimes  \mathrm{det}(\cF_3)^{\otimes 2}\otimes \cL^{\vee}\to \cF_2,$$
so that over $S'$ we have a decomposition $S^2\cF_2 =  (\mathrm{det}(\cF_3)^{\otimes 2}\otimes \cL^{\vee}) \oplus (S^2\cF_2/\mathrm{det}(\cF_3)^{\otimes 2}\otimes \cL^{\vee})$.
\begin{corollary} \label{kernel_veronese}
The kernel of $\tg\restr{S'}$ is isomorphic to $S^2\cF_2/\mathrm{det}(\cF_3)^{\otimes 2}\otimes \cL^{\vee}$.
\end{corollary}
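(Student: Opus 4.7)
The plan is to reduce the statement to a fiberwise check via Proposition~\ref{proposition_veronese} and then globalize using that the sheaves involved have constant rank on $S'$. Abbreviate $M\coloneqq \mathrm{det}(\cF_3)^{\otimes 2}\otimes \cL^{\vee}$. The desired isomorphism will be realized as the composition
\begin{equation*}
    \mathrm{Ker}(\tg\restr{S'})\hookrightarrow S^2\cF_2\restr{S'}\twoheadrightarrow (S^2\cF_2/M)\restr{S'},
\end{equation*}
whose fiberwise structure is read off from Proposition~\ref{proposition_veronese}.

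For the first arrow I start with rank bookkeeping. Since $g$ has constant rank $2$ on $S'$, the image $\cF_2=\mathrm{Im}(g)\restr{S'}\subset \cF_3$ is a rank-$2$ subbundle, so $S^2\cF_2\subset S^2\cF_3$ is a rank-$3$ subbundle on $S'$. By Corollary~\ref{proposition_veronese_family} combined with Proposition~\ref{proposition_veronese}, the quadric $\tg\restr{S'}$ has constant rank $4$, hence $\mathrm{Ker}(\tg\restr{S'})\subset S^2\cF_3\restr{S'}$ is a rank-$2$ subbundle. Now fix $s\in S'$, trivialize $M_s$ and $\cL_s$, and choose $u,v\in(\cF_3)_s$ spanning $(\cF_2)_s$ so that $g_s$ is identified with $u\otimes v+v\otimes u$. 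By Proposition~\ref{proposition_veronese}, the vertex of the rank-$4$ quadric cone $Q_{\tg_s}$, i.e.\ $\mathrm{Ker}(\tg_s)$, equals the plane $\langle u^2,v^2\rangle\subset S^2(\cF_2)_s$. Therefore the composition
\begin{equation*}
    \mathrm{Ker}(\tg\restr{S'})\hookrightarrow S^2\cF_3\restr{S'}\twoheadrightarrow (S^2\cF_3/S^2\cF_2)\restr{S'}
\end{equation*}
vanishes on every fiber, so (source and target being locally free of constant rank) it is the zero morphism, and the inclusion factors through a bundle embedding $j\colon \mathrm{Ker}(\tg\restr{S'})\hookrightarrow S^2\cF_2\restr{S'}$.

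For the second arrow, the factorization of $g$ through its image yields a self-dual isomorphism $\bar g\colon \cF_2^{\vee}\otimes M\xrightarrow{\sim}\cF_2$; by symmetry, $\bar g$ corresponds to a nowhere-vanishing section of $S^2\cF_2\otimes M^{\vee}$, equivalently to a subbundle embedding $M\hookrightarrow S^2\cF_2$. Unwinding this construction at the point $s$ shows that a generator of $M_s$ is sent to a nonzero multiple of $uv\in S^2(\cF_2)_s$, because $\bar g_s$ is represented by the symmetric tensor $u\otimes v+v\otimes u$. Since $S^2(\cF_2)_s=\langle u^2,uv,v^2\rangle$ is the direct sum of the line $\langle uv\rangle$ (image of $M_s$) and the plane $\langle u^2,v^2\rangle=\mathrm{Ker}(\tg_s)$, the composition $\mathrm{Ker}(\tg\restr{S'})\xrightarrow{j} S^2\cF_2\restr{S'}\twoheadrightarrow (S^2\cF_2/M)\restr{S'}$ is an isomorphism on every fiber, hence an isomorphism of vector bundles on $S'$, which is the claimed identification.

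I do not foresee a substantial obstacle: the two key ingredients are already available, namely the explicit description of the vertex in Proposition~\ref{proposition_veronese} and the self-duality of $\bar g$. The only point needing a little attention is the computation in the last paragraph identifying the image of $M\hookrightarrow S^2\cF_2$ with the line $\langle uv\rangle$, and not some other line in $S^2(\cF_2)_s$; this follows directly from writing $\bar g_s$ explicitly from $g_s=u\otimes v+v\otimes u$.
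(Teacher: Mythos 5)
Your argument is correct and is precisely the reasoning the paper leaves implicit: the paper states this corollary without proof, as an immediate consequence of the vertex description in Proposition~\ref{proposition_veronese} (giving $\mathrm{Ker}(\tg_s)=\langle u^2,v^2\rangle$) and the splitting of $S^2\cF_2$ by the line $\langle uv\rangle$ coming from the self-dual isomorphism $\bar g$. Your fiberwise verification plus the constant-rank globalization is a faithful and complete fleshing-out of that intended argument.
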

\subsection{Quadrics containing the Segre variety}
Consider now a 4-dimensional space~$U_4$ and the Segre embedding $\sigma\colon \PP(U_3)\times \PP(U_4) \hookrightarrow \PP(U_3\otimes U_4)$. The kernel of the evaluation map 
$$S^2(U_3\otimes U_4)^{\vee}\to S^2U_3^{\vee}\otimes S^2U_4^{\vee}$$
is the vector space of quadratic polynomials in $\PP(U_3\otimes U_4)$ vanishing on the Segre variety $\PP(U_3)\times \PP(U_4)$. A decomposition \cite[Corollary 2.3.3]{5}
\begin{equation}
  S^2(U_3\otimes U_4)^{\vee}= S^2U_3^{\vee}\otimes S^2U_4^{\vee}\oplus \Lambda^2U_3^{\vee}\otimes \Lambda^2U_4^{\vee}
\end{equation}
shows that $\mathrm{Ker}(S^2(U_3\otimes U_4)^{\vee}\to S^2U_3^{\vee}\otimes S^2U_4^{\vee})\simeq \Lambda^2U_3^{\vee}\otimes \Lambda^2U_4^{\vee}$, so the space of quadrics in~$\PP(U_3\otimes U_4)$ containing the Segre variety $\PP(U_3)\times \PP(U_4)$ is isomorphic to~$\PP(\Lambda^2U_3^{\vee}\otimes \Lambda^2U_4^{\vee})$. We will denote by~$Q_{\varsigma}\subset \PP(U_3\otimes U_4)$ the quadric corresponding to $\varsigma\in \Lambda^2U_3^{\vee}\otimes \Lambda^2U_4^{\vee}$.

Note that an element $\varsigma\in \Lambda^2U_3^{\vee}\otimes \Lambda^2U_4^{\vee}$ determines a self-dual map
\begin{equation*}
  \Lambda^2U_3\xrightarrow{\varsigma} \Lambda^2U_4^{\vee} \simeq \Lambda^2U_4 \xrightarrow{\varsigma}  \Lambda^2U^{\vee}_3.
\end{equation*}
We will denote by $C_{\varsigma}\subset \PP(\Lambda^2U_3)$ the corresponding conic. 

\begin{proposition} \label{proposition_quadrics_segre}
Suppose $\varsigma\in \Lambda^2U_3^{\vee}\otimes \Lambda^2U_4^{\vee}$ determines an embedding $\Lambda^2U_3\to \Lambda^2U_4^{\vee}$ such that~$\varsigma(\PP(\Lambda^2U_3))$ is not contained in $\Gr(2,U_4)$. Then
\begin{itemize}
    \item if $C_{\varsigma}$ is smooth then $Q_{\varsigma}$ is smooth
    \item if $C_{\varsigma}$ is reducible then $Q_{\varsigma}$ has rank 10.
\end{itemize}
\end{proposition}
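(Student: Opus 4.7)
The plan is to compute the rank of $Q_\varsigma$ by analyzing the radical of its associated self-dual map $\Phi_\varsigma\colon U_3\otimes U_4 \to U_3^\vee\otimes U_4^\vee$. On decomposable vectors one has $\Phi_\varsigma(x\otimes y) = \iota_x\iota_y\,\varsigma$, so that the associated bilinear form satisfies
\[
B_\varsigma(x\otimes y,\, x'\otimes y') = \tfrac{1}{2}\,\varsigma(x\wedge x',\, y\wedge y').
\]
Using the identification $\Lambda^2 U_4^\vee \simeq \Lambda^2 U_4$ already employed in the definition of $C_\varsigma$, the conic $C_\varsigma$ is exactly $\PP(\varsigma(\Lambda^2 U_3))\cap \Gr(2,U_4)\subset \PP(\Lambda^2 U_4)$, and the lines of the Klein quadric $\Gr(2,U_4)$ are pencils $\PP(y\wedge V_3)$ for flags $\Bbbk y \subset V_3\subset U_4$; each such line lies inside the maximal isotropic $\beta$-plane $y\wedge U_4\subset \Lambda^2 U_4$.

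For the reducible case $C_\varsigma = \ell_1 \cup \ell_2$, I would first identify each line $\ell_i\subset \PP(\Lambda^2 U_3) = \Gr(2,U_3)$ as the pencil $\PP(x_i\wedge U_3)$ through some $\Bbbk x_i\subset U_3$; correspondingly $\varsigma(\ell_i)$ is a line of $\Gr(2,U_4)$, hence a pencil through some $\Bbbk y_i\subset U_4$, so that $\varsigma(x_i\wedge U_3)\subset y_i\wedge U_4$. The key observation is that $x_i\otimes y_i \in \mathrm{Rad}(Q_\varsigma)$: for any $x'\in U_3$, $y'\in U_4$ the value $\varsigma(x_i\wedge x',\, y_i\wedge y')$ vanishes, because both $\varsigma(x_i\wedge x')$ and $y_i\wedge y'$ lie in the totally isotropic subspace $y_i\wedge U_4$, on which the Klein pairing restricts to zero. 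Since $\ell_1\neq \ell_2$ forces $\Bbbk x_1\neq \Bbbk x_2$, the two tensors $x_1\otimes y_1$ and $x_2\otimes y_2$ are linearly independent, giving $\dim \mathrm{Rad}(Q_\varsigma) \geq 2$ and hence $\mathrm{rank}(Q_\varsigma)\leq 10$. Conversely, the same formula shows that any decomposable radical element $x\otimes y$ would force $\varsigma(x\wedge U_3)$ to be Klein-isotropic, hence the line $\PP(x\wedge U_3)$ to lie in $C_\varsigma$; in particular, if $C_\varsigma$ is smooth, no decomposable vector can lie in $\mathrm{Rad}(Q_\varsigma)$.

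To upgrade these bounds to the stated equalities — i.e., to obtain exact rank $10$ in the reducible case and to rule out non-decomposable radical vectors in the smooth case — I would finish with a determinantal argument. The polynomials $\det\Phi_\varsigma$ and $\det C_\varsigma$ in the coefficients of $\varsigma$ are $\GL(U_3)\times \GL(U_4)$-semi-invariants of degrees $12$ and $6$ respectively; the set-theoretic inclusion $\{\det C_\varsigma = 0\}\subseteq \{\det\Phi_\varsigma = 0\}$ established above, together with the irreducibility of $\det C_\varsigma$ and the multiplicity bound $\dim\mathrm{Rad}(Q_\varsigma)\geq 2$ along this divisor, force $\det\Phi_\varsigma = c\cdot(\det C_\varsigma)^2$ for some nonzero scalar $c$ by degree comparison. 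The proposition then follows at once: if $C_\varsigma$ is smooth, then $\det C_\varsigma\neq 0$, so $\det\Phi_\varsigma\neq 0$ and $Q_\varsigma$ is smooth; if $C_\varsigma$ has rank $2$, then $\det C_\varsigma$ has a simple zero and $\det\Phi_\varsigma$ has a double zero, yielding $\mathrm{rank}(Q_\varsigma) = 12 - 2 = 10$. The main obstacle is establishing this proportionality cleanly; an equally valid alternative, likely closer to a direct calculation, is to choose a $\GL(U_3)\times\GL(U_4)$-normal form for $\varsigma$ in each case and verify the rank by writing out the $12\times 12$ matrix of $Q_\varsigma$ explicitly.
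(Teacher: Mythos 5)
Your first two paragraphs are correct and are genuinely different from (and arguably cleaner than) the paper's treatment of the lower bound on the corank: the paper identifies smooth and reducible $\tau$-conics with the open and divisorial orbits of $\GL(U_3)\times\GL(U_4)$ on $\Lambda^2U_3^{\vee}\otimes\Lambda^2U_4^{\vee}$ via the Hilbert scheme of conics on $\Gr(2,U_4^{\vee})$, and then simply writes down one explicit reducible $\varsigma'$ and reads off a $2$-dimensional kernel $\langle u_2\otimes v_4,\ u_3\otimes v_3\rangle$ from the $12\times 12$ matrix. Your coordinate-free identification of the radical vectors $x_i\otimes y_i$ from the two pencils $\ell_i=\PP(x_i\wedge U_3)$ and the isotropy of $y_i\wedge U_4$ for the Klein pairing is a nice conceptual replacement for that computation, and your converse (a decomposable radical vector forces a line inside $C_{\varsigma}$) is also correct.

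The gap is in the step that is supposed to convert the bounds into equalities. The identity $\det\Phi_{\varsigma}=c\cdot(\det C_{\varsigma})^{2}$ is not established by what you write. First, you need $\det C_{\varsigma}$ (degree $6$) to be an \emph{irreducible polynomial}; its zero locus is indeed the closure of the single divisorial orbit, but a priori $\det C_{\varsigma}$ could be a proper power of the reduced equation of that divisor, and then the degree count $12=2\cdot 6$ no longer pins down $\det\Phi_{\varsigma}$. Second, and more seriously, nothing in your argument excludes $c=0$: divisibility of $\det\Phi_{\varsigma}$ by $(\det C_{\varsigma})^{2}$ together with equality of degrees is consistent with $\det\Phi_{\varsigma}\equiv 0$, in which case both conclusions of the proposition fail. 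You must exhibit at least one $\varsigma$ with $Q_{\varsigma}$ nondegenerate --- e.g.\ by a Bertini-type argument for the linear system of quadrics through the Segre (whose base locus is the smooth Segre itself, and at a point $[x\otimes y]$ the quadrics singular there form a codimension-$6$ subspace), or by a direct computation --- before the smooth case follows. Once such a $\varsigma$ is produced, the orbit argument alone already settles the smooth case without any determinantal identity, and for the reducible case your radical vectors give $\mathrm{rank}\le 10$ while a single explicit check that the corank is not $\ge 3$ (which is what the paper's computation accomplishes) gives equality. So the fallback you mention in your last sentence is not merely an alternative: some explicit verification of this kind is needed to close the argument, and it is exactly what the paper does.
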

\begin{proof}
Recall, see \cite[Lemma 3.9]{CC}, that the Hilbert scheme of conics $\mathrm{Hilb}^{1+2t}(\Gr(2,U^{\vee}_4))$ on the Grassmannian $\Gr(2,U^{\vee}_4)$ is ismorphic to
$$\mathrm{Bl}_{\PP(U_4) \sqcup \PP(U_4^{\vee})}\Gr(3, \Lambda^2U_4^{\vee}),$$
where  $\PP(U_4^{\vee})$ corresponds to $\sigma$-conics and $\PP(U_4)$ corresponds to $\rho$-conics, see Subsection \ref{classes_of_conics} for the notation. 
Note that $\Gr(3, \Lambda^2U_4^{\vee})$ is isomorphic to
the quotient of the open dense locus in $\mathrm{Hom}(\Lambda^2U_3, \Lambda^2U_4^{\vee})$ parametrizing embeddings by the action of $\mathrm{GL}(U_3)$. So we obtain that under the suppositions of the proposition $\varsigma$ corresponds to a $\tau$-conic on $\Gr(2,U^{\vee}_4)$. We conclude that smooth $\tau$-conics on $\Gr(2,U^{\vee}_4)$ are identified with the open dense orbit of the action of $\mathrm{GL}(U_3)\times \mathrm{GL}(U_4)$ on $\Lambda^2U^{\vee}_3\otimes \Lambda^2U_4^{\vee}$ and reducible $\tau$-conics on $\Gr(2,U^{\vee}_4)$ are identified with the divisorial orbit of the action of $\mathrm{GL}(U_3)\times \mathrm{GL}(U_4)$ on $\Lambda^2U^{\vee}_3\otimes \Lambda^2U_4^{\vee}$.

To prove the statement it is enough to show that for a reducible $\tau$-conic $C_{\varsigma'}$ the rank of $Q_{\varsigma'}$ is equal to 10. Choose bases $(x_1,x_2,x_3)$ and $(y_1,y_2,y_3,y_4)$ of $U_3^{\vee}$ and $U_4^{\vee}$ respectively, so that $z_{m,n}=x_{m}\otimes y_{n}$ is a basis of $U_3^{\vee}\otimes U_4^{\vee}$. The dual bases of $U_3$ and $U_4$ will be denoted by $(u_1,u_2,u_3)$ and $(v_1,v_2,v_3,v_4)$ respectively. By \cite[Proposition 3.2.3]{5} the embedding $\Lambda^2U_3^{\vee}\otimes \Lambda^2U_4^{\vee}\hookrightarrow  S^2(U_3\otimes U_4)^{\vee}$ sends $$(x_{m_1}\wedge x_{m_2})\otimes (y_{n_1}\wedge y_{n_2}) \to z_{m_1n_1}z_{m_1n_2}- z_{m_1n_2}z_{m_2n_1} =\mathrm{det}\begin{pmatrix}
z_{m_1n_1} & z_{m_1n_2}\\
z_{m_2n_1}& z_{m_1n_2}\\
\end{pmatrix}.$$
So we compute that the element
$$\varsigma'=(x_2\wedge x_3)\otimes (y_1\wedge y_2)-2(x_1\wedge x_2)\otimes (y_2\wedge y_3)-2(x_1\wedge x_3)\otimes (y_1\wedge y_4) \in \Lambda^2U_3^{\vee}\otimes \Lambda^2U_4^{\vee},$$
that determines reducible $\tau$-conic $C_{\varsigma'}\subset \Gr(2,U^{\vee}_4)$,
corresponds to the quadric $Q_{\varsigma'}$ given by the equation 
$$z_{21}z_{32}-z_{22}z_{31}-2z_{11}z_{34}+2z_{14}z_{31}-2z_{12}z_{23}+2z_{13}z_{22}\in S^2(U_3\otimes U_4)^{\vee},$$
so that $Q_{\varsigma'}$ has 2-dimensional kernel isomorphic to $\langle u_2\otimes v_4, u_3\otimes v_3 \rangle$.
\end{proof}
We also discuss a family version of the above construction. Consider vector bundles $\cF_3$ and $\cF_4$ of rank 3 and 4 respectively on a scheme $S$ and the Segre embedding $\sigma\colon \PP_{S}(\cF_3)\times \PP_{S}(\cF_4) \hookrightarrow \PP_{S}(\cF_3\otimes\cF_4)$. We have
$$\mathrm{Ker}(S^2(\cF_3\otimes \cF_4)^{\vee}\otimes \cL \to S^2\cF_3^{\vee}\otimes S^2\cF_4^{\vee}\otimes \cL)\simeq \Lambda^2\cF_3^{\vee}\otimes \Lambda^2\cF_4^{\vee}\otimes \cL.$$
\begin{corollary} \label{proposition:segre_family}
The projective space 
$\PP(\mathrm{Hom}(\Lambda^2\cF_3,\Lambda^2\cF_4^{\vee}\otimes \cL))$ parametrizes quadric bundles in~$\PP_{S}(\cF_3\otimes\cF_4)$
$$\cF_3\otimes\cF_4\otimes \cL^{\vee} \to  \cF^{\vee}_3\otimes\cF^{\vee}_4,$$
containing $\sigma(\PP_{S}(\cF_3)\times \PP_{S}(\cF_4)) \subset \PP_{S}(\cF_3\otimes\cF_4)$.
\end{corollary}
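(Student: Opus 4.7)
The plan is to globalize, in characteristic zero, the $\mathrm{GL}(U_3)\times\mathrm{GL}(U_4)$-equivariant plethystic decomposition
\begin{equation*}
S^2(U_3^{\vee}\otimes U_4^{\vee})\simeq S^2U_3^{\vee}\otimes S^2U_4^{\vee}\ \oplus\ \Lambda^2U_3^{\vee}\otimes \Lambda^2U_4^{\vee}
\end{equation*}
that underlies the pointwise discussion just above the statement. Since this is a natural (Schur-functorial) decomposition and $\mathrm{char}(\Bbbk)=0$, applying it fiberwise to $\cF_3^{\vee}$ and $\cF_4^{\vee}$ yields a canonical splitting of vector bundles on $S$:
\begin{equation*}
S^2(\cF_3\otimes \cF_4)^{\vee}\ \simeq\ S^2\cF_3^{\vee}\otimes S^2\cF_4^{\vee}\ \oplus\ \Lambda^2\cF_3^{\vee}\otimes \Lambda^2\cF_4^{\vee},
\end{equation*}
in which the projection onto the first summand is exactly the relative multiplication map obtained by pulling back symmetric forms along the Segre embedding $\sigma\colon \PP_S(\cF_3)\times_S\PP_S(\cF_4)\hookrightarrow \PP_S(\cF_3\otimes \cF_4)$.

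Next I would invoke the standard dictionary: a quadric bundle in $\PP_S(\cF_3\otimes\cF_4)$ with values in $\cL$ is the same as a self-dual morphism $\cF_3\otimes\cF_4\otimes \cL^{\vee}\to \cF_3^{\vee}\otimes \cF_4^{\vee}$, which in turn is the same as a morphism $\cO_S\to S^2(\cF_3\otimes\cF_4)^{\vee}\otimes \cL$. A quadric bundle contains the relative Segre variety if and only if the pullback of its defining equation along $\sigma$ vanishes, and by the previous paragraph this holds precisely when the corresponding morphism factors through the subbundle $\Lambda^2\cF_3^{\vee}\otimes \Lambda^2\cF_4^{\vee}\otimes \cL$.

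Passing to projectivizations and using the tensor-Hom adjunction
\begin{equation*}
\mathrm{Hom}(\cO_S,\ \Lambda^2\cF_3^{\vee}\otimes \Lambda^2\cF_4^{\vee}\otimes \cL)\ \simeq\ \mathrm{Hom}(\Lambda^2\cF_3,\ \Lambda^2\cF_4^{\vee}\otimes \cL)
\end{equation*}
identifies the parameter space as claimed. There is no real obstacle: the argument is a purely functorial rephrasing of the pointwise computation, with the one nontrivial ingredient being the characteristic-zero plethystic splitting, which is exactly the input used in the absolute case. A brief sanity check against Proposition \ref{proposition_quadrics_segre} — that when $\cF_3$, $\cF_4$ and $\cL$ are trivialized on a point of $S$, the parameter space specializes to $\PP(\Lambda^2U_3^{\vee}\otimes \Lambda^2U_4^{\vee})$ — confirms the normalization.
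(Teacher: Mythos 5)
Your proposal is correct and follows essentially the same route as the paper: the paper offers no separate proof of this corollary, deriving it directly from the displayed identity $\mathrm{Ker}(S^2(\cF_3\otimes \cF_4)^{\vee}\otimes \cL \to S^2\cF_3^{\vee}\otimes S^2\cF_4^{\vee}\otimes \cL)\simeq \Lambda^2\cF_3^{\vee}\otimes \Lambda^2\cF_4^{\vee}\otimes \cL$, which is exactly the fiberwise globalization of \cite[Corollary 2.3.3]{5} that you carry out, combined with the standard dictionary between quadric bundles, self-dual morphisms, and global sections of the kernel. Your tensor-Hom step and the sanity check against Proposition \ref{proposition_quadrics_segre} just make explicit what the paper leaves implicit.
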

The quadric bundle corresponding to an element $\varsigma \in \mathrm{Hom}(\Lambda^2\cF_3,\Lambda^2\cF_4^{\vee}\otimes \cL)$ will be denoted by~$\bQ_{\varsigma}\subset \PP(\cF_3\otimes\cF_4)$.

Suppose for simplicity that $\mathrm{det}(\cF_3)=\mathrm{det}(\cF_4)=\cL$, then $\varsigma$ determines a conic bundle 
\begin{equation*}
     q_{\varsigma}\colon \Lambda^2\cF_3\xrightarrow{\varsigma} \Lambda^2\cF_4^{\vee}\otimes \cL\simeq \Lambda^2\cF_4\xrightarrow{\varsigma} \Lambda^2\cF_3^{\vee}\otimes \cL \simeq \cF_3,
\end{equation*}
that we will denote by $\bC_{\varsigma}\subset \PP_{S}(\Lambda^2\cF_3)$. Denote by $S'\subset S$ the locus where $q_{\varsigma}$ has rank 2.
\begin{corollary} \label{corollary:Segre}
Suppose $\varsigma \in \mathrm{Hom}(\Lambda^2\cF_3,\Lambda^2\cF_4)$ determines an embedding of vector bundles, such that the projective bundle $\varsigma(\PP(\Lambda^2\cF_3))$ is not contained in $\Gr(2,\cF_4)$ fiberwise. Then the set-theoretic supports of the cokernel sheaves $\mathrm{Coker}(\bC_{\varsigma})$ and $\mathrm{Coker}(\bQ_{\varsigma})$ coincide. Moreover, over the locus $S'$ the map $\cF_3\otimes\cF_4\otimes \cL^{\vee} \to \cF^{\vee}_3\otimes\cF^{\vee}_4$ determining $\bQ_{\varsigma}$ has 2-dimensional kernel.
\end{corollary}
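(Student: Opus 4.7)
The plan is to reduce Corollary \ref{corollary:Segre} to the pointwise statement of Proposition \ref{proposition_quadrics_segre}: both assertions are local on $S$ and can be verified at the level of geometric fibers. Fix $s \in S$, trivialize $\cF_3(s)$, $\cF_4(s)$, and $\cL(s)$; then $\varsigma(s)$ lies in $\Lambda^2\cF_3(s)^{\vee}\otimes \Lambda^2\cF_4(s)^{\vee}$ and, by the fiberwise hypothesis on $\varsigma$, satisfies the assumption of Proposition \ref{proposition_quadrics_segre}. By construction, the fibers of $\bC_{\varsigma}$ and $\bQ_{\varsigma}$ at $s$ are exactly the conic $C_{\varsigma(s)}$ and the quadric $Q_{\varsigma(s)}$ of that proposition, so I can feed its conclusions into each fiber.

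For the second assertion, a point $s$ lies in $S'$ iff $q_{\varsigma}$ has rank $2$ at $s$ iff $C_{\varsigma(s)}$ is a reducible $\tau$-conic. Proposition \ref{proposition_quadrics_segre} then gives that $Q_{\varsigma(s)}$ has rank $10$; since $\cF_3\otimes \cF_4$ has rank $12$, the self-dual map defining $\bQ_{\varsigma}$ has a $2$-dimensional kernel at every $s\in S'$, which is the claim.

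For the first assertion, I would use that the support of $\mathrm{Coker}(\bC_{\varsigma})$ (respectively, $\mathrm{Coker}(\bQ_{\varsigma})$) is precisely the locus where the defining self-dual morphism drops rank, i.e.\ where $C_{\varsigma(s)}$ (respectively, $Q_{\varsigma(s)}$) is singular. Proposition \ref{proposition_quadrics_segre} directly matches these loci on the part of $S$ where $C_{\varsigma(s)}$ is either smooth or reducible: smooth conics pair with smooth quadrics, and reducible conics pair with rank-$10$ quadrics. The main obstacle I anticipate is the possible locus where $C_{\varsigma(s)}$ degenerates further to a double line (corank $2$ of $q_{\varsigma}$), which is not directly addressed by Proposition \ref{proposition_quadrics_segre}. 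I would handle this by upper semicontinuity of the corank of the map defining $\bQ_{\varsigma}$: such points lie in the closure of $S'$, where $\bQ_{\varsigma}$ already has corank $\ge 2$, forcing $Q_{\varsigma(s)}$ to remain singular there as well. Combined with the preceding inclusion, this yields the desired equality of set-theoretic supports.
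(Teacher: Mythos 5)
The paper offers no separate proof of this corollary: it is presented as the immediate fiberwise consequence of Proposition \ref{proposition_quadrics_segre}, which is exactly your strategy. Your treatment of the second assertion is correct ($s\in S'$ means $C_{\varsigma(s)}$ is a two-distinct-lines conic, so $Q_{\varsigma(s)}$ has rank $10$ and the $12\times 12$ symmetric map has a $2$-dimensional kernel), as is the matching of supports over the loci where $C_{\varsigma(s)}$ is smooth or has rank exactly $2$. You are also right that the only delicate point is the double-line locus, which Proposition \ref{proposition_quadrics_segre} does not address and which the paper silently skips.

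However, your proposed fix for that locus has a gap: you assert that the points where $C_{\varsigma(s)}$ is a double line lie in the closure of $S'$ \emph{inside $S$}, and semicontinuity over $S$ does the rest. In the generality in which the corollary is stated this need not hold --- take $S$ a single point with $\varsigma$ an embedding whose plane meets $\Gr(2,U_4^{\vee})$ in a double line; then $S'=\emptyset$ while the double-line locus is all of $S$, and your argument produces nothing. The correct way to close the gap is to run the semicontinuity argument in the parameter $\varsigma$ itself rather than in $s$: the corank of the $12\times 12$ symmetric matrix defining $Q_{\varsigma}$ is upper semicontinuous on the linear space $\Lambda^2U_3^{\vee}\otimes\Lambda^2U_4^{\vee}$, and the rank-$\le 1$ locus of the $3\times 3$ symmetric matrix defining $C_{\varsigma}$ is contained in the (irreducible) determinantal hypersurface whose generic point is the divisorial orbit of reducible $\tau$-conics from the proof of Proposition \ref{proposition_quadrics_segre}; hence $Q_{\varsigma}$ has corank $\ge 2$ at every double-line $\varsigma$ as well. (Alternatively, one can just compute the rank of $Q_{\varsigma'}$ at an explicit double-line representative, as the paper does for the reducible case.) In the paper's only application, Lemma \ref{lemma_f3}, the double-line locus is $\mathsf{O}_2\subset\bar{\mathsf{O}}_1=\overline{S'}$, so there your closure-in-$S$ argument does happen to suffice; but as a proof of the corollary as stated it is incomplete.
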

\subsection{Auxiliary lemma}
The following lemma deals with any morphism of vector bundles of equal rank, not necessary self-dual. But since we will use the lemma only for self-dual morphisms we place it in this section. 

Given a morphism $\phi\colon \cF \to \cG$ of vector bundles on a scheme $S$ we denote by~$D_k(\phi) \subset S$
its $k$-th degeneration scheme, i.e. the subscheme of $S$ whose ideal is locally generated by all
$(r + 1 - k) \times (r + 1 - k) $ minors of a matrix of $\phi$ for $r = \text{min}\{\text{rank}(\cF), \text{rank}(\cG)\}$. This
is a closed subscheme in $S$, and~$D_{k+1}(\phi) \subset D_{k}(\phi)$.

The following observation is quite basic. Unfortunately,
we were not able to find a reference for it, so we sketch a short proof.
\begin{lemma} \label{lemma:degeneracy_locus}
Let $\phi\colon \cF \to \cG$ be a morphism of vector bundles of equal rank $n$ on a smooth connected scheme $S$. Let $D$ be a prime Cartier divisor on $S$.
Suppose that the first degeneracy locus of $\phi$ is $D_1(\phi)=2D$ and~$D\subset D_2(\phi)$, i.e. the rank of $\phi$ on $D$ is less than or equal to $n-2$. Then the scheme-theoretic support of $\mathrm{Coker}(\phi)$ is~$D$. Moreover, at the generic point of $D$ the sheaf $\mathrm{Coker}(\phi)$ has rank~2.
\end{lemma}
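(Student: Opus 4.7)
The plan is to analyze $\phi$ locally at the generic point of $D$ using Smith normal form, then promote the local information to a statement about the annihilator of $\mathrm{Coker}(\phi)$ globally.

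First I would pass to the stalk at the generic point $\eta_D$. Since $S$ is smooth and $D$ is a prime Cartier divisor, $A \coloneqq \cO_{S,\eta_D}$ is a DVR; let $t$ denote a uniformizer, so that $t$ generates $\cI_{D,\eta_D}$. Over the PID $A$ I can invoke Smith normal form to write $\phi$ in the form $\mathrm{diag}(t^{a_1},\dots,t^{a_n})$ with $0\le a_1\le\cdots\le a_n$. The hypothesis $D_1(\phi)=2D$ means $\det(\phi)=u\cdot t^2$ for some local unit $u$ near $\eta_D$, so $\sum a_i = 2$. The hypothesis $D\subset D_2(\phi)$ means $\phi$ drops rank by at least $2$ at $\eta_D$, so at least two of the $a_i$ are positive. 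Combined, exactly two $a_i$ equal $1$ and the rest vanish, which gives $\mathrm{Coker}(\phi)_{\eta_D}\simeq (A/t)^{\oplus 2}$. This establishes the rank-$2$ assertion at the generic point of $D$.

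Next I would pin down the scheme-theoretic support, which amounts to identifying the annihilator ideal of $\mathrm{Coker}(\phi)$. On one hand, for any point $p\in D$ with local equation $t$ for $D$, the entries of $\mathrm{adj}(\phi)$ are $(n-1)\times(n-1)$ minors of $\phi$, hence lie in the ideal of $D_2(\phi)$, which contains $\cI_D=(t)$. Thus one can write $\mathrm{adj}(\phi) = t\cdot M$ locally, and the identity $\phi\cdot\mathrm{adj}(\phi) = \det(\phi)\cdot\mathrm{Id} = ut^2\cdot\mathrm{Id}$ gives $\phi\cdot M = ut\cdot\mathrm{Id}$. This shows $t\in\mathrm{Ann}(\mathrm{Coker}(\phi))$ locally, so $\cI_D\subseteq \mathrm{Ann}(\mathrm{Coker}(\phi))$. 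On the other hand, by the Smith normal form computation above, $\mathrm{Ann}(\mathrm{Coker}(\phi))_{\eta_D} = (t)$; together with primeness of $\cI_{D,p} = (t)$ in the regular (hence UFD) local ring $\cO_{S,p}$, a standard localization argument upgrades this to $\mathrm{Ann}(\mathrm{Coker}(\phi))\subseteq \cI_D$. Hence the two ideals agree and the scheme-theoretic support is exactly $D$.

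The only mildly subtle point is the passage from the equality $\mathrm{Ann}(\mathrm{Coker}(\phi))_{\eta_D}=(t)$ at the generic point to the global containment $\mathrm{Ann}(\mathrm{Coker}(\phi))\subseteq\cI_D$, which is why it is important that $\cI_D$ be prime locally; this is guaranteed by smoothness of $S$ together with primeness of the divisor $D$. Everything else is either Smith normal form over a DVR or the cofactor identity, so I do not expect further obstacles.
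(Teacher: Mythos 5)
Your proof is correct. At the generic point of $D$ it is essentially the paper's argument in different clothing: the paper classifies the cokernel as a length-$2$ module over the DVR $\cO_{S,\eta_D}$ (either $A/\cm^2$ or $A/\cm\oplus A/\cm$) and uses the rank condition to exclude the first case, which is exactly what your Smith normal form computation $(a_1,\dots,a_n)=(0,\dots,0,1,1)$ does. Where you genuinely add something is the identification of the scheme-theoretic support. The paper reduces everything to the generic point and then closes with the single sentence ``Since $D$ is a Cartier divisor we get the statement,'' which leaves the containment $\cI_D\subseteq\mathrm{Ann}(\mathrm{Coker}(\phi))$ at the non-generic points of $D$ implicit. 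Your adjugate argument --- writing $\mathrm{adj}(\phi)=t\cdot M$ because the $(n-1)\times(n-1)$ minors lie in $\cI_{D_2(\phi)}\subseteq\cI_D$, and cancelling one factor of $t$ from $\phi\cdot\mathrm{adj}(\phi)=ut^2\cdot\mathrm{Id}$ --- proves $t\in\mathrm{Ann}(\mathrm{Coker}(\phi))$ in every local ring along $D$, not just at $\eta_D$, and the reverse containment $\mathrm{Ann}\subseteq\cI_D$ is immediate since the set-theoretic support is the reduced divisor $D$ (your localization argument also works, but radicals give it for free). So your write-up is, if anything, more complete than the paper's on the global claim; the only cosmetic caveat is that $\mathrm{adj}(\phi)$ should be interpreted after local trivializations of $\cF$ and $\cG$ (or with the appropriate determinant twists), which does not affect the argument.
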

\begin{proof}
The question is local, so we may work over the generic point of $D$, that is $\mathrm{Spec}\ A$, where $A$ is a discrete valuation ring. In this setting~$\cF$ and~$\cG$ correspond to free modules $A^{\oplus n}$, so that $\phi$ is a matrix of size $n\times n$ with entries in $A$, and~$D$ corresponds to the maximal ideal $\cm\subset A$. Then the assumption reads as~$\cm^{2} = A\cdot \mathrm{det}(\phi)$ and $\mathrm{rank}(\tilde{\phi})=n-2$, where 
$$\tilde{\phi}\colon (A/\cm)^{\oplus n}\to (A/\cm)^{\oplus n}$$
is the map induced by $\phi$.

From the assumption $\cm^{2} = A\cdot \mathrm{det}(\phi)$ we deduce that we have two possibilities for $\mathrm{Coker}(\phi)$: either~$\mathrm{Coker}(\phi)\simeq A/\cm^2$ or $\mathrm{Coker}(\phi)\simeq A/\cm\oplus A/\cm$. From the assumption $\mathrm{rank}(\tilde{\phi})=n-2$ we get that $\mathrm{Coker}(\phi)/\cm \mathrm{Coker}(\phi)$ has rank 2. So we conclude that cokernel of $\phi$ should be isomorphic to $A/\cm\oplus A/\cm$ and hence it is annihilated by $\cm$. Since $D$ is a Cartier divisor we get the statement.
\end{proof}

\section{$\rG_2$-action} \label{section:G2action}
Recall from Subsection \ref{section:G2} that $\rG_2$ is the stabilizer of the 4-form $\lambda\in \Lambda^4V^{\vee}$ in $\mathrm{GL}(V)$, so $\rG_2$ acts on the Cayley Grassmannian~$\CG\subset \Gr(3,V)$. To prove the fullness of the exceptional collection~\eqref{collection} we will need an explicit description of orbits of this action that was given by L. Manivel. 

Recall the notation from Subsection \ref{section:G2}: the weight decomposition \eqref{weight_decomposition} of $V$, the 5-dimensional quadric $\qQ$ and the adjoint variety $\Gad\subset \Gr(2,V)$.
\begin{proposition}\cite[Proposition 3.1]{M} \label{proposition:orbits}
The action of $\rG_2$ on $\CG$ has three orbits:
\begin{enumerate}
    \item the open dense 8-dimensional orbit $\mathsf{O}_0=\rG_2\cdot P_0$, where $$[P_0]:=\langle e_0,e_{\gamma},e_{-\gamma} \rangle\in \CG;$$ this orbit parametrizes 3-dimensional subspaces in $V$ annihilated by $\lambda$ on which the restriction of $q$ is non-degenerate; for $[U_3]\in \mathsf{O}_0$ there is no $[U_2]\in \Gad$ such that $U_2\subset U_3$;
    \item the locally closed 7-dimensional orbit $\mathsf{O}_1=\rG_2\cdot P_1$, where $$[P_1]:=\langle e_{0},e_{\beta},e_{-\gamma} \rangle\in \CG;$$ this orbit parametrizes 3-dimensional subspaces annihilated by $\lambda$ on which the restriction of~$q$ has rank one; for $[U_3]\in \mathsf{O}_1$ if $U_2= \mathrm{Ker}(q|_{U_3})$ then $[U_2]\in \Gad$, and this is the only such subspace;
    \item the closed 5-dimensional orbit $\mathsf{O}_2=\rG_2\cdot P_2$, where $$[P_2]:=\langle e_{\alpha},e_{\beta},e_{-\gamma} \rangle\in \CG;$$ this orbit parametrizes $q$-isotropic 3-dimensional subspaces annihilated by $\lambda$; in a 3-dimensional subspace $U_3$ on $\mathsf{O}_2$ there is a unique line $U_1\subset U_3$ such that each 2-subspace of $U_3$ containing~$U_1$ lies on $\Gad$; the correspondence~$[U_3]\to [U_1]$ defines an isomorphism $\mathsf{O}_2\simeq \qQ$. 
\end{enumerate}
\end{proposition}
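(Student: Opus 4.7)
The plan is to produce three explicit orbit representatives, distinguish their $\rG_2$-orbits by an invariant, compute their dimensions via equivariant maps to known homogeneous varieties, and verify that these three orbits exhaust $\CG$.

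First I would verify that $P_0, P_1, P_2 \in \CG$ by directly computing $\lambda \conv \Lambda^3 P_i$ term by term from \eqref{lambda}; in each case no term of $\lambda$ contains the full triple of dual basis vectors indexed by the weights generating $P_i$. Simultaneously, \eqref{q:equation} gives $\mathrm{rank}(q|_{P_i}) = 3, 1, 0$ respectively, as $P_0$ contains the dual pair $\{e_\gamma, e_{-\gamma}\}$ together with $e_0$, while $P_1$ contains only $e_0$ and no dual pair, and $P_2$ contains neither $e_0$ nor any dual pair. Since $\rG_2 \subset \mathrm{SO}(V, q)$, the rank of $q|_{U_3}$ is a $\rG_2$-invariant of $[U_3]$, so the three orbits $\rG_2 \cdot P_i$ are pairwise disjoint.

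Next I would compute the orbit dimensions using explicit equivariant maps. For the closed orbit, the assignment $[U_3] \mapsto [U_1]$ of the statement is a well-defined $\rG_2$-equivariant morphism $\rG_2 \cdot P_2 \to \qQ$: the line $U_1$ is characterized by the condition that every $2$-subspace $U_2 \subset U_3$ containing $U_1$ satisfies $\nu|_{U_2} = 0$ (equivalently $[U_2] \in \Gad$ by Lemma \ref{adjoint_properties}), and existence and uniqueness reduce to a direct weight-basis check at $P_2 = \langle e_\alpha, e_\beta, e_{-\gamma}\rangle$ using the explicit formula for $\nu$. Combined with the transitivity of $\rG_2$ on $\qQ$, this yields $\rG_2 \cdot P_2 \simeq \qQ$ and $\dim \mathsf{O}_2 = 5$. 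For $\mathsf{O}_1$, the assignment $[U_3] \mapsto [\mathrm{Ker}(q|_{U_3})]$ lands in $\Gad$, because $\lambda|_{U_3} = 0$ together with the relation $\nu = q(\lambda^\vee)$ forces $\nu|_{\mathrm{Ker}(q|_{U_3})} = 0$; a local fiber computation gives $\dim \mathsf{O}_1 = 7$. Finally $\dim \CG = 8$ forces $\mathsf{O}_0$ to be open of dimension $8$.

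The main obstacle is showing that no further orbits exist, equivalently that no $3$-subspace $U_3 \subset V$ with $\mathrm{rank}(q|_{U_3}) = 2$ is annihilated by $\lambda$. I would verify this by direct computation at a representative rank-$2$ subspace $U_3 = \langle e_0, e_\alpha, e_\gamma + e_{-\gamma}\rangle$ (which has kernel $\langle e_\alpha\rangle$ for $q$), where evaluating $\lambda \conv (e_0 \wedge e_\alpha \wedge (e_\gamma + e_{-\gamma}))$ against \eqref{lambda} picks up only the term $2 e_0^\vee \wedge e_\alpha^\vee \wedge e_\beta^\vee \wedge e_\gamma^\vee$ and produces $\pm 2 e_\beta^\vee \ne 0$. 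To propagate from this one representative to the whole rank-$2$ stratum of $\Gr(3, V)$, I would use that $\CG$ is a spherical $\rG_2$-variety (see the introduction), hence has finitely many $\rG_2$-orbits; coupling this finiteness with the $q$-rank invariant and the single rank-$2$ check leaves only the three orbits constructed above. The geometric descriptions of $\mathsf{O}_1$ and $\mathsf{O}_2$ then follow directly from the equivariant maps built in the dimension count, with the uniqueness assertions in (ii) and (iii) verified on the weight-basis representatives $P_1, P_2$.
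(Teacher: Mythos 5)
The paper does not actually prove this proposition: it is imported verbatim from Manivel's work (the citation \cite[Proposition~3.1]{M} is the proof), so your attempt has to stand on its own. The explicit verifications at the start are fine: $P_0,P_1,P_2\in\CG$ by inspection of \eqref{lambda}, the ranks of $q|_{P_i}$ are $3,1,0$, and since $\rG_2\subset\mathrm{SO}(V,q)$ the rank of $q|_{U_3}$ is constant on orbits, so the three orbits are distinct. The problem is the exhaustion step. Finiteness of the number of $\rG_2$-orbits (sphericity) together with the $q$-rank invariant does \emph{not} imply that each rank value is realized by at most one orbit: a priori the locus $\{[U_3]\in\CG:\ \mathrm{rank}(q|_{U_3})=r\}$ could be a finite union of several $\rG_2$-orbits for each of $r=3,1,0$, and nothing in your argument rules this out. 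The rank strata are single orbits for $\mathrm{SO}(V,q)$ (Witt), but $\rG_2$ is a $14$-dimensional subgroup of the $21$-dimensional $\mathrm{SO}(V,q)$ and there is no reason for transitivity to descend. For the same reason, checking that the \emph{one} rank-$2$ subspace $\langle e_0,e_\alpha,e_\gamma+e_{-\gamma}\rangle$ is not annihilated by $\lambda$ does not show that the entire ($11$-dimensional, many-$\rG_2$-orbit) rank-$2$ stratum of $\Gr(3,V)$ misses $\CG$. Closing this gap requires genuinely new input --- e.g.\ computing the stabilizers of $P_0,P_1,P_2$ in $\rG_2$ (Manivel shows $\mathrm{Stab}_{P_0}\simeq\mathrm{SL}_2\times\mathrm{SL}_2$, cf.\ the use of this fact in Appendix~\ref{section:geometric_construction}) and then arguing transitivity on each stratum directly, which is essentially the content of the cited proof.

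A secondary gap: for (iii) you only establish that $[U_3]\mapsto[U_1]$ is a well-defined equivariant map onto $\qQ$; transitivity of $\rG_2$ on $\qQ$ gives surjectivity but not injectivity, so the claimed isomorphism $\mathsf{O}_2\simeq\qQ$ (equivalently, that $\mathrm{Stab}_{\rG_2}(P_2)$ is the full parabolic $\cP_2$ rather than a proper subgroup) is unproved; the same issue makes the dimension counts $\dim\mathsf{O}_2=5$ and $\dim\mathsf{O}_1=7$ incomplete, since you bound the dimensions from below by the base of the fibration but never control the fibers. You also never address the assertion in (i) that a subspace in $\mathsf{O}_0$ contains no $[U_2]\in\Gad$.
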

By \cite[Corollary 3.4]{M} and decomposition \eqref{decomposition1} the linear span of $\CG$ is $S^2V=V_{27}\oplus \Bbbk\subset \Lambda^3 V$. In fact, we have the following lemma.
\begin{lemma} \label{lemma:CGlinear}
The Cayley Grassmannian $\CG$ is the linear section of $\Gr(3,V)$:
\begin{equation} \label{CG_is_linear_section}
    \CG=\Gr(3,V)\cap \PP(V_{27}\oplus \Bbbk)\subset \PP(\Lambda^3V).
\end{equation}
\end{lemma}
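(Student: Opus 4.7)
The plan is to reinterpret $\CG$ as the locus where the Plücker coordinate lies in a distinguished $\rG_2$-subrepresentation of $\Lambda^3 V$, and then to identify that subrepresentation by a Schur's lemma argument.

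First, the 4-form $\lambda$ defines a linear map
\begin{equation*}
\lambda \conv - \colon \Lambda^3 V \longrightarrow V^{\vee}, \qquad u_1\wedge u_2\wedge u_3 \longmapsto \lambda(u_1,u_2,u_3,-).
\end{equation*}
By the very definition of $\CG$ given in the introduction, a point $[U_3] \in \Gr(3,V)$ lies in $\CG$ if and only if the line $\Lambda^3 U_3 \subset \Lambda^3 V$ is sent to zero by this map. Equivalently, under the Plücker embedding, $\CG$ is the scheme-theoretic intersection of $\Gr(3,V)$ with the projective subspace $\PP(\mathrm{Ker}(\lambda\conv -))$.

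It therefore suffices to identify $\mathrm{Ker}(\lambda\conv -)$ with the subspace $V_{27}\oplus \Bbbk \subset \Lambda^3 V$. Since $\lambda$ is $\rG_2$-invariant, the contraction map is $\rG_2$-equivariant; by the decomposition \eqref{decomposition2} one has $\Lambda^3 V = V_{27}\oplus \Bbbk \oplus V$, while the target $V^{\vee} \simeq V$ is irreducible and appears in $\Lambda^3 V$ with multiplicity one. The map is nonzero (pick any $u_1,u_2,u_3,v$ with $\lambda(u_1,u_2,u_3,v)\neq 0$), so Schur's lemma forces it to restrict to zero on $V_{27}\oplus \Bbbk$ and to an isomorphism on the $V$-summand. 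This gives $\mathrm{Ker}(\lambda\conv -) = V_{27}\oplus \Bbbk$ and hence the claimed equality $\CG = \Gr(3,V) \cap \PP(V_{27}\oplus \Bbbk)$.

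I do not anticipate any substantive obstacle: the proof really just combines the tautological contraction description of $\CG$ with the multiplicity-one appearance of $V$ inside $\Lambda^3 V$ under the $\rG_2$-action. The only thing worth double-checking is that the inclusion is scheme-theoretic and not merely set-theoretic, which is automatic since both sides are cut out of $\Gr(3,V)$ by the same linear equations on $\PP(\Lambda^3 V)$, namely a basis of the dual of the $V$-summand.
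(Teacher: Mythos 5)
Your proof is correct, but it takes a genuinely different route from the paper. The paper deduces that $\CG$ is \emph{some} linear section of $\Gr(3,V)$ from the Koszul resolution \eqref{resolution}: the twisted ideal sheaf $\cI_{\CG}(1)$ is exhibited as a quotient of the globally generated bundle $\cQ$, hence is globally generated, so $\CG$ is cut out by linear forms; the identification of the linear span as $V_{27}\oplus\Bbbk$ is then quoted from \cite[Corollary 3.4]{M} in the sentence preceding the lemma. You instead observe directly that the defining condition $\lambda\conv\Lambda^3U_3=0$ is a linear condition on the Pl\"ucker point, so that $\CG=\Gr(3,V)\cap\PP(\mathrm{Ker}(\lambda\conv-))$, and then pin down the kernel by Schur's lemma using the multiplicity-one occurrence of $V$ in $\Lambda^3V=V_{27}\oplus\Bbbk\oplus V$ from \eqref{decomposition2}. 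Your argument is more elementary and self-contained (it does not need the external reference for the linear span), while the paper's argument isolates a reusable sheaf-theoretic fact (global generation of $\cI_{\CG}(1)$). One small point you wave at but should make explicit: the scheme-theoretic agreement rests on the fact that the zero scheme of the section $\lambda$ of the subbundle $\cU^{\perp}(1)\subset V^{\vee}\otimes\cO(1)$ coincides with the zero scheme of the induced section of $V^{\vee}\otimes\cO(1)$, which holds because the quotient $\cU^{\vee}(1)$ is locally free, so locally the seven equations are the four equations of $\lambda$ padded by zeros; the seven linear forms $v\conv\lambda$, $v\in V$, are then exactly a basis of the $V$-summand of $\Lambda^3V^{\vee}$, as you say.
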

\begin{proof}
The Cayley Grassamnnian is the zero locus of a regular section~$\lambda\in \mathrm{H}^0(\Gr(3,V),\cU^{\perp}(1))$. Therefore, on $\Gr(3,V)$ we have the following Koszul resolution of the structure sheaf~$\varrho_*\cO_{\CG}$:
\begin{equation}\label{resolution}
    0\to \cO(-3)\to \Lambda^3\cQ(-3)\to \Lambda^2\cQ(-2)\to \cQ(-1)\to \cO\to \varrho_*\cO_{\CG}\to 0,
\end{equation} 
where $\varrho\colon \CG \hookrightarrow \Gr(3,V)$ is the embedding. From the above resolution we immediately obtain that the ideal sheaf $\cI_{\CG}(1)=\mathrm{Ker}(\cO(1)\to \varrho_*\varrho^*\cO(1))$ is a quotient of the globally generated vector bundle~$\cQ$, hence it is globally generated, and therefore, $\CG$ is a linear section of $\CG$. 
\end{proof}
By \eqref{decomposition2} there exists a unique $\rG_2$-invariant hyperplane section of $\CG$, that corresponds to the canonical three-form $\nu\in \mathrm{H}^0(\CG, \cO(1))$. The invariant hyperplane section should be the union of some $\rG_2$-orbits so we get that the orbit closure $\bar{\mathsf{O}}_1=\mathsf{O}_1\cup \mathsf{O}_2$ of $\mathsf{O}_1$ is the $\rG_2$-equivariant hyperplane section of~$\CG$. Let us denote by
\begin{equation*} \label{i_embedding}
   i\colon \bar{\mathsf{O}}_1\hookrightarrow \CG
\end{equation*}
the embedding of $\bar{\mathsf{O}}_1$ in $\CG$.

By Lemma \ref{adjoint_properties} we have $\Gad\subset \mathrm{OGr}_{q}(2,V)$, so on $\Gad$ the tautological bundle $\cU_2$ via $q$ is a subbundle of $\cU^{\perp}_2$, that is, $\cU_2 \subset \cU_2^{\perp}$, and $q$ induces a nondegenerate quadratic form on $\cU^{\perp}_2/\cU_2$.
\begin{proposition} \cite[Section 3.1]{M} \label{proposition:resolution_of_Hnu}
The $\rG_2$-equivariant hyperplane section $\bar{\mathsf{O}}_1=\mathsf{O}_1\cup \mathsf{O}_2$ is singular along~$\mathsf{O}_2$, with nodal singularities. It admits the following equivariant resolution of singularities
\begin{equation} \label{diagram:resolution}
\xymatrix{
& \mathrm{Bl}_{\mathsf{O}_2}\bar{\mathsf{O}}_1  \ar[dl]_{\pi_1} \ar@{=}[r] &  \PP_{\Gad}(\cU^{\perp}_2/\cU_2)  \ar[dr]^{\pi_2}\\ 
\bar{\mathsf{O}}_1  &&& \Gad,
}
\end{equation}
where $\cU_2$ is the restriction of the tautological vector bundle on $\Gr(2,V)$ to $\Gad$.  If~$E \subset \PP_{\Gad}(\cU^{\perp}_2/\cU_2)$ is the exceptional divisor of $\pi_1$ then the induced projection $E\to \Gad$ is a $\PP^1$-bundle and it is embedded into the $\PP^2$-bundle $\PP_{\Gad}(\cU^{\perp}_2/\cU_2)$ as the family of smooth conics with the equation given by the restriction of~$q$, i.e. $E\simeq \Gad \times_{\mathrm{OGr}_q(2,V)}\mathrm{OFl}_q(2,3;V)$.
\end{proposition}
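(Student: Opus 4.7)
The plan is to construct $\pi_1$ as the morphism classifying the universal rank-$3$ subsheaf $\widetilde U\subset V\otimes\cO$ on $\PP_{\Gad}(\cU_2^{\perp}/\cU_2)$, obtained from the extension $0\to\cU_2\to\widetilde U\to\cO(-1)\to 0$, where $\cO(-1)\subset\cU_2^{\perp}/\cU_2$ is the tautological subbundle of the projective bundle (note that $\cU_2^{\perp}$ here denotes the $q$-orthogonal subbundle, which is well-defined since $\Gad\subset\mathrm{OGr}_q(2,V)$ by Lemma~\ref{adjoint_properties}). The morphism $\pi_1$ is $\rG_2$-equivariant by construction, so to check that it factors through $\CG$ and to describe its image I would analyze the $\rG_2$-orbit structure on the source.

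Because $\Gad\simeq\rG_2/\cP_1$ is homogeneous, $\rG_2$-orbits on $\PP_{\Gad}(\cU_2^{\perp}/\cU_2)$ correspond to $\cP_1$-orbits on the fiber $\PP(U_2^{\perp}/U_2)\simeq \PP^2$. The Levi factor of $\cP_1$ is $\GL_2$, and since it acts on the $3$-dimensional quotient $U_2^{\perp}/U_2$ preserving the descended non-degenerate quadratic form $\bar q$, it must act through $S^2$ of its standard $2$-dimensional representation, with $\bar q$ identified up to scalar with the discriminant. This yields exactly two orbits on $\PP^2$: the Veronese conic $\{\bar q=0\}$ and its open complement. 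Consequently $\PP_{\Gad}(\cU_2^{\perp}/\cU_2)$ has two $\rG_2$-orbits, and the closed one is the smooth conic bundle $E=\{\bar q|_{\cU_2^{\perp}/\cU_2}=0\}$. Evaluating $\pi_1$ at the two base points coming from $U_2=\langle e_\beta,e_{-\gamma}\rangle$ together with $\ell=\langle e_0\rangle$ or $\ell=\langle e_\alpha\rangle$ should recover $P_1\in\mathsf{O}_1$ and $P_2\in\mathsf{O}_2$, respectively. By equivariance this proves that $\pi_1$ maps the open orbit onto $\mathsf{O}_1$ and $E$ onto $\mathsf{O}_2$, so the image is $\bar{\mathsf{O}}_1\subset\CG$.

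To finish, I would check that $\pi_1$ is birational by exhibiting the inverse $U_3\mapsto(\ker(q|_{U_3}),\, U_3/\ker(q|_{U_3}))$ over $\mathsf{O}_1$, and that the fiber over $[U_3]\in\mathsf{O}_2$ parametrizes the $[U_2]\in\Gad$ lying inside $U_3$, which by Proposition~\ref{proposition:orbits}(iii) is the pencil of $2$-subspaces of $U_3$ through the distinguished line $U_1$ and is identified with the smooth conic $E\cap\pi_1^{-1}([U_3])$. Since $\pi_1$ is then a proper birational morphism from a smooth variety contracting the smooth Cartier divisor $E$ onto the smooth subvariety $\mathsf{O}_2\simeq\qQ$ with $\PP^1$-fibers, it coincides with $\mathrm{Bl}_{\mathsf{O}_2}\bar{\mathsf{O}}_1$; and the identification $E\simeq\Gad\times_{\mathrm{OGr}_q(2,V)}\mathrm{OFl}_q(2,3;V)$ is the tautological description of $E$ as pairs $(U_2\subset U_3)$ of nested $q$-isotropic subspaces. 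The nodal nature of the singularities along $\mathsf{O}_2$ follows from a local computation at one $\rG_2$-representative point, showing that the transverse $2$-dimensional slice of $\bar{\mathsf{O}}_1$ is a surface whose minimal resolution contracts a single smooth $(-2)$-curve. The main obstacle is the orbit analysis on the source — specifically, pinning down the Levi action on $\cU_2^{\perp}/\cU_2$ as the symmetric square representation — together with verifying that the transverse singularity is exactly $A_1$ rather than a worse $A_k$.
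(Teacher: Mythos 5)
The paper itself gives no proof of this proposition --- it is quoted from \cite[Section 3.1]{M} --- so your argument has to stand on its own. Its overall strategy is the right one and is essentially Manivel's: define $\pi_1$ from the universal flag $\cU_2\subset\widetilde U\subset\cU_2^{\perp}$ on $\PP_{\Gad}(\cU_2^{\perp}/\cU_2)$, use homogeneity of $\Gad$ to reduce the orbit analysis to a single fiber, evaluate at representatives, and invert over $\mathsf{O}_1$ via $U_3\mapsto\ker(q|_{U_3})$ using Proposition \ref{proposition:orbits}(ii). Your point checks do work out: for $U_2=\langle e_{\beta},e_{-\gamma}\rangle\in\Gad$ one finds $U_2^{\perp}/U_2=\langle e_0,e_{\alpha},e_{-\alpha}\rangle$ with $\bar q(e_0)\neq 0$ and $\bar q(e_{\alpha})=0$, producing $P_1$ and $P_2$ respectively, and the fiber of $\pi_1$ over a point of $\mathsf{O}_2$ is the pencil of Proposition \ref{proposition:orbits}(iii).

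Two steps are asserted more confidently than they deserve. First, ``the Levi preserves $\bar q$, hence acts through $S^2$ of its standard representation'' is not an implication: a $\GL_2$ can preserve a nondegenerate quadratic form on a $3$-space while acting through its center, in which case there would be far more than two orbits on $\PP^2$. You must actually identify the action; this is easy at your chosen point, since the weights of $\langle e_0,e_{\alpha},e_{-\alpha}\rangle$ are $0,\pm\alpha$ and form a single irreducible string for the $\mathfrak{sl}_2$ of the short root $\alpha$ inside the Levi, or alternatively one can quote Lemma \ref{lemma:isomorphism}, whose isomorphism $S^2\cU_2(H_2)\simeq\cU_2^{\perp}/\cU_2$ is proved independently via spinor bundles. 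Second, and more substantively, a proper birational morphism from a smooth variety contracting a smooth divisor with $\PP^1$-fibers onto a smooth subvariety is \emph{not} automatically the blow-up of that subvariety, all the more so when the target is singular along the center, as here. To identify $\pi_1$ with $\mathrm{Bl}_{\mathsf{O}_2}\bar{\mathsf{O}}_1$ you need either the universal property (show $\pi_1^{-1}\mathcal{I}_{\mathsf{O}_2}\cdot\cO=\cO(-E)$) or the transverse local computation of $\bar{\mathsf{O}}_1$ along $\mathsf{O}_2$ that you defer to the very end; note that this same computation is what establishes the nodal ($A_1$) claim, which Lemma \ref{lemma:computation_of_O(E)} later relies on for the crepancy $\omega_{\mathrm{Bl}_{\mathsf{O}_2}\bar{\mathsf{O}}_1}\simeq\pi_1^*\omega_{\bar{\mathsf{O}}_1}$, so it cannot be waved away. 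With those two computations supplied, your proof is complete.
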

Let us introduce some notation.
We will denote by $\cO(H_2)$ and $\cO(H_3)$ the Pl{\"u}cker line bundles on $\Gr(2,V)$ and $\Gr(3,V)$ restricted to~$\Gad$ and $\bar{\mathsf{O}}_1$ respectively. Abusing the notation the restriction~$\cO(H_3)\restr{\mathsf{O}_1}$ will be also denoted by~$\cO(H_3)$. The relative hyperplane bundle of $\PP_{\Gad}(\cU^{\perp}_2/\cU_2)$ will be denoted by~$\cO_{\PP_{\Gad}(\cU^{\perp}_2/\cU_2)}(1)$. The restriction of the map $\pi_2$ to $\mathsf{O}_1\simeq \mathrm{Bl}_{\mathsf{O}_2}\bar{\mathsf{O}}_1\setminus E$ will be denoted by
\begin{equation} \label{tpi}
\tpi\colon \mathsf{O}_1\to\Gad.
\end{equation}

Using Proposition \ref{proposition:resolution_of_Hnu} we can prove the following statement. 
\begin{lemma} \label{lemma:isomorphism}
On $\Gad$ we have $S^2\cU_2(H_2) \simeq \cU^{\perp}_2/\cU_2$. In particular
\begin{equation*}
 E\simeq \PP_{\Gad}(\cU_2)\subset \PP_{\Gad}(S^2\cU_2)\simeq \PP_{\Gad}(\cU^{\perp}_2/\cU_2),   
\end{equation*}
where the embedding is the Veronese embedding. 
\end{lemma}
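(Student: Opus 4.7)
The plan is to first identify the exceptional divisor $E$ with the projective bundle $\PP_{\Gad}(\cU_2)$, then to realise the embedding $E\hookrightarrow \PP_{\Gad}(\cU_2^{\perp}/\cU_2)$ as a relative Veronese, and finally to pin down the line bundle twist by a determinant computation. The conceptual heart is the geometric identification in the first step, which hinges on the orbit description in Proposition~\ref{proposition:orbits}; the remaining steps are essentially formal.

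For the first step, I would use Proposition~\ref{proposition:orbits}(iii), which attaches to each $U_3\in \mathsf{O}_2$ a distinguished line $U_1\subset U_3$ such that the 2-subspaces $U_2\subset U_3$ belonging to $\Gad$ are precisely those containing $U_1$. Thus the points of the exceptional divisor, i.e.\ pairs $(U_2,U_3)\in \Gad\times \mathsf{O}_2$ with $U_2\subset U_3$, correspond bijectively to pairs $(U_1,U_2)$ with $U_1\subset U_2\in \Gad$ (every such $U_1$ automatically lies on $\qQ$, since $U_2$ is $q$-isotropic). This incidence variety is exactly the projective bundle $\PP_{\Gad}(\cU_2)$, and by $\rG_2$-equivariance the bijection is an isomorphism of $\rG_2$-equivariant $\PP^1$-bundles over $\Gad$.

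For the second step, observe that the embedding $E\hookrightarrow \PP_{\Gad}(\cU_2^{\perp}/\cU_2)$ from Proposition~\ref{proposition:resolution_of_Hnu} is the smooth conic subbundle cut out by the nondegenerate form $q|_{\cU_2^{\perp}/\cU_2}$. Hence the restriction of $\cO_{\PP_{\Gad}(\cU_2^{\perp}/\cU_2)}(1)$ to $E\simeq \PP_{\Gad}(\cU_2)$ has fibrewise degree $2$ over $\pi_2$, so it equals $\cO_{\PP_{\Gad}(\cU_2)}(2)\otimes \pi_2^*\cL$ for some line bundle $\cL$ on $\Gad$. Pushing forward the surjection $\cO_{\PP_{\Gad}(\cU_2^{\perp}/\cU_2)}(1)\twoheadrightarrow \cO_{\PP_{\Gad}(\cU_2^{\perp}/\cU_2)}(1)|_E$ along $\pi_2$ would yield a morphism
\begin{equation*}
(\cU_2^{\perp}/\cU_2)^{\vee}\longrightarrow S^2\cU_2^{\vee}\otimes \cL
\end{equation*}
of rank $3$ bundles, surjective on each fibre because linear forms on $\PP^2$ restrict surjectively onto linear forms on a smooth plane conic, and so an isomorphism. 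Dualising identifies $\cU_2^{\perp}/\cU_2\simeq S^2\cU_2\otimes \cL^{\vee}$ and realises the embedding in question as the relative Veronese $\PP_{\Gad}(\cU_2)\hookrightarrow \PP_{\Gad}(S^2\cU_2)$.

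For the last step, I would determine $\cL$ by a determinant computation. Using $q\colon V\simeq V^{\vee}$ together with the triviality of $\det V$ as an $\rG_2$-representation, one gets $\det \cU_2^{\perp}\simeq \det \cU_2=\cO(-H_2)$, and therefore $\det(\cU_2^{\perp}/\cU_2)=\cO$. Equating this with $\det(S^2\cU_2\otimes \cL^{\vee})=\cO(-3H_2)\otimes \cL^{-3}$ forces $\cL^{3}=\cO(-3H_2)$, and since $\mathrm{Pic}(\Gad)=\ZZ\cdot \cO(H_2)$ is torsion-free this uniquely yields $\cL=\cO(-H_2)$. Combining everything gives the claimed isomorphism $S^2\cU_2(H_2)\simeq \cU_2^{\perp}/\cU_2$ together with the Veronese description of $E\subset \PP_{\Gad}(\cU_2^{\perp}/\cU_2)$.
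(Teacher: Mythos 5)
Your argument reaches the right conclusion by a route genuinely different from the paper's. The paper proves the bundle isomorphism first and only then deduces the description of $E$: it works on all of $\mathrm{OGr}_q(2,V)$, identifies the conic bundle $\mathrm{OFl}_q(2,3;V)\subset \PP_{\mathrm{OGr}_q(2,V)}(\cU_2^{\perp}/\cU_2)$ with $\PP(\cS)$ for the spinor bundle $\cS=pr_{2*}pr_3^*\cO(H_3)$, obtains $S^2\cS^{\vee}(H_2)\simeq \cU_2^{\perp}/\cU_2$ there, and finally restricts to $\Gad$ using $\cS^{\vee}\restr{\Gad}\simeq\cU_2$ from \cite[Lemma 8.3]{K06}. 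You instead stay on $\Gad$, identify $E$ with $\PP_{\Gad}(\cU_2)$ directly from the orbit geometry, and recover the bundle isomorphism from the conic bundle by a pushforward and a determinant computation. Your second and third steps are correct and self-contained (they are essentially the mechanism by which one passes from $\mathrm{OFl}\simeq\PP(\cS)$ to $S^2\cS^{\vee}(H_2)\simeq\cU_2^{\perp}/\cU_2$ in the paper's route), and what your approach buys is independence from the spinor bundle and from the external reference.

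The weak point is your first step, which is also the crux. Two things are asserted there without justification. First, Proposition \ref{proposition:orbits}(iii) states only that every $2$-subspace of $U_3$ containing $U_1$ lies on $\Gad$; it does not state that these are \emph{precisely} the $2$-subspaces of $U_3$ lying on $\Gad$, and the latter is what you need for $(U_2,U_3)\mapsto (U_1(U_3),U_2)$ to be well defined (i.e.\ for $U_1(U_3)\subset U_2$ to hold). Second, even granting this, your correspondence restricted to a fiber of $\pi_2\restr{E}$ over $[U_2]$ is a nonconstant morphism from the smooth conic of isotropic $3$-subspaces containing $U_2$ to $\PP(U_2)$, i.e.\ a morphism $\PP^1\to\PP^1$, and one must check that it has degree $1$ rather than $2$ before calling it a bijection; $\rG_2$-equivariance alone does not exclude an equivariant degree $2$ map. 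Both points are true and, by homogeneity of $\Gad$, can be verified at a single point: for $U_2=\langle e_{\beta},e_{-\gamma}\rangle$ the $q$-isotropic $3$-subspaces containing $U_2$ are $U_3=\langle st\,e_0+s^2e_{\alpha}+t^2e_{-\alpha},\,e_{\beta},\,e_{-\gamma}\rangle$ for $[s:t]\in\PP^1$, and the bracket formula \eqref{definition_l_1} (together with \eqref{lie_p1}, \eqref{lie_p2}) shows that the $2$-subspaces of such a $U_3$ lying on $\Gad$ are exactly those containing $\langle t e_{\beta}+s e_{-\gamma}\rangle$; hence the map to $\PP(U_2)$ is an isomorphism on this fiber. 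With this check (or with the paper's spinor-bundle identification of $E\to\Gad$) supplied, your proof is complete.
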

\begin{proof}
Consider the diagram
\begin{equation} \label{diagram:OGr}
\xymatrix{
& \mathrm{OFl}_{q}(2,3;V) \ar[dl]_{pr_2} \ar[dr]^{pr_3} &\\ 
\mathrm{OGr}_{q}(2,V) && \mathrm{OGr}_{q}(3,V),
}
\end{equation}
where we denote by $\mathrm{OFl}_{q}(2,3;V)\subset \mathrm{Fl}(2,3;V)$ isotropic partial flag variety. Denote by $\cU_k$ and $\cO(H_k)$ the restrictions of the tautological vector bundle and the Pl{\"u}cker line bundle on~$\Gr(k,V)$ to $\mathrm{OGr}_{q}(k,V)$ respectively.

On the one hand, $\mathrm{OFl}_{q}(2,3;V)\subset \PP_{\mathrm{OGr}_{q}(2,V)}(\cU^{\perp}_2/\cU_2)$ is a smooth conic bundle over $\mathrm{OGr}_{q}(2,V)$. On the other hand, the line bundle $pr_3^*\cO(H_3)$ restricts to each conic fiber of $pr_2$ as $\cO_{\PP^1}(1)$ and~$pr_{2*}pr_3^*\cO(H_3)\simeq \cS$ is the spinor bundle of $\mathrm{OGr}_q(2,V)$. Therefore, $\mathrm{OFl}_{q}(2,3;V)\simeq \PP_{\mathrm{OGr}_q(2,V)}(\cS)$ and $S^2\cS^{\vee}(H_2)\simeq \cU^{\perp}_2/\cU_2$.

Finally, $\cS^{\vee}\restr{\Gad}\simeq \cU_2$  by \cite[Lemma 8.3]{K06}, so we conclude
$$ S^2\cU_2(H_2)\simeq S^2\cS^{\vee}(H_2)\restr{\Gad} \simeq  \cU^{\perp}_2/\cU_2.$$
The last statement follows from the above isomorphism and Proposition \ref{proposition:resolution_of_Hnu}. 
\end{proof}
\begin{lemma} \label{lemma:computation_of_O(E)}
In the notation of Proposition \ref{proposition:resolution_of_Hnu} we have $\cO_{\PP_{\Gad}(\cU^{\perp}_2/\cU_2)}(1)\simeq \pi_1^*\cO(H_3)\otimes \pi_2^*\cO(-H_2)$ and~$\cO(E)\simeq \pi_1^*\cO(2H_3)\otimes \pi_2^*\cO(-2H_2)$.
\end{lemma}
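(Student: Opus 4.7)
The plan is to verify both isomorphisms by exploiting the modular interpretation of the resolution $(\pi_1, \pi_2)$ from Proposition \ref{proposition:resolution_of_Hnu}. A point of $\PP_{\Gad}(\cU^{\perp}_2/\cU_2)$ is a pair $([U_2], \ell)$ with $U_2$ a point of $\Gad$ and $\ell\subset U_2^{\perp}/U_2$ a line, and $\pi_1$ sends it to the 3-dimensional subspace $U_3\subset V$ obtained as the preimage of $\ell$ under the quotient $U_2^{\perp}\twoheadrightarrow U_2^{\perp}/U_2$.

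For the first isomorphism, I would use that the chain of inclusions $U_2\subset U_3\subset U_2^{\perp}$ with $U_3/U_2=\ell$ globalizes to the short exact sequence
\begin{equation*}
0 \to \pi_2^*\cU_2 \to \pi_1^*\cU \to \cO_{\PP_{\Gad}(\cU^{\perp}_2/\cU_2)}(-1) \to 0
\end{equation*}
on $\PP_{\Gad}(\cU^{\perp}_2/\cU_2)$, under the convention that $\PP_S(\cE)$ parametrizes lines in $\cE$ (the same convention used in Lemma \ref{lemma:isomorphism} when writing the Veronese embedding $\PP_{\Gad}(\cU_2)\hookrightarrow \PP_{\Gad}(S^2\cU_2)$, so that $\cO(-1)$ is the tautological subbundle of $\pi_2^*(\cU^{\perp}_2/\cU_2)$). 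Taking top exterior powers and using $\cO(H_2)=\det\cU_2^{\vee}$ and $\cO(H_3)=\det\cU^{\vee}$, one gets $\pi_1^*\cO(H_3)\simeq \pi_2^*\cO(H_2)\otimes \cO_{\PP_{\Gad}(\cU^{\perp}_2/\cU_2)}(1)$, which rearranges to the first claim.

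For the second isomorphism, Proposition \ref{proposition:resolution_of_Hnu} identifies $E\subset \PP_{\Gad}(\cU^{\perp}_2/\cU_2)$ with the smooth conic bundle cut out fiberwise by the restriction of the $\rG_2$-invariant quadratic form $q\in S^2V^{\vee}$. Since $\cU_2$ is $q$-isotropic on $\Gad$ by Lemma \ref{adjoint_properties}, the form $q$ descends to a section $\bar q$ of $S^2(\cU^{\perp}_2/\cU_2)^{\vee}$ on $\Gad$ that is nondegenerate at every point. Under the standard identification $\pi_{2*}\cO_{\PP_{\Gad}(\cU^{\perp}_2/\cU_2)}(2)\simeq S^2(\cU^{\perp}_2/\cU_2)^{\vee}$, this section corresponds to a section of $\cO_{\PP_{\Gad}(\cU^{\perp}_2/\cU_2)}(2)$ whose zero scheme is a smooth $\PP^1$-bundle over $\Gad$, hence reduced, and equal to $E$. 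Therefore $\cO(E)\simeq \cO_{\PP_{\Gad}(\cU^{\perp}_2/\cU_2)}(2)$, and squaring the first formula yields the second.

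There is no genuine obstacle here; the only point that requires care is fixing the projectivization convention consistently (so that the tautological subbundle is $\cO(-1)$), after which both identifications are essentially immediate from the geometric description of the resolution together with the quadric-bundle interpretation of $E$.
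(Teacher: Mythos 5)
Your proof is correct, but for the first isomorphism it takes a genuinely different route from the paper. The paper never writes down the universal subbundle: it computes the canonical bundle of $\PP_{\Gad}(\cU^{\perp}_2/\cU_2)$ twice --- once via the relative Euler sequence of the $\PP^2$-bundle, giving $\cO_{\PP_{\Gad}(\cU^{\perp}_2/\cU_2)}(-3)\otimes\pi_2^*\cO(-3H_2)$, and once using that $\pi_1$ is a crepant resolution of the nodal singularities of $\bar{\mathsf{O}}_1$ along the codimension-two locus $\mathsf{O}_2$, giving $\pi_1^*\omega_{\bar{\mathsf{O}}_1}\simeq\pi_1^*\cO(-3H_3)$ --- and then extracts the cube root using torsion-freeness of the Picard group. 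Your argument instead identifies the tautological subbundle $\cO(-1)\subset\pi_2^*(\cU_2^{\perp}/\cU_2)$ with $\pi_1^*\cU/\pi_2^*\cU_2$ and takes determinants; this is more elementary (no canonical bundle computation, no appeal to torsion-freeness), at the cost of needing the precise modular description of $\pi_1$, in particular that the inclusion $\pi_2^*\cU_2\hookrightarrow\pi_1^*\cU$ extends as a subbundle inclusion over $E$ with quotient the tautological line subbundle --- true, since a point of $E$ is a flag $U_2\subset U_3\subset U_2^{\perp}$ with $U_3$ isotropic, but worth a sentence. Note the paper itself later uses exactly your identification $\cU_3/\tpi^*\cU_2\simeq\cO(-H_3)\otimes\tpi^*\cO(H_2)$ on $\mathsf{O}_1$ (proof of Lemma \ref{lemma_f2}), so your ingredient is implicitly present there anyway. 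For $\cO(E)$ your argument coincides with the paper's: $E$ is the conic bundle cut out by $q$, hence the zero locus of a section of $\cO_{\PP_{\Gad}(\cU^{\perp}_2/\cU_2)}(2)$; your extra remark that this zero scheme is reduced is a sensible precaution.
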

\begin{proof}
Note that the Picard group of $\PP_{\Gad}(\cU^{\perp}_2/\cU_2)$ is torsion-free. Indeed, $\PP_{\Gad}(\cU^{\perp}_2/\cU_2)$ is a $\PP^2$-bundle over $\Gad$ and the Picard group of $\Gad$ is torsion-free by Lemma \ref{adjoint_properties}. On the one hand, the canonical line bundle of the projectivization $\PP_{\Gad}(\cU^{\perp}_2/\cU_2)$ is isomorphic to
$$\cO_{\PP_{\Gad}(\cU^{\perp}_2/\cU_2)}(-3)\otimes \pi_2^*( \omega_{\Gad}\otimes \mathrm{det}(\cU^{\perp}_2/\cU_2)^{\vee})\simeq \cO_{\PP_{\Gad}(\cU^{\perp}_2/\cU_2)}(-3)\otimes \pi_2^*\cO(-3H_2).$$
On the other hand, since $\bar{\mathsf{O}}_1$ has nodal singularities along the codimension 2 orbit $\mathsf{O}_2$, the canonical line bundle of $\mathrm{Bl}_{\mathsf{O}_2}\bar{\mathsf{O}}_1$ is isomorphic to $\pi_1^*\omega_{\bar{\mathsf{O}}_1}\simeq \pi_1^*\cO(-3H_3)$. Since~$\mathrm{Bl}_{\mathsf{O}_2}\bar{\mathsf{O}}_1\simeq \PP_{\Gad}(\cU^{\perp}_2/\cU_2)$ we conclude that $$\cO_{\PP_{\Gad}(\cU^{\perp}_2/\cU_2)}(1)\simeq \pi_1^*\cO(H_3)\otimes \pi_2^*\cO(-H_2).$$

The conic bundle $E \subset \PP_{\Gad}(\cU^{\perp}_2/\cU_2)$ is given by the quadratic form $q$ hence $E$ is the zero locus of a section of the line bundle $\cO_{\PP_{\Gad}(\cU^{\perp}_2/\cU_2)}(2)$. So we get that~$\cO(E)\simeq \pi_1^*\cO(2H_3)\otimes \pi_2^*\cO(-2H_2)$.
\end{proof}
The tautological exact sequence on $\PP_{\Gad}(\cU^{\perp}_2/\cU_2)\simeq \PP_{\Gad}(S^2\cU_2(H_2))$ has the following form
\begin{equation} \label{tautological_exact_sequence_on_projectivization_1}
0\to \pi_1^*\cO(-H_3)\otimes \pi_2^*\cO(H_2)\to \pi^{*}_2(\cU^{\perp}_2/\cU_2)\to \pi^{*}_2(\cU^{\perp}_2/\cU_2)/(\pi_1^*\cO(-H_3)\otimes \pi_2^*\cO(H_2))\to 0
\end{equation}
or, equivalently, 
\begin{equation} \label{tautological_exact_sequence_on_projectivization}
0\to \pi_1^*\cO(-H_3)\otimes \pi_2^*\cO(H_2)\to \pi^{*}_2(S^2\cU_2(H_2) )\to \pi^{*}_2(S^2\cU_2(H_2))/(\pi_1^*\cO(-H_3)\otimes \pi_2^*\cO(H_2))\to 0.
\end{equation}
We will denote by 
\begin{equation} \label{quotient_bundle}
 \cQ_{\PP_{\scaleto{\Gad}{6.5pt}}(S^2\cU_2)}\coloneqq \pi^{*}_2(\cU^{\perp}_2/\cU_2)/(\pi_1^*\cO(-H_3)\otimes \pi_2^*\cO(H_2))\simeq \pi^{*}_2(S^2\cU_2(H_2))/(\pi_1^*\cO(-H_3)\otimes \pi_2^*\cO(H_2))
 \end{equation}
the tautological quotient bundle on $\PP_{\Gad}(\cU^{\perp}_2/\cU_2)\simeq \PP_{\Gad}(S^2\cU_2(H_2))$.
\begin{corollary} \label{corollary_unramified}
There exists a unique nontrivial unramified double cover of $\mathsf{O}_1$:
$$h\colon \widetilde{\mathsf{O}}_1\to \mathsf{O}_1.$$
Moreover, we have $\widetilde{\mathsf{O}}_1= \mathrm{Spec}_{\mathsf{O}_1}(\cO\oplus (\cO(H_3)\otimes \tpi^* \cO(-H_2)))$.
\end{corollary}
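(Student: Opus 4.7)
The plan is to produce a canonical square root of $\cO_{\mathsf{O}_1}$ from the class of the exceptional divisor $E$ via Lemma~\ref{lemma:computation_of_O(E)}, build the corresponding finite \'etale cover as a relative spectrum, and establish uniqueness by computing $\mathrm{Pic}(\mathsf{O}_1)[2]$.

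First I would compute $\mathrm{Pic}(\PP_{\Gad}(\cU_2^{\perp}/\cU_2))$. Since $\mathrm{Pic}(\Gad)=\ZZ\cdot\cO(H_2)$ by Lemma~\ref{adjoint_properties}, the projective bundle formula gives
$$\mathrm{Pic}(\PP_{\Gad}(\cU_2^{\perp}/\cU_2)) = \ZZ\cdot\pi_2^{*}\cO(H_2)\oplus\ZZ\cdot\xi,$$
where $\xi := \cO_{\PP_{\Gad}(\cU_2^{\perp}/\cU_2)}(1) = \pi_1^{*}\cO(H_3)\otimes\pi_2^{*}\cO(-H_2)$ by Lemma~\ref{lemma:computation_of_O(E)}. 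The same lemma gives $\cO(E)\simeq\xi^{\otimes 2}$, so on the open complement $\mathsf{O}_1=\PP_{\Gad}(\cU_2^{\perp}/\cU_2)\setminus E$ the defining section of $E$ produces a canonical trivialisation $\cO(E)|_{\mathsf{O}_1}\simeq\cO_{\mathsf{O}_1}$ and hence a canonical isomorphism $\cL^{\otimes 2}\xrightarrow{\sim}\cO_{\mathsf{O}_1}$, where $\cL := \cO(H_3)\otimes\tpi^{*}\cO(-H_2)=\xi^{-1}|_{\mathsf{O}_1}$. Using this isomorphism as the multiplication on the $\cL$-summand, I would endow $\cO_{\mathsf{O}_1}\oplus\cL$ with the structure of a commutative sheaf of $\cO_{\mathsf{O}_1}$-algebras whose relative spectrum $h\colon\widetilde{\mathsf{O}}_1\to\mathsf{O}_1$ is finite flat of degree~$2$ with non-degenerate trace form, hence \'etale and in particular unramified.

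For non-triviality and uniqueness, I would invoke the excision sequence
$$\ZZ\cdot[E]\longrightarrow\mathrm{Pic}(\PP_{\Gad}(\cU_2^{\perp}/\cU_2))\longrightarrow\mathrm{Pic}(\mathsf{O}_1)\longrightarrow 0.$$
Since $[E]=2\xi$ in $\ZZ\cdot\pi_2^{*}\cO(H_2)\oplus\ZZ\cdot\xi$, this yields $\mathrm{Pic}(\mathsf{O}_1)\simeq\ZZ\oplus\ZZ/2$, with the torsion subgroup generated precisely by the class of $\cL$; in particular $\cL\not\simeq\cO_{\mathsf{O}_1}$, so $\widetilde{\mathsf{O}}_1\not\simeq\mathsf{O}_1\sqcup\mathsf{O}_1$. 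Uniqueness then follows from the Kummer exact sequence
$$0\to\Bbbk^{*}/\Bbbk^{*2}\to H^{1}_{\mathrm{et}}(\mathsf{O}_1,\mu_2)\to\mathrm{Pic}(\mathsf{O}_1)[2]\to 0,$$
using $\Bbbk^{*}/\Bbbk^{*2}=1$ since $\Bbbk$ is algebraically closed of characteristic zero: this identifies $H^{1}_{\mathrm{et}}(\mathsf{O}_1,\mu_2)$, which classifies isomorphism classes of \'etale double covers of $\mathsf{O}_1$, with $\ZZ/2$.

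There is no serious obstacle here: the entire statement is a formal consequence of the fact, encoded in Lemma~\ref{lemma:computation_of_O(E)}, that the prime Cartier divisor $E$ removed to obtain $\mathsf{O}_1$ has class divisible by $2$ in the Picard group of the compactification; the only mild care needed is to verify via the excision quotient that the resulting $2$-torsion class on $\mathsf{O}_1$ really is non-trivial.
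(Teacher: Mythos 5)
Your proof is correct and follows essentially the same route as the paper: both compute $\mathrm{Pic}(\mathsf{O}_1)\simeq\ZZ\oplus\ZZ/2\ZZ$ by excising the class $[E]=\pi_1^*\cO(2H_3)\otimes\pi_2^*\cO(-2H_2)$ from the free rank-two Picard group of the $\PP^2$-bundle $\PP_{\Gad}(\cU_2^{\perp}/\cU_2)$ and identify the unique nontrivial $2$-torsion class with $\cO(H_3)\otimes\tpi^*\cO(-H_2)$. You merely spell out the standard relative-Spec and Kummer-sequence details that the paper leaves implicit (and your $\cL=\xi^{-1}|_{\mathsf{O}_1}$ should read $\xi|_{\mathsf{O}_1}$, a harmless slip since the class is $2$-torsion).
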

\begin{proof}
Since $\mathrm{Bl}_{\mathsf{O}_2}\bar{\mathsf{O}}_1$ is a $\PP^2$-bundle over $\Gad$, we have $\mathrm{Pic}(\mathrm{Bl}_{\mathsf{O}_2}\bar{\mathsf{O}}_1)=\mathbb{Z}\cdot \pi_2^* \cO(H_2)\oplus \mathbb{Z}\cdot \pi_1^*\cO(H_3)$. Therefore, by Proposition \ref{proposition:resolution_of_Hnu} and Lemma \ref{lemma:computation_of_O(E)}
\begin{equation} \label{picard_on_O1}
    \mathrm{Pic}(\mathsf{O}_1)=\mathrm{Pic}(\mathrm{Bl}_{\mathsf{O}_2}\bar{\mathsf{O}}_1\setminus E)=(\mathbb{Z}\cdot \pi_2^*\cO(H_2)\oplus \mathbb{Z}\cdot \pi_1^* \cO(H_3))/\mathbb{Z}\cdot (\pi_1^*\cO(2H_3)-\pi_2^*\cO(2H_2))\simeq \mathbb{Z}\oplus \mathbb{Z}/2\mathbb{Z}
\end{equation}
and the only non-trivial 2-torsion element is $\cO(H_3)\otimes \tpi^* \cO(-H_2)$. So we get the statement. 
\end{proof}
\begin{corollary} \label{corollary:double_cover}
The double cover $h$ is isomorphic to $$\PP(\cU_2)\times_{\Gad} \PP(\cU_2)\to \PP_{\Gad}(S^2\cU_2)\simeq \PP_{\Gad}(\cU^{\perp}_2/\cU_2)$$
base changed to $\mathsf{O}_1$.
\end{corollary}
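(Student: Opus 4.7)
The plan is to use the Veronese identification from Lemma \ref{lemma:isomorphism}, namely $E\simeq \PP_{\Gad}(\cU_2)\hookrightarrow \PP_{\Gad}(S^2\cU_2)\simeq \PP_{\Gad}(\cU^{\perp}_2/\cU_2)$, in order to produce the desired unramified double cover explicitly. The key observation is that there is a natural multiplication morphism
\[
\mu\colon \PP(\cU_2)\times_{\Gad}\PP(\cU_2)\longrightarrow \PP_{\Gad}(S^2\cU_2),\qquad (L_1,L_2)\mapsto L_1\cdot L_2,
\]
whose fiberwise version is the classical symmetric-multiplication map $\PP^1\times\PP^1\to \PP^2$, realized as the quotient by the $\mathsf{S}_2$-action swapping the two factors.

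First I would verify that $\mu$ is finite, flat and of degree $2$, with ramification divisor equal to the diagonal $\Delta\simeq \PP_{\Gad}(\cU_2)$. This is a routine fiberwise computation: the map $([v_1],[v_2])\mapsto [v_1\cdot v_2]$ has fibers of cardinality $2$ off the diagonal and is ramified precisely where $v_1$ and $v_2$ span the same line. Under the Veronese identification of Lemma \ref{lemma:isomorphism}, the branch locus $\Delta$ maps isomorphically onto the exceptional divisor $E\subset \PP_{\Gad}(\cU^{\perp}_2/\cU_2)$.

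Next I would base change $\mu$ along the open immersion $\mathsf{O}_1=\PP_{\Gad}(\cU^{\perp}_2/\cU_2)\setminus E \hookrightarrow \PP_{\Gad}(\cU^{\perp}_2/\cU_2)$ provided by Proposition \ref{proposition:resolution_of_Hnu}. Since the branch divisor is removed, the resulting map
\[
\mu'\colon \bigl(\PP(\cU_2)\times_{\Gad}\PP(\cU_2)\bigr)\setminus \Delta \longrightarrow \mathsf{O}_1
\]
is finite étale of degree $2$. To conclude, by the uniqueness established in Corollary \ref{corollary_unramified}, it is enough to check that $\mu'$ is nontrivial, i.e.\ that its source is connected. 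But $\PP(\cU_2)\times_{\Gad}\PP(\cU_2)$ is a $\PP^1\times\PP^1$-bundle over the irreducible variety $\Gad$ and hence irreducible; removing the divisor $\Delta$ from an irreducible variety leaves an irreducible, in particular connected, open subset. Thus $\mu'$ is a connected étale double cover of $\mathsf{O}_1$ and must coincide with $h\colon\widetilde{\mathsf{O}}_1\to\mathsf{O}_1$.

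I do not anticipate any serious obstacle; the only subtle point is making sure the scheme structure of the ramification divisor of $\mu$ really matches that of $E$, rather than just set-theoretically. This can either be settled by a local calculation on fibers or, more formally, by comparing associated line bundles: one computes $\mu_*\cO = \cO\oplus \cM^{-1}$ with $\cM^{\otimes 2}\simeq \cO(E)$, and then Lemma \ref{lemma:computation_of_O(E)} identifies $\cM$ with $\pi_1^*\cO(H_3)\otimes \pi_2^*\cO(-H_2)$, whose restriction to $\mathsf{O}_1$ is the nontrivial $2$-torsion class appearing in Corollary \ref{corollary_unramified}, giving an alternative confirmation of the identification.
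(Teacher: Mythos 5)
Your proposal is correct and follows essentially the same route as the paper: the paper's proof is literally ``immediately follows from the uniqueness of $h$'' (i.e.\ Corollary \ref{corollary_unramified}), and you are simply supplying the implicit verifications --- that the fiberwise multiplication map $\PP^1\times\PP^1\to\PP^2$ is a degree-$2$ cover ramified along the diagonal, that the diagonal sits over $E$ via the Veronese identification of Lemma \ref{lemma:isomorphism}, and that the restricted cover is connected and hence nontrivial. All of these checks are sound, so nothing further is needed.
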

\begin{proof}
Immediately follows from the uniqueness of $h$.
\end{proof}
Denote $\cO_{\PP(\cU_2)\times_{\Gad} \PP(\cU_2)}(-1,-1)$ by $\cO_{\PP(\cU_2)\times \PP(\cU_2)}(-1,-1)$.
\begin{corollary} \label{corollary:pushforward}
On $\mathsf{O}_1$ we have $$h_*\cO_{\PP(\cU_2)\times \PP(\cU_2)}(-1,-1)\simeq \cO(-H_3)\oplus \pi_2^*\cO(-H_2).$$ Moreover, the fiber of $h_*\cO_{\PP(\cU_2)\times \PP(\cU_2)}(-1,-1)\subset \pi_2^*\cU_2 \otimes \pi_2^*\cU_2$ over $[P_1]=\langle e_{0},e_{\beta},e_{-\gamma} \rangle$ is isomorphic to~$\langle e_{\beta}\otimes e_{-\gamma}, e_{-\gamma}\otimes e_{\beta} \rangle$.
\end{corollary}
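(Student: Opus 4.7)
My plan is to read off $h_*\cO(-1,-1)$ from the projection formula together with the description of $h$ given in Corollary~\ref{corollary:double_cover}. That corollary identifies $h$ as the base change to $\mathsf{O}_1$ of the multiplication map
\begin{equation*}
\mu\colon \PP_{\Gad}(\cU_2)\times_{\Gad}\PP_{\Gad}(\cU_2)\longrightarrow \PP_{\Gad}(S^2\cU_2),
\end{equation*}
which is induced by the natural composition $\cO(-1)\boxtimes\cO(-1)\hookrightarrow \pi_2^*\cU_2\otimes \pi_2^*\cU_2\twoheadrightarrow \pi_2^*S^2\cU_2$; in particular $\mu^*\cO_{\PP(S^2\cU_2)}(-1)\simeq \cO(-1,-1)$. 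Since $h$ is \'etale of degree $2$ over $\mathsf{O}_1$, the projection formula yields
\begin{equation*}
h_*\cO(-1,-1) \simeq \cO_{\PP(S^2\cU_2)}(-1)\big|_{\mathsf{O}_1}\otimes h_*\cO_{\widetilde{\mathsf{O}}_1}.
\end{equation*}
The second factor is $\cO\oplus (\cO(H_3)\otimes \tpi^*\cO(-H_2))$ by Corollary~\ref{corollary_unramified}. For the first factor I use the standard identity $\cO_{\PP(\cF\otimes\cL)}(-1)\simeq \cO_{\PP(\cF)}(-1)\otimes\pi_2^*\cL$ (with $\cF=S^2\cU_2$, $\cL=\cO(H_2)$) together with Lemma~\ref{lemma:isomorphism} to write $\cO_{\PP(S^2\cU_2)}(-1)\simeq \cO_{\PP(\cU_2^{\perp}/\cU_2)}(-1)\otimes \pi_2^*\cO(-H_2)$; combining with Lemma~\ref{lemma:computation_of_O(E)} then gives $\cO_{\PP(S^2\cU_2)}(-1)|_{\mathsf{O}_1}\simeq \cO(-H_3)$. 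Multiplying these two factors yields the desired splitting $\cO(-H_3)\oplus \tpi^*\cO(-H_2)$.

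For the fiber at $[P_1]=\langle e_0, e_\beta, e_{-\gamma}\rangle$, a direct calculation using~\eqref{q:equation} shows $q|_{P_1}=x_0^2$, so $\tpi([P_1])=[U_2]$ with $U_2=\langle e_\beta, e_{-\gamma}\rangle$. Under the $\rG_2$-equivariant isomorphism $\cU_2^{\perp}/\cU_2\simeq S^2\cU_2(H_2)$ of Lemma~\ref{lemma:isomorphism}, the exceptional divisor $E$ (parametrizing $q$-isotropic extensions, i.e.\ points of $\mathsf{O}_2$) corresponds to the Veronese surface $\PP_{\Gad}(\cU_2)\hookrightarrow \PP_{\Gad}(S^2\cU_2)$. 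By $\rG_2$-equivariance, the line $P_1/U_2=\langle e_0\rangle$ is then identified with the line $\langle e_\beta\cdot e_{-\gamma}\rangle\subset S^2U_2$. The two preimages of this point under $\mu$ are $([e_\beta],[e_{-\gamma}])$ and $([e_{-\gamma}],[e_\beta])$, and the fibers of $\cO(-1,-1)$ there are respectively $\langle e_\beta\otimes e_{-\gamma}\rangle$ and $\langle e_{-\gamma}\otimes e_\beta\rangle$ inside $U_2\otimes U_2$. The embedding $h_*\cO(-1,-1)\hookrightarrow \tpi^*\cU_2\otimes \tpi^*\cU_2$ is obtained by adjunction from the tautological inclusion $\cO(-1,-1)\hookrightarrow h^*\tpi^*\cU_2\otimes h^*\tpi^*\cU_2$ on $\widetilde{\mathsf{O}}_1$ and is injective (checked fiberwise), and summing the contributions of the two preimages recovers the claimed fiber $\langle e_\beta\otimes e_{-\gamma},\; e_{-\gamma}\otimes e_\beta\rangle$.

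The only delicate point, which I expect to be the main obstacle, is pinning down the correspondence $\langle e_0\rangle\leftrightarrow \langle e_\beta\cdot e_{-\gamma}\rangle$ under Lemma~\ref{lemma:isomorphism}. This should follow from $\rG_2$-equivariance once one observes that the Veronese conic in $\PP(S^2U_2)$ and the $q$-isotropic conic $\{[v]:q(v)=0\}\subset \PP(U_2^{\perp}/U_2)$ match under the isomorphism (each is cut out by the unique invariant quadric on the relevant $\PP^2$), so that their complements admit a unique equivariant identification sending $[e_0]$ to $[e_\beta\cdot e_{-\gamma}]$.
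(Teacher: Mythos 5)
Your proof is correct and follows essentially the same route as the paper: the paper's own argument is the one-line projection-formula computation $h_*\cO(-1,-1)\simeq h_*h^*\cO(-H_3)\simeq \cO(-H_3)\oplus\tpi^*\cO(-H_2)$ plus a reference to Corollary~\ref{corollary:double_cover} for the fiber, and you are simply supplying the details (the identification $\cO(-1,-1)\simeq\mu^*\cO_{\PP(S^2\cU_2)}(-1)\simeq h^*\cO(-H_3)$ and the explicit fiber at $[P_1]$) that the paper leaves implicit. Your ``delicate point'' about $[e_0]\leftrightarrow[e_\beta\cdot e_{-\gamma}]$ is most cleanly settled by noting that both $\cU_2^{\perp}/\cU_2$ and $S^2\cU_2(H_2)$ have multiplicity-free torus weights $\{0,\alpha,-\alpha\}$ at $[U_2]=[\langle e_\beta,e_{-\gamma}\rangle]$, so the equivariant isomorphism must match the weight-zero lines.
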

\begin{proof}
We have $$h_*\cO_{\PP(\cU_2)\times \PP(\cU_2)}(-1,-1)\simeq h_*h^*\cO(-H_3)\simeq \cO(-H_3)\oplus \pi_2^*\cO(-H_2),$$
so the first claim follows. All the remaining statements follows immediately from Corollary \ref{corollary:double_cover}.
\end{proof}
\begin{corollary} \label{corollary:twist_of_quotient}
We have $(\cQ_{\PP_{\scaleto{\Gad}{6.5pt}}(S^2\cU_2)}\restr{\mathsf{O}_1})^{\vee}\simeq \cQ_{\PP_{\scaleto{\Gad}{6.5pt}}(S^2\cU_2)}\restr{\mathsf{O}_1}\simeq \cQ_{\PP_{\scaleto{\Gad}{6.5pt}}(S^2\cU_2)}\restr{\mathsf{O}_1}\otimes \tpi^*\cO(-H_2)\otimes \cO(H_3)$.
\end{corollary}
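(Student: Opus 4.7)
The plan is to exploit the natural $\rG_2$-invariant quadratic form on $\cU_2^{\perp}/\cU_2 \simeq S^2\cU_2(H_2)$ induced from the form $q$ on $V$ of Subsection~\ref{section:G2}. Since $q$ takes values in the base field, its restriction to $\cU_2^{\perp}$ descends to a non-degenerate $\cO_{\Gad}$-valued quadratic form on the rank~$3$ bundle $\cU_2^{\perp}/\cU_2 \simeq S^2\cU_2(H_2)$ (Lemma~\ref{lemma:isomorphism}).

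I would first analyze the behavior of this form on the tautological subbundle of $\PP_{\Gad}(S^2\cU_2(H_2))$. By Lemma~\ref{lemma:computation_of_O(E)} the tautological line subbundle of $\pi_2^*(S^2\cU_2(H_2))$ is $\cO_{\PP}(-1) = \pi_1^*\cO(-H_3) \otimes \pi_2^*\cO(H_2)$, and the restriction of $q$ to this subbundle is a section of $\cO_{\PP}(2)$. By Proposition~\ref{proposition:resolution_of_Hnu}, the divisor $E$ is precisely the relative conic bundle cut out by $q$, so this section vanishes exactly along~$E$. Consequently, on $\mathsf{O}_1 = \mathrm{Bl}_{\mathsf{O}_2}\bar{\mathsf{O}}_1 \setminus E$, the tautological line $\cO_{\PP}(-1)$ is anisotropic for $q$, yielding an orthogonal decomposition
\begin{equation*}
\pi_2^*(S^2\cU_2(H_2))|_{\mathsf{O}_1} = \cO_{\PP}(-1) \oplus \cO_{\PP}(-1)^{\perp},
\end{equation*}
and the composition with the tautological quotient identifies $\cQ_{\PP_{\scaleto{\Gad}{6.5pt}}(S^2\cU_2)}|_{\mathsf{O}_1} \simeq \cO_{\PP}(-1)^{\perp}$. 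The restriction of $q$ to this rank~$2$ orthogonal complement is again non-degenerate and $\cO_{\mathsf{O}_1}$-valued, so it furnishes a self-duality, which proves the first isomorphism $\cQ|_{\mathsf{O}_1} \simeq (\cQ|_{\mathsf{O}_1})^{\vee}$.

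For the second isomorphism I would compute the determinant directly from the tautological exact sequence \eqref{tautological_exact_sequence_on_projectivization}. Since $\det(S^2\cU_2) = (\det \cU_2)^{\otimes 3} = \cO(-3H_2)$, the bundle $S^2\cU_2(H_2)$ has trivial determinant on $\Gad$, so from the SES I obtain
\begin{equation*}
\det \cQ_{\PP_{\scaleto{\Gad}{6.5pt}}(S^2\cU_2)} \simeq \cO_{\PP}(1) \simeq \pi_1^*\cO(H_3) \otimes \pi_2^*\cO(-H_2),
\end{equation*}
whose restriction to $\mathsf{O}_1$ is the $2$-torsion line bundle $\cL := \cO(H_3) \otimes \tpi^*\cO(-H_2)$ from Corollary~\ref{corollary_unramified}. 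As $\cQ$ has rank~$2$, one has $\cQ^{\vee} \simeq \cQ \otimes (\det \cQ)^{-1} \simeq \cQ \otimes \cL^{-1} \simeq \cQ \otimes \cL$, and combining with the self-duality from the previous paragraph yields $\cQ|_{\mathsf{O}_1} \simeq \cQ|_{\mathsf{O}_1} \otimes \cL$.

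The only subtle step is identifying the vanishing locus of the quadratic section of $\cO_{\PP}(2)$ with the divisor~$E$; this is immediate from Proposition~\ref{proposition:resolution_of_Hnu}, which was already established using the tautological conic-bundle description of~$E$. Everything else is a direct chain of determinant bookkeeping and the elementary fact that the orthogonal complement of a non-isotropic line in a quadratic bundle carries an induced non-degenerate form.
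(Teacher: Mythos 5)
Your proof is correct. For the first isomorphism you and the paper do essentially the same thing: the paper feeds the pair $(q,\epsilon)$, with $\epsilon$ the tautological sequence \eqref{tautological_exact_sequence_on_projectivization_1}, into Proposition \ref{lemma:quadric_bundles_cokernel}, and since the induced form on the tautological line subbundle is invertible away from $E$ (exactly your anisotropy observation via Proposition \ref{proposition:resolution_of_Hnu}), the induced self-dual map on the quotient is an isomorphism over $\mathsf{O}_1$ --- which is precisely your orthogonal-complement splitting in different clothing. Where you genuinely diverge is the second isomorphism. The paper realizes the self-dual form on the rank-$2$ bundle $\cQ_{\PP_{\scaleto{\Gad}{6.5pt}}(S^2\cU_2)}\restr{\mathsf{O}_1}$ as a length-$2$ subscheme bundle in $\PP(\cQ_{\PP_{\scaleto{\Gad}{6.5pt}}(S^2\cU_2)}\restr{\mathsf{O}_1})$, identifies it with the double cover $h$ of Corollary \ref{corollary_unramified}, writes $\cQ_{\PP_{\scaleto{\Gad}{6.5pt}}(S^2\cU_2)}\restr{\mathsf{O}_1}\simeq h_*i_h^*\cO_{\PP(\cQ)}(1)$, and uses the projection formula together with $h^*(\tpi^*\cO(-H_2)\otimes\cO(H_3))\simeq\cO$ to absorb the $2$-torsion twist. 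You instead use the elementary identity $\cQ\simeq\cQ^{\vee}\otimes\det\cQ$ for a rank-$2$ bundle, compute $\det\cQ_{\PP_{\scaleto{\Gad}{6.5pt}}(S^2\cU_2)}\simeq\cO_{\PP}(1)\simeq\pi_1^*\cO(H_3)\otimes\pi_2^*\cO(-H_2)$ from the tautological sequence (the determinant bookkeeping $\det(S^2\cU_2(H_2))\simeq\cO$ checks out), and combine with the self-duality already established. Your route is shorter and avoids the double cover entirely; the paper's route has the advantage of exhibiting the isomorphism concretely through $h$, which it reuses elsewhere (e.g.\ in Corollary \ref{corollary:pushforward} and Lemma \ref{lemma_f3}), but as a proof of this corollary your argument is complete and arguably cleaner. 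The only cosmetic slip is attributing the descent of $q$ to a nondegenerate form on $\cU_2^{\perp}/\cU_2$ to Lemma \ref{lemma:isomorphism}; that fact is stated just before Proposition \ref{proposition:resolution_of_Hnu}, while Lemma \ref{lemma:isomorphism} only supplies the identification with $S^2\cU_2(H_2)$.
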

\begin{proof}
By Proposition \ref{lemma:quadric_bundles_cokernel} the pair $(q,\epsilon)$, where $\epsilon$ is the tautological exact sequence \eqref{tautological_exact_sequence_on_projectivization_1} and~$q\colon \pi^{*}_2(\cU^{\perp}_2/\cU_2)\simeq  \pi^{*}_2(\cU^{\perp}_2/\cU_2)^{\vee}$, induces a self-dual isomorphism $(\cQ_{\PP_{\scaleto{\Gad}{6.5pt}}(S^2\cU_2)}\restr{\mathsf{O}_1})^{\vee}\simeq \cQ_{\PP_{\scaleto{\Gad}{6.5pt}}(S^2\cU_2)}\restr{\mathsf{O}_1}$ on $\mathsf{O}_1$, that determines the double cover $h$. Denote by $i_h\colon \widetilde{\mathsf{O}}_1\hookrightarrow \PP(\cQ_{\PP_{\scaleto{\Gad}{6.5pt}}(S^2\cU_2)}\restr{\mathsf{O}_1})$ the corresponding embedding.

Denote by $\cO_{\PP(\cQ)}(1)$ the tautological line bundle on $\PP(\cQ_{\PP_{\scaleto{\Gad}{6.5pt}}(S^2\cU_2)}\restr{\mathsf{O}_1})$. We have
\begin{multline*}
 \cQ_{\PP_{\scaleto{\Gad}{6.5pt}}(S^2\cU_2)}\restr{\mathsf{O}_1}\simeq h_*i_{h}^*\cO_{\PP(\cQ)}(1)\simeq  h_*(i_{h}^*\cO_{\PP(\cQ)}(1)\otimes \cO_{\widetilde{\mathsf{O}}_1})\simeq\\
 \simeq h_*(i_{h}^*\cO_{\PP(\cQ)}(1)\otimes h^*(\tpi^*\cO(-H_2)\otimes \cO(H_3)))\simeq \cQ_{\PP_{\scaleto{\Gad}{6.5pt}}(S^2\cU_2)}\restr{\mathsf{O}_1}\otimes \tpi^*\cO(-H_2)\otimes \cO(H_3),   
\end{multline*}
so we get the statement. 
\end{proof}
\section{Self-dualities} \label{section:selfdualities}
The main goal of this section is to prove the self-dualities \eqref{self-dualities}, that are crucial for the proof of Theorem \ref{Theorem:introduction:main}. We prove \eqref{self-dualities} using certain $\rG_2$-equivariant quadric bundles on $\CG$ and Proposition~\ref{lemma:quadric_bundles_cokernel}. In Subsection \ref{subsection:quadric_bundles} we describe quadric bundles that can be constructed from $q\colon V\otimes \cO\stackrel{\sim}{\longrightarrow} V^{\vee}\otimes \cO$, in Subsection \ref{subsection:first} we prove the isomorphism~$\cE_{10}(1)\simeq \cE_{10}^{\vee}$, in Subsection \ref{subsection:wedge} we describe quadric bundles that can be constructed from $\wedge\colon \Lambda^2\cU_3^{\perp}\stackrel{\sim}{\longrightarrow} \Lambda^2\cQ_3(-H_3)$, and in Subsection \ref{subsection:second} we prove the isomorphism~$\cE_{16}(-1)\simeq \cE_{16}^{\vee}$.

We will continue to use the notation of Section \ref{section:G2action}. To avoid confusion in this subsection the vector bundles $\cU$, $\cU^{\perp}$ and $\cQ$ on $\CG$ will be denoted by $\cU_3$ and $\cU_3^{\perp}$ and $\cQ_3$ respectively, and the Pl{\"u}cker line bundle $\cO(1)$ and its restriction to $\bar{\mathsf{O}}_1$ or $\mathsf{O}_1$ will be denoted by $\cO(H_3)$.
\subsection{Quadric bundles induced by the quadratic form} \label{subsection:quadric_bundles}
In this subsection we describe quadric bundles that can be constructed using Proposition \ref{lemma:quadric_bundles_cokernel} and Proposition \ref{quadric_construction} from the self-dual isomorphism~$q\colon V\otimes \cO\stackrel{\sim}{\longrightarrow} V^{\vee}\otimes \cO$.
\begin{lemma} \label{lemma:self_dual_1}
A quadric bundle $q\colon V\otimes \cO\stackrel{\sim}{\longrightarrow} V^{\vee}\otimes \cO$ and the tautological subbundle $\cU_3\hookrightarrow V\otimes \cO$ induce a pair of quadric bundles with isomorphic cokernel sheaves:
\begin{align*}
  & 0\to \cU_3 \xrightarrow{q_{\scaleto{\cU}{3.5pt}}} \cU_3^{\vee} \to i_*\cC(q_{\scaleto{\cU}{3.5pt}})\to 0 \\
  & 0\to \cU_3^{\perp}\xrightarrow{q_{\scaleto{\cU^{\perp}}{5pt}}}\cQ_3 \to i_*\cC(q_{\scaleto{\cU}{3.5pt}})\to 0,
\end{align*}
where $\cC(q_{\scaleto{\cU}{3.5pt}})\simeq \cC(q_{\scaleto{\cU}{3.5pt}})^{\vee}\otimes \cO(H_3)$ is a self-dual sheaf of rank 2 on $\bar{\mathsf{O}}_1$. Moreover, we have $\cC(q_{\scaleto{\cU}{3.5pt}})\restr{\mathsf{O}_1}\simeq \tpi^*\cU^{\vee}_2$ and on $\mathsf{O}_1$ there exits an embedding $\tpi^*\cU_2\hookrightarrow \cU_3^{\perp}$, such that $\mathrm{Im}(q_{\cU^{\perp}})\restr{\mathsf{O}_1}\simeq \cU_3^{\perp}/ \tpi^*\cU_2$.
\end{lemma}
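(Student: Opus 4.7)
My plan is to obtain $q_{\cU}$ and $q_{\cU^\perp}$ with the cokernel isomorphism as a direct instance of Proposition \ref{lemma:quadric_bundles_cokernel}, applied to the self-dual isomorphism $q\colon V\otimes\cO \xrightarrow{\sim} V^\vee\otimes\cO$ (with $\cL=\cO$) together with the tautological extension $0\to\cU_3\to V\otimes\cO\to\cQ_3\to 0$, so that $\cF_1=\cU_3$ and $\cF_2=\cU_3^\perp$. This immediately yields both self-dual morphisms and an isomorphism between their cokernels; the two short exact sequences in the statement are then just the definitions of $\cC(q_{\cU})$, where injectivity of $q_{\cU}$ and $q_{\cU^\perp}$ follows from generic injectivity on $\mathsf{O}_0$ combined with torsion-freeness of $\cU_3$ and $\cU_3^\perp$.

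To pin down the scheme-theoretic support and the rank-$2$ property I would invoke Lemma \ref{lemma:degeneracy_locus}. Proposition \ref{proposition:orbits} describes the three $\rG_2$-orbits: $q|_{U_3}$ is nondegenerate on $\mathsf{O}_0$, has rank $1$ on $\mathsf{O}_1$, and vanishes on $\mathsf{O}_2$. Thus the set-theoretic zero locus of $\det q_{\cU}$ equals $\bar{\mathsf{O}}_1$, and $\bar{\mathsf{O}}_1\subset D_2(q_{\cU})$. To upgrade this to the scheme-theoretic equality $D_1(q_{\cU})=2\bar{\mathsf{O}}_1$, I would use that $\det q_{\cU}\in H^0(\CG,\cO(2H_3))$ is $\rG_2$-invariant while $\bar{\mathsf{O}}_1$ is the $\rG_2$-equivariant hyperplane section of $\CG$ cut out by the unique invariant section $\nu$ of $\cO(H_3)$ (cf.\ the discussion after Lemma \ref{lemma:CGlinear}); irreducibility of $\bar{\mathsf{O}}_1$ then forces the divisor of $\det q_{\cU}$ to be $2\bar{\mathsf{O}}_1$. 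Lemma \ref{lemma:degeneracy_locus} now yields the scheme-theoretic support $\bar{\mathsf{O}}_1$ and generic rank $2$ for $\cC(q_{\cU})$, and Proposition \ref{quadric_construction} (with $D=\bar{\mathsf{O}}_1$, $\cO(D)=\cO(H_3)$, $\cL=\cO$) supplies the self-duality $\cC(q_{\cU})\simeq \cC(q_{\cU})^\vee\otimes\cO(H_3)$ on $\bar{\mathsf{O}}_1$.

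For the last assertions I would restrict to $\mathsf{O}_1$. By Proposition \ref{proposition:orbits}(ii), for $[U_3]\in\mathsf{O}_1$ the subspace $U_2=\ker(q|_{U_3})$ lies on $\Gad$, and this correspondence is exactly $\tpi$ from \eqref{tpi}; globally this realizes $\tpi^*\cU_2\subset\cU_3$ as $\ker(q_{\cU})\restr{\mathsf{O}_1}$. Self-duality of $q_{\cU}$ identifies $\mathrm{Im}(q_{\cU})$ with the annihilator of $\tpi^*\cU_2$ in $\cU_3^\vee$, giving $\cC(q_{\cU})\restr{\mathsf{O}_1}\simeq\tpi^*\cU_2^\vee$. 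To produce the embedding $\tpi^*\cU_2\hookrightarrow\cU_3^\perp$, note that a local section $v$ of $\tpi^*\cU_2\subset\cU_3$ lies in $\ker q_{\cU}$, so $q(v)\in V^\vee\otimes\cO$ goes to $0$ in $\cU_3^\vee$ and hence lies in $\cU_3^\perp$ by \eqref{tautological_exact_sequence_1}; invertibility of $q$ makes this injective. The identical calculation for $q_{\cU^\perp}$ (in which the roles of $\cU_3$ and $\cU_3^\perp$ are swapped via $q^{-1}$) shows that $\ker(q_{\cU^\perp})\restr{\mathsf{O}_1}$ coincides with this embedded copy of $\tpi^*\cU_2$, so $\mathrm{Im}(q_{\cU^\perp})\restr{\mathsf{O}_1}\simeq\cU_3^\perp/\tpi^*\cU_2$ as required.

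The main obstacle I foresee is the scheme-theoretic doubling $D_1(q_{\cU})=2\bar{\mathsf{O}}_1$: once this is settled, everything else is a formal combination of the general quadric-bundle constructions of Section \ref{section:quadric_constructions} with the orbit analysis of Section \ref{section:G2action}, but the doubling relies crucially on the uniqueness of the $\rG_2$-invariant hyperplane in $|H_3|$.
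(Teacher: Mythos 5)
Your proposal is correct and follows essentially the same route as the paper: apply Proposition \ref{lemma:quadric_bundles_cokernel} to $(q,\epsilon)$ with $\epsilon$ the tautological sequence, get the scheme-theoretic support $\bar{\mathsf{O}}_1$ and generic rank $2$ from Lemma \ref{lemma:degeneracy_locus} together with the orbit description of Proposition \ref{proposition:orbits}, identify the kernels over $\mathsf{O}_1$ with $\tpi^*\cU_2$, and deduce the self-duality of $\cC(q_{\cU})$ from Proposition \ref{quadric_construction}. The only difference is that you spell out the scheme-theoretic doubling $D_1(q_{\cU})=2\bar{\mathsf{O}}_1$ and the injectivity/kernel identifications in more detail than the paper does, which is a welcome addition rather than a deviation.
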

\begin{proof}
Applying Proposition \ref{lemma:quadric_bundles_cokernel} to the pair $(q,\epsilon)$, where $\epsilon$ is the tautological exact sequence \eqref{tautological_exact_sequence} on~$\CG$ we get the first statement. Since $\mathrm{det}(\cU^{\vee})\otimes \mathrm{det}(\cU^{\vee})=\cO(2H_3)$ we obtain by Lemma \ref{lemma:degeneracy_locus} and Proposition~\ref{proposition:orbits} that the scheme support of the cokernel sheaf $\mathrm{Coker}(q_{\scaleto{\cU}{3.5pt}})$ is $\bar{\mathsf{O}}_1$ and that~$\cC(q_{\scaleto{\cU}{3.5pt}})\restr{\mathsf{O}_1}\simeq \tpi^*\cU^{\vee}_2$. In particular, on $\mathsf{O}_1$ we get
$$\mathrm{Im}(q_{\cU^{\perp}})\simeq \cU_3^{\perp}/\mathrm{Ker}(q_{\cU^{\perp}})\simeq \cU_3^{\perp}/ \tpi^*\cU_2.$$
Using Proposition \ref{quadric_construction} we conclude $\cC(q_{\scaleto{\cU}{3.5pt}})\simeq \cC(q_{\scaleto{\cU}{3.5pt}})^{\vee}\otimes \cO(H_3)$.
\end{proof}
Recall the notation \eqref{quotient_bundle}.
\begin{lemma} \label{lemma_f2}
On $\CG$ we have a $\rG_2$-equivariant quadric bundle
\begin{equation*}
q_{\scaleto{\cQ_3}{6.5pt}}\colon \cQ_3(-1)\to \cU_3^{\perp} 
\end{equation*}
with the cokernel sheaf isomorphic to~$i_*\cC(q_{\scaleto{\cQ_3}{6.5pt}})$, where $\cC(q_{\scaleto{\cQ_3}{6.5pt}})$ is a reflexive sheaf of rank 2 on~$\bar{\mathsf{O}}_1$. Moreover,~$\cC(q_{\scaleto{\cQ_3}{6.5pt}})\restr{\mathsf{O}_1}\simeq \cQ_{\PP_{\scaleto{\Gad}{6.5pt}}(S^2\cU_2)}\restr{\mathsf{O}_1}$.
\end{lemma}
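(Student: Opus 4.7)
The plan is to construct $q_{\scaleto{\cQ_3}{6.5pt}}$ by applying Proposition \ref{quadric_construction} to the quadric bundle $q_{\scaleto{\cU^{\perp}}{5pt}}\colon \cU_3^{\perp}\to \cQ_3$ produced in Lemma \ref{lemma:self_dual_1}. That lemma gives $\mathrm{Coker}(q_{\scaleto{\cU^{\perp}}{5pt}})\simeq i_*\cC(q_{\scaleto{\cU}{3.5pt}})$, and since this cokernel has generic rank $2$ along the prime divisor $\bar{\mathsf{O}}_1$, its scheme-theoretic support is exactly $\bar{\mathsf{O}}_1$. The divisor $\bar{\mathsf{O}}_1$ is the $\rG_2$-equivariant hyperplane section of $\CG$, so $\cO(\bar{\mathsf{O}}_1)\simeq \cO(H_3)$; with the identification $\cQ_3\simeq (\cU_3^{\perp})^{\vee}$ the hypotheses of Proposition \ref{quadric_construction} are satisfied with $\cF=\cU_3^{\perp}$, $\cL=\cO$, and $D=\bar{\mathsf{O}}_1$. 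Applying the proposition then produces a self-dual morphism $q_{\scaleto{\cQ_3}{6.5pt}}\colon \cQ_3(-1)\to \cU_3^{\perp}$, which inherits $\rG_2$-equivariance from $q_{\scaleto{\cU^{\perp}}{5pt}}$, together with the isomorphism $\mathrm{Coker}(q_{\scaleto{\cQ_3}{6.5pt}})\simeq i_*\cC(q_{\scaleto{\cQ_3}{6.5pt}})$ and the reflexivity of $\cC(q_{\scaleto{\cQ_3}{6.5pt}})$.

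Proposition \ref{quadric_construction} further identifies $\cC(q_{\scaleto{\cQ_3}{6.5pt}})\simeq \mathrm{Im}(q_{\scaleto{\cU^{\perp}}{5pt}}\restr{\bar{\mathsf{O}}_1})$. Restricting to the open orbit $\mathsf{O}_1\subset \bar{\mathsf{O}}_1$ and invoking the last assertion of Lemma \ref{lemma:self_dual_1} gives
$$\cC(q_{\scaleto{\cQ_3}{6.5pt}})\restr{\mathsf{O}_1}\simeq \mathrm{Im}(q_{\scaleto{\cU^{\perp}}{5pt}})\restr{\mathsf{O}_1}\simeq \cU_3^{\perp}/\tpi^*\cU_2,$$
a rank-$2$ vector bundle on $\mathsf{O}_1$. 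It thus remains to identify $\cU_3^{\perp}/\tpi^*\cU_2$ with $\cQ_{\PP_{\scaleto{\Gad}{6.5pt}}(S^2\cU_2)}\restr{\mathsf{O}_1}$, which is where the main work lies.

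Using the identification $V\simeq V^{\vee}$ coming from $q$, on $\mathsf{O}_1$ one has two flags
$$\tpi^*\cU_2\subset \cU_3\subset \tpi^*\cU_2^{\perp} \qquad \text{and} \qquad \tpi^*\cU_2\subset \cU_3^{\perp}\subset \tpi^*\cU_2^{\perp},$$
whose nontrivial inclusions follow from the $q$-isotropy of $\tpi^*\cU_2$ and from the equality $\tpi^*\cU_2=\mathrm{Ker}(q\restr{\cU_3})$ of Proposition \ref{proposition:orbits}(ii). By Lemma \ref{adjoint_properties} the rank-$3$ quotient $\tpi^*(\cU_2^{\perp}/\cU_2)$ carries the nondegenerate quadratic form induced by $q$; the line subbundle $\cU_3/\tpi^*\cU_2$ is nondegenerate for this form, and its $q$-orthogonal complement is precisely $\cU_3^{\perp}/\tpi^*\cU_2$. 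Non-degeneracy then implies that the composition
$$\cU_3^{\perp}/\tpi^*\cU_2 \hookrightarrow \tpi^*(\cU_2^{\perp}/\cU_2) \twoheadrightarrow \tpi^*(\cU_2^{\perp}/\cU_2)/(\cU_3/\tpi^*\cU_2)$$
is an isomorphism of rank-$2$ bundles.

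Finally, under the identification $\mathrm{Bl}_{\mathsf{O}_2}\bar{\mathsf{O}}_1\simeq \PP_{\Gad}(\cU_2^{\perp}/\cU_2)$ of Proposition \ref{proposition:resolution_of_Hnu}, a point $[U_3]\in \mathsf{O}_1$ is recorded by the line $U_3/\mathrm{Ker}(q\restr{U_3})$ inside the fiber of $\cU_2^{\perp}/\cU_2$ over $\tpi(U_3)$, so the tautological line subbundle of the projective bundle pulls back to $\cU_3/\tpi^*\cU_2$ on $\mathsf{O}_1$. The tautological exact sequence \eqref{tautological_exact_sequence_on_projectivization_1} therefore identifies $\cQ_{\PP_{\scaleto{\Gad}{6.5pt}}(S^2\cU_2)}\restr{\mathsf{O}_1}$ with $\tpi^*(\cU_2^{\perp}/\cU_2)/(\cU_3/\tpi^*\cU_2)$, and by the previous step with $\cU_3^{\perp}/\tpi^*\cU_2$, which completes the identification. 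The main obstacle I anticipate is the last step: pinning down precisely that the tautological line subbundle of $\PP_{\Gad}(\cU_2^{\perp}/\cU_2)$ restricts on $\mathsf{O}_1$ to $\cU_3/\tpi^*\cU_2$, i.e.\ verifying that the geometric description of the embedding $\mathsf{O}_1\hookrightarrow \PP_{\Gad}(\cU_2^{\perp}/\cU_2)$ implicit in Proposition \ref{proposition:resolution_of_Hnu} behaves as expected with respect to the tautological flag.
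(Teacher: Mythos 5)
Your proposal is correct, and its first half coincides with the paper's proof: both construct $q_{\cQ_3}$ by applying Proposition \ref{quadric_construction} to the quadric bundle $q_{\cU^{\perp}}$ of Lemma \ref{lemma:self_dual_1}, obtain reflexivity and the support statement from that proposition, and reduce the last claim to identifying $\mathrm{Im}(q_{\cU^{\perp}})\restr{\mathsf{O}_1}\simeq \cU_3^{\perp}/\tpi^*\cU_2$ with $\cQ_{\PP_{\Gad}(S^2\cU_2)}\restr{\mathsf{O}_1}$. Where you diverge is in that final identification. The paper first reads off $\cQ_{\PP_{\Gad}(S^2\cU_2)}\restr{\mathsf{O}_1}\simeq \tpi^*\cU_2^{\perp}/\cU_3$ from the restricted tautological sequence, then invokes Corollary \ref{corollary:twist_of_quotient} (self-duality of the tautological quotient, itself obtained from Proposition \ref{lemma:quadric_bundles_cokernel} and the double-cover analysis of Section \ref{section:G2action}) together with the dual sequence \eqref{U_3U_2} to conclude $(\tpi^*\cU_2^{\perp}/\cU_3)^{\vee}\simeq \cU_3^{\perp}/\tpi^*\cU_2$. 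You instead observe that the line $\cU_3/\tpi^*\cU_2$ is nondegenerate for the form induced by $q$ on $\tpi^*(\cU_2^{\perp}/\cU_2)$ (since $q\restr{U_3}$ has rank one with kernel $U_2$ on $\mathsf{O}_1$), that its orthogonal complement is exactly $\cU_3^{\perp}/\tpi^*\cU_2$, and that the resulting splitting maps this complement isomorphically onto the tautological quotient. This is a more elementary route that bypasses Corollary \ref{corollary:twist_of_quotient} entirely, at the cost of not producing the extra twisted self-duality that the paper needs again in Lemma \ref{lemma_f1}; the step you flag as the main obstacle --- that the tautological line subbundle of $\PP_{\Gad}(\cU^{\perp}_2/\cU_2)$ restricts on $\mathsf{O}_1$ to $\cU_3/\tpi^*\cU_2$ --- is exactly the identification the paper also uses (implicitly, via $\cO(-H_3)\otimes\tpi^*\cO(H_2)\simeq \cU_3/\tpi^*\cU_2$ and \eqref{tautological_restriction}), and it holds because the resolution of Proposition \ref{proposition:resolution_of_Hnu} sends $([U_2],\ell)$ to the preimage $U_3$ of $\ell$ in $U_2^{\perp}$, so that $\ell=U_3/U_2$ with $U_2=\mathrm{Ker}(q\restr{U_3})$.
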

\begin{proof}
Applying Proposition \ref{quadric_construction} to the quadric bundle $q_{\scaleto{\cU^{\perp}}{6.5pt}}$ from Lemma \ref{lemma:self_dual_1} we obtain the required $\rG_2$-equivariant quadric bundle $q_{\scaleto{\cQ_3}{6.5pt}}$. By Proposition \ref{quadric_construction} and Lemma \ref{lemma:self_dual_1} we obtain
$$\cC(q_{\scaleto{\cQ_3}{6.5pt}})\restr{\mathsf{O}_1}\simeq \mathrm{Im}(q_{\cU^{\perp}})\restr{\mathsf{O}_1}\simeq \cU_3^{\perp}/ \tpi^*\cU_2.$$
Since on $\mathsf{O}_1$ we have $\cO(-H_3)\otimes \tpi^*\cO(H_2) \simeq \cU_3/\tpi^*\cU_2$ the restriction of the exact sequence \eqref{tautological_exact_sequence_on_projectivization_1} to $\mathsf{O}_1$ has the following form:
\begin{equation} \label{tautological_restriction}
    0\to \cU_3/\tpi^*\cU_2 \to \tpi^*\cU_2^{\perp}/ \tpi^*\cU_2 \to \tpi^*\cU^{\perp}_2/\cU_3 \to 0,
\end{equation}
so we obtain $\cQ_{\PP_{\scaleto{\Gad}{6.5pt}}(S^2\cU_2)}\restr{\mathsf{O}_1}\simeq \tpi^*\cU^{\perp}_2/\cU_3$. By Corollary \ref{corollary:twist_of_quotient} we have $$\cQ_{\PP_{\scaleto{\Gad}{6.5pt}}(S^2\cU_2)}\restr{\mathsf{O}_1}\simeq (\cQ_{\PP_{\scaleto{\Gad}{6.5pt}}(S^2\cU_2)}\restr{\mathsf{O}_1})^{\vee}\simeq (\tpi^*\cU^{\perp}_2/\cU_3)^{\vee}.$$
The exact sequence dual to \eqref{tautological_restriction}
\begin{equation} \label{U_3U_2}
   0\to \cU_3^{\perp}/ \tpi^*\cU_2\to \tpi^*\cU_2^{\perp}/ \tpi^*\cU_2 \to (\cU_3/\tpi^*\cU_2)^{\vee}\to 0
\end{equation}
shows $(\tpi^*\cU^{\perp}_2/\cU_3)^{\vee}\simeq \cU_3^{\perp}/\tpi^*\cU_2$, so we deduce 
$$\cC(q_{\scaleto{\cQ_3}{6.5pt}})\simeq \cU_3^{\perp}/\tpi^*\cU_2\simeq (\tpi^*\cU^{\perp}_2/\cU_3)^{\vee}\simeq \cQ_{\PP_{\scaleto{\Gad}{6.5pt}}(S^2\cU_2)}\restr{\mathsf{O}_1}$$
\end{proof}
\begin{lemma} \label{lemma:map_l}
On $\CG$ there exists a nonzero self-dual $\rG_2$-equivariant map
\begin{equation} \label{definition_l}
     q_{\scaleto{\Lambda^2\cU_3}{6.5pt}}\colon \Lambda^2\cU_3\to \cU_3
\end{equation} 
with the cokernel sheaf isomorphic to $i_*\cC(q_{\scaleto{\Lambda^2\cU_3}{6.5pt}})$, where $\cC(q_{\scaleto{\Lambda^2\cU_3}{6.5pt}})$ is a reflexive sheaf of rank~1 on $\bar{\mathsf{O}}_1$. Moreover, we have~$\cC(q_{\scaleto{\Lambda^2\cU_3}{6.5pt}})\restr{\mathsf{O}_1}\simeq  \cO(-H_3)\otimes\tpi^*\cO(H_2)$.
\end{lemma}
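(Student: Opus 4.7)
The plan is to construct $q_{\scaleto{\Lambda^2\cU_3}{6.5pt}}$ by applying Proposition~\ref{quadric_construction} to the self-dual map $q_{\scaleto{\cU}{3.5pt}}\colon \cU_3\to\cU_3^{\vee}$ from Lemma~\ref{lemma:self_dual_1}. Two inputs are needed. First, $\bar{\mathsf{O}}_1\subset\CG$ is an effective Cartier divisor with $\cO(\bar{\mathsf{O}}_1)\simeq\cO(H_3)$, since it is the vanishing locus of the $\rG_2$-invariant section $\nu\in\mathrm{H}^0(\CG,\cO(H_3))$ discussed before \eqref{i_embedding}. Second, by Lemma~\ref{lemma:self_dual_1} the cokernel of $q_{\scaleto{\cU}{3.5pt}}$ is scheme-theoretically supported on $\bar{\mathsf{O}}_1$. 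This is precisely the hypothesis of Proposition~\ref{quadric_construction}.

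Applying Proposition~\ref{quadric_construction} with $\cF=\cU_3$, $\cL=\cO$, $D=\bar{\mathsf{O}}_1$ produces a self-dual map $\cU_3^{\vee}\otimes\cO(-H_3)\to\cU_3$, which, via the isomorphism $\cU_3^{\vee}(-H_3)\simeq\Lambda^2\cU_3$ from \eqref{preliminaries_isomorphisms}, becomes the required self-dual map
\begin{equation*}
q_{\scaleto{\Lambda^2\cU_3}{6.5pt}}\colon \Lambda^2\cU_3\to\cU_3,
\end{equation*}
viewed as a quadric bundle with values in $\cO(-H_3)$. Its nonvanishing is built into the construction (the section $\xi^{\vee}\colon\cO(-H_3)\to\cO$ in the proof of Proposition~\ref{quadric_construction} is nonzero), and its $\rG_2$-equivariance is automatic from the $\rG_2$-equivariance of $q_{\scaleto{\cU}{3.5pt}}$ together with the canonical nature of the Proposition~\ref{quadric_construction} construction.

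It remains to identify the cokernel sheaf. Proposition~\ref{quadric_construction} delivers directly that $\cC(q_{\scaleto{\Lambda^2\cU_3}{6.5pt}})$ is reflexive and satisfies $\cC(q_{\scaleto{\Lambda^2\cU_3}{6.5pt}})\simeq\mathrm{Im}(q_{\scaleto{\cU}{3.5pt}}\restr{\bar{\mathsf{O}}_1})$. Over the open orbit $\mathsf{O}_1\subset\bar{\mathsf{O}}_1$, Lemma~\ref{lemma:self_dual_1} identifies $\mathrm{Ker}(q_{\scaleto{\cU}{3.5pt}})\restr{\mathsf{O}_1}\simeq \tpi^*\cU_2$, so $\mathrm{Im}(q_{\scaleto{\cU}{3.5pt}})\restr{\mathsf{O}_1}\simeq \cU_3/\tpi^*\cU_2$ is a line bundle, confirming that $\cC(q_{\scaleto{\Lambda^2\cU_3}{6.5pt}})$ has rank $1$ on $\bar{\mathsf{O}}_1$. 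The final identification $\cU_3/\tpi^*\cU_2\simeq\cO(-H_3)\otimes\tpi^*\cO(H_2)$ is the main subtle point: it was already used in the proof of Lemma~\ref{lemma_f2} and follows from Proposition~\ref{proposition:resolution_of_Hnu} and Lemma~\ref{lemma:computation_of_O(E)} describing $\mathrm{Bl}_{\mathsf{O}_2}\bar{\mathsf{O}}_1\simeq\PP_{\Gad}(\cU^{\perp}_2/\cU_2)$ and its tautological line bundle on the complement of the exceptional divisor $E$. Together these yield $\cC(q_{\scaleto{\Lambda^2\cU_3}{6.5pt}})\restr{\mathsf{O}_1}\simeq\cO(-H_3)\otimes\tpi^*\cO(H_2)$, completing the proof.
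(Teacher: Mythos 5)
Your proposal is correct and takes essentially the same route as the paper's proof: apply Proposition~\ref{quadric_construction} to the quadric bundle $q_{\scaleto{\cU}{3.5pt}}$ from Lemma~\ref{lemma:self_dual_1} and convert $\cU_3^{\vee}(-H_3)\to\cU_3$ into the required map via the isomorphism \eqref{preliminaries_isomorphisms}. You spell out the identification $\cC(q_{\scaleto{\Lambda^2\cU_3}{6.5pt}})\restr{\mathsf{O}_1}\simeq\mathrm{Im}(q_{\scaleto{\cU}{3.5pt}}\restr{\mathsf{O}_1})\simeq\cU_3/\tpi^*\cU_2\simeq\cO(-H_3)\otimes\tpi^*\cO(H_2)$ in more detail than the paper, which simply asserts the cokernel is "the required" one; the only thing you omit that the paper records is the explicit formula $q_{\scaleto{\Lambda^2\cU_3}{6.5pt}}(u_1\wedge u_2)=q^{-1}(\nu\conv u_1\wedge u_2)$, which is used later but is not needed for the lemma itself.
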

\begin{proof}
Applying Proposition \ref{quadric_construction} to the quadric bundle $q_{\cU}$ from Lemma \ref{lemma:self_dual_1} we obtain a $\rG_2$-equivariant self-dual map~$\cU_3^{\vee}(-1)\to \cU_3$ with the required cokernel sheaf. Using the isomorphism \eqref{preliminaries_isomorphisms} we get the required map. 

By Proposition \ref{quadric_construction} the map $ q_{\Lambda^2\cU_3}$ can be described as
\begin{equation} \label{definition_l_1}
     q_{\scaleto{\Lambda^2\cU_3}{6.5pt}}(u_1\wedge u_2) = q^{-1}(\nu\conv u_1\wedge u_2), 
\end{equation}
because $\nu$ is the equation of $\bar{\mathsf{O}}_1$.
\end{proof}
Now let us show that the map $q_{\Lambda^2\cU_3}$ defines a Lie algebra structure on $\cU$. 
\begin{lemma} \label{lemma:lie_sructure}
Subspaces of $V$ parametrized by $\CG$ are Lie algebras with respect to $q_{\Lambda^2\cU_3}$:
\begin{enumerate}
\item the open dense orbit $\mathsf{O}_0$ parametrizes three-dimensional semisimple Lie algebras that are isomorphic to~$\mathfrak{sl}_2$;
\item the locally closed 7-dimensional orbit $\mathsf{O}_1$ parametrizes three-dimensional solvable Lie algebras that are isomorphic to the subalgebra of $\mathfrak{gl}_3$ formed by matrices with zero second and third row;
\item the closed 5-dimensional orbit $\mathsf{O}_2$ parametrizes three-dimensional nilpotent Lie algebras that are isomorphic to the strictly upper triangular matrices $\mathfrak{n}_3\subset \mathfrak{sl}_3$.
\end{enumerate}
\end{lemma}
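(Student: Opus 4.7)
The plan is to verify the Lie algebra axioms for the bracket $[u,v]\coloneqq q_{\Lambda^2\cU_3}(u\wedge v)$ on each fiber of $\cU_3$ over a point $[U_3]\in\CG$, and then to identify the isomorphism class on each $\rG_2$-orbit by direct computation at the representatives $P_0$, $P_1$, $P_2$ from Proposition~\ref{proposition:orbits}. Bilinearity and antisymmetry are immediate from Lemma~\ref{lemma:map_l}, and closure of the bracket (that the image lies in $U_3$ rather than merely in $V$) is built into the target of $q_{\Lambda^2\cU_3}$; so the only nontrivial axiom is the Jacobi identity.

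For Jacobi, view the formula $[u,v]=q^{-1}(\nu\conv(u\wedge v))$ as defining an antisymmetric bracket on all of $V$, and consider the associated Jacobiator
\[
J\colon \Lambda^3 V \to V,\qquad u_1\wedge u_2\wedge u_3 \mapsto [[u_1,u_2],u_3]+[[u_2,u_3],u_1]+[[u_3,u_1],u_2].
\]
This map is $\rG_2$-equivariant since $q$ and $\nu$ are, and alternating in its three arguments as a general feature of the Jacobiator of any antisymmetric bracket. By~\eqref{decomposition2} the space $\Lambda^3 V$ decomposes as $V_{27}\oplus \Bbbk \oplus V$, so Schur's lemma forces $\mathrm{Hom}_{\rG_2}(\Lambda^3 V, V)$ to be one-dimensional, spanned by the natural contraction $u_1\wedge u_2\wedge u_3\mapsto q^{-1}(\lambda\conv(u_1\wedge u_2\wedge u_3))$. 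Hence $J$ is a scalar multiple of this contraction; in particular $J(u_1,u_2,u_3)=0$ whenever $\lambda\conv(u_1\wedge u_2\wedge u_3)=0$, which by definition of $\CG$ holds for any three vectors in a subspace annihilated by $\lambda$. This establishes the Jacobi identity on every $U_3$ parametrized by $\CG$.

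To identify the Lie algebra type on each orbit, I would substitute $P_0$, $P_1$, $P_2$ into the formula for the bracket and use the explicit expressions for $\nu$ and for $q$ in~\eqref{q:equation}. At $P_0=\langle e_0,e_\gamma,e_{-\gamma}\rangle$ a short computation gives $[e_0,e_\gamma]\propto e_\gamma$, $[e_0,e_{-\gamma}]\propto e_{-\gamma}$ and $[e_\gamma,e_{-\gamma}]\propto e_0$, which are the $(h,e,f)$-relations of $\mathfrak{sl}_2$ after rescaling. At $P_1=\langle e_0,e_\beta,e_{-\gamma}\rangle$ only the two brackets involving $e_0$ are nonzero, yielding a three-dimensional solvable algebra with a two-dimensional abelian derived ideal, which after an appropriate choice of basis is the claimed subalgebra of $\mathfrak{gl}_3$. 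At $P_2=\langle e_\alpha,e_\beta,e_{-\gamma}\rangle$ the only nonvanishing bracket is $[e_\alpha,e_\beta]\propto e_{-\gamma}$, which is the defining relation of the Heisenberg algebra $\mathfrak{n}_3\subset \mathfrak{sl}_3$.

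The main content of the proof is the Jacobi identity, where the whole argument is compressed into the Schur-lemma observation that $J$ and the contraction with $\lambda$ are forced to be proportional by $\rG_2$-equivariance; the orbit-wise identification of isomorphism types is then a short explicit check at the chosen base points.
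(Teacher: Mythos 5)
Your proposal is correct in substance, and its second half --- computing the bracket at the representatives $P_0,P_1,P_2$ of Proposition \ref{proposition:orbits} via the explicit formula \eqref{definition_l_1} and invoking $\rG_2$-equivariance to propagate the answer over each orbit --- is precisely the paper's proof, which consists of nothing more than the three computations \eqref{lie_p0}--\eqref{lie_p2}. Where you genuinely diverge is in treating the Jacobi identity as a separate issue: the paper never addresses it explicitly in the proof (it is subsumed in recognizing the structure constants at each representative as those of honest Lie algebras, which by equivariance suffices), and the Jacobiator formula is only asserted without proof in Remark \ref{remark:cayley}. Your argument --- the Jacobiator $J\colon\Lambda^3V\to V$ is $\rG_2$-equivariant and alternating, $V$ occurs with multiplicity one in $\Lambda^3V$ by \eqref{decomposition2}, hence $J$ is proportional to $u_1\wedge u_2\wedge u_3\mapsto q^{-1}(\lambda\conv(u_1\wedge u_2\wedge u_3))$ and so vanishes on $\Lambda^3U_3$ exactly when $[U_3]\in\CG$ --- is exactly the right way to justify that remark, and it buys something the pointwise check does not, namely a uniform reason why the bracket is a Lie bracket precisely over the Cayley Grassmannian.

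One step deserves more care than ``after an appropriate choice of basis''. A three-dimensional solvable algebra with two-dimensional abelian derived ideal is not determined up to isomorphism by those words: the ratio of the two eigenvalues of $\mathrm{ad}(h)$ on the derived ideal is an invariant (up to inversion). The computation \eqref{lie_p1} gives eigenvalues $-2,+2$ at $P_1$, i.e.\ ratio $-1$, whereas the subalgebra of $\mathfrak{gl}_3$ of matrices supported in the first row has $\mathrm{ad}(E_{11})$ acting as the identity on $\langle E_{12},E_{13}\rangle$, i.e.\ ratio $+1$, and these two algebras are not isomorphic. So the identification in part (ii) requires an actual comparison of eigenvalue ratios rather than a generic basis change; the paper's own proof is equally terse at this point, and the issue affects only the name of the model algebra in (ii), not the claims that the three orbits parametrize semisimple, solvable and nilpotent algebras respectively.
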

\begin{proof}
Using the explicit formula \eqref{definition_l_1} we compute $q_{\Lambda^2\cU_3}$ at points~$P_i$ from Proposition \ref{proposition:orbits}.

For $P_0:=\langle e_0,e_{\gamma},e_{-\gamma} \rangle$ we have 
\begin{equation} \label{lie_p0}
     q_{\scaleto{\Lambda^2\cU_3}{6.5pt}}(e_0\wedge e_{\gamma})=-2e_{\gamma}, \quad
     q_{\scaleto{\Lambda^2\cU_3}{6.5pt}}(e_0\wedge e_{-\gamma})=2e_{-\gamma}, \quad
     q_{\scaleto{\Lambda^2\cU_3}{6.5pt}}(e_{\gamma}\wedge e_{-\gamma})=e_{0}.
\end{equation}
For $P_1:=\langle e_{0},e_{\beta},e_{-\gamma} \rangle$ we have 
\begin{equation} \label{lie_p1}
     q_{\scaleto{\Lambda^2\cU_3}{6.5pt}}(e_0\wedge e_{\beta})=-2e_{\beta}, \quad
     q_{\scaleto{\Lambda^2\cU_3}{6.5pt}}(e_0\wedge e_{-\gamma})=2e_{-\gamma}, \quad
     q_{\scaleto{\Lambda^2\cU_3}{6.5pt}}(e_{\beta}\wedge e_{-\gamma})=0.
\end{equation} 
For $P_2:=\langle e_{\alpha},e_{\beta},e_{-\gamma} \rangle$ we have 
\begin{equation} \label{lie_p2}
     q_{\scaleto{\Lambda^2\cU_3}{6.5pt}}(e_{\alpha}\wedge e_{\beta})=-2e_{-\gamma}, \quad
     q_{\scaleto{\Lambda^2\cU_3}{6.5pt}}(e_{\alpha}\wedge e_{-\gamma})=0, \quad
     q_{\scaleto{\Lambda^2\cU_3}{6.5pt}}(e_{\beta}\wedge e_{-\gamma})=0.
\end{equation}
The statement of the lemma follows from these computations.
\end{proof}
\begin{remark} \label{remark:cayley}
By \cite[Proposition 3.2]{M} the Cayley Grassmannian $\CG$ can be described as a closed subvariety in~$\Gr(3,\mathrm{Im}(\mathbb{O}))$, that parametrizes the imaginary parts of 4-dimensional associative subalgebras of~$\mathbb{O}$, i.e. 3-dimensional subspaces of $V= \mathrm{Im}(\mathbb{O})$ closed with respect to the skew-symmetric bracket~$\br \coloneqq q^{-1}(\nu\conv (x\wedge y))= \mathrm{Im}(x\times y)$, where~$\times\colon \mathbb{O}\times \mathbb{O}\to \mathbb{O}$ is the octonionic product. The lemma above shows that $\br$ defines a Lie algebra structure on 3-dimensional subspaces of~$\mathrm{Im}(\mathbb{O})$ parametrized by~$\CG$. In fact, it is easy to check that the Jacobiator of $\br$ on $V$ is equal to~$q^{-1}(\lambda\conv(-))\colon \Lambda^3V \to V$, hence vanishes on $U_3\subset V$ precisely when $[U_3]\in \CG$.
\end{remark}
Denote by $\bQ_{\Lambda^2\cU_3}$ the $\rG_2$-equivariant conic bundle that is defined by $q_{\scaleto{\Lambda^2\cU_3}{6.5pt}}$.
\begin{corollary} \label{lemma_l}
Over~$\mathsf{O}_0$ the fibers of $\bQ_{\Lambda^2\cU_3}$ are smooth conics, over~$\mathsf{O}_1$ the fibers of $\bQ_{\Lambda^2\cU_3}$ are reducible conics and over $\mathsf{O}_2$ the fibers of~$\bQ_{\Lambda^2\cU_3}$ are double lines. Moreover, over $\mathsf{O}_1$ the map $q_{\scaleto{\Lambda^2\cU_3}{6.5pt}}$ induces a self-dual isomorphism
$$\tpi^*\cU_2^{\vee}(-H_3)\to \tpi^*\cU_2,$$
that is determined by the section $\cO(-H_3)\otimes \tpi^*\cO(H_2)\to \tpi^{*}S^2\cU_2(H_2)$ corresponding to the restriction to~$\mathsf{O}_1$ of the embedding from the tautological exact sequence \eqref{tautological_exact_sequence_on_projectivization}.
\end{corollary}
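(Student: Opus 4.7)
The plan is to use $\rG_2$-equivariance of $q_{\scaleto{\Lambda^2\cU_3}{6.5pt}}$ to reduce everything to pointwise computations at the three orbit representatives $P_0, P_1, P_2$ from Proposition~\ref{proposition:orbits}, invoking the Lie bracket values \eqref{lie_p0}-\eqref{lie_p2} from Lemma~\ref{lemma:lie_sructure}.

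For the fiber classification, the conic $\bQ_{\Lambda^2\cU_3}$ over $[U_3]\in \CG$ is the zero locus in $\PP(\Lambda^2 U_3)$ of the self-dual form associated with $q_{\scaleto{\Lambda^2\cU_3}{6.5pt}}\colon \Lambda^2 U_3\to U_3$, and its type is dictated by the rank of this form: rank~$3$, $2$, or~$1$ gives a smooth conic, a reducible conic, or a double line respectively. Inspecting \eqref{lie_p0}-\eqref{lie_p2}: at $P_0$ the three images $-2e_{\gamma}, 2e_{-\gamma}, e_0$ are linearly independent (rank~$3$); at $P_1$ the kernel is $\langle e_{\beta}\wedge e_{-\gamma}\rangle$ with image $\langle e_\beta, e_{-\gamma}\rangle$ (rank~$2$); at $P_2$ only $e_{\alpha}\wedge e_{\beta}\mapsto -2e_{-\gamma}$ is nonzero (rank~$1$). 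Since rank is constant on each $\rG_2$-orbit, the first claim follows.

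For the self-dual isomorphism on $\mathsf{O}_1$, observe that at $P_1$ the kernel of $q_{\scaleto{\Lambda^2\cU_3}{6.5pt}}$ equals $\Lambda^2 U_2$, where $U_2=\mathrm{Ker}(q|_{P_1})=\tpi(P_1)$ by Proposition~\ref{proposition:orbits}(ii); by $\rG_2$-equivariance the kernel of $q_{\scaleto{\Lambda^2\cU_3}{6.5pt}}|_{\mathsf{O}_1}$ is $\Lambda^2\tpi^*\cU_2\simeq \tpi^*\cO(-H_2)$. The filtration on $\Lambda^2\cU_3$ induced by $\tpi^*\cU_2\subset \cU_3$, combined with $\cU_3/\tpi^*\cU_2\simeq \cO(-H_3)\otimes \tpi^*\cO(H_2)$ (a determinant computation) and the rank-$2$ identity $\cU_2^\vee\simeq \cU_2(H_2)$, gives
\begin{equation*}
\Lambda^2\cU_3/\Lambda^2\tpi^*\cU_2\;\simeq\; \tpi^*\cU_2\otimes(\cU_3/\tpi^*\cU_2) \;\simeq\; \tpi^*\cU_2^\vee(-H_3).
\end{equation*}
The image of $q_{\scaleto{\Lambda^2\cU_3}{6.5pt}}|_{\mathsf{O}_1}$ lies in $\tpi^*\cU_2$ (checked at $P_1$, extended equivariantly), yielding a factored map $\tpi^*\cU_2^\vee(-H_3)\to \tpi^*\cU_2$ that is self-dual (inherited from $q_{\scaleto{\Lambda^2\cU_3}{6.5pt}}$) and an isomorphism (by the rank count at $P_1$).

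Finally, a self-dual isomorphism $\tpi^*\cU_2^\vee(-H_3)\simeq \tpi^*\cU_2$ corresponds, after a twist and the rank-$2$ identity, to a line-subbundle map $\cO(-H_3)\otimes \tpi^*\cO(H_2)\to \tpi^*S^2\cU_2(H_2)$. Under Lemma~\ref{lemma:isomorphism}'s identification $\tpi^*S^2\cU_2(H_2)\simeq \tpi^*(\cU_2^\perp/\cU_2)$, the tautological embedding of \eqref{tautological_exact_sequence_on_projectivization} restricts on $\mathsf{O}_1$ to the natural inclusion $\cU_3/\tpi^*\cU_2\hookrightarrow \tpi^*(\cU_2^\perp/\cU_2)$ coming from $\cU_3\subset \tpi^*\cU_2^\perp$ (easily verified at $P_1$). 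Both candidate sections are $\rG_2$-equivariant maps from the same line bundle into the same rank-$3$ bundle, so they match up to a global scalar on the connected orbit $\mathsf{O}_1$; this scalar is pinned down by comparing at $P_1$, where the tautological image of $[e_0]\in U_3/U_2$ corresponds, under Lemma~\ref{lemma:isomorphism}, to the traceless operator $\mathrm{ad}_{e_0}|_{U_2}=\mathrm{diag}(-2, 2)\in \mathfrak{sl}(U_2)\simeq S^2 U_2(H_2)$, which is precisely what the bracket values \eqref{lie_p1} produce. The main obstacle I anticipate is exactly this final calibration: tracing through the specific normalization of Lemma~\ref{lemma:isomorphism}'s isomorphism $\cU_2^\perp/\cU_2\simeq S^2\cU_2(H_2)$ carefully enough to confirm the two equivariant sections agree on the nose, rather than only up to an equivariant scalar.
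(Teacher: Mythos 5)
Your argument is correct and reaches the same endpoint as the paper, but the middle step is done by a genuinely different (more hands-on) route. For the fiber types the paper does exactly what you do: read off the rank of $q_{\Lambda^2\cU_3}$ at $P_0,P_1,P_2$ from \eqref{lie_p0}--\eqref{lie_p2} and spread it over each orbit by $\rG_2$-equivariance. For the self-dual isomorphism on $\mathsf{O}_1$, however, the paper does not touch the filtration of $\Lambda^2\cU_3$ at all: it obtains $\tpi^*\cU_2^{\vee}(-H_3)\xrightarrow{\sim}\tpi^*\cU_2$ directly from the general machinery of Proposition \ref{quadric_construction} applied to the pair $(q_{\cU},q_{\Lambda^2\cU_3})$ together with the identification $\cC(q_{\cU})\restr{\mathsf{O}_1}\simeq\tpi^*\cU_2^{\vee}$ from Lemma \ref{lemma:self_dual_1} (the image/coimage of $f'\restr{D}$ are read off from the cokernel of $f$ and the twist by $\cO_D(D)$). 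Your replacement --- kernel equals $\Lambda^2\tpi^*\cU_2$, coimage computed via $\Lambda^2\cU_3/\Lambda^2\tpi^*\cU_2\simeq\tpi^*\cU_2\otimes(\cU_3/\tpi^*\cU_2)\simeq\tpi^*\cU_2^{\vee}(-H_3)$, image checked to be $\tpi^*\cU_2$ at $P_1$ --- is a correct, more elementary derivation that avoids invoking the quadric-bundle formalism, at the cost of redoing by hand what that formalism packages. Finally, the ``calibration'' you flag as the main obstacle is not a real gap: the identification of Lemma \ref{lemma:isomorphism} is itself an equivariant isomorphism of irreducible equivariant bundles, hence canonical only up to scalar, so the statement can only be (and only needs to be) verified up to that scalar; concretely, both the tautological image of $\langle e_0\rangle=U_3/U_2$ and the value $-2e_{\beta}\otimes e_{-\gamma}-2e_{-\gamma}\otimes e_{\beta}$ from \eqref{lie_p1} are the unique torus-weight-zero line in $S^2U_2(H_2)\simeq U_2^{\perp}/U_2$ at $P_1$, which forces the two equivariant sections of \eqref{tautological_exact_sequence_on_projectivization} to define the same line subbundle. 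This one-point weight check is exactly what the paper's quoted computation accomplishes, so your proof closes once you make that observation explicit.
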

\begin{proof}
Using the computations \eqref{lie_p0}, \eqref{lie_p1} and \eqref{lie_p2} we deduce the first statement.

Applying Proposition \ref{quadric_construction} and Lemma \ref{lemma:self_dual_1} to the quadric bundles $q_{\scaleto{\cU}{4pt}}$ and $q_{\scaleto{\Lambda^2\cU_3}{6.5pt}}$ we obtain a self-dual isomorphism $\tpi^*\cU_2^{\vee}(-H_3)\to \tpi^*\cU_2$ on $\mathsf{O}_1$. The fact that this isomorphism is determined by the restriction of the embedding from the exact sequence \eqref{tautological_exact_sequence_on_projectivization} follows from the direct computation \eqref{lie_p1}, that shows that~$q_{\scaleto{\Lambda^2\cU_3}{6.5pt}}\in S^2\cU_2(H_3)$ at the point $P_1$ is equal to~$-2 e_{\beta}\otimes e_{-\gamma}- 2 e_{-\gamma}\otimes e_{\beta}$.
\end{proof}
\begin{corollary} \label{corollary_l_twisted}
The cokernel sheaf of the self-dual morphism $q_{\scaleto{\Lambda^2\cU_3}{6.5pt}}$ twisted by $\cO(H_3)$
$$q_{\scaleto{\cU^{\vee}_3}{6.5pt}}\colon \cU_3^{\vee}\to \Lambda^2\cU_3^{\vee}$$
is isomorphic to $i_*\cC(q_{\scaleto{\cU^{\vee}_3}{6.5pt}})$, where $\cC(q_{\scaleto{\cU^{\vee}_3}{6.5pt}})$ is a reflexive sheaf of rank~1 on $\bar{\mathsf{O}}_1$. Moreover, we have~$\cC(q_{\scaleto{\cU^{\vee}_3}{6.5pt}})\restr{\mathsf{O}_1}\simeq  \tpi^*\cO(H_2)$.
\end{corollary}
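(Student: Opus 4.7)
The plan is to deduce this corollary directly from Lemma \ref{lemma:map_l} by twisting with the line bundle $\cO(H_3)$. First, I observe that the natural isomorphisms \eqref{preliminaries_isomorphisms} give
\[
\Lambda^2\cU_3 \otimes \cO(H_3) \simeq \cU_3^{\vee}
\qquad\text{and}\qquad
\cU_3 \otimes \cO(H_3) \simeq \Lambda^2\cU_3^{\vee},
\]
where the second identification comes from $\Lambda^2\cU_3^{\vee}\simeq \cU_3\otimes \det(\cU_3)^{\vee}\simeq \cU_3(H_3)$. Consequently, $q_{\scaleto{\cU^{\vee}_3}{6.5pt}}$ is literally the morphism $q_{\scaleto{\Lambda^2\cU_3}{6.5pt}}\otimes \mathrm{id}_{\cO(H_3)}$ after these identifications, and it remains self-dual since twisting a self-dual morphism by a line bundle preserves self-duality (with a corresponding shift in the twist of the codomain).

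Next, since tensoring with a line bundle is exact, the cokernel of $q_{\scaleto{\cU^{\vee}_3}{6.5pt}}$ is obtained from the cokernel of $q_{\scaleto{\Lambda^2\cU_3}{6.5pt}}$ by tensoring with $\cO(H_3)$. Applying the projection formula to the closed embedding $i\colon \bar{\mathsf{O}}_1\hookrightarrow \CG$,
\[
\mathrm{Coker}(q_{\scaleto{\cU^{\vee}_3}{6.5pt}})\simeq i_*\cC(q_{\scaleto{\Lambda^2\cU_3}{6.5pt}})\otimes \cO(H_3)\simeq i_*\bigl(\cC(q_{\scaleto{\Lambda^2\cU_3}{6.5pt}})\otimes i^*\cO(H_3)\bigr),
\]
so $\cC(q_{\scaleto{\cU^{\vee}_3}{6.5pt}})\simeq \cC(q_{\scaleto{\Lambda^2\cU_3}{6.5pt}})\otimes i^*\cO(H_3)$. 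Reflexivity and rank are preserved under tensoring with a line bundle, so $\cC(q_{\scaleto{\cU^{\vee}_3}{6.5pt}})$ is a reflexive sheaf of rank $1$ on $\bar{\mathsf{O}}_1$.

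Finally, restricting to $\mathsf{O}_1$ and using the formula $\cC(q_{\scaleto{\Lambda^2\cU_3}{6.5pt}})\restr{\mathsf{O}_1}\simeq \cO(-H_3)\otimes \tpi^*\cO(H_2)$ from Lemma \ref{lemma:map_l}, one obtains
\[
\cC(q_{\scaleto{\cU^{\vee}_3}{6.5pt}})\restr{\mathsf{O}_1}\simeq \cO(-H_3)\otimes \tpi^*\cO(H_2)\otimes \cO(H_3)\simeq \tpi^*\cO(H_2),
\]
as required. There is essentially no substantive obstacle: the statement is a direct twist of Lemma \ref{lemma:map_l}, and the only care needed is to confirm that the twisted map is indeed the one denoted $q_{\scaleto{\cU^{\vee}_3}{6.5pt}}$ via the canonical identifications \eqref{preliminaries_isomorphisms}.
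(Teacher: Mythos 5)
Your proposal is correct and follows exactly the paper's own (one-line) argument: the paper also deduces the corollary as an immediate consequence of Lemma \ref{lemma:map_l} together with the isomorphism \eqref{preliminaries_isomorphisms}, and you have merely spelled out the routine twisting, projection-formula, and restriction steps. No gaps.
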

\begin{proof}
This is an immediate consequence of Lemma \ref{lemma:map_l} and isomorphism \eqref{preliminaries_isomorphisms}.
\end{proof}
\subsection{First self-duality} \label{subsection:first}
The aim of this subsection is to prove the isomorphism $\cE_{10}(H_3)\simeq \cE_{10}^{\vee}$. Recall the notation \eqref{quotient_bundle}.
\begin{lemma} \label{lemma_f1}
On $\CG$ we have a $\rG_2$-equivariant self-dual morphism 
\begin{equation*}
   q_{\scaleto{S^2\cU_3}{6.5pt}}\colon S^2\cU_3 \to S^2\cU_3^{\vee}(-H_3)
\end{equation*}
that determines a $\rG_2$-equivariant quadric bundle in $\bQ_{\scaleto{S^2\cU_3}{6.5pt}}\hookrightarrow\PP_{\CG}(S^2\cU_3)\to \CG$. The cokernel sheaf of~$q_{\scaleto{S^2\cU_3}{6.5pt}}$
is isomorphic to $i_*\cC(q_{\scaleto{S^2\cU_3}{6.5pt}})$, where $\cC(q_{\scaleto{S^2\cU_3}{6.5pt}})$ is a reflexive sheaf of rank 2 on $\bar{\mathsf{O}}_1$. Moreover,~$\cC(q_{\scaleto{S^2\cU_3}{6.5pt}})\restr{\mathsf{O}_1}\simeq \cQ_{\PP_{\scaleto{\Gad}{6.5pt}}(S^2\cU_2)}\restr{\mathsf{O}_1}$.
\end{lemma}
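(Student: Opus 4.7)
The plan is to construct $q_{S^2\cU_3}$ as the image of the conic bundle $q_{\Lambda^2\cU_3}$ under the Veronese--quadric correspondence of Corollary~\ref{proposition_veronese_family}, then to pin down its cokernel using Lemma~\ref{lemma:degeneracy_locus} and Proposition~\ref{quadric_construction}, and finally to identify $\cC(q_{S^2\cU_3})|_{\mathsf{O}_1}$ with $\cQ_{\PP_{\scaleto{\Gad}{6.5pt}}(S^2\cU_2)}|_{\mathsf{O}_1}$ by combining the explicit kernel description of Corollary~\ref{kernel_veronese} with the self-duality and $2$-torsion invariance recorded in Corollary~\ref{corollary:twist_of_quotient}. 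For the first step I apply Corollary~\ref{proposition_veronese_family} with $\cF_3=\cU_3$ and $\cL=\cO(-H_3)$ to the conic bundle $q_{\Lambda^2\cU_3}$ of Lemma~\ref{lemma:map_l}, regarded as a self-dual morphism $\cU_3^{\vee}(-H_3)\to\cU_3$ via $\Lambda^2\cU_3\simeq\cU_3^{\vee}(-H_3)$. The associated Veronese-containing quadric bundle is exactly a self-dual map $q_{S^2\cU_3}\colon S^2\cU_3\to S^2\cU_3^{\vee}(-H_3)$; its $\rG_2$-equivariance is inherited from $q_{\Lambda^2\cU_3}$, and the embedding $\bQ_{S^2\cU_3}\hookrightarrow\PP_{\CG}(S^2\cU_3)$ is automatic from the quadric bundle structure.

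\textbf{Cokernel support and rank.} A direct computation with determinants of symmetric squares gives $\det(S^2\cU_3^{\vee}(-H_3))\otimes\det(S^2\cU_3)^{\vee}\simeq\cO(2H_3)$, so the degeneracy divisor $D_1(q_{S^2\cU_3})$ has class $2H_3=2[\bar{\mathsf{O}}_1]$. By Corollary~\ref{kernel_veronese} applied to our Veronese construction, the kernel of $q_{S^2\cU_3}$ over $\mathsf{O}_1$ is locally free of rank $2$; hence $\bar{\mathsf{O}}_1\subset D_2(q_{S^2\cU_3})$ and the multiplicity of $D_1(q_{S^2\cU_3})$ along the irreducible divisor $\bar{\mathsf{O}}_1$ is at least $2$. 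Combined with the degree count this forces $D_1(q_{S^2\cU_3})=2\bar{\mathsf{O}}_1$ scheme-theoretically. Lemma~\ref{lemma:degeneracy_locus} then yields $\mathrm{Coker}(q_{S^2\cU_3})=i_*\cC(q_{S^2\cU_3})$ with $\cC(q_{S^2\cU_3})$ of generic rank $2$ on $\bar{\mathsf{O}}_1$, and reflexivity follows from Proposition~\ref{quadric_construction} applied to the self-dual $q_{S^2\cU_3}$ on the Cartier divisor $\bar{\mathsf{O}}_1$.

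\textbf{The identification over $\mathsf{O}_1$ (main obstacle).} The most delicate step is the isomorphism $\cC(q_{S^2\cU_3})|_{\mathsf{O}_1}\simeq\cQ_{\PP_{\scaleto{\Gad}{6.5pt}}(S^2\cU_2)}|_{\mathsf{O}_1}$, which demands careful bookkeeping of line-bundle twists. On the smooth locus $\mathsf{O}_1$, the symmetric form $S^2\cU_3\otimes S^2\cU_3\to\cO(-H_3)$ induced by $q_{S^2\cU_3}$ descends to a nondegenerate form on the image, giving $\mathrm{Im}(q_{S^2\cU_3})|_{\mathsf{O}_1}\simeq\mathrm{Im}(q_{S^2\cU_3})^{\vee}(-H_3)|_{\mathsf{O}_1}$; dualizing the kernel sequence and comparing with the image sequence yields the identification $\cC|_{\mathsf{O}_1}\simeq K^{\vee}(-H_3)$, where $K=\ker(q_{S^2\cU_3})|_{\mathsf{O}_1}$. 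By Corollary~\ref{kernel_veronese}, $K\simeq S^2(\tpi^*\cU_2)/\cO(-H_3)$, and by Corollary~\ref{lemma_l} the defining embedding $\cO(-H_3)\hookrightarrow S^2(\tpi^*\cU_2)$ is the $\tpi^*\cO(-H_2)$-twist of the tautological embedding $\cO(-H_3)\otimes\tpi^*\cO(H_2)\hookrightarrow\tpi^*(S^2\cU_2(H_2))$ from \eqref{tautological_exact_sequence_on_projectivization}; comparing with \eqref{quotient_bundle} gives $K\simeq\cQ_{\PP_{\scaleto{\Gad}{6.5pt}}(S^2\cU_2)}|_{\mathsf{O}_1}\otimes\tpi^*\cO(-H_2)$. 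Substituting into $\cC|_{\mathsf{O}_1}\simeq K^{\vee}(-H_3)$ and invoking both halves of Corollary~\ref{corollary:twist_of_quotient}---the self-duality $\cQ_{\PP_{\scaleto{\Gad}{6.5pt}}(S^2\cU_2)}^{\vee}\simeq\cQ_{\PP_{\scaleto{\Gad}{6.5pt}}(S^2\cU_2)}$ and the $2$-torsion invariance $\cQ_{\PP_{\scaleto{\Gad}{6.5pt}}(S^2\cU_2)}\simeq\cQ_{\PP_{\scaleto{\Gad}{6.5pt}}(S^2\cU_2)}\otimes\tpi^*\cO(-H_2)\otimes\cO(H_3)$---collapses the residual twist factor $\tpi^*\cO(H_2)\otimes\cO(-H_3)$ and produces $\cC|_{\mathsf{O}_1}\simeq\cQ_{\PP_{\scaleto{\Gad}{6.5pt}}(S^2\cU_2)}|_{\mathsf{O}_1}$.
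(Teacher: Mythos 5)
Your proposal is correct and follows essentially the same route as the paper: construct $q_{\scaleto{S^2\cU_3}{6.5pt}}$ from $q_{\scaleto{\Lambda^2\cU_3}{6.5pt}}$ via Corollary~\ref{proposition_veronese_family}, pin down the cokernel support with the determinant computation and Lemma~\ref{lemma:degeneracy_locus} together with Proposition~\ref{quadric_construction}, and identify $\cC(q_{\scaleto{S^2\cU_3}{6.5pt}})\restr{\mathsf{O}_1}$ through Corollary~\ref{kernel_veronese}, Corollary~\ref{lemma_l} and Corollary~\ref{corollary:twist_of_quotient}. The only (harmless) variations are that you derive $D_1=2\bar{\mathsf{O}}_1$ by a corank-plus-degree count where the paper invokes $\rG_2$-equivariance (and both tacitly use that the fibers over $\mathsf{O}_0$ are smooth quadrics, which is immediate from Corollary~\ref{proposition_veronese_family}), and that you make explicit the kernel-to-cokernel passage $\cC\restr{\mathsf{O}_1}\simeq K^{\vee}(-H_3)$ that the paper leaves implicit.
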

\begin{proof}
Applying Corollary \ref{proposition_veronese_family} to the conic bundle $q_{\scaleto{\Lambda^2\cU_3}{6.5pt}}$ from Lemma \ref{lemma:map_l}, so that in the notation of Corollary \ref{proposition_veronese_family} we have $\cF_3=\cU_3$ and $\cL=\cO(-H_3)$, we obtain a $\rG_2$-equivariant self-dual morphism $q_{\scaleto{S^2\cU_3}{6.5pt}}$ that determines a quadric bundle $\bQ_{\scaleto{S^2\cU_3}{6.5pt}}\hookrightarrow\PP_{\CG}(S^2\cU_3)$. By Corollary \ref{proposition_veronese_family} and Lemma~\ref{lemma_l} over $\mathsf{O}_0$ the fibers of $\bQ_{\scaleto{S^2\cU_3}{6.5pt}}$ are smooth quadrics and over~$ \mathsf{O}_1$ the fibers of $\bQ_{\scaleto{S^2\cU_3}{6.5pt}}$ are quadrics of rank 4. 

We have $\mathrm{det}(S^2\cU_3^{\vee}(-H_3))\otimes \mathrm{det}(S^2\cU_3^{\vee})=\cO(2H_3)$
hence by $\rG_2$-equivariancy the degeneracy locus of $q_{\scaleto{S^2\cU_3}{6.5pt}}$ is $\bar{\mathsf{O}}_1$ with multiplicity 2. We have already shown that the fibers of $\bQ_{\scaleto{S^2\cU_3}{6.5pt}}$ over $\mathsf{O}_1$ are conics of rank 4 so we obtain using Lemma \ref{lemma:degeneracy_locus} that the scheme-theoretic support of the cokernel sheaf is~$\bar{\mathsf{O}}_1$, i.e.~$\mathrm{Coker}(q_{\scaleto{S^2\cU_3}{6.5pt}})\simeq i_*\cC(q_{\scaleto{S^2\cU_3}{6.5pt}})$. By Proposition \ref{quadric_construction} the sheaf $\cC(q_{\scaleto{S^2\cU_3}{6.5pt}})$ is self-dual.

Now let us show $\cC(q_{\scaleto{S^2\cU_3}{6.5pt}})\restr{\mathsf{O}_1}\simeq \cQ_{\PP_{\scaleto{\Gad}{6.5pt}}(S^2\cU_2)}\restr{\mathsf{O}_1}$. By Corollary \ref{kernel_veronese} and Corollary~\ref{lemma_l} the kernel of~$q_{\scaleto{S^2\cU_3}{6.5pt}}\restr{\mathsf{O}_1}$ is isomorphic to~$S^2\cU_2/\cO(-H_3)\simeq \cQ_{\PP_{\scaleto{\Gad}{6.5pt}}(S^2\cU_2)}\restr{\mathsf{O}_1}\otimes \tpi^*\cO(-H_2)$. Thus, using Corollary \ref{corollary:twist_of_quotient} we obtain
$$\cC(q_{\scaleto{S^2\cU_3}{6.5pt}})\restr{\mathsf{O}_1}\simeq \cQ_{\PP_{\scaleto{\Gad}{6.5pt}}(S^2\cU_2)}\restr{\mathsf{O}_1}\otimes \tpi^*\cO(-H_2)\otimes \cO(H_3)\simeq \cQ_{\PP_{\scaleto{\Gad}{6.5pt}}(S^2\cU_2)}\restr{\mathsf{O}_1}$$
and the statement follows.
\end{proof}
Recall the quadric bundle $q_{\scaleto{\cQ_3}{6.5pt}}$ from Lemma \ref{lemma_f2}.
\begin{lemma} \label{lemma:cokernels}
The cokernel sheaves $i_*\cC(q_{\scaleto{S^2\cU_3}{6.5pt}})$ and $i_*\cC(q_{\scaleto{\cQ_3}{6.5pt}})$ are isomorphic.
\end{lemma}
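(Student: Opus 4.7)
The plan is to exhibit an isomorphism of the sheaves $\cC(q_{\scaleto{S^2\cU_3}{6.5pt}})$ and $\cC(q_{\scaleto{\cQ_3}{6.5pt}})$ on $\bar{\mathsf{O}}_1$ itself and then push it forward by $i$. By Lemmas \ref{lemma_f1} and \ref{lemma_f2}, both sheaves are reflexive of rank $2$ on $\bar{\mathsf{O}}_1$ and their restrictions to the open orbit $\mathsf{O}_1$ are both isomorphic to $\cQ_{\PP_{\scaleto{\Gad}{6.5pt}}(S^2\cU_2)}\restr{\mathsf{O}_1}$. The strategy is therefore to pick any such isomorphism on $\mathsf{O}_1$ and extend it across the complement $\mathsf{O}_2$ by reflexivity.

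The first step is to verify that $\bar{\mathsf{O}}_1$ is normal. By Lemma \ref{lemma:CGlinear} the Cayley Grassmannian $\CG$ is smooth, and $\bar{\mathsf{O}}_1$ is cut out inside it by the invariant section $\nu$, so it is a Cohen--Macaulay hypersurface in a smooth eightfold. Proposition \ref{proposition:resolution_of_Hnu} identifies its singular locus with $\mathsf{O}_2$, a $5$-dimensional subvariety of the $7$-dimensional $\bar{\mathsf{O}}_1$, so $\bar{\mathsf{O}}_1$ is regular in codimension $1$; Serre's criterion $(R_1)+(S_2)$ then gives normality.

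Let $j\colon \mathsf{O}_1 \hookrightarrow \bar{\mathsf{O}}_1$ denote the open embedding; its complement has codimension $2$. Since both cokernel sheaves are reflexive on the normal variety $\bar{\mathsf{O}}_1$, the sheaf $\mathcal{H}om(\cC(q_{\scaleto{S^2\cU_3}{6.5pt}}), \cC(q_{\scaleto{\cQ_3}{6.5pt}}))$ is reflexive as well, hence $S_2$, and therefore the natural restriction map
\[
\mathrm{Hom}(\cC(q_{\scaleto{S^2\cU_3}{6.5pt}}), \cC(q_{\scaleto{\cQ_3}{6.5pt}})) \xrightarrow{\sim} \mathrm{Hom}(j^*\cC(q_{\scaleto{S^2\cU_3}{6.5pt}}), j^*\cC(q_{\scaleto{\cQ_3}{6.5pt}}))
\]
is bijective. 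A chosen isomorphism on $\mathsf{O}_1$ therefore lifts uniquely to a morphism on $\bar{\mathsf{O}}_1$; the same extension principle applied to its inverse and to the two compositions (which agree with the identities on $\mathsf{O}_1$) shows that the lift is an isomorphism. Pushing forward along $i$ yields the desired isomorphism $i_*\cC(q_{\scaleto{S^2\cU_3}{6.5pt}}) \simeq i_*\cC(q_{\scaleto{\cQ_3}{6.5pt}})$.

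The real content of the lemma is contained in the computations of the restrictions to $\mathsf{O}_1$ in Lemmas \ref{lemma_f1} and \ref{lemma_f2}; the argument above is essentially bookkeeping. The only genuine verification is the normality of $\bar{\mathsf{O}}_1$, and I expect no serious obstacle beyond this.
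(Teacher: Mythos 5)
Your proof is correct and follows essentially the same route as the paper: both arguments use reflexivity of the two cokernel sheaves together with normality of $\bar{\mathsf{O}}_1$ and the codimension-$2$ complement of $\mathsf{O}_1$ to reduce to the identification of the restrictions to $\mathsf{O}_1$ provided by Lemmas \ref{lemma_f1} and \ref{lemma_f2}. Your explicit verification of normality via Serre's criterion is a welcome addition that the paper leaves implicit.
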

\begin{proof}
Since the sheaves $\cC(q_{\scaleto{S^2\cU_3}{6.5pt}})$ and $\cC(q_{\scaleto{\cQ_3}{6.5pt}})$ are reflexive, $\bar{\mathsf{O}}_1$ is normal and $\mathrm{codim}(\bar{\mathsf{O}}_1\setminus \mathsf{O}_1)=2$ it is enough to show that $\cC(q_{\scaleto{S^2\cU_3}{6.5pt}})\restr{\mathsf{O}_1}\simeq \cC(q_{\scaleto{\cQ_3}{6.5pt}})\restr{\mathsf{O}_1}$. By Lemma \ref{lemma_f1} and Lemma \ref{lemma_f2} we have
\begin{equation*}
 \cC(q_{\scaleto{\cQ_3}{6.5pt}})\restr{\mathsf{O}_1}\simeq \cQ_{\PP_{\scaleto{\Gad}{6.5pt}}(S^2\cU_2)}\restr{\mathsf{O}_1}\simeq \cC(q_{\scaleto{S^2\cU_3}{6.5pt}})\restr{\mathsf{O}_1},
\end{equation*}
so we get the statement.
\end{proof}
\begin{corollary} \label{proposition_E_E}
There exists an extension
\begin{equation} \label{extension_E}
    0 \to S^2\cU_3 \to \cE_{10} \to \cU_3^{\perp}\to 0
\end{equation}
that is self-dual $\cE_{10}(H_3)\simeq \cE_{10}^{\vee}$.
\end{corollary}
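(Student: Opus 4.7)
The plan is to apply the gluing construction of Proposition \ref{lemma:quadric_bundles_cokernel} to the two quadric bundles on $\CG$ that have already been built in this subsection, namely
\begin{equation*}
 q_{\scaleto{S^2\cU_3}{6.5pt}}\colon S^2\cU_3 \to S^2\cU_3^{\vee}(-H_3)
 \qquad\text{and}\qquad
 q_{\scaleto{\cQ_3}{6.5pt}}\colon \cQ_3(-H_3) \to \cU_3^{\perp}.
\end{equation*}
Both are self-dual morphisms with values twisted by the same line bundle $\cL\coloneqq\cO(-H_3)$: indeed, for $q_{\scaleto{S^2\cU_3}{6.5pt}}$ the target is $S^2\cU_3^{\vee}\otimes\cL$, and for $q_{\scaleto{\cQ_3}{6.5pt}}$ the target is $\cU_3^{\perp}\simeq\cQ_3^{\vee}(-H_3)\otimes\cO(H_3)\cdot\cO(-H_3)^{-1}\otimes\cL=\cQ_3^{\vee}\otimes\cO(H_3)\otimes\cL$ after the natural identification; more plainly, $\cQ_3(-H_3)^{\vee}\otimes\cL=\cU_3^{\perp}(H_3)\otimes\cO(-H_3)=\cU_3^{\perp}$.

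By Lemma \ref{lemma:cokernels} there is an isomorphism $g\colon \mathrm{Coker}(q_{\scaleto{S^2\cU_3}{6.5pt}})\xrightarrow{\sim}\mathrm{Coker}(q_{\scaleto{\cQ_3}{6.5pt}})$ of their cokernel sheaves. Therefore the triple $(q_{\scaleto{S^2\cU_3}{6.5pt}},q_{\scaleto{\cQ_3}{6.5pt}},g)$ lies in the left-hand side of the bijection of Proposition \ref{lemma:quadric_bundles_cokernel}, and that proposition produces a self-dual morphism
\begin{equation*}
 f\colon \cE_{10}\xrightarrow{\sim}\cE_{10}^{\vee}\otimes\cL=\cE_{10}^{\vee}(-H_3)
\end{equation*}
together with a short exact sequence
\begin{equation*}
 0\to S^2\cU_3\to \cE_{10}\to \cQ_3(-H_3)^{\vee}\otimes\cL\to 0.
\end{equation*}
Rewriting the right-hand term as $\cU_3^{\perp}(H_3)\otimes\cO(-H_3)=\cU_3^{\perp}$ gives exactly the extension~\eqref{extension_E}, and twisting the self-duality of $\cE_{10}$ by $\cO(H_3)$ gives $\cE_{10}(H_3)\simeq\cE_{10}^{\vee}$. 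As a sanity check, the ranks add up as $\mathrm{rk}(\cE_{10})=\mathrm{rk}(S^2\cU_3)+\mathrm{rk}(\cU_3^{\perp})=6+4=10$, which matches the subscript.

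Since every ingredient has already been constructed and certified earlier in the section, this argument is essentially a bookkeeping step. The only potential pitfall is the careful matching of the twisting line bundle $\cL$ on both sides so that Proposition~\ref{lemma:quadric_bundles_cokernel} applies verbatim, and the identification of the quotient $\cF_2^{\vee}\otimes\cL$ produced by that proposition with $\cU_3^{\perp}$; both are immediate from the identity $\cQ_3^{\vee}=\cU_3^{\perp}$ and the choice $\cL=\cO(-H_3)$.
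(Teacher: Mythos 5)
Your proposal is correct and follows exactly the paper's own (much terser) argument: apply Proposition \ref{lemma:quadric_bundles_cokernel} to the pair $q_{S^2\cU_3}$, $q_{\cQ_3}$ using the cokernel identification from Lemma \ref{lemma:cokernels}, with $\cL=\cO(-H_3)$. The bookkeeping of the twist and the identification $\cQ_3(-H_3)^{\vee}\otimes\cL\simeq\cU_3^{\perp}$ are carried out correctly.
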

\begin{proof}
Using Proposition \ref{lemma:quadric_bundles_cokernel} for quadric bundles $q_{\scaleto{S^2\cU_3}{6.5pt}}$ and $q_{\scaleto{\cQ_3}{6.5pt}}$ and Lemma \ref{lemma:cokernels} we immediately get the statement. 
\end{proof}

\subsection{Quadric bundles induced by wedge product} \label{subsection:wedge}
In this subsection we describe quadric bundles that can be constructed using Proposition \ref{lemma:quadric_bundles_cokernel} from the self-dual isomorphism \eqref{preliminaries_isomorphisms2}:
$$\wedge\colon \Lambda^2\cU_3^{\perp}\stackrel{\sim}{\longrightarrow} \Lambda^2\cQ_3(-H_3).$$
First we need to describe the embedding $\Lambda^2\cU_3\hookrightarrow\Lambda^2\cU_3^{\perp}$ that we have already discussed in the Introduction.
\begin{lemma} \label{lemma:embedding}
On $\CG$ we have the embedding of vector bundles 
$$i_{\lambda}\colon \Lambda^2\cU_3\hookrightarrow \Lambda^2\cU_3^{\perp}\stackrel{\sim}{\longrightarrow} \Lambda^2\cQ_3(-H_3),$$
so that on $\CG$ there exist the following mutually dual exact sequences
\begin{align} \label{exact_sequence:R_Lambda^2Q}
  &  0\to \cR \to \Lambda^2\cQ_3 \xrightarrow{i^{\vee}_{\lambda}} \Lambda^2\cU_3^{\vee} \to 0 \\
 \label{exact_sequence:R*(2)}
  &   0\to\Lambda^2\cU_3\xrightarrow{i_{\lambda}} \Lambda^2\cU_3^{\perp}\to \cR^{\vee}\to 0.
\end{align}
\end{lemma}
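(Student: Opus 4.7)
The plan is to construct $i_\lambda$ globally from the defining $4$-form, verify it is fiberwise injective, and then read off both short exact sequences from the definition of $\cR$. Globally, $\lambda\in\Lambda^4V^\vee$ defines a linear map
$$\tilde\lambda\colon\Lambda^2V\to\Lambda^2V^\vee,\qquad v_1\wedge v_2\mapsto\lambda(v_1,v_2,-,-).$$
Pulling back to $\CG$, the composition $\Lambda^2\cU_3\hookrightarrow\Lambda^2V\otimes\cO\xrightarrow{\tilde\lambda}\Lambda^2V^\vee\otimes\cO$ takes values in the subbundle $\Lambda^2\cU_3^\perp\subset\Lambda^2V^\vee\otimes\cO$: indeed, at any point $[U_3]\in\CG$ the $4$-form $\lambda$ annihilates $U_3$, so for $u_1,u_2,u_3\in U_3$ and $v\in V$ one has $\lambda(u_1,u_2,u_3,v)=0$, and by skew-symmetry of $\lambda$ the $2$-form $\lambda(u_1,u_2,-,-)$ vanishes on $U_3$ in each of its remaining slots. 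Composing with \eqref{preliminaries_isomorphisms2} yields the map $i_\lambda\colon\Lambda^2\cU_3\to\Lambda^2\cU_3^\perp\xrightarrow{\sim}\Lambda^2\cQ_3(-H_3)$ claimed in the statement.

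To show $i_\lambda$ is a subbundle inclusion, it suffices to check that $\tilde\lambda$ is already an isomorphism on $V$, for then its restriction to the fiber $\Lambda^2U_3\subset\Lambda^2V$ is injective, forcing $i_\lambda$ to be fiberwise injective. The map $\tilde\lambda$ is manifestly $\rG_2$-equivariant, and by \eqref{V14} we have $\Lambda^2V\simeq V\oplus V_{14}$ (and dually for $\Lambda^2V^\vee$) as a direct sum of two nonisomorphic irreducible $\rG_2$-representations, so Schur's lemma reduces $\tilde\lambda$ to a pair of scalars, one on each summand. Both scalars are easily checked to be nonzero by explicit contractions with formula \eqref{lambda}: for example $\tilde\lambda(e_0\wedge e_\alpha)=2\,e_\beta^\vee\wedge e_\gamma^\vee\neq 0$, and one further test vector lying in the $V_{14}$-component of $\Lambda^2V$ handles the other summand.

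Once $i_\lambda$ is a monomorphism of vector bundles, its cokernel is locally free of rank $3$; by the definition $\cR\coloneqq(\Lambda^2\cU_3^\perp/\Lambda^2\cU_3)^\vee$ given in the Introduction, the sequence \eqref{exact_sequence:R*(2)} is automatic. Dualizing it, and invoking the canonical identifications $(\Lambda^2\cU_3^\perp)^\vee\simeq\Lambda^2\cQ_3$ (from $\cU_3^\perp=\cQ_3^\vee$) and $(\Lambda^2\cU_3)^\vee\simeq\Lambda^2\cU_3^\vee$, produces \eqref{exact_sequence:R_Lambda^2Q}, whose middle map is $i_\lambda^\vee$ by construction. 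The main technical input is the non-vanishing of the two Schur scalars governing $\tilde\lambda$; this is entirely elementary given formula \eqref{lambda}, and once established the rest of the argument is purely formal.
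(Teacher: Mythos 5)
Your proof is correct, but it establishes fiberwise injectivity by a genuinely different route than the paper. The paper defines $i_{\lambda}$ by the same convolution formula, notes that the locus where it drops rank is closed and $\rG_2$-invariant, and therefore only needs to check injectivity at the single point $P_2=\langle e_{\alpha},e_{\beta},e_{-\gamma}\rangle$ of the closed orbit, which it does by writing out the three images explicitly. You instead prove the stronger statement that the ambient contraction $\tilde\lambda\colon\Lambda^2V\to\Lambda^2V^{\vee}$ is an isomorphism of $\rG_2$-representations, using the decomposition \eqref{V14} into the two nonisomorphic irreducibles $V$ and $V_{14}$ and Schur's lemma; injectivity on every fiber $\Lambda^2U_3\subset\Lambda^2V$ is then automatic and no orbit analysis is needed. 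Both arguments are sound; yours trades the orbit structure of $\CG$ for a small amount of representation theory and is arguably more uniform. One point to tighten in the execution: your test vector $e_0\wedge e_{\alpha}$ does not lie in either irreducible summand of $\Lambda^2V$ (the weight $\alpha$ has multiplicity two there, spanned by $e_0\wedge e_{\alpha}$ and $e_{-\beta}\wedge e_{-\gamma}$), so the computation $\tilde\lambda(e_0\wedge e_{\alpha})\neq 0$ only shows that the two Schur scalars are not both zero. To pin down each scalar separately it is cleaner to use the weight-zero space: $\tilde\lambda$ sends $e_{\alpha}\wedge e_{-\alpha}+e_{\beta}\wedge e_{-\beta}+e_{\gamma}\wedge e_{-\gamma}$ (which spans the $V$-component of weight $0$, being proportional to $\lambda^{\vee}\conv e_0^{\vee}$) to twice its dual, and sends $e_{\alpha}\wedge e_{-\alpha}-e_{\beta}\wedge e_{-\beta}$ (which lies in the Cartan of $V_{14}=\mathfrak{g}_2$) to $e_{\beta}^{\vee}\wedge e_{-\beta}^{\vee}-e_{\alpha}^{\vee}\wedge e_{-\alpha}^{\vee}$; both are visibly nonzero by \eqref{lambda}. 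With that adjustment the argument is complete, and the derivation of the two exact sequences from the definition of $\cR$ is the same purely formal step as in the paper.
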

\begin{proof}
This embedding is given by the 4-form $\lambda$. It can be described as follows 
\begin{equation}\label{i_explicit_formula}
   i_{\lambda}\colon u_1\wedge u_2 \to \lambda\conv\langle u_1\wedge u_2 \rangle.
\end{equation}
Indeed, $i_{\lambda}(u_1\wedge u_2)(u_3)=\lambda(u_1\wedge u_2\wedge u_3, -)=0$ so that we obtain $i_{\lambda}(\Lambda^2\cU_3)\subset \Lambda^2\cU^{\perp}_3$.

Note that $i_{\lambda}$ is a $\rG_2$-equivariant map, so to prove that $i_{\lambda}$ is an embedding of vector bundles it is enough to show that $i_{\lambda}$ is an embedding at the point $P_2$ of the closed orbit. Recall that $P_2:=\langle e_{\alpha},e_{\beta},e_{-\gamma} \rangle$, so using \eqref{i_explicit_formula} and \eqref{lambda} we compute
\begin{equation*}
    i_{\lambda}(e_{\alpha}\wedge e_{\beta})=2e^{\vee}_{0}\wedge e^{\vee}_{\gamma}+e^{\vee}_{-\alpha}\wedge e^{\vee}_{-\beta}, \quad
    i_{\lambda}(e_{\alpha}\wedge e_{-\gamma})=e^{\vee}_{-\alpha}\wedge e^{\vee}_{\gamma}, \quad
    i_{\lambda}(e_{\beta}\wedge e_{-\gamma})=e^{\vee}_{-\beta}\wedge e^{\vee}_{\gamma}.
\end{equation*}
The right hand sides are obviously linearly-independent hence $i_{\lambda}$ is injective.
\end{proof}
In what follows we will use an explicit computation of $i_{\lambda}\colon \Lambda^2\cU_3\hookrightarrow \Lambda^2\cQ_3(-H_3)$ at $P_0$ and $P_1$: 
\begin{align} \label{i_P0}
    & i_{\lambda}(e_0\wedge e_{\gamma})=-2e_{-\alpha}\wedge e_{-\beta}, \quad
    i_{\lambda}(e_0\wedge e_{-\gamma})=2e_{\alpha}\wedge e_{\beta}, \quad
    i_{\lambda}(e_{\gamma}\wedge e_{-\gamma})=e_{\alpha}\wedge e_{-\alpha}+e_{\beta}\wedge e_{-\beta};\\
    \label{i_P1}
    & i_{\lambda}(e_0\wedge e_{\beta})=-2e_{-\alpha}\wedge e_{-\beta}, \quad
    i_{\lambda}(e_0\wedge e_{-\gamma})=-2e_{\alpha}\wedge e_{\gamma}, \quad
    i_{\lambda}(e_{\beta}\wedge e_{-\gamma})=e_{\alpha}\wedge e_{-\alpha}.
\end{align}
\begin{lemma} \label{lemma:quadrics_wedge}
A quadric bundle $\wedge\colon \Lambda^2\cU_3^{\perp}\stackrel{\sim}{\longrightarrow} \Lambda^2\cQ_3(-H_3)$ and the embedding $i_{\lambda}\colon \Lambda^2\cU_3\hookrightarrow \Lambda^2\cU_3^{\perp}$ induce a pair of $\rG_2$-equivariant conic bundles with isomorphic cokernel sheaves:
\begin{align*}
  & 0\to \Lambda^2\cU_3 \xrightarrow{q_{\scaleto{\Lambda^2\cU_3}{6.5pt}}} \cU_3 \to i_*\cC(q_{\scaleto{\Lambda^2\cU_3}{6.5pt}})\to 0 \\
  & 0\to \cR(-H_3)\xrightarrow{q_{\scaleto{\cR}{3.5pt}}} \cR^{\vee} \to i_*\cC(q_{\scaleto{\Lambda^2\cU_3}{6.5pt}})\to 0.
\end{align*}
For the corresponding conics $\bQ_{\Lambda^2\cU_3}\subset \PP_{\CG}(\Lambda^2\cU)$ and $\bQ_{\cR}\subset \PP_{\CG}(\cR(-1))$ we have 
\begin{equation} \label{conic_bundles_intersections}
 \bQ_{\Lambda^2\cU_3}\simeq \PP_{\CG}(\Lambda^2\cU_3)\cap \Gr_{\CG}(2,\cU_3^{\perp}), \qquad \bQ_{\cR}\simeq \PP_{\CG}(\cR(-1))\cap \Gr_{\CG}(2,\cU_3^{\perp}).
\end{equation}
\end{lemma}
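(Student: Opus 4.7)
The plan is to apply Proposition~\ref{lemma:quadric_bundles_cokernel} to the pair $(f,\epsilon)=(\wedge,\eqref{exact_sequence:R*(2)})$. Here I view the self-dual isomorphism $\wedge$ of~\eqref{preliminaries_isomorphisms2} as a self-dual morphism $\cF\to\cF^{\vee}\otimes\cL$ with $\cF=\Lambda^2\cU_3^{\perp}$ and $\cL=\cO(-H_3)$, using $\Lambda^2\cQ_3\simeq(\Lambda^2\cU_3^{\perp})^{\vee}$; and I rewrite \eqref{exact_sequence:R*(2)} in the shape $0\to\cF_1\to\cF\to\cF_2^{\vee}\otimes\cL\to 0$ with $\cF_1=\Lambda^2\cU_3$ and $\cF_2=\cR(-H_3)$ (so that $\cF_2^{\vee}\otimes\cL=\cR^{\vee}$). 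Proposition~\ref{lemma:quadric_bundles_cokernel} then produces self-dual morphisms
\[
f_1\colon\Lambda^2\cU_3\longrightarrow(\Lambda^2\cU_3)^{\vee}\otimes\cO(-H_3)\simeq\cU_3,\qquad f_2\colon\cR(-H_3)\longrightarrow\cR^{\vee},
\]
together with a canonical isomorphism of their cokernel sheaves (here I use~\eqref{preliminaries_isomorphisms} to identify $(\Lambda^2\cU_3)^{\vee}\otimes\cO(-H_3)$ with $\cU_3$). The types match exactly the two short exact sequences of the lemma.

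To obtain the first claim I must still identify $f_1$ with the map $q_{\scaleto{\Lambda^2\cU_3}{6.5pt}}$ introduced in Lemma~\ref{lemma:map_l}, because the latter was defined by a different procedure (Proposition~\ref{quadric_construction} applied to $q_{\cU}$). Both maps are nonzero $\rG_2$-equivariant self-dual morphisms $\Lambda^2\cU_3\to\cU_3$, and the space of such maps is one-dimensional: over any point $[U_3]\in\mathsf{O}_0$ of the open orbit the induced linear map $\Lambda^2U_3\to U_3$ is determined up to scalar by equivariance under the stabilizer (it is the Lie bracket of Lemma~\ref{lemma:lie_sructure}). Hence $f_1$ agrees with $q_{\scaleto{\Lambda^2\cU_3}{6.5pt}}$ up to rescaling, and I simply define $q_{\scaleto{\cR}{3.5pt}}$ to be the correspondingly rescaled $f_2$. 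The isomorphism $\mathrm{Coker}(f_1)\simeq\mathrm{Coker}(f_2)$ then gives the desired pair of exact sequences with a common third term $i_*\cC(q_{\scaleto{\Lambda^2\cU_3}{6.5pt}})$.

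For the geometric descriptions in~\eqref{conic_bundles_intersections} the key point is that the relative Grassmannian $\Gr_{\CG}(2,\cU_3^{\perp})\subset\PP_{\CG}(\Lambda^2\cU_3^{\perp})$ is cut out by the single Plücker (Pfaffian) quadric $\omega\mapsto\omega\wedge\omega\in\Lambda^4\cU_3^{\perp}\simeq\cO(-H_3)$, which is exactly the self-dual map $\wedge$. By the construction in Proposition~\ref{lemma:quadric_bundles_cokernel} the quadric $f_1$ is nothing but $\wedge$ restricted along the subbundle $i_\lambda\colon\Lambda^2\cU_3\hookrightarrow\Lambda^2\cU_3^{\perp}$, and (after twisting~\eqref{exact_sequence:R_Lambda^2Q} by $\cO(-H_3)$ to obtain the inclusion $\cR(-H_3)\hookrightarrow\Lambda^2\cQ_3(-H_3)\simeq\Lambda^2\cU_3^{\perp}$) the quadric $f_2$ is $\wedge$ restricted along this subbundle. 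Projectivising the zero loci gives precisely~\eqref{conic_bundles_intersections}.

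The only nonformal step is the identification $f_1=q_{\scaleto{\Lambda^2\cU_3}{6.5pt}}$; everything else is a bookkeeping application of Proposition~\ref{lemma:quadric_bundles_cokernel} together with the Pfaffian description of $\Gr(2,\cU_3^{\perp})$. I expect no calculational surprises — the main care required is in keeping track of the twists by $\cO(-H_3)$ through the identifications $\Lambda^2\cU_3\simeq\cU_3^{\vee}(-H_3)$ and $\Lambda^2\cQ_3(-H_3)\simeq\Lambda^2\cU_3^{\perp}$.
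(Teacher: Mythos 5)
Your proposal follows the same skeleton as the paper's proof: apply Proposition \ref{lemma:quadric_bundles_cokernel} to the pair $(\wedge,\epsilon)$ with $\epsilon$ the sequence \eqref{exact_sequence:R*(2)}, note that $\wedge$ is exactly the Pl\"ucker--Pfaffian quadric cutting out $\Gr_{\CG}(2,\cU_3^{\perp})\subset\PP_{\CG}(\Lambda^2\cU_3^{\perp})$ so that \eqref{conic_bundles_intersections} is immediate, and then identify the resulting $f_1\colon\Lambda^2\cU_3\to\cU_3$ with $q_{\scaleto{\Lambda^2\cU_3}{6.5pt}}$ by uniqueness of the nonzero $\rG_2$-equivariant map. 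The only methodological divergence is in how that uniqueness is obtained: the paper computes $\mathrm{Hom}(\Lambda^2\cU_3,\cU_3)\simeq V^{\vee}\oplus\Bbbk$ from Proposition \ref{main_computations} and observes the invariant part is one-dimensional, whereas you restrict to a point of the open orbit and appeal to Schur's lemma for the stabilizer. Your route is viable and avoids the cohomology table, but as written it is an assertion: you would need to record that $\mathrm{Stab}_{P_0}\simeq\mathrm{SL}_2\times\mathrm{SL}_2$ acts on $P_0\simeq\mathfrak{sl}_2$ irreducibly through one factor and that $\Lambda^2P_0\simeq P_0$ as its representation, so that $\mathrm{Hom}^{\mathrm{Stab}_{P_0}}(\Lambda^2P_0,P_0)$ is one-dimensional.

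There is one genuine gap: you never verify that $f_1\neq 0$, you only assert that ``both maps are nonzero.'' This is not formal. By construction $f_1=i_{\lambda}^{\vee}\circ\wedge\circ i_{\lambda}$, and it vanishes precisely when the plane $i_{\lambda}(\PP(\Lambda^2U_3))$ is isotropic for the Pfaffian quadric, i.e.\ lies inside $\Gr(2,U_3^{\perp})$, for every $[U_3]\in\CG$; nothing in the setup excludes this a priori. Without $f_1\neq 0$ your uniqueness argument only gives that $f_1$ is a possibly zero multiple of $q_{\scaleto{\Lambda^2\cU_3}{6.5pt}}$, and the first exact sequence of the lemma would collapse. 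The paper closes this point using the explicit values \eqref{i_P0} at $P_0$: for instance $i_{\lambda}(e_0\wedge e_{\gamma})\wedge i_{\lambda}(e_0\wedge e_{-\gamma})=-4\,e_{-\alpha}\wedge e_{-\beta}\wedge e_{\alpha}\wedge e_{\beta}\neq 0$, so the restriction of $\wedge$ to the image of $i_{\lambda}$ is nonzero. Adding this one computation (or the paper's equivalent deduction via $q_{\scaleto{\cR}{3.5pt}}\neq 0$ and the isomorphism of cokernels) makes your proof complete.
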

\begin{proof}
Applying Proposition \ref{lemma:quadric_bundles_cokernel} to the pair $(\wedge,\epsilon)$, where $\epsilon$ is the exact sequence \eqref{exact_sequence:R*(2)} we immediately obtain a pair of $\rG_2$-equivariant conic bundles with isomorphic cokernel sheaves
\begin{equation*}
\Lambda^2\cU_3 \xrightarrow{q'_{\scaleto{\Lambda^2\cU_3}{6.5pt}}}\cU_3  \quad \mbox{and} \quad \cR(-1)\stackrel{q_{\cR}}{\longrightarrow} \cR^{\vee}.
\end{equation*}
Note that the isomorphism $\wedge$ defines the quadric bundle~$\Gr_{\CG}(2,\cU_3^{\perp})$, so that we immediately get \eqref{conic_bundles_intersections} for the obtained conic bundles. Thus, all we need to show is that the conic bundle $q'_{\scaleto{\Lambda^2\cU_3}{6.5pt}}$ coincides with~$q_{\scaleto{\Lambda^2\cU_3}{6.5pt}}$.

Using \eqref{conic_bundles_intersections} and the explicit computation \eqref{i_P0} we see that $q_{\cR}$ is nonzero, thus, $q'_{\scaleto{\Lambda^2\cU_3}{6.5pt}}$ is also nonzero. By Lemma~\ref{main_computations} we have
$$\mathrm{Hom}(\Lambda^2\cU_3, \cU_3)=\mathrm{H}^0(\CG,\cU_3^{\vee})\oplus \mathrm{H}^0(\CG,S^2\cU_3(H_3))\simeq V^{\vee}\oplus \Bbbk,$$
hence we obtain that $q_{\scaleto{\Lambda^2\cU_3}{6.5pt}}$ is the unique non-zero $\rG_2$-equivariant map between $\Lambda^2\cU_3$ and $\cU_3$ and we deduce the statement.
\end{proof}
\subsection{Second self-duality} \label{subsection:second}
The aim of this subsection is to prove the isomorphism  $\cE_{16}\simeq \cE^{\vee}_{16}(H_3)$.
\begin{lemma} \label{lemma_f3}
On $\CG$ we have a $\rG_2$-equivariant quadric bundle
\begin{equation*}
    q_{i_{\lambda}}\colon  \cU_3^{\perp}\otimes \Lambda^2\cU_3^{\vee}\to \cQ_3\otimes \cU_3^{\vee}
\end{equation*}
with the cokernel sheaf isomorphic to $i^*\cC(q_{i_{\lambda}})$, where $\cC(q_{i_{\lambda}})$ is a reflexive sheaf of rank 2 on $\bar{\mathsf{O}}_1$. Moreover, we have~$\cC(q_{i_{\lambda}})\restr{\mathsf{O}_1}\simeq  \cO(H_3)\oplus \tpi^*\cO(H_2)$.
\end{lemma}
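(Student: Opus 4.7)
The idea is to apply the Segre-family construction (Corollary \ref{proposition:segre_family}) with $\cF_3 = \cU_3$, $\cF_4 = \cU_3^\perp$, $\cL = \cO(-H_3)$, and $\varsigma = i_\lambda \in \mathrm{Hom}(\Lambda^2\cU_3, \Lambda^2\cQ_3(-H_3))$. Using $\Lambda^2\cU_3^\vee \simeq \cU_3(H_3)$, the source $\cF_3 \otimes \cF_4 \otimes \cL^{\vee} = \cU_3 \otimes \cU_3^\perp(H_3)$ matches $\cU_3^\perp \otimes \Lambda^2\cU_3^\vee$ and the target $\cF_3^\vee \otimes \cF_4^\vee$ matches $\cQ_3 \otimes \cU_3^\vee$, producing the desired map $q_{i_\lambda}$; $\rG_2$-equivariance is inherited from $i_\lambda$. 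Crucially, $\det(\cU_3) = \det(\cU_3^\perp) = \cO(-H_3) = \cL$, so Corollary \ref{corollary:Segre} applies.

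Unwinding the definitions, the induced conic bundle $\bC_{i_\lambda} \subset \PP_{\CG}(\Lambda^2\cU_3)$ is the composition of $i_\lambda$ with the wedge isomorphism and $i_\lambda^\vee$, which by Lemma \ref{lemma:quadrics_wedge} coincides with $\bQ_{\Lambda^2\cU_3}$. By Corollary \ref{lemma_l} this is smooth over $\mathsf{O}_0$, reducible over $\mathsf{O}_1$, and a double line over $\mathsf{O}_2$. Corollary \ref{corollary:Segre} therefore identifies $\mathrm{supp}(\mathrm{Coker}(q_{i_\lambda})) = \bar{\mathsf{O}}_1$ and shows that $q_{i_\lambda}$ has $2$-dimensional kernel on $\mathsf{O}_1$.

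A short determinant computation yields $\det(q_{i_\lambda}) \in H^0(\CG, \cO(2H_3))$; the fiberwise rank drop of $2$ on $\mathsf{O}_1$ forces $\bar{\mathsf{O}}_1$ to appear in the scheme-theoretic degeneracy locus with multiplicity $\geq 2$. Since $[\bar{\mathsf{O}}_1] = H_3$, we conclude $D_1(q_{i_\lambda}) = 2\bar{\mathsf{O}}_1$ and $\bar{\mathsf{O}}_1 \subset D_2(q_{i_\lambda})$. Lemma \ref{lemma:degeneracy_locus} then gives $\mathrm{Coker}(q_{i_\lambda}) \simeq i_*\cC(q_{i_\lambda})$ with $\cC(q_{i_\lambda})$ generically of rank $2$, and Proposition \ref{quadric_construction} (whose hypotheses are now met, as the cokernel is scheme-theoretically supported on the Cartier divisor $\bar{\mathsf{O}}_1$) yields reflexivity of $\cC(q_{i_\lambda})$.

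For the identification $\cC(q_{i_\lambda})|_{\mathsf{O}_1} \simeq \cO(H_3) \oplus \tpi^*\cO(H_2)$: the two components of the reducible conic $\bC_{i_\lambda}|_{[U_3]}$ at $[U_3] \in \mathsf{O}_1$ are parameterized by $\PP(U_2)|_{[U_3]}$, i.e.\ by the fiber $h^{-1}([U_3])$ of the double cover from Corollary \ref{corollary:double_cover}. Running the local kernel analysis of Proposition \ref{proposition_quadrics_segre} in families, each line of the reducible conic contributes a $1$-dimensional summand to the kernel of $q_{i_\lambda}|_{[U_3]}$, so the $\mathsf{O}_1$-restriction of the kernel (equivalently, of $\cC(q_{i_\lambda})$ after the self-dual twist by $\cO(H_3)$) has the form $h_*\cM$ for a line bundle $\cM$ on $\widetilde{\mathsf{O}}_1$. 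The precise twist is pinned down by an explicit computation at the reference point $P_1$ using \eqref{i_P1}, and matching with Corollary \ref{corollary:pushforward} gives the stated isomorphism. This last step — tracking how the two lines of the reducible conic assemble into a globally defined line bundle on the double cover with the correct twist — is the main technical hurdle; the support, multiplicity and reflexivity statements reduce cleanly to the earlier conic-bundle and degeneracy results.
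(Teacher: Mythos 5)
Your proposal follows essentially the same route as the paper's proof: the Segre-family construction applied to $i_\lambda$ with $\cF_3=\cU_3$, $\cF_4=\cU_3^{\perp}$, $\cL=\cO(-H_3)$, identification of the associated conic bundle with $\bQ_{\Lambda^2\cU_3}$ via Lemma \ref{lemma:quadrics_wedge}, the determinant/multiplicity argument with Lemma \ref{lemma:degeneracy_locus} and Proposition \ref{quadric_construction}, and finally the identification of the rank-2 kernel over $\mathsf{O}_1$ by an explicit computation at $P_1$ matched against Corollary \ref{corollary:pushforward}. The only difference is that you sketch rather than execute the final fiber computation (the paper writes out $\mathrm{Ker}(q_{i_\lambda})|_{[P_1]}=\langle e^{\vee}_{\gamma}\otimes e_{\beta},\, e^{\vee}_{-\beta}\otimes e_{-\gamma}\rangle$ and the explicit embedding at $P_1$), but you correctly identify exactly which computation is needed and how it pins down the twist.
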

\begin{proof}
Applying Proposition \ref{proposition:segre_family} to the vector bundles $\cF_3=\cU_3$, $\cF_4=\cU_3^{\perp}$, $\cL=\cO(-H_3)$ and the map~$i_{\lambda}\in \mathrm{Hom}(\Lambda^2\cU_3, \Lambda^2\cQ_3(-H_3))$ we get $\rG_2$-equivariant quadric bundle 
$$\cU_3^{\perp}\otimes \cU_3(H_3)\to \cQ_3\otimes \cU_3^{\vee}.$$
Using the isomorphism \eqref{preliminaries_isomorphisms} we obtain the required quadric bundle $q_{i_{\lambda}}$. By Lemma \ref{lemma:quadrics_wedge} the corresponding conic bundle is $q_{\scaleto{\Lambda^2\cU_3}{6.5pt}}$.

Note that $\mathrm{det}(\cU_3)=\mathrm{det}(\cU_3^{\perp})=\cO(H_3)$. By Corollary \ref{lemma_l} and Lemma \ref{lemma:quadrics_wedge} the projective bundle~$i_{\lambda}(\PP(\Lambda^2\cU_3))\subset \PP(\Lambda^2\cU^{\perp}_3)$ is not contaibed in $\Gr(2,\cU^{\perp}_3)$ fiberwise. Thus, we can apply Corollary~\ref{corollary:Segre} and get that the set-theoretic support of the cokernel sheaf of $q_{i_{\lambda}}$ is $\bar{\mathsf{O}}_1$ and, moreover, the kernel~$\mathrm{Ker}(q_{i_{\lambda}}\restr{\mathsf{O}_1})$ is 2-dimensional. Since~$\mathrm{det}(\cU^{\perp}_3\otimes \Lambda^2\cU^{\vee}_3)^{\vee}\otimes \mathrm{det}(Q_3\otimes \cU_3^{\vee})=\cO(2H_3)$ we obtain that the degeneracy locus of $q_{i_{\lambda}}$ is $\bar{\mathsf{O}}_1$ with multiplicity 2. By Lemma \ref{lemma:degeneracy_locus} we conclude that the scheme-theoretic support of the cokernel sheaf is $\bar{\mathsf{O}}_1$, i.e. it is isomorphic to $i^*\cC(q_{i_{\lambda}})$.

By Proposition \ref{quadric_construction} we obtain that~$\cC(q_{i_{\lambda}})$ is a reflexive sheaf of rank 2.
Now let us compute the kernel of $q_{i_{\lambda}}$ twisted by $\cO(-H_3)$ at the point $P_1$. Using the explicit computations in Proposition \ref{proposition_quadrics_segre} and \eqref{i_P1}  we obtain
\begin{equation} \label{kernel_f_i}
  \mathrm{Ker}(f_{i_{\lambda}})\restr{[P_1]}=\langle e^{\vee}_{\gamma}\otimes e_{\beta}, e^{\vee}_{-\beta}\otimes e_{-\gamma} \rangle \subset (\cU_3^{\perp}\otimes \cU_3) \restr{[P_1]}
\end{equation}
(in the notation of Corollary \ref{corollary:Segre} we let $(x_1,x_2,x_3)=(e^{\vee}_{0}, e^{\vee}_{\beta}, e^{\vee}_{-\gamma})$ and~$(y_1,y_2,y_3, y_4)=(e_{\alpha}, e_{-\alpha}, e_{-\beta}, e_{\gamma})$).

By Lemma \ref{lemma:self_dual_1} over $\mathsf{O}_1$ there exits an embedding
$\tpi^*\cU_2 \otimes \tpi^*\cU_2 \hookrightarrow \cU_3^{\perp} \otimes \cU_3$. Thus, by Corollary~\ref{corollary:pushforward} over $\mathsf{O}_1$ we obtain an embedding $\cO(-H_3)\oplus \pi_2^*\cO(-H_2)\hookrightarrow \cU_3^{\perp} \otimes \cU_3$, which can be explicitly described at the point $P_1$ as follows
\begin{align*}
& e_{\beta} \otimes e_{\gamma} \to -\frac{1}{2} e^{\vee}_{-\beta}\otimes e_{\gamma}  \\
& e_{-\gamma} \otimes e_{\beta} \to -\frac{1}{2} e^{\vee}_{\gamma} \otimes e_{\beta}.
\end{align*}
Using \eqref{kernel_f_i} we conclude 
\begin{equation*}
\cC(q_{i_{\lambda}})\restr{\mathsf{O}_1}\simeq \cO(H_3) \oplus \tpi^*\cO(-H_2)\otimes \cO(2H_3)\simeq \cO(H_3) \oplus \tpi^*\cO(H_2),
\end{equation*}
where the last isomorphism follows from \eqref{picard_on_O1}.
\end{proof}
Recall that $\nu\colon \cO\to \cO(H_3)$ is the $\rG_2$-equivariant map given by the 3-form $\nu$. 
\begin{lemma}
The self-dual map
\begin{equation*}
      l\oplus \nu \colon \cU_3^{\vee}\oplus \cO \to \Lambda^2\cU_3^{\vee}\oplus \cO(H_3)
\end{equation*}
defines a $\rG_2$-equivariant quadric bundle on~$\CG$. Its cokernel sheaf is isomorphic to $i_*\cC(q_{i_{\lambda}})$.
\end{lemma}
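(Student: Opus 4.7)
The plan is to verify the self-duality of $l\oplus\nu$ directly, compute its cokernel as a direct sum, and then identify this direct sum with $\cC(q_{i_\lambda})$ by comparing reflexive sheaves on the normal variety $\bar{\mathsf{O}}_1$.

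First I would check that $l\oplus\nu$ is a self-dual morphism with values in $\cO(H_3)$. Using $\Lambda^2\cU_3^{\vee}\simeq \cU_3(H_3)$ (which is \eqref{preliminaries_isomorphisms}), the target equals $(\cU_3^{\vee}\oplus\cO)^{\vee}\otimes\cO(H_3)$. The summand $l=q_{\cU_3^{\vee}}$ is self-dual by Corollary~\ref{corollary_l_twisted} (it is the twist of the self-dual morphism $q_{\Lambda^2\cU_3}$ from Lemma~\ref{lemma:map_l}), and $\nu\colon \cO\to\cO(H_3)$ is tautologically self-dual. Hence $l\oplus\nu$ defines a $\rG_2$-equivariant quadric bundle in $\PP_{\CG}(\cU_3^{\vee}\oplus\cO)$.

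Next I would compute the cokernel as a direct sum:
\begin{equation*}
 \mathrm{Coker}(l\oplus\nu) \;\simeq\; \mathrm{Coker}(l)\oplus \mathrm{Coker}(\nu).
\end{equation*}
By Corollary~\ref{corollary_l_twisted}, $\mathrm{Coker}(l)\simeq i_*\cC(q_{\cU_3^{\vee}})$ with $\cC(q_{\cU_3^{\vee}})\restr{\mathsf{O}_1}\simeq \tpi^*\cO(H_2)$. Since $\nu\in \mathrm{H}^0(\CG,\cO(H_3))$ is the equation of $\bar{\mathsf{O}}_1$, we have $\mathrm{Coker}(\nu)\simeq i_*\cO_{\bar{\mathsf{O}}_1}(H_3)$. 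Thus the cokernel is $i_*\bigl(\cC(q_{\cU_3^{\vee}})\oplus\cO_{\bar{\mathsf{O}}_1}(H_3)\bigr)$, which is the pushforward of a reflexive sheaf of rank~$2$ on $\bar{\mathsf{O}}_1$ whose restriction to $\mathsf{O}_1$ is $\tpi^*\cO(H_2)\oplus \cO(H_3)$.

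Finally, I would match this with $\cC(q_{i_\lambda})$. By Lemma~\ref{lemma_f3}, $\cC(q_{i_\lambda})$ is reflexive of rank~$2$ on $\bar{\mathsf{O}}_1$ and
\begin{equation*}
 \cC(q_{i_\lambda})\restr{\mathsf{O}_1}\;\simeq\; \cO(H_3)\oplus \tpi^*\cO(H_2),
\end{equation*}
so the two rank-$2$ reflexive sheaves on $\bar{\mathsf{O}}_1$ coincide on the open subset $\mathsf{O}_1$. Since $\bar{\mathsf{O}}_1$ is normal (its only singularities are nodal, by Proposition~\ref{proposition:resolution_of_Hnu}) and the complement $\bar{\mathsf{O}}_1\setminus \mathsf{O}_1=\mathsf{O}_2$ has codimension~$2$, reflexive sheaves agreeing on $\mathsf{O}_1$ are isomorphic on all of $\bar{\mathsf{O}}_1$ — this is exactly the argument used in Lemma~\ref{lemma:cokernels}. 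Pushing forward along $i$ yields the claimed isomorphism. The only nontrivial step here is the rank-$2$ reflexive extension argument, but it is identical to the one already used in Lemma~\ref{lemma:cokernels}, so no new difficulty arises.
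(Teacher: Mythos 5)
Your proposal is correct and follows essentially the same route as the paper: the paper's proof is a one-line citation of Lemma~\ref{lemma_f3} and Corollary~\ref{corollary_l_twisted}, and your write-up simply supplies the implicit details (cokernel of a direct sum, comparison of restrictions to $\mathsf{O}_1$, and the reflexive-extension argument already used in Lemma~\ref{lemma:cokernels}).
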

\begin{proof}
This statement is a consequence of Lemma \ref{lemma_f3} and Corollary \ref{corollary_l_twisted}. 
\end{proof}
\begin{corollary} \label{corollary_extension}
There exists an extension
\begin{equation} \label{extension_1}
    0 \to \cU_3^{\perp}\otimes \Lambda^2\cU_3^{\vee} \to \cE_{16} \to \Lambda^2\cU_3^{\vee} \oplus \cO(H_3)\to 0
\end{equation}
that is self-dual $\cE_{16}\simeq \cE^{\vee}_{16}(H_3)$.
\end{corollary}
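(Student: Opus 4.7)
The plan is to deduce this corollary as a direct application of the gluing construction of Proposition~\ref{lemma:quadric_bundles_cokernel} to the two self-dual quadric bundles just constructed. The setup is already fully prepared: on the one hand Lemma~\ref{lemma_f3} gives the quadric bundle
$$q_{i_{\lambda}}\colon \cU_3^{\perp}\otimes \Lambda^2\cU_3^{\vee}\longrightarrow \cQ_3\otimes \cU_3^{\vee},$$
and on the other hand the preceding lemma gives
$$l\oplus \nu\colon \cU_3^{\vee}\oplus \cO\longrightarrow \Lambda^2\cU_3^{\vee}\oplus \cO(H_3),$$
with both having cokernel sheaf isomorphic to $i_*\cC(q_{i_{\lambda}})$.

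First I would check that both maps are self-dual with the same twist $\cL=\cO(H_3)$. Using the isomorphism \eqref{preliminaries_isomorphisms}, i.e.\ $\Lambda^2\cU_3\simeq \cU_3^{\vee}(-H_3)$, one verifies the identifications
$$(\cU_3^{\perp}\otimes \Lambda^2\cU_3^{\vee})^{\vee}\otimes \cO(H_3) \simeq \cQ_3\otimes \cU_3^{\vee}, \qquad (\cU_3^{\vee}\oplus \cO)^{\vee}\otimes \cO(H_3)\simeq \Lambda^2\cU_3^{\vee}\oplus \cO(H_3),$$
so both quadric bundles have the required shape $\cF_i\to \cF_i^{\vee}\otimes \cL$.

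Then I would apply Proposition~\ref{lemma:quadric_bundles_cokernel} with
$$\cF_1\coloneqq \cU_3^{\perp}\otimes \Lambda^2\cU_3^{\vee},\qquad \cF_2\coloneqq \cU_3^{\vee}\oplus \cO,\qquad \cL\coloneqq \cO(H_3),$$
and with the isomorphism of cokernel sheaves provided by the preceding lemma. The proposition produces a vector bundle $\cE_{16}$ equipped with a self-dual isomorphism $\cE_{16}\simeq \cE_{16}^{\vee}\otimes \cO(H_3)=\cE_{16}^{\vee}(H_3)$ and a short exact sequence
$$0\to \cF_1\to \cE_{16}\to \cF_2^{\vee}\otimes \cL\to 0,$$
which upon unravelling the second identification above becomes precisely \eqref{extension_1}.

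There is no real obstacle: the argument is entirely mechanical once the two quadric bundles and the identification of their cokernels are in place, exactly parallel to the proof of Corollary~\ref{proposition_E_E} in the previous subsection. The only small point to keep track of is the bookkeeping of line-bundle twists so that both quadric bundles are written with the same $\cL=\cO(H_3)$; this is handled by the two isomorphisms displayed above.
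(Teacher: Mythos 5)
Your proof is correct and follows exactly the paper's argument: the paper likewise deduces the corollary by applying Proposition~\ref{lemma:quadric_bundles_cokernel} to the quadric bundles $q_{i_{\lambda}}$ and $l\oplus\nu$, whose cokernel sheaves were identified in the preceding lemma. The extra bookkeeping of twists you carry out is implicit in the paper but entirely consistent with it.
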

\begin{proof}
Using Proposition \ref{lemma:quadric_bundles_cokernel} for quadric bundles $q_{i_{\lambda}}$ and $l\oplus \nu$ we deduce the statement.
\end{proof}
\section{Fullness} \label{section:fullness}
The proof of the exceptionality of the collection \eqref{collection} is purely computational, so we moved it to Appendix \ref{section:computations}. In this section we prove the fullness of \eqref{collection}.
\subsection{Adding some objects}
Recall notation \eqref{mathfrak_E}. It will be convenient to start with proving fullness of a slightly different collection
\begin{equation} \label{ec}
   \langle \cU, \mathfrak{E}, \cR, \Sigma^{2,1}\cU^{\vee},\mathfrak{E}(1), \cR(1),\mathfrak{E}(2), \cO(3), \cU^{\vee}(3)\rangle.
\end{equation}
obtained from the collection \eqref{collection} by removing its last object $\Lambda^2\cU^{\vee}(3)$ and adding instead $\cU\simeq \Lambda^2\cU^{\vee}(-1)$ on the left. By Proposition \ref{proposition:long-mutations} exceptionality of \eqref{collection} implies exceptionality of \eqref{ec}.
\begin{lemma} \label{lemma:fullness}
The exceptional collection \eqref{collection} is full if and only if the collection \eqref{ec} is full.
\end{lemma}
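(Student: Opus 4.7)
The plan is to apply Proposition~\ref{proposition:long-mutations} directly and observe that the two collections differ exactly by one such mutation. Since $\CG$ is a Fano eightfold of index~4 (stated right after the description of~$\CG$ as the zero locus of a section of $\cU^\perp(1)$ on $\Gr(3,V)$), we have $\omega_{\CG}\simeq \cO(-4)$. Moreover, dualizing the isomorphism $\Lambda^2\cU\simeq\cU^\vee(-1)$ from~\eqref{preliminaries_isomorphisms} gives $\Lambda^2\cU^\vee\simeq\cU(1)$, so
\[
\Lambda^2\cU^\vee(3)\otimes\omega_{\CG}\;\simeq\;\Lambda^2\cU^\vee(-1)\;\simeq\;\cU.
\]

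With this identification in hand, I would read off the last object of \eqref{collection}, namely $\Lambda^2\cU^\vee(3)$, and apply the second form of Proposition~\ref{proposition:long-mutations}, which moves the last object of an exceptional collection to the front after tensoring it by $\omega_X$. Performing this operation on \eqref{collection} replaces $\Lambda^2\cU^\vee(3)$ at the end by $\cU$ at the beginning, and the resulting sequence is exactly \eqref{ec}. Proposition~\ref{proposition:long-mutations} asserts that the resulting collection is again exceptional and, crucially, that it is full if and only if the original one is full. This gives the desired equivalence.

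There is no real obstacle here: the entire content of the lemma is a bookkeeping application of the Serre-duality mutation combined with the two standard identities $\omega_{\CG}=\cO(-4)$ and $\Lambda^2\cU^\vee(-1)=\cU$. The only thing worth writing in the proof is the explicit check that $\Lambda^2\cU^\vee(3)\otimes\omega_{\CG}\simeq\cU$, after which the statement follows from a single invocation of Proposition~\ref{proposition:long-mutations}.
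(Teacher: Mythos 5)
Your proof is correct and is exactly the paper's argument: the paper also deduces the lemma from a single application of Proposition~\ref{proposition:long-mutations}, using $\omega_{\CG}\simeq\cO(-4)$ and $\Lambda^2\cU^{\vee}(-1)\simeq\cU$ to identify the mutated last object with $\cU$. You have merely written out the bookkeeping that the paper leaves implicit.
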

\begin{proof}
The statement follows from Proposition \ref{proposition:long-mutations}.
\end{proof}
Let us denote by 
\begin{equation} \label{definition:D}
    \cD\coloneqq  \langle \cU, \mathfrak{E}, \cR, \Sigma^{2,1}\cU^{\vee},\mathfrak{E}(1), \cR(1),\mathfrak{E}(2), \cO(3), \cU^{\vee}(3)\rangle
\end{equation}
the subcategory of $\Db(\CG)$ generated by \eqref{ec}. 
\begin{lemma} \label{lemma:Lambda^2Q(i)}
We have $\cU^{\perp}(1), \cQ, \Lambda^2\cQ, \Lambda^2\cQ(1) \in \cD$.
\end{lemma}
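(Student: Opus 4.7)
The plan is to obtain each of the four bundles as the third term of a short exact sequence whose other two terms are already in $\cD$, since $\cD$ is a triangulated subcategory and hence closed under extensions. Happily, for all four bundles the relevant short exact sequences are immediately available from the tautological sequences on $\Gr(3,V)$ and from the defining sequence of $\cR$ that was established in Lemma \ref{lemma:embedding}.

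For $\cQ$, I would use the tautological sequence $0\to\cU\to V\otimes\cO\to\cQ\to 0$ from \eqref{tautological_exact_sequence}. The first generator of \eqref{ec} is $\cU\in\cD$ and $V\otimes\cO$ is a direct sum of copies of $\cO\in\cD$, so $\cQ\in\cD$. For $\cU^{\perp}(1)$, I would twist the dual tautological sequence \eqref{tautological_exact_sequence_1} by $\cO(1)$ to obtain
\begin{equation*}
0\to \cU^{\perp}(1)\to V^{\vee}\otimes\cO(1)\to\cU^{\vee}(1)\to 0,
\end{equation*}
and observe that both $\cO(1)$ and $\cU^{\vee}(1)$ belong to $\mathfrak{E}(1)\subset\cD$.

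For $\Lambda^2\cQ$ and $\Lambda^2\cQ(1)$, I would invoke the exact sequence \eqref{exact_sequence:R_Lambda^2Q},
\begin{equation*}
0\to \cR\to \Lambda^2\cQ\xrightarrow{i_{\lambda}^{\vee}}\Lambda^2\cU^{\vee}\to 0,
\end{equation*}
which was constructed in Lemma \ref{lemma:embedding}. Since $\cR$ is one of the generators of \eqref{ec} and $\Lambda^2\cU^{\vee}\in\mathfrak{E}\subset\cD$, it follows that $\Lambda^2\cQ\in\cD$. Twisting the same sequence by $\cO(1)$ gives $0\to\cR(1)\to\Lambda^2\cQ(1)\to\Lambda^2\cU^{\vee}(1)\to 0$, and since $\cR(1)$ is a generator of \eqref{ec} while $\Lambda^2\cU^{\vee}(1)\in\mathfrak{E}(1)\subset\cD$, we conclude $\Lambda^2\cQ(1)\in\cD$.

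There is no real obstacle here: the lemma is a bookkeeping step that collects easy consequences of the definition of $\cD$. The only non-trivial input used is the sequence \eqref{exact_sequence:R_Lambda^2Q}, whose existence rests on the injectivity of $i_{\lambda}\colon\Lambda^2\cU\hookrightarrow\Lambda^2\cU^{\perp}$ verified in Lemma \ref{lemma:embedding}. This lemma is meant to serve as a warm-up for the substantially harder inclusions, such as $S^2\cU^{\vee}(m)\in\cD$ and $\Sigma^{2,1}\cU^{\vee}(1)\in\cD$, which will require the self-dualities \eqref{self-dualities} proved in Section~\ref{section:selfdualities}.
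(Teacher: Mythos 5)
Your proof is correct and is essentially identical to the paper's: the paper also derives $\cQ$ from \eqref{tautological_exact_sequence}, $\cU^{\perp}(1)$ from \eqref{tautological_exact_sequence_1} twisted by $\cO(1)$, and $\Lambda^2\cQ$, $\Lambda^2\cQ(1)$ from \eqref{exact_sequence:R_Lambda^2Q} and its twist.
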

\begin{proof}
From the exact sequence \eqref{tautological_exact_sequence} we obtain that $\cQ\in \cD$. From the exact sequence \eqref{tautological_exact_sequence_1} twisted by~$\cO(1)$ we get that $\cU^{\perp}(1)\in \cD$.
The statement for $\Lambda^2\cQ$ and $\Lambda^2\cQ(1)$ follows from the exact sequence \eqref{exact_sequence:R_Lambda^2Q} twisted by~$\cO$ and $\cO(1)$.
\end{proof}
\begin{proposition} \label{proposition:S^2}
We have $S^2\cU^{\vee}(n)\in \cD$ for $n=0,1,2$.
\end{proposition}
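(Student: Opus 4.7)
My plan is to exploit the self-duality $\cE_{10}(1) \simeq \cE_{10}^\vee$ from Corollary~\ref{proposition_E_E} together with the resolution of the common cokernel sheaf from Lemma~\ref{lemma_f2}. First I would collect the easy memberships: from the tautological sequence $0 \to \cU^\perp(k) \to V^\vee \otimes \cO(k) \to \cU^\vee(k) \to 0$ together with $\cO(k), \cU^\vee(k) \in \cD$ for $k = 1, 2, 3$, one obtains $\cU^\perp(n+1) \in \cD$ for $n = 0, 1, 2$. Similarly $\cQ(n) \in \cD$ for $n = 0, 1, 2$, using $\cU(n) \simeq \Lambda^2\cU^\vee(n-1) \in \cD$ in the relevant range. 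Twisting the short exact sequence of Lemma~\ref{lemma_f2}, namely $0 \to \cQ(-1) \to \cU^\perp \to i_*\cC \to 0$, by $\cO(n+1)$ then places the torsion sheaf $i_*\cC(n+1)$ into $\cD$, where $\cC$ denotes the common cokernel sheaf of the two quadric bundles.

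Next, dualizing the extension $0 \to S^2\cU \to \cE_{10} \to \cU^\perp \to 0$ from Corollary~\ref{proposition_E_E} and applying the self-duality $\cE_{10}^\vee \simeq \cE_{10}(1)$ yields $0 \to \cQ \to \cE_{10}(1) \to S^2\cU^\vee \to 0$. Twisting these by $\cO(n+1)$ and $\cO(n)$, respectively, gives for each $n \in \{0, 1, 2\}$
\begin{align*}
0 \to S^2\cU(n+1) &\to \cE_{10}(n+1) \to \cU^\perp(n+1) \to 0, \\
0 \to \cQ(n) &\to \cE_{10}(n+1) \to S^2\cU^\vee(n) \to 0,
\end{align*}
so that the desired membership $S^2\cU^\vee(n) \in \cD$ will follow as soon as $\cE_{10}(n+1) \in \cD$.

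For the last and most delicate step I would combine the preceding with the auxiliary sequence $0 \to S^2\cU \oplus \cQ(-1) \to \cE_{10} \to i_*\cC \to 0$ coming from the proof of Proposition~\ref{lemma:quadric_bundles_cokernel}. Twisting by $\cO(n+1)$ and using $\cQ(n), i_*\cC(n+1) \in \cD$ reduces the problem to $S^2\cU(n+1) \in \cD$. The short exact sequence $0 \to S^2\cU(n+1) \to S^2\cU^\vee(n) \to i_*\cC(n+1) \to 0$ obtained from the quadric bundle of Lemma~\ref{lemma_f1} twisted by $\cO(n+1)$ then shows $S^2\cU(n+1) \in \cD \iff S^2\cU^\vee(n) \in \cD$, so breaking this circular equivalence is the main obstacle. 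I expect to do this by exploiting additional inputs beyond the first self-duality --- either via decompositions such as $\cU^\vee \otimes \Lambda^2\cU^\vee \simeq \Sigma^{2,1}\cU^\vee \oplus \cO(1)$ that express a tensor already in $\cD$ as containing $S^2\cU(n+1)$ as a direct summand after suitable twisting, or by playing the two self-dualities~\eqref{self-dualities} off one another to extract a fresh relation.
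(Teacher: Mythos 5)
Your reductions are sound as far as they go, and for $n=0$ your route through $\cE_{10}$ and the self-duality $\cE_{10}(1)\simeq\cE_{10}^{\vee}$ is exactly the paper's; but the proof is not complete. You correctly identify that everything reduces to $S^2\cU(n+1)\in\cD$, you correctly observe that the sequences you have so far only produce the circular equivalence $S^2\cU(n+1)\in\cD\iff S^2\cU^{\vee}(n)\in\cD$, and then you stop, offering only candidate ideas for breaking the circle. Neither candidate works: the decomposition $\cU^{\vee}\otimes\Lambda^2\cU^{\vee}\simeq\Sigma^{2,1}\cU^{\vee}\oplus\cO(1)$ involves no symmetric square, and in any case knowing that a tensor product of generators decomposes does not put it in $\cD$ ($\cD$ is a triangulated subcategory, not a tensor ideal); and the second self-duality $\cE_{16}(-1)\simeq\cE_{16}^{\vee}$ is the engine for $\Sigma^{2,1}\cU^{\vee}$ (Proposition~\ref{proposition:21}), not for $S^2\cU^{\vee}$.

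The missing ingredient is the pair of Koszul-type complexes \eqref{Koszul}--\eqref{Koszul_dual},
\begin{equation*}
0\to S^2\cU(m)\to V\otimes \Lambda^2\cU^{\vee}(m-1)\to \Lambda^2V^{\vee}\otimes \cO(m)\to \Lambda^2\cQ(m)\to 0,
\end{equation*}
and its dual, together with the fact that $\Lambda^2\cQ$ and $\Lambda^2\cQ(1)$ lie in $\cD$ --- which is where the object $\cR$ (absent from your argument) earns its place in the collection, via the sequence \eqref{exact_sequence:R_Lambda^2Q}. With these in hand, the dual complex gives $S^2\cU^{\vee}(n)\in\cD$ for $n=1,2$ outright, with no self-duality needed, and the complex itself gives $S^2\cU(1)\in\cD$, which is precisely the input that breaks your circle for $n=0$: it yields $\cE_{10}(1)\in\cD$, hence $\cE_{10}^{\vee}\in\cD$, hence $S^2\cU^{\vee}\in\cD$ from the dual extension $0\to\cQ\to\cE_{10}^{\vee}\to S^2\cU^{\vee}\to 0$. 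So your skeleton for the $n=0$ case is right, but without \eqref{Koszul} and the membership $\Lambda^2\cQ(1)\in\cD$ the argument does not close.
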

\begin{proof}
Consider the (twisted) Koszul complex and its dual
\begin{align} \label{Koszul}
& 0\to S^2\cU(n)\to V\otimes \Lambda^2\cU^{\vee}(n-1)\to  \Lambda^2V^{\vee}\otimes \cO(n)\to  \Lambda^2\cQ(n)\to 0\\ \label{Koszul_dual}
& 0\to \Lambda^2\cQ(n-1)\to \Lambda^2V^{\vee}\otimes \cO(n)\to V^{\vee}\otimes \cU^{\vee}(n)\to S^2\cU^{\vee}(n)\to 0,   
\end{align}
here we use isomorphisms $\Lambda^2\cU^{\vee}(n-1)\simeq \cU(n)$ and $\Lambda^2\cQ(n-1)\simeq \Lambda^2\cU^{\perp}(n)$. 

Using the exact sequence \eqref{Koszul_dual} and Lemma \ref{lemma:Lambda^2Q(i)} we immediately obtain that $S^2\cU^{\vee}(n)\in \cD$ for~$n=1,2$.

Using the exact sequence \eqref{Koszul} for $n=1$ and Lemma \ref{lemma:Lambda^2Q(i)} we get that $S^2\cU(1)\in \cD$. From the exact sequence \eqref{extension_E} twisted by $\cO(1)$ and Lemma \ref{lemma:Lambda^2Q(i)} we obtain that $\cE_{10}(1)\in \cD$. Thus, by Corollary \ref{proposition_E_E} we have $\cE_{10}^{\vee}\in\cD$. Using the dual to \eqref{extension_E} exact sequence
\begin{equation} \label{exact_sequence:E_dual}
    0\to \cQ \to \cE_{10}^{\vee}\to S^2\cU^{\vee} \to 0
\end{equation}
and Lemma \ref{lemma:Lambda^2Q(i)} we conclude that $S^2\cU^{\vee}\in \cD$.
\end{proof}
\begin{proposition} \label{proposition:21}
We have $\Sigma^{2,1}\cU^{\vee}\in \langle \Sigma^{2,1}\cU^{\vee}(-1),\mathfrak{E},\cO(1)\rangle$. 
\end{proposition}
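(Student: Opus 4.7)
The plan is to deduce the claim through a short chain of exact sequences anchored by the self-duality $\cE_{16}\simeq \cE_{16}^{\vee}(1)$ established in Corollary \ref{corollary_extension}. Write $\cC := \langle \Sigma^{2,1}\cU^{\vee}(-1),\mathfrak{E},\cO(1)\rangle$. The starting point is the defining short exact sequence
\begin{equation*}
0 \to \Lambda^3\cU^{\vee} \to \cU^{\vee}\otimes \Lambda^2\cU^{\vee} \to \Sigma^{2,1}\cU^{\vee} \to 0,
\end{equation*}
with $\Lambda^3\cU^{\vee}\simeq \cO(1)\in \cC$, which reduces the task to showing $\cU^{\vee}\otimes \Lambda^2\cU^{\vee}\in \cC$. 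Tensoring the dual tautological sequence \eqref{tautological_exact_sequence_1} by $\Lambda^2\cU^{\vee}$, whose middle term $V^{\vee}\otimes \Lambda^2\cU^{\vee}$ is a trivial sum of copies of $\Lambda^2\cU^{\vee}\in \mathfrak{E}$, reduces the problem further to proving $\cU^{\perp}\otimes\Lambda^2\cU^{\vee}\in \cC$.

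The central step exploits the self-duality of $\cE_{16}$. Dualizing the extension \eqref{extension_1}, twisting by $\cO(1)$, and applying the isomorphism $\Lambda^2\cU\otimes \cO(1)\simeq \cU^{\vee}$ from \eqref{preliminaries_isomorphisms} yields a second short exact sequence
\begin{equation*}
0\to \cU^{\vee}\oplus \cO \to \cE_{16} \to \cQ\otimes \cU^{\vee}\to 0.
\end{equation*}
Combined with the original extension \eqref{extension_1}, these two sequences place $\cU^{\perp}\otimes\Lambda^2\cU^{\vee}$ inside $\langle \cE_{16}, \Lambda^2\cU^{\vee}, \cO(1)\rangle \subseteq \langle \cU^{\vee},\cO, \cQ\otimes\cU^{\vee}, \Lambda^2\cU^{\vee}, \cO(1)\rangle$, so it remains to show $\cQ\otimes \cU^{\vee}\in \cC$.

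For the last piece I would tensor the tautological sequence \eqref{tautological_exact_sequence} by $\cU^{\vee}$; the middle term is a trivial sum of $\cU^{\vee}\in \mathfrak{E}$, reducing to $\cU\otimes \cU^{\vee}\in \cC$. Working over a field of characteristic zero, the trace splits $\cU\otimes \cU^{\vee}$ as $\cO\oplus \mathfrak{sl}(\cU) = \cO\oplus \Sigma^{2,1}\cU^{\vee}(-1)$, whose summands are both generators of $\cC$, closing the chain. The main obstacle is the bookkeeping for the dualization in the central step: one must carefully identify $(\cU^{\perp}\otimes \Lambda^2\cU^{\vee})^{\vee}\otimes \cO(1)\simeq \cQ\otimes\cU^{\vee}$ and $(\Lambda^2\cU^{\vee}\oplus \cO(1))^{\vee}\otimes \cO(1)\simeq \cU^{\vee}\oplus \cO$ using $\cU^{\perp\vee}\simeq \cQ$ together with \eqref{preliminaries_isomorphisms}; beyond this no geometric input is required except the self-dualities already established in Section~\ref{section:selfdualities}.
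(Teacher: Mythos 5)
Your proof is correct and follows essentially the same route as the paper: both arguments pivot on the self-duality $\cE_{16}\simeq\cE_{16}^{\vee}(1)$ of Corollary \ref{corollary_extension} together with the extension \eqref{extension_1} and the two tautological sequences, reducing everything to $\cQ\otimes\cU^{\vee}$ and ultimately to objects of $\langle\Sigma^{2,1}\cU^{\vee}(-1),\mathfrak{E},\cO(1)\rangle$. The only difference is organizational: the paper packages the intermediate steps through the auxiliary bundle $\cK=\mathrm{Ker}(V^{\vee}\otimes\Lambda^2\cU^{\vee}\to\Sigma^{2,1}\cU^{\vee})$ and a snake-lemma diagram, whereas you chain the defining sequence of $\Sigma^{2,1}\cU^{\vee}$ with the tautological sequences directly and use the splitting $\cU\otimes\cU^{\vee}\simeq\cO\oplus\Sigma^{2,1}\cU^{\vee}(-1)$, which carries the same information.
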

\begin{proof}
Consider the following mutually dual exact sequences restricted from $\Gr(3,V)$:
\begin{align} \label{K_definition}
  & 0\to \cK\to V^{\vee} \otimes \Lambda^2\cU^{\vee}\stackrel{ev}{\longrightarrow} \Sigma^{2,1}\cU^{\vee} \to  0, \\ \label{K_dual}
  & 0\to \Sigma^{2,1}\cU \to V\otimes \Lambda^2\cU\to  \cK^{\vee} \to 0,
\end{align}
where $ev$ is the evaluation map and~$\cK\coloneqq \mathrm{Ker}(V^{\vee}\otimes\Lambda^2\cU^{\vee}\stackrel{ev}{\longrightarrow} \Sigma^{2,1}\cU^{\vee})$. Applying the snake lemma to the diagram
\begin{equation*} 
\xymatrix@R=1pc{
0\ar[r]& \cK\ar[r] \ar[d] & V^{\vee} \otimes \Lambda^2\cU^{\vee}\ar[d] \ar[r]& \Sigma^{2,1}\cU^{\vee} \ar@{=}[d] \ar[r] & 0 \\
0\ar[r]& \cO(1) \ar[r]&\cU^{\vee} \otimes \Lambda^2\cU^{\vee} \ar[r] & \Sigma^{2,1}\cU^{\vee} \ar[r] & 0,
}   
\end{equation*}
we get the following mutually dual exact sequences
\begin{align}\label{exact_sequence_bK'}
    & 0\to \cU^{\perp}\otimes \Lambda^2\cU^{\vee} \to \cK\to \cO(1)\to 0,\\ \label{exact_sequence_bK'_dual}
    &  0\to \cO(-1) \to \cK^{\vee}\to \cQ\otimes \Lambda^2\cU \to 0.
\end{align}
The exact sequence \eqref{K_dual} twisted by $\cO(1)$ shows that  $\cK^{\vee}(1)\in \langle \Sigma^{2,1}\cU^{\vee}(-1), \cU^{\vee}\rangle$. The exact sequence~\eqref{exact_sequence_bK'_dual} twisted by $\cO(1)$ shows that $\cQ\otimes \cU^{\vee}\in \langle \cO, \cK^{\vee}(1)\rangle\subset \langle \Sigma^{2,1}\cU^{\vee}(-1), \cO, \cU^{\vee}\rangle$. The dual exact sequence to~\eqref{extension_1} twisted by $\cO(1)$
\begin{equation*}
    0 \to \cU^{\vee} \oplus \cO  \to \cE_{16}^{\vee}(1) \to \cQ\otimes\cU^{\vee}\to 0
\end{equation*}
shows that $\cE_{16}^{\vee}(1)\in \langle \Sigma^{2,1}\cU^{\vee}(-1), \cO, \cU^{\vee}\rangle$. Applying Corollary~\ref{corollary_extension} we obtain~$\cE_{16} \in \langle \Sigma^{2,1}\cU^{\vee}(-1), \cO, \cU^{\vee}\rangle$. Using the exact sequence \eqref{extension_1} we deduce~$\cU^{\perp}\otimes \Lambda^2\cU^{\vee}\in \langle \cE_{16}, \Lambda^2\cU^{\vee}, \cO(1)\rangle\subset \langle \Sigma^{2,1}\cU^{\vee}(-1), \mathfrak{E}, \cO(1)\rangle$. The exact sequence \eqref{exact_sequence_bK'} shows that $\cK\in \langle \Sigma^{2,1}\cU^{\vee}(-1), \mathfrak{E}, \cO(1)\rangle$. Finally, using the exact sequence \eqref{K_definition} we deduce $\Sigma^{2,1}\cU^{\vee}\in \langle \Sigma^{2,1}\cU^{\vee}(-1),\mathfrak{E},\cO(1)\rangle$.
\end{proof}
\begin{corollary} \label{proposition:Sigma^{2,1}}
We have $\Sigma^{2,1}\cU^{\vee}(n)\in \cD$ for $n=1,2$.
\end{corollary}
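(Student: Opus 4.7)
The plan is to apply Proposition \ref{proposition:21} inductively, twisting it successively by $\cO(1)$ and $\cO(2)$, and then check that all ingredients on the right-hand side already belong to $\cD$.

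More explicitly, first I would twist the containment from Proposition \ref{proposition:21} by $\cO(1)$ to obtain
\begin{equation*}
\Sigma^{2,1}\cU^{\vee}(1) \in \langle \Sigma^{2,1}\cU^{\vee}, \mathfrak{E}(1), \cO(2) \rangle.
\end{equation*}
Each generator on the right sits in $\cD$: $\Sigma^{2,1}\cU^{\vee}$ and $\mathfrak{E}(1)$ appear directly as objects in the collection \eqref{ec} defining $\cD$, and $\cO(2)$ is the first object of $\mathfrak{E}(2)$, which is also part of \eqref{ec}. Hence $\Sigma^{2,1}\cU^{\vee}(1) \in \cD$.

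Next I would twist Proposition \ref{proposition:21} by $\cO(2)$ to get
\begin{equation*}
\Sigma^{2,1}\cU^{\vee}(2) \in \langle \Sigma^{2,1}\cU^{\vee}(1), \mathfrak{E}(2), \cO(3) \rangle.
\end{equation*}
By the previous step $\Sigma^{2,1}\cU^{\vee}(1) \in \cD$, while $\mathfrak{E}(2)$ and $\cO(3)$ are already in the generating collection of $\cD$. Therefore $\Sigma^{2,1}\cU^{\vee}(2) \in \cD$ as well, which finishes the proof.

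There is no real obstacle here: the work has been done in Proposition \ref{proposition:21}, and all that remains is bookkeeping to confirm that the twisted generators lie in $\cD$. The only point of care is to note that $\cO(3)$ is indeed present in the generating set \eqref{ec}, which is why it was convenient to work with the modified collection \eqref{ec} rather than \eqref{collection} directly.
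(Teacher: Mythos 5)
Your proposal is correct and is exactly the argument the paper intends when it says the corollary "directly follows" from Proposition \ref{proposition:21}: twist the containment by $\cO(1)$ and $\cO(2)$ and observe that the resulting generators all lie in $\cD$. The bookkeeping you supply (including the remark that $\cO(3)$ belongs to the generating set \eqref{ec}) is accurate.
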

\begin{proof}
This directly follows from Proposition \ref{proposition:21}.
\end{proof}
\subsection{Fullness}
In this section we prove the fullness of the collection \eqref{ec} and consequently of the collection \eqref{collection}.
Let us briefly sketch the idea of the proof, that comes from \cite{2}. We consider the family of subvarieties~$\CG_f \stackrel{i_f}{\hookrightarrow}\CG$ defined as zero loci of sufficiently general global sections $f\in \mathrm{H}^0(\CG,\cU^{\vee})$. We show that $\CG_f$ is isomorphic to a smooth hyperplane section of the isotropic Grassmannian~$\IGr(3,6)$ and prove that every point of $\CG$ lies on such a smooth zero locus. Using Koszul resolution of $i_{f*}\cO_{\CG_{f}}$ and the exceptional collection \eqref{exceptional_collection_hyperplane} on $\CG_f$ we check that the pullback to~$\CG_f$ of an object $F\in \cD^{\perp}$ is zero. It follows that~$F=0$, so we conclude that $\cD^{\perp}=0$, hence $\cD=\Db(\CG)$.

To prove the fullness of \eqref{ec} we need some preparations. By Proposition \ref{main_computations} we have~$\mathrm{H}^0(\CG,\cU^{\vee})=V^{\vee}$, so $\mathrm{H}^0(\CG,\cU^{\vee})$ is one of the fundamental representation of $\rG_2$. Recall that the action of $\rG_2$ on $\PP(V) \xrightarrow[q]{\sim}\PP(V^{\vee})$ has two orbits: the 5-dimensional quadric $\qQ^{\vee}=q(\qQ)$ and its open dense complement $\PP(V^{\vee})\setminus \qQ^{\vee}$, see Subsection~\ref{section:G2}. We will denote by $H_f\subset V$ the 6-dimensional vector subspace of $V$ that corresponds to~$[f]\in \PP(V^{\vee})$.
\begin{lemma} \label{lemma:zerolocus}
If $[f]\in \PP(V^{\vee})\setminus \qQ^{\vee}$ then
$$\CG_f\coloneqq \Gr(3,H_{f})\cap \CG \simeq \IGr(3,H_f^{\vee}) \cap H$$ and the right hand side is a smooth hyperplane section of $\IGr(3,H_f^{\vee})$. In particular, we have the following resolution of the structure
sheaf $i_{f*}\cO_{\CG_{f}}$ on $\CG$:
\begin{equation} \label{krM}
    0\to \cO(-1)\to \Lambda^2 \cU \to \cU \to \cO_{\CG}\to i_{f*}\cO_{\CG_{f}}\to 0.
\end{equation}
where $i_{f}\colon \CG_{f}\to \CG$ is the embedding. Moreover, the Cayley Grassmannian $\CG$ is covered by the subvarieties $\CG_{f}$ for~$[f]\in \PP(V^{\vee})\setminus \qQ^{\vee}$.
\end{lemma}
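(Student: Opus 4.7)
The plan is to verify the three assertions (the isomorphism, the Koszul resolution, and the covering) in turn. Set $v_f := q^{-1}(f)\in V$, so that $H_f=\ker(f)=v_f^{\perp_q}$, and observe that $[f]\in \PP(V^\vee)\setminus \qQ^\vee$ is equivalent to $q(v_f,v_f)\ne 0$, equivalently to $q|_{H_f}$ being non-degenerate. The volume form on $H_f$ induced by $q|_{H_f}$ Hodge-dualizes $\lambda|_{H_f}\in \Lambda^4 H_f^\vee$ to a 2-vector $\omega_f\in \Lambda^2 H_f$, which we view as a skew bilinear form on $H_f^\vee$. I would check non-degeneracy of $\omega_f$ by $\rG_2$-equivariance: since the $\rG_2$-stabilizer of the non-isotropic vector $v_f$ is $\mathrm{SL}_3$ acting on $H_f\simeq \Bbbk^3\oplus (\Bbbk^3)^\vee$, the bivector $\omega_f$ is a nonzero $\mathrm{SL}_3$-invariant element of $\Lambda^2 H_f$, hence proportional to the standard symplectic structure.

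A direct Hodge-star computation in local coordinates then shows that for $U_3\subset H_f$, the condition $\lambda|_{H_f}\conv \Lambda^3 U_3 = 0$ in $H_f^\vee$ is equivalent to $\mathrm{ann}(U_3)\subset H_f^\vee$ being isotropic for $\omega_f$. Since the duality $U_3\mapsto \mathrm{ann}(U_3)$ is an isomorphism $\Gr(3,H_f)\simeq \Gr(3,H_f^\vee)$, this identifies the zero locus of $\lambda|_{H_f}$ inside $\Gr(3,H_f)$ with $\IGr(3,H_f^\vee;\omega_f)$. The full condition $\lambda\conv \Lambda^3 U_3 = 0$ in $V^\vee$ imposes exactly one additional scalar equation beyond its restriction to $H_f^\vee$, since $V^\vee/H_f^\vee\simeq \langle f\rangle$ is one-dimensional; in Pl\"ucker coordinates this equation cuts out a hyperplane $H$, giving $\CG_f\simeq \IGr(3,H_f^\vee)\cap H$. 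Smoothness is verified at a single point by $\rG_2$-equivariance, since $\rG_2$ acts transitively on $\PP(V^\vee)\setminus \qQ^\vee$.

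For the resolution, $\CG_f$ is by construction the zero locus of $f$ viewed as a section of $\cU^\vee$ on $\CG$. The rank of $\cU^\vee$ equals $3$, which matches $\mathrm{codim}_{\CG}(\CG_f)=8-5=3$; together with the smoothness (hence Cohen--Macaulayness) of $\CG$ this forces $f$ to be a regular section, so the Koszul complex of $f$ resolves $i_{f*}\cO_{\CG_f}$. The isomorphism $\Lambda^3\cU\simeq \cO(-1)$ then yields the stated five-term exact sequence. For the covering, given $[U_3]\in \CG$ the locus of $[f]$ with $U_3\subset H_f$ is $\PP(\mathrm{ann}(U_3))\simeq \PP^3\subset \PP(V^\vee)\simeq \PP^6$; since $\qQ^\vee$ is a smooth $5$-dimensional quadric in $\PP^6$ whose maximal linear subspaces have dimension $\lfloor 5/2\rfloor=2$, this $\PP^3$ cannot lie in $\qQ^\vee$, and its complement with $\qQ^\vee$ supplies the required $f$.

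The most delicate step should be the smoothness verification for $\CG_f$: although $\rG_2$-equivariance reduces this to a single-point check, one still needs an explicit local computation at, e.g., the point $P_0\in \mathsf{O}_0$, or an intrinsic transversality argument for the specific hyperplane $H$ inside $\IGr(3,H_f^\vee)$.
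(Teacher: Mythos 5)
Your identification of $\CG_f$ with a hyperplane section of $\IGr(3,H_f^\vee)$ is sound in outline and runs parallel to the paper's argument, which phrases the same duality via the splitting $\cU^{\perp}(1)|_{\Gr(3,H_{f})}\simeq \cU^{\perp}_{\Gr(3,H_{f})}(1)\oplus \cO_{\Gr(3,H_{f})}(1)$ followed by the canonical isomorphism $\Gr(3,H_f)\simeq \Gr(3,H_f^\vee)$ carrying $\lambda|_{H_f}$ to its dual bivector. Your $\mathrm{SL}_3$-invariance argument for the non-degeneracy of $\omega_f$ is a legitimate alternative to the paper's, which deduces non-degeneracy a posteriori from smoothness (rank $4$ would force $\IGr$ to contain a singular $\PP^3$, rank $2$ would make the dimension too large); you should still note that $\lambda|_{H_f}\ne 0$, but that is immediate. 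The covering argument at the end is the same as the paper's.

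The genuine gap is the smoothness of $\CG_f$, which you yourself flag as the delicate step and leave open, and the reduction you propose does not close it. Equivariance of the family over $\PP(V^\vee)$ reduces the claim to a single $[f_0]$ in the open orbit, but $\CG_{f_0}$ is a five-dimensional variety whose stabilizer $\mathrm{SL}_3\subset \rG_2$ does not act transitively on it, so a local computation ``at the point $P_0$'' establishes nothing about the remaining points of $\CG_{f_0}$. Equivalently, knowing that $\omega_f$ is non-degenerate is not enough: one must also rule out that the specific hyperplane $H$ is tangent to $\IGr(3,H_f^\vee)$, and your proposal gives no handle on this. The paper's route avoids the issue entirely: $\cU^{\vee}$ is globally generated, so by Bertini the zero locus of a general section is smooth of codimension $3$; hence the set of $[f]$ for which $\CG_f$ is smooth of codimension $3$ is a nonempty open subset of $\PP(V^{\vee})$, and being $\rG_2$-invariant it must contain the dense orbit $\PP(V^{\vee})\setminus \qQ^{\vee}$. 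You should replace the single-point check by this openness-plus-invariance argument, or else supply an actual transversality proof for the particular hyperplane $H$ inside $\IGr(3,H_f^\vee)$.
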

\begin{proof}
The intersection~$\Gr(3,H_{f})\cap \CG$ is isomorphic to the zero locus of~$f\in \mathrm{H}^0(\Gr(3,V), \cU^{\vee})$ on~$\Gr(3,V)$, hence $\CG_f$ is the zero locus of $f\in \mathrm{H}^0(\CG,\cU^{\vee})$. 

The vector bundle $\cU^{\vee}$ is globally generated, hence by Bertini's theorem the zero locus of a general global section is smooth of codimension 3. The subset of $\PP(\mathrm{H}^0(\CG, \cU^{\vee}))$ of $f$ such that $\CG_f$ is smooth is open and $\rG_2$-invariant hence this subset contains the open orbit $\PP(V^{\vee})\setminus \qQ^{\vee}$. In particular, we obtain that for $[f]\notin \qQ^{\vee}$ the zero locus $\CG_f$ is smooth of codimension 3.

We denote by $\cU_{\Gr(3,H_{f})}$ and $\cU^{\perp}_{\Gr(3,H_{f})}$ the tautological vector bundle and the dual to the quotient bundle on $\Gr(3,H_{f})$, respectively. The Pl{\"u}cker line bundle on $\Gr(3,H_{f})$ will be denoted by~$\cO_{\Gr(3,H_{f})}(1)$.
The restriction of $\cU^{\perp}(1)$ to~$\Gr(3,H_{f})$ is isomorphic to
$$\cU^{\perp}(1)\restr{\Gr(3,H_{f})}\simeq \cU^{\perp}_{\Gr(3,H_{f})}(1)\oplus \cO_{\Gr(3,H_{f})}(1),$$
hence we get that $\CG_f$ is isomorhic to a hyperplane section of the zero locus of~$\lambda|_{H_f}\in \mathrm{H}^0(\Gr(3,H_{f}),\cU_{\Gr(3,H_{f})}^{\perp}(1))\simeq \Lambda^4 H_f^{\vee}$ on $\Gr(3,H_{f})$. Under the canonical isomorphism~$\Gr(3,H_f)\simeq \Gr(3, H_f^{\vee})$ the vector bundle $\cU^{\perp}_{\Gr(3,H_{f})}(1)$ maps to the vector bundle~$\cU_{\Gr(3,H_{f}^{\vee})}(1)\simeq \Lambda^2\cU^{\vee}_{\Gr(3,H_{f}^{\vee})}$, hence the zero locus of the 4-form~$\lambda|_{H_f}\in \mathrm{H}^0(\Gr(3,H_{f}),\cU_{\Gr(3,H_{f})}^{\perp}(1))$ on~$\Gr(3,H_f)$ is isomorphic to the zero locus of a dual 2-vector~$(\lambda|_{H_f})^{\vee}\in \mathrm{H}^0(\Gr(3,H_{f}^{\vee}),\Lambda^2\cU_{\Gr(3,H_{f}^{\vee})}^{\vee})\simeq \Lambda^2H_f$ on~$\Gr(3,H_f^{\vee})$. Hence $\CG_{f}$ is isomorphic to a hyperplane section of the isotropic Grassmannian~$\IGr_{(\lambda|_{H_f})^{\vee}}(3,H_f^{\vee})$ and we get the first statement.

We have already shown that $\CG_f$ is smooth of codimension 3. Let us prove that this implies nondegenerateness of~$(\lambda|_{H_f})^{\vee}$. Suppose the rank of $(\lambda|_{H_f})^{\vee}$ is equal to 4, that is $(\lambda|_{H_f})^{\vee}$ has 2-dimensional kernel $U_2\subset H_f^{\vee}$. Then the isotropic Grassmannian~$\IGr_{(\lambda|_{H_f})^{\vee}}(3,H_f^{\vee})$ contains singular $\PP^3=\PP(\Lambda^2 U_2\wedge (H_f^{\vee}/U_2))$. Thus, a hyperplane section of $\IGr_{(\lambda|_{H_f})^{\vee}}(3,H_f^{\vee})$ can not be smooth. If the rank of $(\lambda|_{H_f})^{\vee}$ is equal to 2 then the dimension of~$\IGr_{(\lambda|_{H_f})^{\vee}}(3,H_f^{\vee})$ is
bigger than 6, so its hyperplane section can not have codimension 3 in $\CG$. We conclude that $(\lambda|_{H_f})^{\vee}$ is nondegenerate.

Any section $[f]\in \PP(V^{\vee})\setminus \qQ^{\vee}$ is regular so the sheaf $i_{f*}\cO_{\CG_{f}}$ admits a Koszul resolution which takes the form~\eqref{krM}.

Let us prove the last statement. Take a point $[U_3]$ of $\CG$. Note that
$$[U_3]\in \CG_f \Leftrightarrow U_3 \subset H_f \Leftrightarrow [f]\in \PP(U_3^{\perp}).$$
Suppose that~$[U_3]$ does not lie on any zero locus $\CG_{f}$ with~$[f]\in \PP(V^{\vee})\setminus \qQ^{\vee}$. Then~$\PP^3\simeq \PP(U_3^{\perp})\subset \qQ^{\vee}$ and we get a contradiction because the smooth 5-dimensional quadric $\qQ^{\vee}$ does not contain~$\PP^3$. 
\end{proof}
Now we can prove the main result of this paper. 
\begin{theorem} \label{theorem:main}
The collection \eqref{ec} is full.
\end{theorem}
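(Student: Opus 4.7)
The plan is to show $\cD^{\perp} = 0$ in $\Db(\CG)$; by Lemma~\ref{lemma:fullness} this yields Theorem~\ref{Theorem:introduction:main}. Fix $F \in \cD^{\perp}$. By Lemma~\ref{lemma:zerolocus} the subvarieties $\CG_{f}$ for $[f] \in \PP(V^{\vee}) \setminus \qQ^{\vee}$ are smooth hyperplane sections of $\IGr(3, 6)$, embedded as regular closed subvarieties of codimension~$3$, and they cover $\CG$; hence it suffices to prove $Li_{f}^{*} F = 0$ for every such $[f]$. Since $\Db(\CG_{f})$ is generated by the full exceptional collection~\eqref{exceptional_collection_hyperplane}, consisting of the bundles $i_{f}^{*} \tilde{G}$ for $\tilde{G} \in \{\cO(m),\, \cU^{\vee}(m) : m = 0, 1, 2\}$, this vanishing reduces to $\mathrm{Ext}^{\bullet}_{\CG_{f}}(i_{f}^{*} \tilde{G}, Li_{f}^{*} F) = 0$ for each such $\tilde{G}$.

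By the $(Li_{f}^{*}, i_{f*})$ adjunction together with the projection formula,
\[
 \mathrm{Ext}^{\bullet}_{\CG_{f}}(i_{f}^{*} \tilde{G}, Li_{f}^{*} F) \;\simeq\; \mathrm{Ext}^{\bullet}_{\CG}\bigl(\tilde{G},\, F \otimes^{L} i_{f*} \cO_{\CG_{f}}\bigr),
\]
and using the Koszul resolution~\eqref{krM} the derived tensor product $F \otimes^{L} i_{f*} \cO_{\CG_{f}}$ is represented by the four-term complex $[F(-1) \to F \otimes \Lambda^{2} \cU \to F \otimes \cU \to F]$. Therefore the Ext above vanishes provided that for each $E \in \{\cO(-1), \Lambda^{2} \cU, \cU, \cO\}$ one has $\mathrm{Ext}^{\bullet}_{\CG}(\tilde{G} \otimes E^{\vee}, F) = 0$; since $F \in \cD^{\perp}$, this holds as soon as each of the four bundles
\[
 \tilde{G}(1), \quad \tilde{G} \otimes \Lambda^{2} \cU^{\vee}, \quad \tilde{G} \otimes \cU^{\vee}, \quad \tilde{G}
\]
lies in $\cD$.

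The core step is verifying this for all six choices of $\tilde{G}$. Using $\Lambda^{2} \cU^{\vee} \simeq \cU(1)$ and the standard decompositions $\cU^{\vee} \otimes \cU^{\vee} \simeq S^{2} \cU^{\vee} \oplus \Lambda^{2} \cU^{\vee}$ and $\cU^{\vee} \otimes \Lambda^{2} \cU^{\vee} \simeq \Sigma^{2, 1} \cU^{\vee} \oplus \cO(1)$, each of the required twenty-four bundles splits into a direct sum of twists of $\cO$, $\cU^{\vee}$, $\Lambda^{2} \cU^{\vee}$, $S^{2} \cU^{\vee}$ and $\Sigma^{2, 1} \cU^{\vee}$ with twist parameter in the range $\{0, 1, 2, 3\}$. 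All of these lie in $\cD$: the triples $\mathfrak{E}(i)$ for $i = 0, 1, 2$ together with $\cO(3)$, $\cU^{\vee}(3)$ and $\Sigma^{2, 1} \cU^{\vee}$ are in $\cD$ directly by the definition~\eqref{definition:D}; the bundles $S^{2} \cU^{\vee}(m)$ for $m = 0, 1, 2$ are placed in $\cD$ by Proposition~\ref{proposition:S^2}; and the bundles $\Sigma^{2, 1} \cU^{\vee}(m)$ for $m = 1, 2$ are placed in $\cD$ by Corollary~\ref{proposition:Sigma^{2,1}}.

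Once $Li_{f}^{*} F = 0$ is established for every admissible $[f]$, the regularity of each $i_{f}$ together with the covering of $\CG$ by the $\CG_{f}$ forces $F = 0$, whence $\cD^{\perp} = 0$ and the collection~\eqref{ec} is full. The principal obstacle lies precisely in the third paragraph: the self-dualities~\eqref{self-dualities} and the resulting inclusions $S^{2} \cU^{\vee}(m), \Sigma^{2, 1} \cU^{\vee}(m) \in \cD$ were tailored so that every one of the twenty-four bundles $\tilde{G} \otimes E^{\vee}$ belongs to $\cD$, with no need for any object carrying negative Pl\"ucker twist.
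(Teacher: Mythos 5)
Your proposal is correct and follows essentially the same route as the paper: cover $\CG$ by the zero loci $\CG_f\simeq \IGr(3,6)\cap H$ from Lemma~\ref{lemma:zerolocus}, use the Koszul resolution~\eqref{krM} and the full collection~\eqref{exceptional_collection_hyperplane} to show $Li_f^*F=0$ for $F\in\cD^{\perp}$, and reduce everything to the same list of memberships in $\cD$ supplied by Proposition~\ref{proposition:S^2} and Corollary~\ref{proposition:Sigma^{2,1}}. The only cosmetic difference is that you package the hypercohomology computation via the adjunction and projection formula, where the paper tensors the resolution by $\cO(-k)\otimes F$ and $\cU(-k)\otimes F$ explicitly; the final step $F=0$ is the paper's appeal to \cite[Lemma 3.29]{H}.
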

\begin{proof}
Recall notation \eqref{definition:D}. Assume that collection \eqref{ec} is not full. Then by \cite[Lemma 3.1 and Theorem 3.2(a)]{3} there exists a nonzero object $F\in\cD^{\perp}$. 

We show that $i_{f}^*F=0$ for any global section $[f]\in \PP(V^{\vee})\setminus \qQ^{\vee}$.
Indeed, by Lemma \ref{lemma:zerolocus} the zero locus~$\CG_f$ is isomorphic to a smooth hyperplane section of the isotropic Grassmannian. Recall the Lefschetz decomposition \eqref{exceptional_collection_hyperplane} of $\Db(\CG_f)$:
\begin{equation} \label{ec_1}
    \Db(\CG_f)= \langle \cO_{\CG_f},\cU_{\CG_f}^{\vee},\cO_{\CG_f}(1),\cU_{\CG_f}^{\vee}(1), \cO_{\CG_f}(2),\cU_{\CG_f}^{\vee}(2) \rangle,
\end{equation}
where to avoid confusion we denote by $\cU_{\CG_f}$ and $\cO_{\CG_f}(1)$ the restrictions to $\CG_f$ of the tautological and the Pl{\"u}cker line bundles respectively.

First, tensoring the resolution \eqref{krM} by $\cO(-k)\otimes F$ we obtain the complex
\begin{equation} \label{complex_F_1}
  0\to \cO(-k-1)\otimes F\to \Lambda^2 \cU(-k)\otimes F \to \cU(-k)\otimes F \to \cO(-k)\otimes F\to
 i_{f *}(\cO_{\CG_f}(-k) \otimes i_{f}^{*}F)\to 0,  
\end{equation}
where we use an isomorphism $(\cO(-k)\otimes F)\otimes i_{f *} \mathcal{O}_{\CG_{f}}\simeq i_{f *}(i_{f}^{*}( \cO(-k) \otimes F)) \simeq i_{f *}(\cO_{\CG_f}(-k) \otimes i_{f}^{*}F).$
Since~$\Lambda^j\cU^{\vee}(k)\in \cD$ for $0\le j\le 3,\ 0\le k \le 2$ and $F\in \cD^{\perp}$ we get
$$\mathrm{H}^{\bullet}(\CG,\Lambda^j\cU(-k)\otimes F)\simeq \mathrm{Ext}^{\bullet}(\Lambda^j\cU^{\vee}(k),F)=0,$$
so that the cohomology on $\CG$ of all terms of the complex \eqref{complex_F_1} except for the last one vanishes. Thus the cohomology of the last term also vanishes, and we obtain \begin{equation} \label{rhom_1}
\mathrm{H}^{\bullet}(\CG, i_{f *}(\cO_{\CG_f}(-k) \otimes i_{f}^{*}F))= \mathrm{H}^{\bullet}(\CG_{f}, \cO_{\CG_f}(-k) \otimes i_{f}^{*}F)=\mathrm{Ext}^{\bullet}_{\CG_{f}}(\cO_{\CG_f}(k), i_{f}^{*} F)=0. \end{equation}

Next, tensoring the resolution \eqref{krM} by $\cU(-k)\otimes F$, where $0\le k \le 2$, we obtain the complex
\begin{multline} \label{complex:F_2}
   0\to \cU(-k-1)\otimes F\to \Lambda^2 \cU \otimes \cU(-k) \otimes F \to \\
   \to \cU\otimes \cU(-k)\otimes F \to \cU(-k)\otimes F\to i_{f *}(\cU_{CG_f}(-k) \otimes i_{f}^{*}F)\to 0,
\end{multline}
where we use an isomorphism $(\cU(-k)\otimes F)\otimes i_{f *} \mathcal{O}_{\CG_{f}}\simeq i_{f *}(i_{f}^{*}( \cU(-k) \otimes F)) \simeq i_{f *}(\cU_{\CG_{f}}(-k) \otimes i_{f}^{*}F)$.
By Proposition~\ref{proposition:S^2} and Corollary~\ref{proposition:Sigma^{2,1}} we have $\cU^{\vee}\otimes \cU^{\vee}(k)\simeq S^2\cU^{\vee}(k)\oplus \Lambda^2\cU^{\vee}(k)\in \cD$ and $\Lambda^2\cU^{\vee}\otimes \cU^{\vee}(k)\simeq \Sigma^{2,1}\cU^{\vee}(k)\oplus \cO(k+1)\in \cD$ for $0\le k \le 2$, hence for~$F\in \cD^{\perp}$ we obtain
$$\mathrm{H}^{\bullet}(\CG, \Lambda^j\cU\otimes \cU(-k) \otimes F)\simeq \mathrm{Ext}^{\bullet}(\Lambda^j\cU^{\vee}\otimes \cU^{\vee}(k),F)=0$$
and the cohomology on $\CG$ of all terms of the complex \eqref{complex:F_2} except for the last one vanishes.
We deduce that the cohomology of the last term also vanishes, and we obtain
\begin{equation} \label{rhom_2}
\mathrm{H}^{\bullet}(\CG, i_{f *}(\cU_{\CG}(-k) \otimes i_{f}^{*}F)= \mathrm{H}^{\bullet}(\CG_{f}, \cU_{\CG_f}(-k) \otimes i_{f}^{*}F) = \mathrm{Ext}^{\bullet}_{\CG_{f}}(\cU_{\CG_f}^{\vee}(k), i_{f}^{*} F)=0.    
\end{equation}
Using the full exceptional collection $\eqref{ec_1}$ and the equalities \eqref{rhom_1} and \eqref{rhom_2} we deduce~$i_{f}^*F\in \mathrm{D}^b(\CG_f)^{\perp}=0$.

Now, using the last statement of Lemma \ref{lemma:zerolocus} and \cite[Lemma 3.29]{H} we deduce that $F=0$. Thus we have proved that the collection \eqref{ec} is indeed full.
\end{proof}
We can also prove the theorem stated in the Introduction. 
\begin{proof}[Proof of Theorem \ref{Theorem:introduction:main}]
The statement immediately follows from Lemma \ref{lemma:fullness} and Theorem \ref{theorem:main}.
\end{proof}
\appendix \section{Geometric constructions} \label{section:geometric_construction}
In this appendix we prove some geometric results related to the Cayley Grassmannian~$\CG$: we show that $\CG$ is isomorphic to the Hilbert scheme of conics on $\Gad$ and describe the Hilbert scheme of lines on $\CG$. 

Recall that we denote by $\cO(H_2)$ and~$\cU_2$ the Pl{\"u}cker line bundle and the tautological bundle on $\Gr(2,V)$ respectively. The quotient bundle and its dual are denoted by $\cQ_2$ and $\cU_2^{\perp}$.
\subsection{The Cayley Grassmannian as the Hilbert scheme}
\label{section:Hilbert} In this section we prove that the Cayley Grassmannian admits another description: it is the Hilbert scheme of conics on the adjoint variety $\Gad$.

So far we considered the adjoint Grassmannian $\Gad\subset \Gr(2,V)$ associated with the 3-form~$\nu=q(\lambda^{\vee})$. Now it is more convenient to consider its image $\Gad(\lambda^{\vee})\coloneqq q(\Gad)\subset \Gr(2,V^{\vee})$ in the dual Grassmannian, as the notation suggests, it is the adjoint variety associated with~$\lambda^{\vee}$.
\begin{proposition} \label{proposition:CGHilb}
The Cayley Grassmannian $\CG\subset \Gr(3,V)\simeq \Gr(4,V^{\vee})$ is isomorphic to the Hilbert scheme of conics on the adjoint variety $\Gad(\lambda^{\vee})$. The universal family of conics is the conic bundle $\bQ_{\cR}$ defined in \eqref{conic_bundles_intersections}, so that we have the following diagram
\begin{equation} \label{universal_family}
\xymatrix@C-=0.5cm{
&& \bQ_{\cR} \ar[dl]_{\psi_1} \ar[dr]^{\psi_2} &&\\
\Gr(2, V^{\vee})& \Gad(\lambda^{\vee}) \ar@{_{(}->}[l] && \CG\ar@{^{(}->}[r]& \Gr(4, V^{\vee}).
}
\end{equation}
\end{proposition}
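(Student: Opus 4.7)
The plan is to interpret $\bQ_{\cR}\to\CG$ as a flat family of conics on $\Gad(\lambda^{\vee})$, extract the classifying morphism $\phi\colon\CG\to\mathrm{Hilb}^{1+2t}(\Gad(\lambda^{\vee}))$, and invert $\phi$ via a linear span construction. The crucial step is checking that each fiber of $\bQ_{\cR}$ lands in $\Gad(\lambda^{\vee})$.

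By Lemma~\ref{lemma:quadrics_wedge} the fiber of $\bQ_{\cR}$ over $[U_3]\in\CG$ is the conic $\bQ_{\cR,[U_3]}=\PP(\cR(-1)_{[U_3]})\cap\Gr(2,U_3^{\perp})$ in $\Gr(2,V^{\vee})$. By $\rG_2$-symmetry the dual version of Lemma~\ref{adjoint_properties} gives $\Gad(\lambda^{\vee})=\Gr(2,V^{\vee})\cap\PP(V_{14}^{\vee})$ with $V_{14}^{\vee}=\ker(\lambda^{\vee}\conv\colon\Lambda^2V^{\vee}\to V)$. Twisting \eqref{exact_sequence:R_Lambda^2Q} by $\cO(-1)$ and using \eqref{preliminaries_isomorphisms2} yields $\cR(-1)=\ker\bigl(\Lambda^2\cU^{\perp}\xrightarrow{i_{\lambda}^{\vee}}\cU\bigr)$, so the inclusion $\bQ_{\cR,[U_3]}\subset\Gad(\lambda^{\vee})$ reduces to the fiberwise identity $\cR(-1)_{[U_3]}=V_{14}^{\vee}\cap\Lambda^2U_3^{\perp}$. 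I would prove this by identifying $i_{\lambda}^{\vee}$ with the restriction of $\lambda^{\vee}\conv$ to $\Lambda^2U_3^{\perp}$: the defining condition $[U_3]\in\CG$, rewritten via Hodge duality as $\lambda^{\vee}|_{\Lambda^3U_3^{\perp}}=0$, forces $\lambda^{\vee}\conv\omega\in(U_3^{\perp})^{\perp}=U_3$ for every $\omega\in\Lambda^2U_3^{\perp}$, and a direct comparison with \eqref{i_explicit_formula} identifies the two maps. Flatness of $\bQ_{\cR}\to\CG$ is automatic from the constant Hilbert polynomial $1+2t$, yielding~$\phi$.

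For the inverse, I would send $[C]\in\mathrm{Hilb}^{1+2t}(\Gad(\lambda^{\vee}))$ to $[U_3(C)]$ with $U_3(C)\coloneqq\bigcap_{[W]\in C}W^{\perp}\subset V$, equivalently $U_3(C)=H_C^{\perp}$ for $H_C\coloneqq\sum_{[W]\in C}W\subset V^{\vee}$. The key lemma is: for any two distinct $[W_1],[W_2]\in C$ with $W_1\cap W_2=0$ one has $\lambda^{\vee}|_{\Lambda^3(W_1+W_2)}=0$, because $[W_i]\in\Gad(\lambda^{\vee})$ gives $\Lambda^2W_i\wedge(W_1+W_2)\subset\ker(\lambda^{\vee}\conv)$ for $i=1,2$, and these four vectors already form a basis of the $4$-dimensional space $\Lambda^3(W_1+W_2)$. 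Together with the connectedness of $C$ this forces $\lambda^{\vee}|_{\Lambda^3H_C}=0$, equivalent by Hodge duality to $[U_3(C)]\in\CG$. The family version is $U_3=\ker\bigl(V\otimes\cO_{\mathrm{Hilb}}\to p_{*}(\cU_2^{\vee}|_{\mathcal{C}})\bigr)$, where $p\colon\mathcal{C}\to\mathrm{Hilb}$ is the universal conic; I would verify that this is a rank-3 subbundle by $\rG_2$-equivariance, reducing to the orbit representatives $P_0,P_1,P_2$ of Proposition~\ref{proposition:orbits} through the computations \eqref{i_P0}--\eqref{i_P1}.

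Finally, $\phi$ and $\psi$ are mutually inverse: for $[U_3]\in\CG$ the span of $\bQ_{\cR,[U_3]}$ equals $U_3^{\perp}$ (checked orbit by orbit via \eqref{i_P0}--\eqref{i_P1}), so $\psi\circ\phi=\mathrm{id}_{\CG}$; conversely, for any conic $C\subset\Gad(\lambda^{\vee})$ Step~1 gives $\Lambda^2W\in V_{14}^{\vee}\cap\Lambda^2H_C=\cR(-1)_{[\psi(C)]}$ for every $[W]\in C$, whence $C\subseteq\bQ_{\cR,[\psi(C)]}$ and equality follows by comparing linear spans. The main obstacle I anticipate is the identification $i_{\lambda}^{\vee}=\lambda^{\vee}\conv|_{\Lambda^2\cU^{\perp}}$ in Step~1, and the careful treatment of the degenerate (reducible or non-reduced) fibers of $\bQ_{\cR}$ above the closed orbits $\mathsf{O}_1,\mathsf{O}_2$, where the span computation has to be carried out using Proposition~\ref{proposition:resolution_of_Hnu} and the equivariant description of $\cR$.
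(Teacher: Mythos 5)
Your construction of the classifying morphism $\CG\to\mathrm{Hilb}^{1+2t}(\Gad(\lambda^{\vee}))$ is essentially the paper's: both use the twisted sequence $0\to\cR(-1)\to\Lambda^2\cU^{\perp}\xrightarrow{\lambda^{\vee}\conv}\cU\to 0$ to see that the fibers of $\bQ_{\cR}$ consist of $2$-planes in $U_3^{\perp}$ annihilated by $\lambda^{\vee}$, hence land in $\Gad(\lambda^{\vee})$. Where you diverge is the inverse: you build an explicit morphism $[C]\mapsto H_C^{\perp}$ via linear spans, whereas the paper classifies conics on $\Gad(\lambda^{\vee})$ by the type ($\tau$, $\sigma$, $\rho$) of their plane span in $\PP(\Lambda^2 V^{\vee})$ and shows $m$ is a closed embedding that hits every point.

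The gap in your inverse is that it silently assumes every conic on $\Gad(\lambda^{\vee})$ is of type $\tau$. Your key lemma needs two points $[W_1],[W_2]\in C$ with $W_1\cap W_2=0$; for a $\sigma$-conic all members share a common line, so no such pair exists, and for a $\rho$-conic the span $H_C$ is only $3$-dimensional, so $H_C^{\perp}$ is $4$-dimensional and $[U_3(C)]$ does not even lie in $\Gr(3,V)$. So before anything else you must rule out $\sigma$- and $\rho$-conics on $\Gad(\lambda^{\vee})$; this is exactly the step the paper supplies, using that $\Gad(\lambda^{\vee})$ is a linear section of $\Gr(2,V^{\vee})$ (Lemma~\ref{adjoint_properties}) so such conics would span planes contained in $\Gad(\lambda^{\vee})$, and that $\Gad(\lambda^{\vee})$ contains no planes by \cite[Lemma 3.1]{KR}. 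A second, smaller omission: a mutually-inverse-on-points argument only identifies $\CG$ with $\mathrm{Hilb}^{1+2t}(\Gad(\lambda^{\vee}))_{\mathrm{red}}$; to get the isomorphism of schemes the paper invokes smoothness of this Hilbert scheme from \cite[Proposition 3.6]{CHK}, and your family version of $\psi$ (the kernel of $V\otimes\cO\to p_*(\cU_2^{\vee}|_{\mathcal{C}})$ being a rank-$3$ subbundle, checked "by $\rG_2$-equivariance on orbit representatives") is circular as stated, since the orbit structure of the Hilbert scheme is only known once the isomorphism with $\CG$ is established. Your forward direction is sound; the inverse needs the no-planes input and a treatment of the scheme structure to close.
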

\begin{proof}
Recall that the canonical isomorphism $\Gr(3,V)\simeq \Gr(4,V^{\vee})$ is defined by $ (U_3\subset V) \mapsto (U_3^{\perp}\subset V^{\vee})$. It is easy to see that this isomorphism takes $\CG$ to the subvariety of $\Gr(4,V)$ that parametrizes all 4-dimensional subspaces of $V^{\vee}$ isotropic with respect to the 3-form~$\lambda^{\vee}\in \Lambda^3 V$.

Twisting \eqref{exact_sequence:R_Lambda^2Q} by $\cO(-1)$ and using \eqref{preliminaries_isomorphisms2} we get the following exact sequence:
\begin{equation} \label{exact_sequence_R(-1)}
    0\to \cR(-1) \to \Lambda^2\cU^{\perp} \stackrel{\lambda^{\vee}}{\longrightarrow} \cU \to 0,
\end{equation}
where the map $\Lambda^2\cU^{\perp} \stackrel{\lambda^{\vee}}{\longrightarrow} \cU$ is given by the convolution with the 3-vector~$\lambda^{\vee}\in \Lambda^3V$. We have $\Gr_{\CG}(2,\cU^{\perp})= \CG\times_{\Gr(4,V^{\vee})} \mathrm{Fl}(2,4,V^{\vee})$ hence there is a canonical projection~$\PP_{\CG}(\cR(-1))\cap \Gr_{\CG}(2,\cU^{\perp})\to \Gr(2, V^{\vee})$ so that using \eqref{conic_bundles_intersections} we get $\psi_1\colon \bQ_{\cR}\to \Gr(2, V^{\vee})$. The exact sequence \eqref{exact_sequence_R(-1)} shows that the intersection~$\PP_{\CG}(\cR(-1))\cap \Gr_{\CG}(2,\cU^{\perp})\subset \PP_{\CG}(\Lambda^2\cU^{\perp})$ consists of those 2-dimensional subspaces in~$\cU^{\perp}\subset V^{\vee}\otimes \cO$ that are annihilated by $\lambda^{\vee}$. Since $\Gad(\lambda^{\vee})$ is the locus of 2-dimensional subspaces of $V^{\vee}$ annihilated by $\lambda^{\vee}$ we deduce that the image of $\psi_1$ lies in $\Gad(\lambda^{\vee})$, so we obtain the diagram \eqref{universal_family}. Moreover, every fiber of $\psi_2$ becomes a conic on $\Gad(\lambda^{\vee})$.

From the universal property of the Hilbert scheme of conics $\mathrm{Hilb}^{1+2t}(\Gad(\lambda^{\vee}))$ we get a unique morphism 
\begin{equation*}
  m\colon \CG \to  \mathrm{Hilb}^{1+2t}(\Gad(\lambda^{\vee})),
\end{equation*}
such that $\bQ_{\cR}$ is isomorphic to the pullback of the universal family of conics on $\mathrm{Hilb}^{1+2t}(\Gad(\lambda^{\vee}))$.

Let us prove that $\bQ_{\cR}$ is the universal family of conics on $\Gad(\lambda^{\vee})$. Recall that the Hilbert scheme of planes in $\Gr(2, V^{\vee})$ has two connected components: one is $\mathrm{Fl}(1,4; V^{\vee})$ (it parameterizes planes of the form $\PP(U_4/U_1)$, where $[U_1\subset U_4]\in \mathrm{Fl}(1,4; V^{\vee})$) and the other one is $\Gr(3,V^{\vee})$ (it parameterizes planes of the form $\PP(U^{\vee}_3)$, where $[U_3]\in \Gr(3,V^{\vee})$). Recall that conics in $\Gr(2,V^{\vee})$ can be divided into three different
classes, according to the type of their linear span in~$\PP(\Lambda^2V^{\vee})$: $\tau$-conics, $\sigma$-conics and $\rho$-conics, see Subsection \ref{classes_of_conics}.

First let us show that there are no conics of types $\sigma$ or $\rho$ on $\Gad(\lambda^{\vee})$. Indeed, by Lemma \ref{adjoint_properties} the adjoint variety~$\Gad(\lambda^{\vee})$ is a linear section of $\Gr(2,V^{\vee})$. Hence we obtain that any conic of type $\sigma$ or $\rho$ on~$\Gad(\lambda^{\vee})$ would span a plane on $\Gad(\lambda^{\vee})$, but there are no planes on $\Gad(\lambda^{\vee})$ by \cite[Lemma 3.1]{KR}.

On the other hand, for a conic that belongs to type $\tau$ there exits a unique 4-dimensional subspace~$U_4\subset V^{\vee}$ such that~$C\subset \Gr(2,U_4)$. Therefore, $\mathrm{Hilb}^{1+2t}(\Gad(\lambda^{\vee}))\subset \Gr(4, V^{\vee})$, and by definition of $\bQ_{\cR}$ the composition $\CG\xrightarrow{m} \mathrm{Hilb}^{1+2t}(\Gad(\lambda^{\vee}))\to \Gr(4, V^{\vee})$ coincides with the canonical embedding $\CG\subset \Gr(4, V^{\vee})$; in particular, $m$ is a closed embedding. Let us show the surjectivity of $m$, i.e. let us prove that the subspace $U_4\subset V^{\vee}$ corresponding to a $\tau$-conic $C\subset \Gad(\lambda^{\vee})$ lies on the Cayley Grassmannian~$\CG$. Indeed, $C$ lies in $$\Gr(2, U_4)\cap \Gad(\lambda^{\vee})= \Gr(2, U_4)\cap (\Gr(2, V^{\vee})\cap \PP^{13})=\Gr(2, U_4)\cap \PP^{13},$$ a linear section of $\Gr(2, U_4)$ that is contained in $\Gad(\lambda^{\vee})$. Since $\Gad(\lambda^{\vee})$ contains no planes and no quadric surfaces, such a linear section must be a conic. Therefore $C=\Gr(2, U_4)\cap \Gad(\lambda^{\vee})$ and~$C$ is the zero locus of
\begin{equation*}
   \cU_2^{\perp}(1)|_{\Gr(2,U_4)}\simeq \cU^{\perp}_{\Gr(2,U_4)}(1)\oplus \cO(1)^{\oplus 3}
\end{equation*}
on $\Gr(2, U_4)$. If $U_4\notin \CG$ the zero locus of~$\cU^{\perp}_{\Gr(2,U_4)}(1)$ is isomorphic to $\Gr(2,3)$ and $C$ should have type~$\rho$, that is the contradiction. Hence we get that $[C]\in \mathrm{Hilb}^{1+2t}(\Gad(\lambda^{\vee}))$ belongs to the image of $m$. 

We deduce that $$\mathrm{Hilb}^{1+2t}(\Gad(\lambda^{\vee}))_{\mathrm{red}}\simeq \CG.$$ By \cite[Proposition 3.6]{CHK} the Hilbert scheme~$\mathrm{Hilb}^{1+2t}(\Gad(\lambda^{\vee}))$ is smooth, so we get the statement. 
\end{proof}

\subsection{The Hilbert scheme of lines on the Cayley Grassmannian}
In this subsection we describe the Hilbert scheme of lines on~$\CG$.

We need to describe first some special subvariety of~$\Gr(2,V)$. Consider the 4-form $\lambda \in \Lambda^4V^{\vee}$ as a global section of the vector bundle~$\Lambda^2\cU^{\perp}_{2}(H_2)$.
It gives a skew-symmetric morphism of vector bundles
\begin{equation*}
    \cQ_2\stackrel{\varphi_{\lambda}}{\longrightarrow} \cU^{\perp}_2(H_2).
\end{equation*}
Denote by $$\mathrm{LieGr}(2,V)\subset \Gr(2,V)$$ the locus where~$\varphi_{\lambda}$ has rank at most 2. We choose the notation due to the fact, see Lemma \ref{lemma:lie_is_lie} below, that the locus~$\mathrm{LieGr}(2,V)$ parametrizes two-dimensional Lie subalgebras in~$V$ with respect to $q_{\scaleto{\Lambda^2\cU_3}{6.5pt}}$. Note that since $\varphi_{\lambda}$ is skew-symmetric, its rank at every point of $\Gr(2,V)$ is even.
\begin{lemma} \label{lemma:D_k}
The map $\varphi_{\lambda}$ has rank 4 over a general point of~$\Gr(2,V)$ and has rank $2$ over $\rG_2$-invariant locus $\mathrm{LieGr}(2,V)\subset \Gr(2,V)$. The locus where the map $\varphi_{\lambda}$ has zero rank is empty. 
\end{lemma}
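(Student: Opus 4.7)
The plan is to describe the fiber of $\varphi_\lambda$ pointwise and combine $\rG_2$-equivariance with two explicit contraction computations. At a point $[U_2] \in \Gr(2,V)$ with chosen basis $u_1, u_2$ of $U_2$, the fiber of $\varphi_\lambda$ is identified with the alternating form on $V/U_2$ sending $(v,w) \mapsto \lambda(u_1, u_2, v, w)$; this is well-defined on the quotient because $\lambda$ is alternating. Since $\dim V/U_2 = 5$ and an alternating form on an odd-dimensional space has even rank at most $4$, the rank of $\varphi_\lambda$ at each point is $0$, $2$, or $4$. Moreover, the $\rG_2$-invariance of $\lambda$ together with the $\rG_2$-equivariance of $\cQ_2$ and $\cU^{\perp}_2(H_2)$ makes $\varphi_\lambda$ itself $\rG_2$-equivariant, so every degeneracy locus is $\rG_2$-invariant; in particular $\Lie(2,V) = D_2(\varphi_\lambda)$ is closed and $\rG_2$-invariant.

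For the generic rank I would compute at the point $[\langle e_\gamma, e_{-\gamma}\rangle]$: only the two monomials in the explicit expression \eqref{lambda} for $\lambda$ containing both $e_\gamma^\vee$ and $e_{-\gamma}^\vee$ contribute to the contraction, giving
\begin{equation*}
\lambda \conv (e_\gamma \wedge e_{-\gamma}) = e_\alpha^\vee \wedge e_{-\alpha}^\vee + e_\beta^\vee \wedge e_{-\beta}^\vee,
\end{equation*}
which has rank $4$ on $V/\langle e_\gamma, e_{-\gamma}\rangle$. By upper semicontinuity of kernel dimension and the fact that $4$ is the maximum possible rank, rank $4$ is attained on a dense open subset of $\Gr(2,V)$.

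The key remaining step is to show $D_4(\varphi_\lambda) = \varnothing$. Being closed and $\rG_2$-invariant, if nonempty this locus would contain a closed $\rG_2$-orbit in $\Gr(2,V)$; the only such orbit is the adjoint variety $\Gad$, so it suffices to check the rank at one point of $\Gad$. Taking $[\langle e_\beta, e_{-\gamma}\rangle]$, which lies on $\Gad$ by the direct verification $\nu \conv (e_\beta \wedge e_{-\gamma}) = 0$ from the explicit form of $\nu$, one computes
\begin{equation*}
\lambda \conv (e_\beta \wedge e_{-\gamma}) = e_{-\beta}^\vee \wedge e_\gamma^\vee \neq 0,
\end{equation*}
a skew form of rank $2$, contradicting rank $0$. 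Hence $\varphi_\lambda$ has rank exactly $2$ on $\Lie(2,V) = D_2(\varphi_\lambda) \setminus D_4(\varphi_\lambda)$.

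The main obstacle is the identification of $\Gad$ as the unique closed $\rG_2$-orbit in $\Gr(2,V)$. A cleaner alternative bypassing orbit classification applies Schur's lemma to the $\rG_2$-equivariant map $\Lambda^2 V \to \Lambda^2 V^{\vee}$ induced by $\lambda$, together with the decomposition $\Lambda^2 V = V \oplus V_{14}$ from \eqref{V14}: the map acts as a scalar on each isotypical component, and the two rank computations above show both scalars are nonzero (on $V_{14}$ via $\Gad \subset \PP(V_{14})$, and on the $V$-component via a well-chosen decomposable representative). This gives injectivity of the induced map $\Lambda^2 V \to \Lambda^2 V^{\vee}$ and in particular nonvanishing on every decomposable 2-vector, yielding emptiness of $D_4(\varphi_\lambda)$ directly.
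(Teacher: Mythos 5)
Your identification of the fiber of $\varphi_\lambda$ with the $2$-form $\lambda\conv(u_1\wedge u_2)$ on $V/U_2$, the parity/rank bound, and the rank-$4$ computation at a single point (yours at $\langle e_\gamma,e_{-\gamma}\rangle$, the paper's at $\langle e_\alpha,e_{-\alpha}\rangle$) all match the paper's proof. The divergence, and the problem, is in the step you yourself flag as the key one: emptiness of the rank-$0$ locus. Your first route rests on the assertion that $\Gad$ is the \emph{only} closed $\rG_2$-orbit in $\Gr(2,V)$. That is true, but it is not free: a closed orbit in $\PP(\Lambda^2V)$ is a highest-weight orbit of one of the two summands in \eqref{V14}, so one must still check that the highest-weight orbit of the $V$-summand does not meet $\Gr(2,V)$ (equivalently, that $\lambda^\vee\conv e^\vee_{-\alpha}$ is not decomposable). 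Without that verification the argument is circular in spirit: you are using orbit geometry you have not established to avoid a computation. Your second route (Schur's lemma on $\lambda\conv\colon\Lambda^2V\to\Lambda^2V^\vee$ with $\Lambda^2V=V\oplus V_{14}$) is sound in principle, but the claim that the scalar on the $V$-summand is nonzero ``via a well-chosen decomposable representative'' is a gap: a decomposable $2$-vector is never a representative of the $V$-summand (no decomposable vector lies in an irreducible non-minuscule piece a priori, and in any case $e_\gamma\wedge e_{-\gamma}$ has components in both summands). What you actually need is that the image $\lambda\conv(e_\gamma\wedge e_{-\gamma})=e^\vee_\alpha\wedge e^\vee_{-\alpha}+e^\vee_\beta\wedge e^\vee_{-\beta}$ has nonzero component in the $V^\vee$-summand of $\Lambda^2V^\vee$; this does hold (its contraction with $\lambda^\vee$ is a nonzero multiple of $e_0$), but the check is missing, and without it the rank-$4$ computation is consistent with the scalar on $V$ being zero.

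For comparison, the paper disposes of the rank-$0$ locus with no new computation at all: if $\lambda\conv U_2=0$, then any $U_3\supset U_2$ satisfies $\lambda\conv\Lambda^3U_3=0$, hence $[U_3]\in\CG$, and then $\Lambda^2U_2$ would lie in the kernel of the fiberwise-injective map $i_\lambda\colon\Lambda^2\cU_3\hookrightarrow\Lambda^2\cU_3^\perp$ of Lemma \ref{lemma:embedding} --- a contradiction. If you want to keep your Schur-type argument, add the one-line contraction with $\lambda^\vee$ described above; otherwise the reduction to Lemma \ref{lemma:embedding} is both shorter and already available.
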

\begin{proof}
Let us give a precise description of the map~$\varphi_{\lambda}$. For a two-dimensional space~$U_2\subset V$ consider the 2-form $\lambda\conv U_2$ obtained by convolution of $\lambda$ with~$U_2$. The 2-form $\lambda\conv U_2$ is defined on the 5-dimensional quotient space $V/U_2$ and the map $\varphi_{\lambda}$ at the point~$U_2$ is the canonical convolution with the 2-form
\begin{equation*}
    V/U_2\stackrel{\lambda\conv U_2}{\longrightarrow} (V/U_2)^{\vee}=U^{\perp}_2.
\end{equation*}
So the rank of $\varphi_{\lambda}$ over a point $[U_2]\in \Gr(2,V)$ is equal to the rank of the 2-form $\lambda\conv U_2$ on $V/U_2$. Let us take the 2-dimensional subspace $U'_2=\langle e_{\alpha}, e_{-\alpha}\rangle\subset V$, then using \eqref{lambda} we have
\begin{equation*}
    \lambda\conv U'_2=e_{\gamma}^{\vee}\wedge e_{-\gamma}^{\vee}+ e_{\beta}^{\vee}\wedge e_{-\beta}^{\vee}
\end{equation*}
and we see that $\lambda\conv U'_2$ has the maximal rank. Hence the map $\varphi_{\lambda}$ has rank 4 over a general point of~$\Gr(2,V)$ and we get the first statement. 

Suppose that there exists a 2-dimensional space $U_2\subset V$ such that $\lambda\conv U_2=0$. Let us take any 3-dimensional space $U_3\subset V$ that contains $U_2$. Then $\lambda\conv U_3=0$ hence $U_3\in \CG$. Consider the map $i_{\lambda}$ from Lemma  \ref{lemma:embedding} at the point $U_3$:
\begin{equation*}
    \Lambda^2U_3 \stackrel{i_{\lambda}}{\longrightarrow} \Lambda^2U_3^{\perp}.
\end{equation*}
Since $\lambda\conv U_2=0$ the subspace $\Lambda^2U_2\subset \Lambda^2U_3$ lies in the kernel of $i_{\lambda}$ we get a contradiction with Lemma~\ref{lemma:embedding}. 

The locus $\mathrm{LieGr}(2,V)\subset \Gr(2,V)$ is obviously $\rG_2$-invariant.
\end{proof}
Let us denote by $e\colon \mathrm{LieGr}(2,V) \hookrightarrow \Gr(2,V)$ the embedding of $\mathrm{LieGr}(2,V)$ in $\Gr(2,V)$. From Lemma~\ref{lemma:D_k} we get the following exact sequence on $\mathrm{LieGr}(2,V)$:
\begin{equation} \label{exact_sequence_Lie}
   0\to \tcE \to e^*(\cQ_2)\stackrel{\varphi_{\lambda}}{\longrightarrow} e^*(\cU_2^{\perp}(H_2))\to \tcE^{\vee}(H_2)\to 0,
\end{equation}
where $\tcE$, the kernel of $\varphi_{\lambda}\restr{\mathrm{LieGr}(2,V)}$, by Lemma \ref{lemma:D_k} is a vector bundle of rank 3.
\begin{proposition} \label{corollary:birationality}
Consider the diagram
\begin{equation} \label{diagram:conic_bundle}
\xymatrix{
& \Gr_{\CG}(2,\cU)=\PP_{\CG}(\Lambda^2\cU)\ar[dl]_(0.55){\varphi_1} \ar[dr]^(0.55){\varphi_2}\\ 
\Gr(2,V) && \CG,
}
\end{equation}
where $\varphi_2$ is the projective bundle and $\varphi_1$ is given by $$\Gr_{\CG}(2,\cU)\hookrightarrow \Gr_{\CG}(2, V\otimes \cO) = \CG\times \Gr(2,V)\to \Gr(2,V).$$
The locus $\mathrm{LieGr}(2,V)$ is a smooth 7-dimensional $\rG_2$-invariant subvariety of $\Gr(2,V)$ and
the map $\varphi_1$ is the blow-up with center in $\mathrm{LieGr}(2,V)$. The exceptional divisor $\tE$ of $\varphi_1$ is isomorphic to $\PP_{\mathrm{LieGr}(2,V)}(\tcE)$. The projectivization~$\PP_{\mathrm{LieGr}(2,V)}(\tcE^{\vee})$ is isomorphic to the Hilbert scheme of lines $\mathrm{Hilb}^{1+t}(\CG)$ on the Cayley Grassmannian $\CG$. 
\end{proposition}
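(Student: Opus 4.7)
The plan is to read off both the blow-up structure and the Hilbert scheme of lines from a fiberwise analysis of $\varphi_1$. First, I would compute the fibers. A point of $\varphi_1^{-1}([U_2])$ is some $[U_3]\in \CG$ containing $U_2$; writing $U_3=U_2+\Bbbk v$, the identification $\Lambda^3 U_3=v\wedge \Lambda^2 U_2$ turns the defining condition $\lambda\conv\Lambda^3 U_3=0$ into $(\lambda\conv U_2)\conv v=0$, i.e.\ $v\in\ker(\varphi_\lambda|_{[U_2]})$. Hence $\varphi_1^{-1}([U_2])\simeq \PP(\ker(\varphi_\lambda|_{[U_2]}))$, which by Lemma~\ref{lemma:D_k} is a single reduced point generically and equals $\PP^2$ exactly when $[U_2]\in \mathrm{LieGr}(2,V)$. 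Since $\dim \Gr_{\CG}(2,\cU)=8+2=10=\dim \Gr(2,V)$, the map $\varphi_1$ is a proper birational surjection and $\tE:=\varphi_1^{-1}(\mathrm{LieGr}(2,V))$ coincides set-theoretically with the $\PP^2$-bundle $\PP_{\mathrm{LieGr}(2,V)}(\tcE)$.

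Next I would establish that $\mathrm{LieGr}(2,V)$ is smooth of dimension $7$. The morphism $\varphi_\lambda\colon \cQ_2\to \cU_2^\perp(H_2)\simeq \cQ_2^\vee(H_2)$ is skew-symmetric, so its rank-$\le 2$ locus has expected codimension $\binom{5-2}{2}=3$. By $\rG_2$-equivariance combined with a direct local analysis of the Pfaffian equations at an explicit orbit representative, the actual codimension equals the expected one and the Pfaffian ideal is reduced, so $\mathrm{LieGr}(2,V)$ is smooth of dimension $7$. Then the Fujiki--Nakano contraction criterion applies to $\varphi_1$: the source $\Gr_\CG(2,\cU)$ is smooth as a projective bundle over $\CG$; over the rank-$4$ locus $\Gr(2,V)\setminus \mathrm{LieGr}(2,V)$ the fibers are reduced singletons, so $\varphi_1$ is an isomorphism there by Zariski's Main Theorem; and $\tE\to \mathrm{LieGr}(2,V)$ is a smooth $\PP^2$-bundle. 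Hence $\varphi_1$ is the blow-up of $\Gr(2,V)$ along $\mathrm{LieGr}(2,V)$, with exceptional divisor $\tE\simeq \PP_{\mathrm{LieGr}(2,V)}(\tcE)$.

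Finally, for the Hilbert scheme of lines, a line in $\Gr(3,V)$ corresponds to a flag $U_2\subset U_4$ with $\dim U_i=i$, namely $\{U_3: U_2\subset U_3\subset U_4\}$; it lies in $\CG$ iff every intermediate $U_3$ is $\lambda$-annihilated, which by the fiber computation is equivalent to $U_4/U_2\subset \ker(\varphi_\lambda|_{[U_2]})$. This forces $\dim\ker\ge 2$, hence by evenness of the rank of a skew form we get $[U_2]\in \mathrm{LieGr}(2,V)$, with $U_4/U_2$ a $2$-plane in the rank-$3$ fibre $\tcE|_{[U_2]}$. Thus on closed points $\mathrm{Hilb}^{1+t}(\CG)\simeq \Gr_{\mathrm{LieGr}(2,V)}(2,\tcE)\simeq \PP_{\mathrm{LieGr}(2,V)}(\tcE^\vee)$; since the parameter space on the right is smooth and carries a universal family of lines on $\CG$, the induced morphism to the Hilbert scheme is a bijection between a smooth source and the target, and a standard deformation-theoretic argument (vanishing of the obstructions to smoothness of the Hilbert scheme at a line computed from the normal bundle) promotes this to a scheme-theoretic isomorphism. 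The principal technical obstacle is the smoothness of $\mathrm{LieGr}(2,V)$, needed both for Fujiki--Nakano and for the Hilbert scheme identification; this is ultimately a local Pfaffian computation made tractable by the $\rG_2$-symmetry.
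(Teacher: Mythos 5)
Your fiberwise analysis of $\varphi_1$ is exactly the paper's: both identify $\varphi_1^{-1}([U_2])$ with $\PP(\ker(\lambda\conv U_2))\subset\PP(V/U_2)$ and lines in $\CG$ with $2$-planes in that kernel, and Lemma~\ref{lemma:D_k} then gives the point/$\PP^2$ dichotomy. The gaps appear at the load-bearing steps. The main one is smoothness of $\Lie(2,V)$: the inference ``the actual codimension equals the expected one and the Pfaffian ideal is reduced, so $\Lie(2,V)$ is smooth'' is not valid --- a reduced Pfaffian degeneracy locus of expected codimension $\binom{3}{2}=3$ is Cohen--Macaulay but need not be smooth. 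What is actually needed is transversality of the section of $\Lambda^2\cU_2^{\perp}(H_2)$ to the rank stratification (together with avoidance of the rank-$0$ stratum, which is Lemma~\ref{lemma:D_k}); the ``direct local analysis at an explicit orbit representative'' that would supply this is precisely the missing computation, and to reduce it to a single point by equivariance you must first know which closed $\rG_2$-orbit(s) lie in $\Lie(2,V)$ --- the orbit structure is only worked out after this proposition. The paper avoids this computation entirely: it first identifies $\mathrm{Hilb}^{1+t}(\CG)$ with $\Gr_{\Lie(2,V)}(2,\tcE)\simeq\PP_{\Lie(2,V)}(\tcE^{\vee})$ functorially, using $\mathrm{Hilb}^{1+t}(\Gr(3,V))\simeq\mathrm{Fl}(2,4;V)$ and the relative Hilbert scheme of $\Gr_{\CG}(2,\cU)/\Gr(2,V)$, imports smoothness and irreducibility of that Hilbert scheme from Benedetti--Manivel, and only then \emph{deduces} that the base $\Lie(2,V)$ is smooth, irreducible and $7$-dimensional, concluding with Ein--Shepherd-Barron.

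Two further steps are asserted rather than proved. The Fujiki--Nakano criterion has the hypothesis $\cO(\tE)|_{F}\simeq\cO_{\PP^2}(-1)$ on the fibers $F$ of $\tE\to\Lie(2,V)$, which you never verify; it does hold here, since $\cO(-H_3)$ restricts to $\cO(-1)$ on each fiber of $\varphi_1$ (compare the computation of $\cO(\tE)$ in Lemma~\ref{lemma:conic_bundle_Lie}), but it is a genuine hypothesis, not a formality. And your promotion of the point-level bijection $\PP_{\Lie(2,V)}(\tcE^{\vee})\to\mathrm{Hilb}^{1+t}(\CG)$ to a scheme isomorphism rests on an unperformed ``standard deformation-theoretic argument'': one must actually compute the cohomology of the normal bundles of lines through each orbit to get smoothness of the Hilbert scheme, which is exactly the content of the cited Benedetti--Manivel propositions and is arguably the hardest input to the whole statement. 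So the geometric skeleton is right, but the three claims that carry the proof --- transversality/smoothness of $\Lie(2,V)$, the conormal condition, and smoothness of the Hilbert scheme --- are each left unestablished.
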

\begin{proof}
By Lemma \ref{lemma:CGlinear} the Cayley Grassmannian $\CG$ is a linear section of $\Gr(3,V)$, so for a point~$[U_2]\in \Gr(2,V)$ we obtain
$$\varphi_1^{-1}([U_2])=\PP(V_{27}\oplus \Bbbk)\cap \PP(\Lambda^2 U_2\wedge (V/U_2))\subset \PP(\Lambda^3 V),$$
i.e. the fibers of $\varphi_1$ are linear subspaces in the fibers of $\PP_{\Gr(2,V)}(\cQ_2)\to \Gr(2,V)$. By the definition of~$\CG$ a 3-dimensional subspace~$U_3$ that contains~$U_2$ lies on $\CG$ if and only if~$\lambda\conv U_3=0$ and this holds if and only if the line $U_3/U_2\subset V/U_2$ is contained in the kernel of $\lambda\conv U_2$. Hence we see that~$\varphi_1^{-1}([U_2])\subset \PP(V/U_2)$ is identified with the kernel of the 2-form~$\lambda\conv U_2\in \Lambda^2(V/U_2)^{\vee}$.

By Lemma \ref{lemma:D_k} for $[U_2]\in \Gr(2,V)\setminus \mathrm{LieGr}(2,V)$ the rank of the 2-form $\lambda\conv U_2$ on $V/U_2$ is equal to~4, i.e. $\lambda\conv U_2$ has 1-dimensional kernel, so we get that the scheme fiber of $\varphi_1$ over~$[U_2]\in \Gr(2,V)\setminus \mathrm{LieGr}(2,V)$ is one point.

Analogously, by Lemma \ref{lemma:D_k} for a point $[U'_2]\in\mathrm {LieGr}(2,V)$ the fiber of $\varphi_1$ is $\PP(\tcE_{U'_2})\simeq \PP^2$, so that over $\mathrm{LieGr}(2,V)$ the projection $\varphi_1$ induces the $\PP^2$-bundle $\PP_{\mathrm{LieGr}(2,V)}(\tcE)\to \mathrm{LieGr}(2,V)$.

Now let us prove that $\PP_{\mathrm{LieGr}(2,V)}(\tcE^{\vee})$ is isomorphic to the Hilbert scheme of lines $\mathrm{Hilb}^{1+t}(\CG)$. Indeed, the Hilbert scheme $\mathrm{Hilb}^{1+t}(\Gr(3,V))$ is isomorphic to $\mathrm{Fl(2,4;V)}$, i.e. it is isomorphic to the relative Hilbert scheme $\mathrm{Hilb}^{1+t}(\mathrm{Fl}(2,3,V)/\Gr(2,V))$ of lines contained in the fibers of the projection~$\mathrm{Fl}(2,3; V)\to \Gr(2,V)$. Hence we obtain $$\mathrm{Hilb}^{1+t}(\CG)\simeq \mathrm{Hilb}^{1+t}(\Gr_{\CG}(2,\cU)/\Gr(2,V))\simeq \Gr_{\mathrm{LieGr}(2,V)}(2,\tcE)\simeq \PP_{\mathrm{LieGr}(2,V)}(\tcE^{\vee}).$$

By \cite[Proposition 3.1 and Proposition 3.2]{BM} the Hilbert scheme $\mathrm{Hilb}^{1+t}(\CG)\simeq \PP_{\mathrm{LieGr}(2,V)}(\tcE^{\vee})$ is a smooth and irreducible 9-dimensional variety. Hence we get that $\mathrm{LieGr}(2,V)$ is a smooth and irreducible 7-dimensional subvariety of $\Gr(2,V)$ which is $\rG_2$-invariant by Lemma \ref{lemma:D_k}. Also we get that~$\PP_{\mathrm{LieGr}(2,V)}(\tcE)$ is smooth and irreducible. So the projection $\varphi_1$ is the blow-up of $\mathrm{LieGr}(2,V)$ by \cite[Theorem 1.1]{ESB}.
\end{proof}
In fact, the exceptional divisor $\tE$ is isomorphic to the conic bundle $\bQ_{\Lambda^2\cU_3}\subset \PP_{\CG}(\Lambda^2\cU)$ from Corollary \ref{lemma_l}.
\begin{lemma} \label{lemma:conic_bundle_Lie}
We have $\tE\simeq \bQ_{\Lambda^2\cU_3}$, so that the restriction of the projection~$\varphi_2$ in the diagram \eqref{diagram:conic_bundle} to the exceptional divisor $\tE\simeq \PP_{\mathrm{LieGr}(2,V)}(\tcE)$ is the conic fibration corresponding to $q_{\scaleto{\Lambda^2\cU_3}{6.5pt}}$. We have the following diagram for~$\bQ_{\Lambda^2\cU_3}$
\begin{equation}\label{diagram:E}
\vcenter{\xymatrix{
& \tE \simeq \bQ_{\Lambda^2\cU_3} \ar[dl]_{\varphi_1\restr{\tE}} \ar[dr]^{\varphi_2\restr{\tE}}\\
\mathrm{LieGr}(2,V) && \CG.
}}
\end{equation}
\end{lemma}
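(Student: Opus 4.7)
\emph{Plan.} The plan is to identify $\tE$ with $\bQ_{\Lambda^2\cU_3}$ as subschemes of the $\PP^2$-bundle $\PP_{\CG}(\Lambda^2\cU) \simeq \Gr_\CG(2,\cU)$, from which the diagram~\eqref{diagram:E} follows by restricting~\eqref{diagram:conic_bundle}. Since every $2$-vector in a $3$-dimensional space is decomposable, a closed point of this bundle is a flag $(U_2\subset U_3)$ with $[U_3]\in \CG$, and $\varphi_1$ sends it to $[U_2]\in \Gr(2,V)$. By Proposition~\ref{corollary:birationality} one has $\tE = \varphi_1^{-1}(\mathrm{LieGr}(2,V))$, so $(U_2,U_3)\in \tE$ iff $U_2\in\mathrm{LieGr}(2,V)$. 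On the other hand, $\bQ_{\Lambda^2\cU_3}$ is cut out fiberwise over $\CG$ by the quadric $\xi\mapsto q_{\scaleto{\Lambda^2\cU_3}{6.5pt}}(\xi)\wedge \xi\in \Lambda^3\cU_3$, which by~\eqref{definition_l_1} and Remark~\ref{remark:cayley} means $(U_2,U_3)\in \bQ_{\Lambda^2\cU_3}$ iff $[u_1,u_2]\in U_2$ for a basis $u_1,u_2$ of $U_2$, where $[-,-] = q^{-1}(\nu\conv(-\wedge -))$ is the octonionic bracket on $V$.

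The main step will be to prove the following set-theoretic equality of $\rG_2$-invariant closed subsets of $\Gr(2,V)$:
\begin{equation*}
\mathrm{LieGr}(2,V) = \{\,U_2\in \Gr(2,V) : [U_2,U_2]\subset U_2\,\}.
\end{equation*}
This will be the hard part. My approach is to exploit the $G_2$-identity $\lambda(x,y,z,w) = q(J(x,y,z), w)$ from Remark~\ref{remark:cayley}, where $J$ is the Jacobiator of $[-,-]$: it implies that the skew form $\omega = \lambda\conv(u_1\wedge u_2)\in \Lambda^2(V/U_2)^\vee$ equals, via the isomorphism induced by $q$, the descended map $\bar J\colon V/U_2 \to U_2^{\perp_q}$, $v\mapsto J(u_1,u_2,v)$ (well-defined and landing in $U_2^{\perp_q}$ by antisymmetry of $\lambda$), so the condition $U_2\in \mathrm{LieGr}(2,V)$ is equivalent to $\mathrm{rk}\, \bar J\le 2$. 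Combining this with the manipulation $J(u_1,u_2,v) = \mathrm{ad}_{[u_1,u_2]}(v) - [\mathrm{ad}_{u_1},\mathrm{ad}_{u_2}](v)$ and a case-by-case analysis of the finitely many $\rG_2$-orbits on $\Gr(2,V)$ (using the explicit weight-basis expressions~\eqref{weight_decomposition} and~\eqref{lambda} for orbit representatives), the two conditions are seen to agree orbit by orbit.

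Granted this equality, $\tE$ and $\bQ_{\Lambda^2\cU_3}$ have the same underlying set in the smooth $10$-dimensional variety $\PP_\CG(\Lambda^2\cU)$. Both are Cartier divisors: $\tE$ by Proposition~\ref{corollary:birationality}, and $\bQ_{\Lambda^2\cU_3}$ as a relative conic in the $\PP^2$-bundle $\PP_{\CG}(\Lambda^2\cU)\to \CG$. Both are irreducible of dimension~$9$: this is clear for $\tE$, while for $\bQ_{\Lambda^2\cU_3}$ it follows from Corollary~\ref{lemma_l}, which shows its fibers over the open orbit $\mathsf{O}_0\subset \CG$ are smooth conics, so $\bQ_{\Lambda^2\cU_3}\restr{\varphi_2^{-1}(\mathsf{O}_0)}$ is an irreducible smooth conic bundle whose closure is irreducible. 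Two irreducible Cartier divisors in a smooth variety with the same support coincide, so $\tE \simeq \bQ_{\Lambda^2\cU_3}$, and restricting diagram~\eqref{diagram:conic_bundle} then yields~\eqref{diagram:E} together with the claim that $\varphi_2\restr{\tE}$ is the conic fibration corresponding to $q_{\scaleto{\Lambda^2\cU_3}{6.5pt}}$.
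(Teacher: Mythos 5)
Your reduction of the lemma to the set-theoretic statement $\mathrm{LieGr}(2,V)=\{\,U_2 : [U_2,U_2]\subset U_2\,\}$ is a legitimate reformulation (your identifications of the points of $\tE$ and of $\bQ_{\Lambda^2\cU_3}$, and the passage from equality of supports to equality of the two generically reduced irreducible divisors, are all fine). But that set-theoretic statement is exactly where your proof stops being a proof. You defer it to ``a case-by-case analysis of the finitely many $\rG_2$-orbits on $\Gr(2,V)$'': the finiteness of the $\rG_2$-orbits on the $10$-dimensional $\Gr(2,V)$ is nowhere established (it is not in the paper, and a Borel subgroup of $\rG_2$ has dimension $8<10$, so this is not a spherical variety where such finiteness would be automatic), no orbit representatives are produced, and the verification itself is not carried out. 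The Jacobiator identity $\lambda(x,y,z,w)=q(J(x,y,z),w)$ and the rewriting $J(u_1,u_2,-)=\mathrm{ad}_{[u_1,u_2]}-[\mathrm{ad}_{u_1},\mathrm{ad}_{u_2}]$ correctly translate $U_2\in\mathrm{LieGr}(2,V)$ into $\mathrm{rk}\,\bar J\le 2$, but they do not by themselves give the hard implication that $[U_2,U_2]\subset U_2$ forces $\lambda\conv U_2$ to drop rank; note also that the equality you are trying to prove is essentially Lemma \ref{lemma:lie_is_lie}, which the paper derives \emph{from} the present lemma, so you cannot lean on it without circularity. (The converse implication does admit a clean argument: if $U_2\in\mathrm{LieGr}(2,V)$ then by Proposition \ref{corollary:birationality} there is a $\PP^2$ of subspaces $U_3\in\CG$ containing $U_2$, each a Lie algebra by Lemma \ref{lemma:lie_sructure}, and their intersection is $U_2$, so $[u_1,u_2]\in U_2$. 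No such shortcut is visible for the direction you actually need.)

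For comparison, the paper avoids any pointwise identification: it computes $\omega_{\PP_{\CG}(\Lambda^2\cU)}$ and uses the blow-up formula from Proposition \ref{corollary:birationality} to get $\cO(\tE)\simeq\varphi_1^*\cO(2H_2)\otimes\varphi_2^*\cO(-H_3)$, observes that $\bQ_{\Lambda^2\cU_3}$ is by construction the zero locus of a section of the same line bundle, and concludes from $h^0(\varphi_1^*\cO(2H_2)\otimes\varphi_2^*\cO(-H_3))=1$ (the exceptional divisor is rigid) that the two effective divisors coincide. If you want to salvage your route, you must either supply the full orbit classification of $\Gr(2,V)$ with explicit representatives and check the rank condition on each, or find an a priori argument that the locus of $2$-dimensional subalgebras has codimension $\ge 3$; as written, the central step is an assertion, not a proof.
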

\begin{proof}
To avoid confusion the Pl{\"u}cker line bundle on $\CG$ will be denoted here by $\cO(H_3)$.  We will use the notation from the diagram \eqref{diagram:conic_bundle}. 

Let us compute the class of the divisor $\tE\subset \PP_{\CG}(\Lambda^2\cU)$.
The canonical line bundle $\omega_{\PP(\Lambda^2\cU)}$ of the projectivization $\PP_{\CG}(\Lambda^2\cU)$ is isomorphic to
$$\cO_{\PP_{\CG}(\Lambda^2\cU)}(-3)\otimes \varphi_2^*(\mathrm{det}(\Lambda^2\cU^{\vee})\otimes \omega_{\CG})\simeq \varphi_1^*\cO(-3H_2)\otimes \varphi_2^*\cO(-2H_3).$$
Using the above computation of $\omega_{\PP(\Lambda^2\cU)}$ and the blow-up formula we obtain
$$2\tE= \omega_{\scaleto{\PP_{\CG}(\Lambda^2\cU)}{6.5pt}}\otimes \varphi_1^*\omega^{-1}_{\scaleto{\Gr(2,V)}{6.5pt}}\simeq \varphi_1^*\cO(4H_2)\otimes \varphi_2^*\cO(-2H_3).$$
The Picard group of $\PP_{\CG}(\Lambda^2\cU)$ is torsion-free, indeed, $\PP_{\CG}(\Lambda^2\cU)$ is a $\PP^2$-bundle over $\CG$ and the Picard group of $\CG$ is torsion-free. Thus, we obtain $\cO(\tE)\simeq \varphi_1^*\cO(2H_2)\otimes \varphi_2^*\cO(-H_3).$
Since $\tE$ is the exceptional divisor we have $\mathrm{H}^0(\PP_{\CG}(\Lambda^2\cU),\varphi_1^*\cO(2H_2)\otimes \varphi_2^*\cO(-H_3))=\Bbbk$. Recall the conic bundle $\bQ_{\Lambda^2\cU_3} \subset \PP_{\CG}(\Lambda^2\cU)$ is the zero locus of~$\cO_{\PP(\Lambda^2\cU)}(2)\otimes \varphi_2^*\cO(-H_3)\simeq \varphi_1^*\cO(2H_2)\otimes \varphi_2^*\cO(-H_3)$, so we conclude that $\bQ_{\Lambda^2\cU_3}$ coincide with~$\tE$.
\end{proof}

Let us describe the vatiety $\Lie(2,V)$ more precisely.
\begin{lemma} \label{lemma:lie_is_lie}
The variety $\mathrm{LieGr}(2,V)$ parametrizes two-dimensional Lie subalgebras in~$V$ with respect to $q_{\scaleto{\Lambda^2\cU_3}{6.5pt}}$.
\end{lemma}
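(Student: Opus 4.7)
The plan is to unpack the identification $\tE \simeq \bQ_{\Lambda^2\cU_3}$ provided by Lemma~\ref{lemma:conic_bundle_Lie} and to reinterpret $\bQ_{\Lambda^2\cU_3}$ as parametrizing $2$-dimensional Lie subalgebras contained in the $3$-dimensional Lie algebras $U_3 \in \CG$.

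Under the isomorphism $\cU_3 \simeq \Lambda^2\cU_3^{\vee}(H_3)$ from~\eqref{preliminaries_isomorphisms}, the self-dual map $q_{\Lambda^2\cU_3}\colon \Lambda^2\cU_3 \to \cU_3$ becomes an $\cO(H_3)$-valued quadratic form on $\Lambda^2 \cU_3$ whose vanishing cuts out $\bQ_{\Lambda^2\cU_3}$ inside $\PP_{\CG}(\Lambda^2\cU_3)$. Since every nonzero $\xi \in \Lambda^2 U_3$ is decomposable (because $\dim U_3 = 3$), we may write $\xi = u_1 \wedge u_2$; the quadratic form then sends $\xi$ to $u_1 \wedge u_2 \wedge q_{\Lambda^2\cU_3}(u_1 \wedge u_2) = u_1 \wedge u_2 \wedge [u_1, u_2]_V \in \Lambda^3 U_3$, where we use Remark~\ref{remark:cayley} and Lemma~\ref{lemma:lie_sructure} to identify $q_{\Lambda^2\cU_3}$ on $U_3 \in \CG$ with the restriction of the bracket $[-, -]_V$ on $V$. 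This expression vanishes precisely when $[u_1, u_2]_V \in U_2 \coloneqq \langle u_1, u_2\rangle$, i.e.\ when $U_2$ is a $2$-dimensional Lie subalgebra of $V$.

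Consequently, $\bQ_{\Lambda^2\cU_3}$ is the variety of pairs $(U_3, U_2)$ with $U_3 \in \CG$, $U_2 \subset U_3$, and $U_2$ a $2$-dimensional Lie subalgebra of $V$. By Proposition~\ref{corollary:birationality}, the restriction $\varphi_1|_{\tE}\colon \tE \simeq \PP_{\mathrm{LieGr}(2,V)}(\tcE) \to \mathrm{LieGr}(2,V)$ is a surjective $\PP^2$-bundle, and under the identification above it sends $(U_3, U_2) \mapsto U_2$. Therefore $\mathrm{LieGr}(2,V)$ coincides with the set of $2$-dimensional Lie subalgebras of $V$ that are contained in some $U_3 \in \CG$; in particular, every element of $\mathrm{LieGr}(2,V)$ is a Lie subalgebra.

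The main obstacle is the converse inclusion: showing that every $2$-dimensional Lie subalgebra of $V$ is contained in some $U_3 \in \CG$. Both the set of $2$-dim Lie subalgebras and $\mathrm{LieGr}(2,V)$ are closed $\rG_2$-invariant subvarieties of $\Gr(2, V)$, so it suffices to verify this on representatives of the (finitely many) $\rG_2$-orbits using the explicit expression~\eqref{lambda} and the bracket computations~\eqref{lie_p0}--\eqref{lie_p2}. For instance, the non-abelian subalgebra $\langle e_0, e_\gamma\rangle$ lies in $\langle e_0, e_\gamma, e_{-\alpha}\rangle \in \CG$, while the abelian subalgebra $\langle e_{-\gamma}, e_\alpha\rangle$ lies in $P_2 \in \CG$, and each $\rG_2$-orbit of $2$-dim Lie subalgebras is handled by a similar explicit witness.
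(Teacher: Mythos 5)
The first half of your argument is essentially the paper's: you identify $\bQ_{\Lambda^2\cU_3}\subset\PP_{\CG}(\Lambda^2\cU_3)$ fiberwise with the locus $\{u\wedge q_{\scaleto{\Lambda^2\cU_3}{6.5pt}}(u)=0\}$ of $2$-dimensional Lie subalgebras of each $U_3\in\CG$, and then use $\tE\simeq\bQ_{\Lambda^2\cU_3}$ together with the surjection $\varphi_1\restr{\tE}\colon\tE\to\mathrm{LieGr}(2,V)$ to conclude that $\mathrm{LieGr}(2,V)$ is exactly the set of $2$-dimensional Lie subalgebras contained in \emph{some} $U_3\in\CG$. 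That part is fine and matches the paper.

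The gap is in your treatment of the converse inclusion. You propose to show that every $2$-dimensional Lie subalgebra lies in some $U_3\in\CG$ by checking representatives of ``the (finitely many) $\rG_2$-orbits'' of $2$-dimensional Lie subalgebras. But nothing you have established says that this locus consists of finitely many orbits, nor do you produce a list of orbit representatives; the finiteness (two orbits: $\Gad$ and its complement) is only proved in the paper \emph{after}, and using, the present lemma, so as written your argument is either circular or incomplete — two explicit witnesses do not cover an unclassified family of orbits. The correct and much simpler observation, which you already have at hand, is that by Proposition~\ref{corollary:birationality} the map $\varphi_1\colon\Gr_{\CG}(2,\cU)\to\Gr(2,V)$ is a blow-up, hence proper and birational onto the irreducible $\Gr(2,V)$, hence surjective: \emph{every} $2$-dimensional subspace $U_2\subset V$ (Lie subalgebra or not) is contained in some $U_3\in\CG$. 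Combined with your first half this immediately closes the converse, which is exactly how the paper concludes. Replace the orbit-by-orbit plan with this surjectivity remark and the proof is complete.
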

\begin{proof}
By Lemma \ref{lemma:lie_sructure} the Cayley Grassmannian $\CG$ parametrizes 3-dimensional Lie algebras in $V$ with respect to $q_{\scaleto{\Lambda^2\cU_3}{6.5pt}}$. Note that 2-dimensional Lie subalgebras in a fixed 3-dimensional Lie algebra~$U_3$ form the subscheme in~$\Gr(2,U_3)=\PP(\Lambda^2U_3)$ given by the equation
\begin{equation*}
   \{ u \wedge l(u)=0 \},
\end{equation*}
where $u\in \Lambda^2U_3$. This subscheme coincides with the conic given by the self-dual map $q_{\scaleto{\Lambda^2\cU_3}{6.5pt}}$. So the $\rG_2$-equivariant conic bundle in $\PP_{\CG}(\Lambda^2\cU)$ that parametrizes 2-dimensional Lie subalgebras coincides with~$\bQ_{\Lambda^2\cU_3}$. 

By Proposition \ref{corollary:birationality} the projection $\varphi_1$ is surjective, i.e. any 2-dimensional subspace is contained in a 3-dimensional subspace that belongs to $\CG$. In particular, any 2-dimensional Lie subalgebra in $V$ is contained in a 3-dimensional Lie subalgebra in $V$, so we deduce the statement.
\end{proof}
Let us now describe the orbits of the action of $\rG_2$ on~$\Lie(2,V)$. 
Recall that there are two isomorphism classes of 2-dimensional Lie algebras: abelian and non-abelian. 
\begin{lemma}
We have
\begin{enumerate}
    \item $\Gad\subset \Lie(2,V)$;
    \item the action of $\rG_2$ on $\Gad$ and its complement is transitive;
    \item $\Gad$ and its complement parametrize abelian and non-abelian Lie algebras, respectively.
\end{enumerate}  
\end{lemma}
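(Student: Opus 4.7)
I will identify $\Lie(2, V)$ with the variety of $2$-dimensional Lie subalgebras of $V$ under the bracket $[u_1, u_2] = q^{-1}(\nu \conv (u_1 \wedge u_2))$ via Lemma \ref{lemma:lie_is_lie}, and exploit the dichotomy that a $2$-dimensional Lie algebra is either abelian or isomorphic to the unique non-abelian one. Parts (1) and (3) are then immediate: by definition $\Gad$ is the zero locus of the section $\nu \in \mathrm{H}^0(\Gr(2, V), \cU_2^{\perp}(1))$, so $[U_2] \in \Gad$ precisely when $\nu \conv \Lambda^2 U_2 = 0$, which, since $q$ is an isomorphism, is equivalent to $[U_2, U_2] = 0$, i.e.\ to $U_2$ being abelian. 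Hence $\Gad \subset \Lie(2, V)$, and $\Gad$ (resp.\ $\Lie(2, V) \setminus \Gad$) parametrizes exactly the abelian (resp.\ non-abelian) $2$-dimensional Lie subalgebras of $V$.

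For part (2), $\rG_2$ acts transitively on $\Gad \simeq \rG_2/\cP_1$ by homogeneity. For the complement, the key step is to construct the $\rG_2$-equivariant map $p \colon \Lie(2, V) \setminus \Gad \to \qQ$ sending $[U_2]$ to the derived line $U_1 := [U_2, U_2]$, which is $q$-isotropic: for $h, e \in U_2$ with $[h, e] = e$ one has $q(e, e) = q([h, e], e) = \nu(h, e, e) = 0$ because $\nu$ is alternating. Since $\rG_2$ acts transitively on $\qQ$, transitivity on $\Lie(2, V) \setminus \Gad$ reduces to transitivity of the parabolic $\cP_2 = \mathrm{Stab}_{\rG_2}([\Bbbk e_\beta])$ on the fiber $p^{-1}([\Bbbk e_\beta])$.

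A direct computation with the explicit $\nu$ and the bracket formula from Lemma \ref{lemma:map_l} in the weight basis \eqref{weight_decomposition} shows that $p^{-1}([\Bbbk e_\beta])$ is parametrized as $\{\langle e_\beta,\ e_0 + a e_{-\alpha} + b e_{-\gamma}\rangle : (a, b) \in \mathbb{A}^2\}$. For $\cP_2$-transitivity on this $\mathbb{A}^2$ I will use the two short-root vectors $X_{-\alpha}, X_{-\gamma} \in \mathfrak{g}_2$, which both annihilate $e_\beta$ (hence lie in $\mathfrak{p}_2$, since the weights $\beta - \alpha$ and $\beta - \gamma$ are long $\mathfrak{g}_2$-roots and not weights of $V$) and satisfy $X_{-\alpha}(e_0) \propto e_{-\alpha}$, $X_{-\gamma}(e_0) \propto e_{-\gamma}$; I then check that the unipotent element $\exp(t X_{-\alpha}) \exp(s X_{-\gamma})$ carries the base point $U_2^0 = \langle e_\beta, e_0\rangle$ to the fiber point with parameters $(c t, c' s)$ for nonzero constants $c, c'$. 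The non-commutativity contributes only a term along $e_\beta$ (via $X_{-\gamma}(e_{-\alpha}) \propto e_\beta$, since $\beta = -\alpha - \gamma$ is a weight of $V$) that is absorbed into the first basis vector of $U_2$, so as $(t, s)$ ranges over $\mathbb{A}^2$ the action is transitive. The main technical obstacle is the bookkeeping of the $\mathfrak{g}_2$-action on the weight basis of $V$, in particular checking that $\exp(t X_{-\alpha})$ and $\exp(s X_{-\gamma})$ terminate at finite order on the relevant vectors and that the only nontrivial iteration $X_{-\gamma}(e_{-\alpha})$ indeed lands in $\Bbbk e_\beta$.
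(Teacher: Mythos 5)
Your treatment of parts (i) and (iii) is essentially the paper's: both read off from the formula $[u_1,u_2]=q^{-1}(\nu\conv(u_1\wedge u_2))$ that the abelian two-dimensional subalgebras are exactly the points of $\Gad$ (the zero locus of $\nu$ on $\Gr(2,V)$), and both rest on Lemma \ref{lemma:lie_is_lie} to identify $\Lie(2,V)$ with the two-dimensional subalgebras. For part (ii) on the complement your route is genuinely different and, as far as I can check, correct. The paper first shows that every non-abelian $U_2$ is contained in a three-dimensional subalgebra belonging to the open orbit $\mathsf{O}_0$ --- this uses the blow-up description of Proposition \ref{corollary:birationality}, the fact that all two-dimensional subalgebras of $\mathfrak{n}_3$ are abelian, and the absence of planes on $\Gad$ --- and then quotes $\mathrm{Stab}_{P_0}\simeq \mathrm{SL}_2\times\mathrm{SL}_2$ from \cite{M} to act transitively on the conic of Borel subalgebras of $P_0\simeq\mathfrak{sl}_2$. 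You instead fiber the complement $\rG_2$-equivariantly over $\qQ$ by the derived line $[U_2,U_2]$ (your isotropy check $q(e,e)=\nu(h,e,e)=0$ is valid, and the derived line automatically lies in $U_2$, so the fiber description makes sense) and prove transitivity of the stabilizer of $[\Bbbk e_\beta]$ on the fiber. Your parametrization of the fiber as $\{\langle e_\beta,\ e_0+ae_{-\alpha}+be_{-\gamma}\rangle\}\simeq\mathbb{A}^2$ agrees with a direct check against the explicit formula for $\nu$ and \eqref{lie_p1} (the brackets $[e_\alpha,e_\beta]$, $[e_{-\beta},e_\beta]$, $[e_\gamma,e_\beta]$ land in the independent weight lines $\Bbbk e_{-\gamma}$, $\Bbbk e_0$, $\Bbbk e_{-\alpha}$, forcing those coefficients to vanish), and the weight bookkeeping you flag is consistent: $\beta-\alpha$ and $\beta-\gamma$ are long roots, hence $X_{-\alpha}$ and $X_{-\gamma}$ kill $e_\beta$; the $\alpha$-string through $0$ in $V$ gives $X_{-\alpha}e_0\in\Bbbk^{*}e_{-\alpha}$ and $X_{-\alpha}^2e_0=0$; and $X_{-\gamma}e_{-\alpha}\in\Bbbk e_\beta$ is absorbed into $U_2$. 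What each approach buys: yours is more self-contained (no appeal to the stabilizer computation in \cite{M} nor to the geometry of the blow-up $\varphi_1$) at the cost of explicit root-vector calculations in the weight basis, while the paper's stays at the level of the orbit geometry it has already developed and reduces to a one-dimensional homogeneous space instead of an affine plane.
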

\begin{proof}
Using \eqref{definition_l_1} we immediately get that the abelian Lie algebras with respect to $q_{\scaleto{\Lambda^2\cU_3}{6.5pt}}$ are parametrized by $\Gad$. In particular, $\Gad\subset \Lie(2,V)$. Thus, to prove the statement it is enough to show that $\rG_2$ acts transitively on 2-dimensional non-abelian Lie algebras.

Recall that by Lemma \ref{lemma:lie_sructure} the Cayley Grassmannian $\CG$ parametrizes 3-dimensional Lie algebras in $V$ with respect to $q_{\scaleto{\Lambda^2\cU_3}{6.5pt}}$. Consider a 2-dimensional non-abelian Lie algebra $U_2$. Let us show that there exists a 3-dimensional Lie algebra corresponding to a point of $\mathsf{O}_0$, that contains~$U_2$. By Proposition~\ref{corollary:birationality} the fiber of $\varphi_1$ over $[U_2]\in \Lie(2,V)$ is isomorphic to $\PP(\tcE\restr{[U_2]})\simeq \PP^2$. We need to prove that $$\varphi_2(\PP(\tcE\restr{[U_2]}))\simeq \PP(\Lambda^2 U_2\wedge (\tcE\restr{[U_2]}))\subset \CG$$ contains a 3-dimensional Lie algebra corresponding to a point of $\mathsf{O}_0$.

Using the description of the closed orbit~$\mathsf{O}_2$ from Lemma \ref{lemma:lie_sructure} we obtain that any 2-dimensional Lie subalgebra of a 3-dimensional Lie algebra corresponding to a point of the closed orbit $\mathsf{O}_2$ is abelian, so $\varphi_2(\PP(\tcE\restr{[U_2]}))$ consists of 3-dimensional Lie algebras corresponding to points of $\mathsf{O}_0$ or $\mathsf{O}_1$.

Suppose now that~$\varphi_2(\PP(\tcE\restr{[U_2]}))\subset \mathsf{O}_1$.  By Lemma \ref{lemma:computation_of_O(E)} we have $$\pi_1^*\cO(H_3) \simeq \cO_{\PP_{\Gad}(\cU^{\perp}_2/\cU_2)}(1)\otimes \pi_2^*\cO(H_2),$$ so the pushforward $\pi_{2*}(\varphi_2(\PP(\tcE\restr{[U_2]})))$ is a linearly embedded plane on $\Gad$, but there are no planes on $\Gad$ by \cite[Lemma 3.1]{KR}, a contradiction. We deduce that $\varphi_2(\PP(\tcE\restr{[U_2]}))$ contains a 3-dimensional Lie algebra corresponding to a point of $\mathsf{O}_0$.

We have shown that $U_2$ is a subalgebra of a 3-dimensional Lie algebra that belongs to the orbit~$\mathsf{O}_0$. By Proposition~\ref{proposition:orbits} we can assume that $U_2\subset P_0=\langle e_0,e_{\gamma},e_{-\gamma} \rangle$ and to prove the statement of the lemma it is enough to show that the stabilizer $\mathrm{Stab}_{P_0}$ of $P_0$ in $\rG_2$ acts transitively on the conic that parametrizes 2-dimensional Lie subalgebras in $P_0$, that is $\bQ_{\Lambda^2\cU_3}\restr{P_0}$ by Lemma \ref{lemma:conic_bundle_Lie}. By Proposition 2.4 in \cite{M} we have $$\mathrm{Stab}_{P_0}\simeq\mathrm{SL}_2\times \mathrm{SL}_2,$$
where one copy of $\mathrm{SL}_2$ corresponds to the subgroup in $\rG_2$ generated by the short root $\gamma$. It is easy to see that this $\mathrm{SL}_2$ acts transitively on $\bQ_{\Lambda^2\cU_3}\restr{P_0}$. Thus, we deduce the statement.  
\end{proof}
Now we have the following funny observation. 
\begin{proposition} \label{lemma:Lie(2,7)}
The variety $\mathrm{LieGr}(2,V)$ is isomorphic to $\OGr(2,V)$
\begin{equation*}
    s\colon \OGr(2,V)\simeq \mathrm{LieGr}(2,V).
\end{equation*}
\end{proposition}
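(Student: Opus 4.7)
The plan is to construct an explicit $\rG_2$-equivariant morphism
$$s\colon \mathrm{LieGr}(2,V)\longrightarrow \OGr(2,V)$$
using the 4-form $\lambda$, and then to check that it is an isomorphism. For $[U_2]\in\mathrm{LieGr}(2,V)$ with $U_2=\langle u_1,u_2\rangle$, view the skew form $\lambda\conv (u_1\wedge u_2)\in \Lambda^2V^\vee$ as a linear map $V\to V^\vee$, $w\mapsto \lambda(u_1,u_2,w,\cdot)$. Its kernel always contains $U_2$, so the map descends to $V/U_2\to U_2^\perp\subset V^\vee$, and by Lemma~\ref{lemma:D_k} together with the defining condition of $\mathrm{LieGr}(2,V)$ it has rank exactly~$2$. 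Composing with $q^{-1}\colon V^\vee\xrightarrow{\sim}V$, define
$$s([U_2]):=q^{-1}\bigl(\mathrm{Im}(\lambda\conv (u_1\wedge u_2))\bigr)\subset V.$$
Equivalently, letting $\tilde K\coloneqq \ker(\lambda\conv U_2)\subset V$ (a 5-dimensional subspace containing $U_2$), the skew-symmetry yields $s([U_2])=\tilde K^{\perp_q}$, so $s$ is a $\rG_2$-equivariant morphism $\mathrm{LieGr}(2,V)\to \Gr(2,V)$.

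The first step is to show that $s$ lands in $\OGr(2,V)$, i.e.\ that $\tilde K$ is co-isotropic ($\tilde K^{\perp_q}\subset \tilde K$). By $\rG_2$-equivariance and Lemma~\ref{lemma:lie_is_lie} together with the orbit description, $\mathrm{LieGr}(2,V)$ is the union of the two $\rG_2$-orbits $\Gad$ and its open complement, so it suffices to verify the inclusion at one representative of each orbit. Using the explicit formula \eqref{lambda} for $\lambda$ and taking the representatives $\langle e_\alpha,e_{-\beta}\rangle\in\Gad$ and $\langle e_0,e_\gamma\rangle$ in the open orbit, direct computation gives $s(\langle e_\alpha,e_{-\beta}\rangle)=\langle e_\alpha,e_{-\beta}\rangle$ (isotropic, since $\Gad\subset \OGr(2,V)$ by Lemma~\ref{adjoint_properties}) and $s(\langle e_0,e_\gamma\rangle)=\langle e_{-\alpha},e_{-\beta}\rangle$, which is manifestly $q$-isotropic.

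The same calculations show that $s$ restricts to the identity on $\Gad$, so $s|_{\Gad}$ is an isomorphism onto the closed $\rG_2$-orbit of $\OGr(2,V)$. On the open 7-dimensional orbits, $\rG_2$-equivariance gives an inclusion $\mathrm{Stab}_{\rG_2}([U_2^0])\subset \mathrm{Stab}_{\rG_2}(s([U_2^0]))$; since both orbits are 7-dimensional inside the 14-dimensional group $\rG_2$, the stabilizers have the same dimension and, being connected, coincide, so $s$ is a bijection on the open orbit. Combining the two pieces, $s$ is a $\rG_2$-equivariant bijection of smooth projective 7-folds.

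Finally, $\OGr(2,V)$ is smooth (hence normal), so by Zariski's main theorem a bijective morphism from a projective variety is an isomorphism once it is birational; birationality follows by computing the differential of $s$ at $\langle e_\alpha,e_{-\beta}\rangle\in\Gad$, where $s$ is the identity, and transferring by $\rG_2$-equivariance. The main obstacle will be the isotropy check at a representative of the open orbit and the verification that the two stabilizer subgroups coincide; both are straightforward but require the explicit weight-vector computations, after which the bijectivity and smoothness arguments give the isomorphism cleanly.
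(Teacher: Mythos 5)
Your route is genuinely different from the paper's. The paper constructs no map at all: it computes the stabilizer $H=\mathrm{Stab}_{\rG_2}(\langle e_0,e_\alpha\rangle)$ of a point of the open orbit, identifies $H$ with the stabilizer of the line $\langle e_0\wedge e_\alpha+e_{-\beta}\wedge e_{-\gamma}\rangle\subset V_{14}$, recognizes $\rG_2/H$ as the open orbit in Case (iv) of Pasquier's list, and concludes from Pasquier's uniqueness result for smooth two-orbit compactifications. Your explicit equivariant morphism $s([U_2])=q^{-1}(\mathrm{Im}(\lambda\conv\Lambda^2U_2))=\tilde K^{\perp_q}$ is an attractive alternative that avoids citing that classification; the two representative computations you invoke do come out as you say ($s(\langle e_\alpha,e_{-\beta}\rangle)=\langle e_\alpha,e_{-\beta}\rangle$ and $s(\langle e_0,e_\gamma\rangle)=\langle e_{-\alpha},e_{-\beta}\rangle$, both $q$-isotropic), and checking isotropy on one representative per orbit is legitimate because isotropy is a closed $\rG_2$-invariant condition and the two-orbit structure of $\mathrm{LieGr}(2,V)$ is established in the preceding lemma.

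The gap is in the degree-one claim on the open orbit. From $\mathrm{Stab}_{\rG_2}([U_2^0])\subseteq\mathrm{Stab}_{\rG_2}(s([U_2^0]))$ and equality of dimensions you only get equality of identity components; the phrase ``being connected, coincide'' assumes connectedness of both stabilizers, which you have not proved and which is not automatic for points of non-closed orbits. If the inclusion had finite index $d>1$, then $s$ would be a degree-$d$ covering of the open $\rG_2$-orbit of $\OGr(2,V)$, which is compatible with everything else you establish, so the argument as written does not exclude this. The fallback (computing $ds$ at a point of $\Gad$) is deferred rather than carried out, and even granted it you would still need a word on why unramifiedness at one point of the closed orbit, together with $s^{-1}(p)=\{p\}$ there, bounds the global degree (say via semicontinuity of the rank of $s_*\cO$ once $s$ is known to be finite). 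A clean way to close the gap with no further computation: $s$ is proper and quasi-finite (the fibers over the open orbit are permuted transitively by $\rG_2$ and the general one is finite, while $s^{-1}(\Gad)=\Gad$ pointwise because the open orbit maps into the open orbit), hence finite between smooth $7$-folds; by purity of the branch locus any ramification divisor would be a $\rG_2$-invariant divisor in $\OGr(2,V)$, but the only proper invariant closed subset is the $5$-dimensional $\Gad$, so $s$ is \'etale, and simple connectedness of $\OGr(2,V)$ then forces $s$ to be an isomorphism.
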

\begin{remark}
The isomorphism $s$ does not agree with the embedding of $\OGr(2,V)$ to $\Gr(2,V)$: the pullback of the tautological bundle $\cU_2$ of $\Lie(2,V)\subset \Gr(2,V)$ under $s$ is the dual spinor bundle~$s^*\cU_2\simeq \cS^{\vee}$.
\end{remark}
\begin{proof}
Let us compute the stabilizer 
\begin{equation*}
    H:= \mathrm{Stab}_{\rG_2}P_{\Lie}
\end{equation*}
of the point $P_{\Lie}=\langle e_{0}, e_{\alpha} \rangle$.
Recall  the map~$\br\colon \Lambda^2V \to V$ that is given by $q^{-1}(\nu\conv (-\wedge -))$ and the notation in the diagram \eqref{g2}. The next proof is almost a repetition of the proof of Proposition 2.34 in \cite{P}.

First note that $H$ contains a maximal torus $T$ of $\rG_2$ (because the basis \eqref{weight_decomposition} is torus-invariant by definition).  Since $[e_{0}, e_{\alpha}]=-2e_{\alpha}$ the group $H$ stabilizes $[e_{\alpha}]\in \qQ$ (see its equation \eqref{q:equation}) so that~$H$ is contained in $\cP_2$, the parabolic subgroup. The torus $T$ acts on $\langle e_{\alpha}\rangle$ with weight a short root $\alpha$. Recall that we have the decomposition~\eqref{V14} of $\Lambda^2V$ and that the representation $V_{14}$ is the kernel of the map~$\br$.
Then, the only 1-dimensional subspace of $V_{14}$ where $T$ acts with weight $\alpha$ is the subspace 
\begin{equation*}
    L:= \langle e_0\wedge e_{\alpha}+e_{-\beta}\wedge e_{-\gamma}\rangle.
\end{equation*}
Let us prove that $H$ is the stabilizer of $L$. Suppose $\phi \in H$. We have already shown that $\phi(e_{\alpha})=\lambda e_{\alpha}$ for~$\lambda\in \Bbbk$. Note that $\phi(e_0)$ must be of the form $e_0+ a e_{\alpha}$ for some $a\in \Bbbk$ since $\phi\in \rG_2$ preserves the symmetric form $q$. So we have $\phi(e_0 \wedge e_{\alpha})=\lambda e_0\wedge e_{\alpha}$. We have $[e_{\alpha}, e_{-\beta}]=0$ and $[e_{\alpha}, e_{-\gamma}]=0$. Since~$\phi\in \rG_2$ preserves the map $\br$ we get that $\phi(e_{-\beta})\in \langle e_{\alpha}, e_{-\beta}, e_{-\gamma}\rangle$ and $\phi(e_{-\gamma})\in \langle e_{\alpha}, e_{-\beta}, e_{-\gamma}\rangle$, where $\langle e_{\alpha}, e_{-\beta}, e_{-\gamma}\rangle$ is the kernel of the linear map $[e_{\alpha},-\ ]$. We have~$[e_{0}, e_{-\beta}]=2e_{-\beta}$ and $[e_{0}, e_{-\gamma}]= 2e_{-\gamma}$ so we get that $\phi(e_{-\beta})\in \langle e_{-\beta}, e_{-\gamma}\rangle$ and $\phi(e_{-\gamma})\in \langle e_{-\beta}, e_{-\gamma}\rangle$. Since $[e_{-\beta}, e_{-\gamma}]=-2e_{\alpha}$ we obtain that $\phi(e_{-\beta}\wedge e_{-\gamma})= \lambda e_{-\beta}\wedge e_{-\gamma}$, so we conclude that $H$ is the stabilizer of $L$.

Then we have proved that $\rG_2/H$ is isomorphic to the homogeneous space of Case (iv)
of Lemma 2.31 in \cite{P}. By Lemma \ref{lemma:lie_is_lie} and Proposition 2.34 in \cite{P} we get that $\Lie(2,V)$ is isomorphic to~$\OGr(2,V)$.
\end{proof}
\begin{corollary}
The Hilbert scheme of lines $F_1(\CG)$ on the Cayley Grassmannian $\CG$ is isomorphic to the projectivization~$\PP_{\OGr(2,V)}(s^*\tcE^{\vee})$.
\end{corollary}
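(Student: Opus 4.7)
The plan is to obtain this as a direct combination of the two preceding results in this subsection, with essentially no extra work. From Proposition \ref{corollary:birationality} I already have the identification
\[
F_1(\CG) \;=\; \mathrm{Hilb}^{1+t}(\CG) \;\simeq\; \PP_{\mathrm{LieGr}(2,V)}(\tcE^{\vee}),
\]
obtained by realizing a line on $\CG$ as a line in a $\PP^2$-fiber of $\varphi_2$ in the diagram \eqref{diagram:conic_bundle}, and then using that $\varphi_1$ restricts over $\Lie(2,V)$ to the projective bundle $\PP_{\Lie(2,V)}(\tcE)$ to rewrite lines in these fibers as 2-dimensional quotients of $\tcE$, i.e.\ points of $\Gr_{\Lie(2,V)}(2,\tcE)\simeq \PP_{\Lie(2,V)}(\tcE^{\vee})$.

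All that remains is to transport this projective bundle along the isomorphism $s\colon \OGr(2,V)\xrightarrow{\sim}\Lie(2,V)$ supplied by Proposition \ref{lemma:Lie(2,7)}. Since the formation of the projectivization of a vector bundle is compatible with pullback along isomorphisms of the base, one has
\[
\PP_{\Lie(2,V)}(\tcE^{\vee}) \;\simeq\; \PP_{\OGr(2,V)}\bigl(s^{*}\tcE^{\vee}\bigr),
\]
and composing with the previous isomorphism gives the stated identification of $F_1(\CG)$. There is no real obstacle here: both ingredients are already proved, and the remark preceding the corollary warns the reader that $s$ does not intertwine the tautological bundles with the spinor bundle in the expected way, so the statement has to be formulated on the level of the abstract base change, which is exactly what the proof above does.
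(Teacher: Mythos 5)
Your proposal is correct and is exactly the paper's argument: the paper's proof also simply combines Proposition \ref{corollary:birationality} (giving $\mathrm{Hilb}^{1+t}(\CG)\simeq \PP_{\mathrm{LieGr}(2,V)}(\tcE^{\vee})$) with Proposition \ref{lemma:Lie(2,7)} (the isomorphism $s$), transporting the projective bundle along $s$. Your additional remarks about the base-change compatibility and the role of the preceding remark are accurate but add nothing beyond what the paper treats as immediate.
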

\begin{proof}
This immediately follows from Propostion \ref{corollary:birationality} and Proposition \ref{lemma:Lie(2,7)}.
\end{proof}
\begin{remark}
In fact, the restristion of the $\PP^2$-bundle $\tE\simeq \PP_{\mathrm{LieGr}(2,V)}(\tcE)$ to $\Gad\subset \mathrm{LieGr}(2,V)$ is isomorphic to~$ \PP_{\Gad}(\cU^{\perp}_2/\cU_2)$ so that constructions from Proposition \ref{proposition:resolution_of_Hnu} and Lemma \ref{lemma:conic_bundle_Lie} are related to each other. 
\end{remark}

\section{Cohomology computations} \label{section:computations}
This appendix contains all necessary cohomology computations for proving exceptionality.
\subsection{The Borel--Bott--Weil theorem}
\label{section:bbw}
The Borel--Bott--Weil theorem computes the cohomology of line bundles on the flag variety of a reductive algebraic group. It can also be used to compute the cohomology of irreducible equivariant vector bundles on Grassmannians. 

Let $W$ be a vector space of dimension $n$.
We will use the standard identification of the weight lattice of the group $\mathrm{GL}(W)$ with $\mathbb{Z}^{n}$
that takes the fundamental weight of the representation $\Lambda^{k}W^{\vee}$ to the vector $(1, 1,\ldots, 1, 0, 0,\ldots , 0) \in \mathbb{Z}^{n}$
(the first $k$ entries are $1$, and the last $n-k$ are $0$).
We denote by
\begin{equation*}
\rho= (n, n- 1,\ldots, 2, 1)
\end{equation*}
the sum of the fundamental weights of $\mathrm{GL}(W)$.

The cone of dominant weights of $\mathrm{GL}(W)$ gets identified with the set of non-increasing sequences
$a=(a_{1}, a_{2},\ldots ,a_{n})$ of integers, i.e. we have $a_{1}\ge a_{2}\ge \ldots \ge a_{n-1}\ge a_{n} $.
We denote by $\Sigma^{a}W^{\vee}=\Sigma^{a_{1},a_{2},\ldots,a_{n}}W^{\vee}$
the representation of $\mathrm{GL}(W)$ of highest weight $a$.

Similarly, given a vector bundle $E$ of rank $n$ on a scheme $X$, we consider the corresponding principal $\mathrm{GL}(n)$-bundle on $X$ and denote by $\Sigma^{a}E$ the vector bundle associated with the $\mathrm{GL}(n)$-representation of highest weight $a$.

The Weyl group of $\mathrm{GL}(W)$ is isomorphic to the permutation group $\mathbf{S}_{n}$
and the length function $\ell \colon \mathbf{S}_n \to \mathbb{Z}$ counts the number of inversions in a permutation.
Note that for every weight $a \in \mathbb{Z}^{n}$
there exists a permutation $\sigma \in \mathbf{S}_{n}$ such that $\sigma(a)$ is dominant, i.e., non-increasing.

The linear algebraic group $\mathrm{GL}(W)$ acts naturally on the Grassmannian $\mathrm{Gr}(k,W)$ of $k$-dimensional subspaces in $W$.
Every irreducible $\mathrm{GL}(W)$-equivariant vector bundle on $\mathrm{Gr}(k,W)$ is isomorphic to
\begin{equation*}
\Sigma^{b}\cU_k^{\vee}\otimes \Sigma^{c}\cU_k^{\perp}
\end{equation*}
for some dominant weights $b \in \mathbb{Z}^{k}$ and $c \in \mathbb{Z}^{n-k}$.
\begin{theorem} [\cite{Kap}] \label{theorem:bbw}
Let $b \in \mathbb{Z}^{k}$ and $c \in \mathbb{Z}^{n-k}$ be non-increasing sequences.
Let $a =(b,c) \in \mathbb{Z}^{n}$ be their concatenation.
Assume that all entries of $a+\rho$ are distinct.
Let $\sigma \in \mathbf{S}_n$ be the unique permutation such that $\sigma(a+\rho)$ is strictly decreasing. Then
\begin{equation}
H^{k}(\mathrm{Gr}(k,W), \Sigma^{b}\cU_k^{\vee}\otimes \Sigma^{c}\cU_k^{\perp})=\begin{cases}
\Sigma^{\sigma(a+\rho)-\rho}\,W^{\vee},&\text{if $k=\ell(\sigma)$;}\\
0,&\text{otherwise.}
\end{cases}
\end{equation}
If not all entries of $a+\rho$ are distinct then
\begin{equation*}
H^{\bullet}(\mathrm{Gr}(k,W), \Sigma^{b}\cU_k^{\vee}\otimes \Sigma^{c}\cU_k^{\perp} )=0.
\end{equation*}
\end{theorem}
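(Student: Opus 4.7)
The plan is to reduce the statement to the classical Bott theorem for line bundles on the full flag variety via pushforward, exploiting the relative Borel--Weil construction.

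First, I would consider the natural projection
$$\pi\colon \mathrm{Fl}(1,2,\ldots,n; W)\to \mathrm{Gr}(k,W)$$
from the complete flag variety. The fiber of $\pi$ over a point $[U_k]\in \mathrm{Gr}(k,W)$ is the product $\mathrm{Fl}(U_k)\times \mathrm{Fl}(W/U_k)$ of complete flag varieties, which is itself (after a choice of basis) the flag variety of $\mathrm{GL}_k\times \mathrm{GL}_{n-k}$. For any weight $a=(b,c)\in \mathbb{Z}^n$ with $b\in \mathbb{Z}^k$ and $c\in \mathbb{Z}^{n-k}$ I would denote by $\cL_a$ the corresponding $\mathrm{GL}(W)$-equivariant line bundle on the flag variety. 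When $b$ and $c$ are separately dominant, the relative Borel--Weil theorem applied fiberwise yields
$$\pi_*\cL_a\simeq \Sigma^b\cU_k^\vee\otimes \Sigma^c\cU_k^\perp,\qquad \mathrm{R}^i\pi_*\cL_a=0\ \text{for}\ i>0,$$
so by the Leray spectral sequence
$$H^\bullet(\mathrm{Gr}(k,W),\Sigma^b\cU_k^\vee\otimes \Sigma^c\cU_k^\perp)\simeq H^\bullet(\mathrm{Fl}(1,2,\ldots,n;W),\cL_a).$$

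Second, I would invoke Bott's theorem for line bundles on the complete flag variety: if the entries of $a+\rho$ are not pairwise distinct, all cohomology vanishes; otherwise, for the unique $\sigma\in \mathbf{S}_n$ making $\sigma(a+\rho)$ strictly decreasing, the cohomology is concentrated in degree $\ell(\sigma)$ with value $\Sigma^{\sigma(a+\rho)-\rho}W^\vee$. This is the only nontrivial ingredient. I would prove it by induction on $n$ (or equivalently on $\ell(\sigma)$): the flag variety admits a $\mathbb{P}^1$-fibration for each simple reflection $s_i$ to a partial flag variety, and on such a $\mathbb{P}^1$-bundle the cohomology of $\cL_a$ along the fibers reduces to cohomology of $\cO_{\mathbb{P}^1}(\mu)$ with $\mu=(a+\rho)_i-(a+\rho)_{i+1}-1$. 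One obtains three cases: if $\mu\ge 0$ the cohomology passes unchanged to the base (with $s_i(a)$ replacing $a$ untouched), if $\mu=-1$ everything vanishes (the repeated-entry case), and if $\mu\le -2$ Serre duality on $\mathbb{P}^1$ yields an isomorphism with the cohomology of the line bundle $\cL_{s_i\cdot a}$ on the base shifted in degree by one; one then applies the inductive hypothesis with $\ell(\sigma s_i)=\ell(\sigma)-1$ strictly smaller.

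Combining these two steps and reading off the answer finishes the proof. The main obstacle is the careful bookkeeping in the $\mathbb{P}^1$-bundle induction: one has to check that the $\rho$-shifted Weyl action $\sigma\cdot a=\sigma(a+\rho)-\rho$ is compatible with each reduction step, that the inductive degree shift agrees with the length $\ell(\sigma)$, and that the vanishing of all $\mathrm{R}^i\pi_*\cL_a$ for $i>0$ in the reduction from $\mathrm{Fl}(1,\ldots,n;W)$ to $\mathrm{Gr}(k,W)$ genuinely holds under the separate dominance assumptions on $b$ and $c$ (which is where the hypothesis that the entries within each block are non-increasing is essential).
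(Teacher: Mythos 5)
The paper offers no proof of this statement: it is quoted verbatim as a known theorem with a citation to Kapranov, so there is nothing internal to compare your argument against. Judged on its own, your proposal is the standard two-step derivation (reduction to the complete flag variety, then Demazure's inductive proof of Bott's theorem via $\PP^1$-fibrations), and the overall architecture is sound: the relative Borel--Weil step correctly uses the separate dominance of $b$ and $c$ to get $\pi_*\cL_a\simeq \Sigma^b\cU_k^\vee\otimes\Sigma^c\cU_k^\perp$ with vanishing higher direct images, and the Leray collapse is then immediate.

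One place where your sketch of the second step is loose enough to mislead: in the case $\mu=\langle a,\alpha_i^\vee\rangle\ge 0$ the fiberwise pushforward $p_*\cL_a$ is a vector bundle of rank $\mu+1$ on the partial flag variety, not a line bundle, so the cohomology does not literally ``pass unchanged to the base'' in a form that lets you iterate with line bundles there. The actual engine of the induction is the identification, via relative Serre duality along the $\PP^1$-fibers, of $R^0p_*\cL_a$ with $R^1p_*\cL_{s_i\cdot a}$ (both equal to the same symmetric-power bundle on the partial flag variety), which yields $H^j(\cL_a)\simeq H^{j+1}(\cL_{s_i\cdot a})$ with \emph{both} line bundles living on the full flag variety; the induction is then on the number of inversions needed to sort $a+\rho$, with Borel--Weil ($a$ dominant, cohomology concentrated in degree $0$ equal to $\Sigma^a W^\vee$) as the base case, and the repeated-entry case $\mu=-1$ giving total vanishing. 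Your $\mu\le -2$ clause states essentially this but misplaces $\cL_{s_i\cdot a}$ ``on the base.'' With that bookkeeping corrected the proof is complete and standard.
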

Also we will use the Littlewood–Richardson rule, that provides a recipe to decompose a tensor product $\Sigma^{a}\cU \otimes \Sigma^{b}\cU^{\vee}$
into a direct sum of bundles of the form $\Sigma^{c}\cU^{\vee}$.
We refer to \cite{5} for the precise formulation of this rule. Let us just mention that we have the following property of this decomposition.
\begin{lemma} [\cite{5}] \label{lemma:lrr}
Let $a=(a_1, \ldots,a_k)\in \mathbb{Z}^{k}$ and $b=(b_1, \ldots, b_k)\in \mathbb{Z}^{k}$ be non-increasing sequences.
Then there is a direct sum decomposition
\begin{equation*}
\Sigma^{a_1, \ldots,a_k }\cU_k \otimes \Sigma^{b_1, \ldots, b_k}\cU_k^{\vee}=
\bigoplus \Sigma^{c_1,\ldots, c_k}\cU_k^{\vee},
\end{equation*}
where for each summand in the right hand side we have
\begin{equation*}
- a_{k+1-i} \le c_i \le b_i\quad\text{for all $1 \le i \le k$}\qquad\text{and}\qquad
\displaystyle\sum c_i = \sum b_i - \sum a_i.
\end{equation*}
\end{lemma}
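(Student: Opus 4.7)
The lemma is a consequence of the classical Littlewood--Richardson rule. My plan is to reduce the tensor product to one of Schur functors on $\cU_k^{\vee}$ alone and then apply the classical rule for tensoring two Schur functors of partitions.

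The first step is the canonical identification
\begin{equation*}
\Sigma^{a_1,\ldots,a_k}\cU_k \simeq \Sigma^{-a_k,\ldots,-a_1}\cU_k^{\vee},
\end{equation*}
which follows from $\cU_k \simeq \Lambda^{k-1}\cU_k^{\vee} \otimes \det\cU_k$ together with the standard rules for Schur functors under duality and line-bundle twists. Setting $a' = (-a_k, \ldots, -a_1)$, the tensor product in the lemma becomes $\Sigma^{a'}\cU_k^{\vee} \otimes \Sigma^b\cU_k^{\vee}$.

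To apply the classical Littlewood--Richardson rule, which is stated for Schur functors of partitions, I would shift both weights by $N\mathbf{1}$ for $N \gg 0$: set $\mu = a' + N\mathbf{1}$ and $\nu = b + N\mathbf{1}$, so that both are genuine partitions. This amounts to tensoring by $\det(\cU_k^{\vee})^{\otimes 2N}$, which does not affect multiplicities. The classical rule then decomposes $\Sigma^{\mu}\cU_k^{\vee} \otimes \Sigma^{\nu}\cU_k^{\vee}$ as $\bigoplus_{\lambda} c^{\lambda}_{\mu\nu}\, \Sigma^{\lambda}\cU_k^{\vee}$ over partitions $\lambda$ with $|\lambda| = |\mu| + |\nu|$, the containments $\lambda \supseteq \mu$ and $\lambda \supseteq \nu$, and the row-length estimates $\lambda_i \le \mu_i + \nu_1$ and $\lambda_i \le \nu_i + \mu_1$ coming from the combinatorics of Littlewood--Richardson tableaux.

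Writing each such $\lambda$ as $c + 2N\mathbf{1}$ cancels the determinant twist and yields the desired decomposition into $\Sigma^c\cU_k^{\vee}$. The three numerical constraints on $\lambda$ translate respectively into the sum identity $\sum c_i = \sum b_i - \sum a_i$, the lower bound $c_i \ge -a_{k+1-i}$ coming from $\lambda \supseteq \mu$, and the upper bound $c_i \le b_i$ coming from $\lambda_i \le \nu_i + \mu_1$ after cancelling the shift (using the fact that in the paper's cohomology computations on $\Gr(k,V)$ the first factor $\Sigma^a \cU_k$ always has $a$ a partition, so $a_k \ge 0$; in general this upper bound should be read as $c_i \le b_i - a_k$). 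The only non-bookkeeping input is the classical LR rule itself, and this delicate translation of the row-length bound is the only place where one has to be careful.
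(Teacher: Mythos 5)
The paper does not actually prove this lemma --- it is quoted from Weyman's book \cite{5} without argument --- so your attempt can only be judged on its own merits. Your reduction $\Sigma^{a}\cU_k\simeq\Sigma^{-a_k,\ldots,-a_1}\cU_k^{\vee}$ and the shifting trick are the right idea, and the sum identity and the upper bound come out correctly (including your accurate observation that the honest bound is $c_i\le b_i-a_k$, which gives the stated one only when $a_k\ge 0$). The genuine gap is in the lower bound. After you shift \emph{both} factors by $N\mathbf{1}$, the summands of $\Sigma^{\mu}\cU_k^{\vee}\otimes\Sigma^{\nu}\cU_k^{\vee}$ are $\Sigma^{\lambda}\cU_k^{\vee}$ with $\lambda=c+2N\mathbf{1}$, whereas $\mu=a'+N\mathbf{1}$ carries only a single shift. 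The containment $\lambda\supseteq\mu$ therefore gives $c_i+2N\ge -a_{k+1-i}+N$, i.e. $c_i\ge -a_{k+1-i}-N$, which is vacuous for $N\gg 0$; it does not give $c_i\ge -a_{k+1-i}$ as you claim.

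To repair this you need a genuine lower bound on the parts of $\lambda$, not mere containment. Two options: (1) use the inequality $\lambda_i\ge\mu_i+\nu_k$, valid whenever $c^{\lambda}_{\mu\nu}\ne 0$ for partitions with at most $k$ parts (it follows from your upper bound applied to the equivalent statement that $\Sigma^{\mu}$ occurs in $\Sigma^{\lambda}\otimes(\Sigma^{\nu})^{\vee}$); this yields $c_i\ge b_k-a_{k+1-i}$, hence the stated bound when $b_k\ge 0$. Or (2) shift only the first factor: when $b$ is already a partition take $\nu=b$ and $\mu=a'+N\mathbf{1}$, so that $\lambda=c+N\mathbf{1}$ and $\lambda\supseteq\mu$ gives $c_i\ge-a_{k+1-i}$ on the nose. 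Either way, the caveat you raised for the upper bound applies symmetrically to the lower one: as literally stated for arbitrary non-increasing integer sequences the lemma is false (for $k=1$, $a=(-1)$, $b=(0)$ the product is $\Sigma^{1}\cU_1^{\vee}$, violating $c_1\le b_1$), and the correct general bounds are $b_k-a_{k+1-i}\le c_i\le b_i-a_k$. This is harmless for the paper, where the lemma is only invoked with $a$ and $b$ partitions after the line-bundle twist has been factored out.
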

We will frequently use the following natural identifications:
\begin{equation*}
\Sigma^{a_1,\ldots,a_k}\cU_k \simeq \Sigma^{-a_k,\ldots ,-a_1}\cU_k^{\vee},
\qquad
\Sigma^{a_1+t,\ldots,a_k+t}\cU_k^{\vee} \simeq \Sigma^{a_1,\ldots, a_k}\cU_k^\vee \otimes \cO(t),
\end{equation*}
where $\cO(t)=\Sigma^{t, t,\ldots,t}\cU_k^{\vee}$ is  the Pl{\"u}cker line bundle of $\Gr(k,W)$.
In what follows for a dominant weight of the form $(a_1,a_2,0)$ we will omit the last zero,
i.e., we will write just~$(a_1,a_2)$~and~$\Sigma^{a_1,a_2}\cU^{\vee}$ for such a weight.
Note also that $\Sigma^{a,0,0}\cU^\vee \simeq S^a\cU^\vee$ and $\Sigma^{1,1}\cU^\vee = \Lambda^2\cU^\vee$.
\subsection{Cohomology}
Let $V$ be a 7-dimensional vector space. Recall that $\cU\subset V\otimes \cO$ and $\cU^{\perp}\subset V^{\vee}\otimes \cO$ are the tautological vector bundles on $\Gr(3,V)$, of ranks 3 and 4 respectively, $\cQ=(\cU^{\perp})^{\vee}$ and $\cO(1)$ is the Pl{\"u}cker line bundle. 

The table below include only those vector bundles that we use for the proof of exceptionality and fullness of the collection \eqref{collection}. In the table we abbreviate $\mathrm{H}^{\bullet}(\Sigma^{c_1,c_2,c_3})\coloneqq \mathrm{H}^{\bullet}(\CG,\Sigma^{c_1,c_2,c_3}\cU^{\vee})$.
Each column in the table lists of vector bundles $\Sigma^{c_1,c_2,c_3}\cU^{\vee}$ with fixed $c_1$, except for the last column where we add $\Sigma^{3,-1,-1}\cU^{\vee}$. Each column is ordered by the values of $c_3$ and $c_2$. Each row in the table lists vector bundles $\Sigma^{c_1,c_2,c_3}\cU^{\vee}$ with fixed $c_2$ and $c_3$. The shaded cells correspond to vector bundles for which the cohomology on $\CG$ and $\Gr(3,V)$ differ.
\begin{proposition} \label{main_computations}
On $\CG$ we have
 \begin{center}
 \begin{tabular}{|c|c|c|c|c|} 
 \hline
& & $\mathrm{H}^{\bullet}(\Sigma^{0,-3,-4})=0$ & & \\
 \hline
$\mathrm{H}^{\bullet}(\Sigma^{-2,-3,-3})=0$& $\mathrm{H}^{\bullet}(\Sigma^{-1,-3,-3})=0$& $\mathrm{H}^{\bullet}(\Sigma^{0,-3,-3})=0$ & & \\
 \hline
 $\mathrm{H}^{\bullet}(\Sigma^{-2,-2,-4})=0$ & $\mathrm{H}^{\bullet}(\Sigma^{-1,-2,-4})=0$ & & & \\
 \hline
$\mathrm{H}^{\bullet}(\Sigma^{-2,-2,-3})=0$& $\mathrm{H}^{\bullet}(\Sigma^{-1,-2,-3})=0$& $\mathrm{H}^{\bullet}(\Sigma^{0,-2,-3})=0$ &&\\
 \hline
& $\mathrm{H}^{\bullet}(\Sigma^{-1,-2,-2})= 0$ & $\mathrm{H}^{\bullet}(\Sigma^{0,-2,-2})= 0$ && \\
\hline
& $\mathrm{H}^{\bullet}(\Sigma^{-1,-1,-5})=\Bbbk[-4]$ & & & \\
 \hline 
  & $\mathrm{H}^{\bullet}(\Sigma^{-1,-1,-4})=0$ & $\mathrm{H}^{\bullet}(\Sigma^{0,-1,-4})=0$ & & \\
\hline 
  & $\mathrm{H}^{\bullet}(\Sigma^{-1,-1,-3})=0$ & $\mathrm{H}^{\bullet}(\Sigma^{0,-1,-3})=0$ & $\mathrm{H}^{\bullet}(\Sigma^{1,-1,-3})=0$ & \\
 \hline
& $\mathrm{H}^{\bullet}(\Sigma^{-1,-1,-2})= 0$ &$\mathrm{H}^{\bullet}(\Sigma^{0,-1,-2})= 0$ & $\mathrm{H}^{\bullet}(\Sigma^{1,-1,-2})= 0$ & $\mathrm{H}^{\bullet}(\Sigma^{2,-1,-2})= 0$\\
 \hline
& $\mathrm{H}^{\bullet}(\cO(-1))= 0$& $\mathrm{H}^{\bullet}(\Sigma^{0,-1,-1})= 0$ & $\mathrm{H}^{\bullet}(\Sigma^{1,-1,-1})= 0$ & $\mathrm{H}^{\bullet}(\Sigma^{2,-1,-1})= 0$\\
&&&&
$\mathrm{H}^{\bullet}(\Sigma^{3,-1,-1})=0$\\
 \hline
  \hline
& & \cellcolor{lightgray}$\mathrm{H}^{\bullet}(\Sigma^{0,0,-3})=\Bbbk[-2]$  & & \\
\hline 
\hline
&& $\mathrm{H}^{\bullet}(\Sigma^{0,0,-2})= 0$ & $\mathrm{H}^{\bullet}(\Sigma^{1,0,-2})= 0$ & $\mathrm{H}^{\bullet}(\Sigma^{2,0,-2})= 0$ \\
 \hline
&& $\mathrm{H}^{\bullet}(\Sigma^{0,0,-1})= 0$ & $\mathrm{H}^{\bullet}(\Sigma^{1,0,-1})=0$ & $\mathrm{H}^{\bullet}(\Sigma^{2,0,-1})= 0$  \\
  \hline
&& $\mathrm{H}^{\bullet}(\cO)=\Bbbk$ & $\mathrm{H}^{\bullet}(\cU^{\vee})= V^{\vee}$ & $\mathrm{H}^{\bullet}(S^2\cU^{\vee})= S^2V^{\vee} $\\
 \hline
&& & $\mathrm{H}^{\bullet}(\Sigma^{1,1,-2})=0$ & $\mathrm{H}^{\bullet}(\Sigma^{2,1,-2})=0$ \\
 \hline
 \hline
&& & \cellcolor{lightgray}$\mathrm{H}^{\bullet}(\Sigma^{1,1,-1})= \Bbbk$ & \cellcolor{lightgray}$\mathrm{H}^{\bullet}(\Sigma^{2,1,-1})= V^{\vee}$ \\
 \hline
 \hline
&&  & $\mathrm{H}^{\bullet}(\Lambda^2\cU^{\vee})= \Lambda^2V^{\vee}$ & $\mathrm{H}^{\bullet}(\Sigma^{2,1})= \Sigma^{2,1}V^{\vee}$ \\
 \hline
\end{tabular}
\end{center}
\end{proposition}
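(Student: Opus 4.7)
The plan is to reduce all cohomology computations on $\CG$ to computations on the ambient Grassmannian $\Gr(3,V)$ by means of the Koszul resolution \eqref{resolution}, and then to use the Borel--Bott--Weil theorem together with the Littlewood--Richardson rule. Concretely, for any vector bundle $F$ on $\Gr(3,V)$, the projection formula gives
\begin{equation*}
\mathrm{H}^{\bullet}(\CG,\varrho^{*}F) \simeq \mathrm{H}^{\bullet}(\Gr(3,V), F\otimes \varrho_{*}\cO_{\CG}),
\end{equation*}
and the right-hand side is computed by the hypercohomology spectral sequence
\begin{equation*}
E_{1}^{p,q} = \mathrm{H}^{q}(\Gr(3,V), F\otimes K^{p}) \;\Longrightarrow\; \mathrm{H}^{p+q}(\CG,\varrho^{*}F),
\end{equation*}
where $K^{\bullet}$ is the Koszul complex
$$K^{-4} = \cO(-3),\quad K^{-3} = \Lambda^{3}\cQ(-3),\quad K^{-2} = \Lambda^{2}\cQ(-2),\quad K^{-1} = \cQ(-1),\quad K^{0} = \cO$$
from \eqref{resolution}.

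For each bundle $F = \Sigma^{c_{1},c_{2},c_{3}}\cU^{\vee}$ in the table, the first step is to decompose each tensor product $F\otimes \Lambda^{j}\cQ\otimes \cO(-j)$ into irreducible $\mathrm{GL}(V)$-equivariant bundles on $\Gr(3,V)$ via the Littlewood--Richardson rule (Lemma~\ref{lemma:lrr}), using the identifications $\Lambda^{j}\cQ \simeq \Sigma^{1^{j},0^{4-j}}\cU^{\perp}$. The second step is to apply Borel--Bott--Weil (Theorem~\ref{theorem:bbw}) to each irreducible summand $\Sigma^{b}\cU^{\vee}\otimes \Sigma^{c}\cU^{\perp}$ on $\Gr(3,V)$: almost all such weights $(b,c)+\rho$ will have a repeated entry and thus contribute nothing, and the very few that are regular will produce explicit cohomology groups concentrated in a single degree. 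For the unshaded cells of the table the expected outcome is that every term $E_{1}^{p,q}$ is zero except those with $p=0$, so that the cohomology on $\CG$ equals the cohomology of $F$ on $\Gr(3,V)$; this is then read off directly from Borel--Bott--Weil.

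For the three shaded cells $\Sigma^{0,0,-3}\cU^{\vee}$, $\Sigma^{1,1,-1}\cU^{\vee}$, and $\Sigma^{2,1,-1}\cU^{\vee}$, the cohomology on $\Gr(3,V)$ vanishes, but nontrivial contributions appear from $K^{p}$ with $p<0$. In these cases one identifies which single summand of $F\otimes K^{p}$ is acyclic-except-for-one-degree by Borel--Bott--Weil, verifies that the horizontal $E_{1}$-differentials entering and leaving it must vanish (because neighboring terms are zero at that $q$-index), and reads off the abutment. For instance, for $F=\Sigma^{1,1,-1}\cU^{\vee} = \Lambda^{2}\cU^{\vee}(-1)$ one expects the class $\Bbbk$ in $\mathrm{H}^{0}(\CG)$ to come from the leftmost term $\cO(-3)$ of the Koszul complex via duality; and similarly for the other two shaded entries one predicts the listed contributions to arise from terms $K^{-3}$ or $K^{-4}$.

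The main obstacle is bookkeeping rather than ideas: the number of irreducible summands produced by Littlewood--Richardson in each row of the Koszul complex is substantial, and one must check Borel--Bott--Weil vanishing for every single one of them. The one conceptual subtlety is to verify for the shaded cells that the relevant $E_{1}$-differentials really vanish, so that no cancellation occurs in passing from $E_{1}$ to $E_{\infty}$; here $\rG_{2}$-equivariance is the natural tool, because each surviving summand sits in a distinct $\rG_{2}$-isotypic component. Once this is done, all entries of the table follow, including the computations $\mathrm{H}^{\bullet}(\cO)=\Bbbk$, $\mathrm{H}^{\bullet}(\cU^{\vee})=V^{\vee}$, $\mathrm{H}^{\bullet}(S^{2}\cU^{\vee})=S^{2}V^{\vee}$, $\mathrm{H}^{\bullet}(\Lambda^{2}\cU^{\vee})=\Lambda^{2}V^{\vee}$, and $\mathrm{H}^{\bullet}(\Sigma^{2,1}\cU^{\vee})=\Sigma^{2,1}V^{\vee}$, which are consistent with $\CG$ being cut out by a $\rG_{2}$-equivariant section inside $\Gr(3,V)$.
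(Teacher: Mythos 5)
Your overall strategy --- tensor the Koszul resolution \eqref{resolution} by $\Sigma^{c_1,c_2,c_3}\cU^{\vee}$, run the hypercohomology spectral sequence, and evaluate each $E_1$-term by Borel--Bott--Weil --- is exactly the paper's proof, and it works. Two corrections, though. First, the Littlewood--Richardson step is superfluous: each term $\Sigma^{c_1,c_2,c_3}\cU^{\vee}\otimes\Lambda^l\cQ(-l)$ is already an irreducible equivariant bundle, namely $\Sigma^{c_1-l,c_2-l,c_3-l}\cU^{\vee}\otimes\Sigma^{0,\dots,0,-1,\dots,-1}\cU^{\perp}$ (note also that $\Lambda^l\cQ\simeq\Sigma^{1^l,0^{4-l}}\cQ$, not $\Sigma^{1^l,0^{4-l}}\cU^{\perp}$ as you wrote), so Theorem~\ref{theorem:bbw} applies to it directly with the concatenated weight $(c_1-l,c_2-l,c_3-l,0^{4-l},(-1)^l)$; Lemma~\ref{lemma:lrr} is only needed elsewhere, for products of the form $\Sigma^{a}\cU\otimes\Sigma^{b}\cU^{\vee}$. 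Second, your prediction of where the shaded contributions come from is wrong: for $\Sigma^{0,0,-3}\cU^{\vee}$ the unique nonvanishing $E_1$-term is at $l=1$ (from $\cQ(-1)$, giving $\Bbbk[-3]$ on $\Gr(3,V)$, hence $\Bbbk[-2]$ on $\CG$), and for $\Sigma^{1,1,-1}\cU^{\vee}$ and $\Sigma^{2,1,-1}\cU^{\vee}$ it is at $l=2$ (from $\Lambda^2\cQ(-2)$), not at $l=3$ or $l=4$. In particular $\Lambda^2\cU^{\vee}(-1)\otimes\cO(-3)=\Sigma^{-2,-2,-4}\cU^{\vee}$ has $a+\rho=(5,4,1,4,3,2,1)$ with repeated entries, so the leftmost Koszul term contributes nothing. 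Since in each shaded case only a single column of $E_1$ is nonzero, all differentials vanish for trivial reasons and no $\rG_2$-equivariance argument is needed; with these adjustments your sketch reproduces the paper's computation.
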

\begin{proof}
Recall that the Cayley Grassmannian $\varrho\colon \CG\hookrightarrow \Gr(3,V)$ is the zero locus of a regular section~$\lambda\in \mathrm{H}^0(\Gr(3,V),\cU^{\perp}(1))$ and that on $\Gr(3,V)$ we have the Koszul resolution \eqref{resolution}. For all $\Sigma^{c_1,c_2,c_3}$ appearing in the table we need to compute $\mathrm{H}^{\bullet}(\CG,\Sigma^{c_1,c_2,c_3}\cU^{\vee})$. Note that we have $c_1\in [-2,3]$, $c_2 \in [-3,1]$, $c_3\in [-5,0]$ and $c_1\ge c_2 \ge c_3$.

Tensoring the resolution \eqref{resolution} by $\Sigma^{c_1,c_2,c_3}\cU^{\vee}$ we obtain a resolution of $\Sigma^{c_1,c_2,c_3}\cU^{\vee}\otimes \varrho_*\cO_{\CG}\simeq \varrho_*(\Sigma^{c_1,c_2,c_3}\cU^{\vee})$ on $\Gr(3,V)$. Hence to compute $\mathrm{H}^{\bullet}(\CG,\Sigma^{c_1,c_2,c_3}\cU^{\vee})$ we need to compute $\mathrm{H}^{\bullet}(\Gr(3,V),\Sigma^{c_1,c_2,c_3}\cU^{\vee}\otimes \Lambda^l \cQ(-l))$ for $l\in[0,4]$ and then apply the spectral sequence
\begin{equation} \label{spectral_sequence}
    \mathbb{E}_{1}^{-l,t}=\mathrm{H}^t(\Gr(3,V),\Sigma^{c_1,c_2,c_3}\cU^{\vee}\otimes \Lambda^l \cQ(-l))\Rightarrow  \mathrm{H}^{t-l}(\CG,\Sigma^{c_1,c_2,c_3}\cU^{\vee}).
\end{equation} 

Let us denote the weight associated to $\Sigma^{c_1,c_2,c_3}\cU^{\vee}\otimes \Lambda^l \cQ(-l)$ by $$(x_1,x_2,x_3,x_4,x_5,x_6,x_7)=(c_1-l,c_2-l,c_3-l,0,\underbrace{-1,\ldots,-1}_{l})+\rho,$$ where~$\rho=(7,6,5,4,3,2,1)$. Below we use Theorem~\ref{theorem:bbw} to compute cohomology on $\Gr(3,V)$.
\begin{itemize}
\item
If $l=0$ then $$(x_1,x_2,x_3,x_4,x_5,x_6,x_7)=(c_1+7,c_2+6,c_3+5,4,3,2,1)$$ 
\begin{itemize}
    \item if $c_3\in [-4,-1]$ then $x_3\in [1,4]$ and all cohomology groups vanish;
    \item if $c_3=0$ then~$\mathrm{H}^{\bullet}(\Gr(3,V),\Sigma^{c_1,c_2}\cU^{\vee})= \Sigma^{c_1,c_2}V^{\vee}$;
     \item for $\Sigma^{-1,-1,-5}$ we have $(x_1,x_2,x_3)=(6,5,0)$, so we obtain $$\mathrm{H}^{\bullet}(\Gr(3,V),\Sigma^{-1,-1,-5}\cU^{\vee})=\Bbbk[-4].$$
\end{itemize} 
\item
If $l=1$ then $$(x_1,x_2,x_3,x_4,x_5,x_6,x_7)=(c_1+6,c_2+5,c_3+4,4,3,2,0)$$ 
\begin{itemize}
    \item if $c_3\in \{-4,-2,-1,0\}$ then $x_3\in \{ 0,2,3,4\}$ and all cohomology groups vanish;
    \item  if $c_3=-3$ and $c_2\in [-3,-1]$ then $x_2\in [2,4]$ and all cohomology groups vanish;
    \item  for $\Sigma^{0,0,-3}$ we have $(x_1,x_2,x_3)=(6,5,1)$, so we obtain~$\mathrm{H}^{\bullet}(\Gr(3,V),\Sigma^{0,0,-3}\cU^{\vee})=\Bbbk[-3]$.
\end{itemize}
\item
If $l=2$ then $$(x_1,x_2,x_3,x_4,x_5,x_6,x_7)=(c_1+5,c_2+4,c_3+3,4,3,1,0)$$
\begin{itemize}
    \item if $c_3\in \{-3,-2, 0\}$ then $x_3\in \{0,1,3\}$ so all cohomology groups vanish;
    \item if $c_3=-4$:
    \begin{itemize}
        \item if $c_2 \in \{-3,-1\}$ then $x_2 \in \{1, 3\}$ and all cohomology groups vanish;
        \item if $c_2=-2$ then $c_1 \in [-2,-1]$ so that $x_1\in [3,4]$ and all cohomology groups vanish;
    \end{itemize}
    \item if $c_3=-1$ then $c_2\in [-1,1]$:
    \begin{itemize}
        \item if $c_2\in [-1,0]$ then $x_2\in [3,4]$ and all cohomology groups vanish;
        \item for $c_2=1$ we have $\mathrm{H}^{\bullet}(\Gr(3,V),\Sigma^{c_1,1,-1}\cU^{\vee})=\Lambda^{c_1-1}V[-2]$. 
    \end{itemize} 
\end{itemize}
\item
If $l=3$ then $$(x_1,x_2,x_3,x_4,x_5,x_6,x_7)=(c_1+4,c_2+3,c_3+2,4,2,1,0)$$
\begin{itemize}
    \item if $c_3\in [-2,0]$ then $x_3\in [0,2]$ so all cohomology groups vanish;
    \item if $c_3=-3$:
    \begin{itemize}
        \item if $c_2\in [-3,-1]$ then $x_2\in [0,2]$ so all cohomology groups vanish;
        \item if $c_2=0$ then $c_1=0$ so that $x_1=4$ and all cohomology groups vanish;
    \end{itemize}
    \item if $c_3=-4$ then $c_2\in [-3,-1]$ so that $x_2\in [0,2]$ and all cohomology groups vanish.
    \end{itemize} 
\item
If $l=4$ then $$(x_1,x_2,x_3,x_4,x_5,x_6,x_7)=(c_1+4,c_2+3,c_3+2,4,3,2,1)$$
\begin{itemize}
    \item if $c_2\in [-2,1]$ then $x_3\in [1,4]$ so all cohomology groups vanish;
    \item if $c_2=-3$ then $c_1\in [-2,0]$ so that $x_1\in [2,4]$ and all cohomology groups vanish.
\end{itemize} 
\end{itemize}
Using the spectral sequence \eqref{spectral_sequence} we obtain the statement of the lemma from the above computations.  
\end{proof}
\subsection{Exceptionality} Now we use these computations to prove exceptionality. 
We consider the partial ordering on dominant weights of $\mathrm{GL}_3$ defined by:
\begin{align*}
(c_1,c_2,c_3) &\preceq (b_1,b_2,b_3)  &&
\text{if $c_1 \le b_1$ and $c_2 \le b_2$ and $c_3 \le b_3$},
\\
(c_1,c_2,c_3) &\prec (b_1,b_2,b_3)  &&
\text{if $c \le b$ and $c \ne b$}.
\end{align*}

Recall notation \eqref{mathfrak_E}.
\begin{corollary} \label{exceptionality1}
The collection $\mathfrak{E},\mathfrak{E}(1),\mathfrak{E}(2),\mathfrak{E}(3)$ is exceptional.
\end{corollary}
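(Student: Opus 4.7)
The plan is to reduce Corollary~\ref{exceptionality1} to cohomology computations handled by Proposition~\ref{main_computations}. For objects $E_\alpha = \Sigma^{b}\cU^\vee(m)$ and $E_\beta = \Sigma^{b'}\cU^\vee(n)$ of the collection --- so $b,b' \in \{(0,0,0),(1,0,0),(1,1,0)\}$ and $m,n \in \{0,1,2,3\}$ --- we have
\begin{equation*}
\mathrm{Ext}^\bullet(E_\beta,E_\alpha) \;=\; \mathrm{H}^\bullet\bigl(\CG,\,\Sigma^{b'}\cU \otimes \Sigma^b\cU^\vee \otimes \cO(m-n)\bigr).
\end{equation*}
It suffices to show that this equals $\Bbbk$ when $\beta=\alpha$ and vanishes whenever $E_\beta$ comes strictly after $E_\alpha$ in the collection order.

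The first step is to decompose each tensor product $\Sigma^{b'}\cU \otimes \Sigma^b\cU^\vee$ into irreducibles $\Sigma^c\cU^\vee$ via the Littlewood--Richardson rule (Lemma~\ref{lemma:lrr}). Since $|b|,|b'|\le 2$, the resulting weights satisfy $-2 \le c_3 \le c_1 \le 2$ with $c_1+c_2+c_3 = (b_1+b_2+b_3) - (b'_1+b'_2+b'_3)$. After the twist by $\cO(m-n)$ with $m-n \in [-3,0]$, every shifted weight $c+(m-n,m-n,m-n)$ lies within the range $c_1 \in [-2,3]$, $c_2 \in [-3,1]$, $c_3 \in [-5,0]$ tabulated in Proposition~\ref{main_computations}, with the sole exceptions being pure line bundles $\cO(-k)$ for $k = 2,3$, which are handled by Kodaira vanishing on the index-$4$ Fano eightfold $\CG$.

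The second step is the case-by-case verification. The diagonal check relies on the decompositions $\cU^\vee \otimes \cU \simeq \cO \oplus \Sigma^{1,0,-1}\cU^\vee$ and $\Lambda^2\cU^\vee \otimes \Lambda^2\cU \simeq \cO \oplus \Sigma^{1,0,-1}\cU^\vee$, together with $\mathrm{H}^\bullet(\Sigma^{1,0,-1}\cU^\vee) = 0$ from the table, yielding $\mathrm{Ext}^\bullet(E_\alpha,E_\alpha) = \Bbbk$ for every $\alpha$. For each backward pair $\beta > \alpha$, every Littlewood--Richardson summand has vanishing cohomology by direct inspection of the table. The main obstacle --- which is purely combinatorial --- is to verify that the three \emph{shaded} entries $\Sigma^{0,0,-3}\cU^\vee$, $\Sigma^{1,1,-1}\cU^\vee$, $\Sigma^{2,1,-1}\cU^\vee$ of the table, which carry nonzero cohomology on $\CG$, never arise as backward summands: the latter two have positive total weight whereas every backward summand has total weight $\le -1$, while $\Sigma^{0,0,-3}\cU^\vee$ would require $n-m=1$ with $|b|_1=|b'|_1$ (hence $b=b'$), and a direct check of $\cO \otimes \cO(-1)$ and $(\cO\oplus\Sigma^{1,0,-1}\cU^\vee)\otimes\cO(-1)$ shows $\Sigma^{0,0,-3}\cU^\vee$ never appears. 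Once this sanity check is in place, exceptionality follows by routine table lookup.
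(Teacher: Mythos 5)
Your overall strategy is the same as the paper's: rewrite each backward $\mathrm{Ext}$ as $\mathrm{H}^\bullet(\CG,\Sigma^{b'}\cU\otimes\Sigma^{b}\cU^\vee(m-n))$, decompose by Littlewood--Richardson, and look the summands up in Proposition~\ref{main_computations}. For the twists $n-m=0,1,2$ this works exactly as you describe (modulo the line bundles $\cO(-2)$, which you correctly dispose of), and your analysis of the shaded entries is fine. However, there is a genuine gap at the twist $n-m=3$: your claim that, apart from $\cO(-2)$ and $\cO(-3)$, every shifted weight lands in the tabulated range is false. By Lemma~\ref{lemma:lrr} the untwisted summands satisfy $c_1\in[0,1]$, $c_2\in[-1,1]$, $c_3\in[-1,0]$, so after subtracting $(3,3,3)$ one gets $c_1-3$ as low as $-3$ and $c_2-3$ as low as $-4$, both outside the ranges $c_1\in[-2,3]$, $c_2\in[-3,1]$ of the table. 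Concretely, $\mathrm{Ext}^\bullet(\cU^\vee(3),\cO)$ requires $\mathrm{H}^\bullet(\Sigma^{-3,-3,-4}\cU^\vee)$, $\mathrm{Ext}^\bullet(\Lambda^2\cU^\vee(3),\cO)$ requires $\mathrm{H}^\bullet(\Sigma^{-3,-4,-4}\cU^\vee)$, $\mathrm{Ext}^\bullet(\Lambda^2\cU^\vee(3),\cU^\vee)$ requires $\mathrm{H}^\bullet(\Sigma^{-2,-4,-4}\cU^\vee)$ and $\mathrm{H}^\bullet(\Sigma^{-3,-3,-4}\cU^\vee)$, and the pairs with $b=b'$ produce $\Sigma^{-2,-3,-4}\cU^\vee$, which lies inside your stated ranges but is simply not an entry of the table (the table is a finite list, not an exhaustive tabulation of a box of weights). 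None of these cohomology groups is available to you, so the ``routine table lookup'' cannot be completed for the fourth block.

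The fix is what the paper does: since $\omega_{\CG}\simeq\cO(-4)$, Serre duality gives $\mathrm{Ext}^\bullet(\Sigma^{a}\cU^\vee(3),\Sigma^{b}\cU^\vee)\simeq\mathrm{Ext}^{8-\bullet}(\Sigma^{b}\cU^\vee,\Sigma^{a}\cU^\vee(-1))^\vee$, which converts every twist-by-$3$ pair into a twist-by-$1$ pair (with source and target exchanged), and those you have already verified. You should replace your claim about the $n-m=3$ case with this duality argument, or alternatively extend Proposition~\ref{main_computations} to cover the four extra weights listed above.
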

\begin{proof}
We need to check that
$$\mathrm{Ext}^{\bullet}(\Sigma^{a}\cU^{\vee}(k),\Sigma^{b}\cU^{\vee})=
\mathrm{H}^{\bullet}(\CG,\Sigma^{a}\cU\otimes \Sigma^{b}\cU^{\vee}(-k))=0,$$
where $(0,0) \preceq b, a \preceq(1,1)$ and $0\le k\le 3$.
Using the Littlewood–Richardson rule we decompose
\begin{equation*}
\Sigma^{a}\cU\otimes \Sigma^{b}\cU^{\vee}(-k)= \bigoplus \Sigma^{c_1,c_2,c_3}\cU^{\vee}(-k),
\end{equation*}
where by Lemma~\ref{lemma:lrr} we have $c_1\in [0,1]$, $c_2 \in [-1,1]$, $c_3\in [-1,0]$. Hence we need to compute~$\mathrm{H}^{\bullet}(\CG,\Sigma^{c_1,c_2,c_3}\cU^{\vee}(-k))\simeq \mathrm{H}^{\bullet}(\CG,\Sigma^{c_1-k,c_2-k,c_3-k}\cU^{\vee})$. 

For $k= 1,2$ the vanishing~$\mathrm{H}^{\bullet}(\CG,\Sigma^{c_1-k,c_2-k,c_3-k}\cU^{\vee})=0$ follows from Proposition \ref{main_computations}. The case~$k=3$ follows from Serre duality.

For $k=0$ we additionally have $c_1+c_2+c_3\le 0$ so $\mathrm{H}^{\bullet}(\CG,\Sigma^{c_1,c_2,c_3}\cU^{\vee})=0$ again by Proposition~\ref{main_computations}. 
\end{proof}
\begin{corollary} \label{exceptionality2}
The collection $\mathfrak{E},\Sigma^{2,1}\cU^{\vee},\mathfrak{E}(1),\mathfrak{E}(2),\mathfrak{E}(3)$ is exceptional.
\end{corollary}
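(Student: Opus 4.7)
Since Corollary~\ref{exceptionality1} already gives exceptionality of the collection $\mathfrak{E},\mathfrak{E}(1),\mathfrak{E}(2),\mathfrak{E}(3)$, the only new verifications needed are the ones involving the extra object $\Sigma^{2,1}\cU^{\vee}$ inserted between $\mathfrak{E}$ and $\mathfrak{E}(1)$. Concretely, I would need to check
\begin{enumerate}
    \item $\mathrm{Ext}^{\bullet}(\Sigma^{2,1}\cU^{\vee},\Sigma^{2,1}\cU^{\vee})=\Bbbk$;
    \item $\mathrm{Ext}^{\bullet}(\Sigma^{2,1}\cU^{\vee},E)=0$ for every $E\in\mathfrak{E}$;
    \item $\mathrm{Ext}^{\bullet}(E(k),\Sigma^{2,1}\cU^{\vee})=0$ for every $E\in\mathfrak{E}$ and $k=1,2,3$.
\end{enumerate}

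Each such Ext group has the form $\mathrm{H}^{\bullet}(\CG, \Sigma^{2,1}\cU\otimes E(-k))$ or $\mathrm{H}^{\bullet}(\CG, (\Sigma^{2,1}\cU^{\vee})\otimes E^{\vee}(k))$. The plan is to decompose these tensor products into irreducible $\mathrm{GL}(\cU)$-equivariant summands $\Sigma^{c_1,c_2,c_3}\cU^{\vee}$ by the Pieri/Littlewood--Richardson rule of Lemma~\ref{lemma:lrr}, and then read off the cohomology of each summand from Proposition~\ref{main_computations}. For instance, $\Sigma^{2,1}\cU\otimes\cU^{\vee}$ decomposes as $\Sigma^{0,0,-2}\cU^{\vee}\oplus\Sigma^{0,-1,-1}\cU^{\vee}\oplus\Sigma^{1,-1,-2}\cU^{\vee}$, and all three of these bundles appear with vanishing cohomology in the table; $\cU\otimes \Sigma^{2,1}\cU^{\vee}=\Sigma^{2,1,-1}\cU^{\vee}\oplus S^2\cU^{\vee}\oplus\Lambda^2\cU^{\vee}$, so after twisting by $\cO(-k)$ the summands land on entries that are also in the table. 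For the self-Ext one, $\Sigma^{2,1}\cU^{\vee}\otimes\Sigma^{2,1}\cU$ contains $\cO$ with multiplicity one (by Schur's lemma applied to the irreducibility of $\Sigma^{2,1}\cU^{\vee}$) together with $\Sigma^{1,0,-1}$, $\Sigma^{1,1,-2}$, $\Sigma^{2,-1,-1}$, $\Sigma^{2,0,-2}$, and each of these last four has vanishing cohomology by Proposition~\ref{main_computations}, giving $\mathrm{Ext}^{\bullet}(\Sigma^{2,1}\cU^{\vee},\Sigma^{2,1}\cU^{\vee})=\mathrm{H}^{\bullet}(\CG,\cO)=\Bbbk$.

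The only technical nuisance is that a few weights that arise, such as $\Sigma^{-1,-3,-4}\cU^{\vee}$ from $\mathrm{Ext}^{\bullet}(\Lambda^2\cU^{\vee}(3),\Sigma^{2,1}\cU^{\vee})$, are not literally listed in the table of Proposition~\ref{main_computations}. For these I would invoke Serre duality, using $\omega_{\CG}\simeq\cO(-4)$ (as $\CG$ is Fano of index~$4$), to rewrite the cohomology of $\Sigma^{c_1,c_2,c_3}\cU^{\vee}$ as the dual of the cohomology of $\Sigma^{-c_3-4,-c_2-4,-c_1-4}\cU^{\vee}$, which will land in the table. For instance $\mathrm{H}^{\bullet}(\Sigma^{-1,-3,-4}\cU^{\vee})$ is dual to $\mathrm{H}^{8-\bullet}(\Sigma^{0,-1,-3}\cU^{\vee})$, which vanishes by the table.

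There is no real obstacle here: the statement is entirely computational and Proposition~\ref{main_computations} has been set up so that every weight occurring in a Littlewood--Richardson decomposition of the relevant tensor products is covered (directly or via Serre duality). The only care needed is to enumerate the decompositions without missing a summand; the bookkeeping is streamlined by recording summands as triples $(c_1,c_2,c_3)$ subject to $\sum c_i=\sum b_i-\sum a_i$ and $-a_{4-i}\le c_i\le b_i$ from Lemma~\ref{lemma:lrr}, and matching each triple (or its Serre dual) to a cell of the table.
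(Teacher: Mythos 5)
Your proposal is correct, and on two of the three verifications it coincides with the paper: the self\nobreakdash-Ext is computed via exactly the decomposition \eqref{computation:21}, and $\mathrm{Ext}^{\bullet}(\mathfrak{E}(1),\Sigma^{2,1}\cU^{\vee})$ is likewise handled by a direct Littlewood--Richardson check against Proposition~\ref{main_computations}. Where you genuinely diverge is in the remaining semiorthogonalities. For $\mathrm{Ext}^{\bullet}(\Sigma^{2,1}\cU^{\vee},\mathfrak{E})$ the paper does not decompose tensor products at all: it uses $\Sigma^{2,1}\cU\simeq\Sigma^{2,1}\cU^{\vee}(-2)$ to rewrite these groups as $\mathrm{Ext}^{\bullet}(\mathfrak{E}^{\vee}(2),\Sigma^{2,1}\cU^{\vee})$ and notes that $\mathfrak{E}^{\vee}(2)$ consists of $\cO(2)$ and objects of $\mathfrak{E}(1)$, reducing to cases already done. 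For $\mathrm{Ext}^{\bullet}(\mathfrak{E}(k),\Sigma^{2,1}\cU^{\vee})$ with $k=2,3$ the paper invokes Proposition~\ref{proposition:21}, i.e.\ $\Sigma^{2,1}\cU^{\vee}\in\langle\Sigma^{2,1}\cU^{\vee}(-1),\mathfrak{E},\cO(1)\rangle$, together with Serre duality to bootstrap everything down to the $k=1$ case; note that Proposition~\ref{proposition:21} rests on the self-dual bundle $\cE_{16}$ and hence on the quadric-bundle machinery of Sections~\ref{section:quadric_constructions}--\ref{section:selfdualities}. Your route replaces all of this by brute-force decomposition, and it does go through: every weight your decompositions produce appears in the table of Proposition~\ref{main_computations} except the single weight $\Sigma^{-1,-3,-4}\cU^{\vee}$ (from $\Lambda^2\cU\otimes\Sigma^{2,1}\cU^{\vee}(-3)$), which is exactly the one you flag, and your Serre-duality reduction of it to $\Sigma^{0,-1,-3}\cU^{\vee}$ is correct since $\omega_{\CG}\simeq\cO(-4)$ and $\dim\CG=8$. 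The trade-off is clear: your argument is logically self-contained, depending only on the cohomology table, at the cost of more enumeration; the paper's argument minimizes computation but makes the exceptionality proof lean on the geometric constructions that are otherwise only needed for fullness.
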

\begin{proof}
By Corollary \ref{exceptionality1} to prove the statement it is enough to compute for $1\le k\le 3$
\begin{align} \label{exceptionality21}
& \mathrm{Ext}^{\bullet}(\Sigma^{2,1}\cU^{\vee},\Sigma^{2,1}\cU^{\vee})= \Bbbk, \\ \label{21E}
& \mathrm{Ext}^{\bullet}(\Sigma^{2,1}\cU^{\vee},\mathfrak{E})= 0, \\ \label{E21}
& \mathrm{Ext}^{\bullet}(\mathfrak{E}(k),\Sigma^{2,1}\cU^{\vee})=0.
\end{align}
Let us first prove \eqref{exceptionality21}. Using the Littlewood–Richardson rule we decompose 
\begin{equation} \label{computation:21}
\Sigma^{2,1}\cU\otimes \Sigma^{2,1}\cU^{\vee}= \cO \oplus (\Sigma^{1,0,-1}\cU^{\vee})^{\oplus 2} \oplus \Sigma^{2,-1,-1}\cU^{\vee} \oplus \Sigma^{2,0,-2}\cU^{\vee}\oplus \Sigma^{1,1,-2}\cU^{\vee},
\end{equation}
so the statement follows from Proposition \ref{main_computations}.

Let us now prove \eqref{E21} for $k=1$. We need to compute $$\mathrm{Ext}^{\bullet}(\Sigma^{a}\cU^{\vee}(1),\Sigma^{2,1}\cU^{\vee})=\mathrm{H}^{\bullet}(\CG,\Sigma^{a}\cU\otimes \Sigma^{2,1}\cU^{\vee}(-1))$$
for $(0,0) \preceq a \preceq(1,1)$. Using the Littlewood–Richardson rule we decompose
\begin{equation*}
\Sigma^{a}\cU\otimes \Sigma^{2,1}\cU^{\vee}(-1)= \bigoplus \Sigma^{c_1,c_2,c_3}\cU^{\vee}(-1),
\end{equation*}
where by Lemma~\ref{lemma:lrr} we have $1\le c_1+c_2+c_3\le 3$ and $c_1\in [1,2],$ $c_2 \in [0,1],$ $c_3\in [-1,0]$. So by Proposition~\ref{main_computations} all cohomology groups vanish in these cases.

Let us prove \eqref{21E}. Denote by $\mathfrak{E}^{\vee}= (\Lambda^2\cU,\cU,\cO)$. Using \eqref{preliminaries_isomorphisms} we obtain $\mathfrak{E}^{\vee}(2)\in (\mathfrak{E}(1), \cO(2))$. Thus, using the isomorphism 
$$\mathrm{Ext}^{\bullet}(\Sigma^{2,1}\cU^{\vee},\cE)\simeq \mathrm{Ext}^{\bullet}(\cE^{\vee}, \Sigma^{2,1}\cU^{\vee}(-2))=\mathrm{Ext}^{\bullet}(\cE^{\vee}(2), \Sigma^{2,1}\cU^{\vee}),$$
where $\cE\in \mathfrak{E}$, we reduce the vanishing \eqref{21E} to the vanishing~$\mathrm{Ext}^{\bullet}(\cO(2), \Sigma^{2,1}\cU^{\vee})= 0$ and the vanishing~\eqref{E21} for $k=1$. The first follows from Proposition~\ref{main_computations} and the second one was already shown.

It remains to prove \eqref{E21} for $k=2,3$.
By Proposition \ref{proposition:21} and Corollary \ref{exceptionality1} to prove~$\mathrm{Ext}^{\bullet}(\mathfrak{E}(k),\Sigma^{2,1}\cU^{\vee})=0$ it is enough to show the vanishing $\mathrm{Ext}^{\bullet}(\mathfrak{E}(k),\Sigma^{2,1}\cU^{\vee}(-1))=0$. By Serre duality we have
$$\mathrm{Ext}^{\bullet}(\mathfrak{E}(k),\Sigma^{2,1}\cU^{\vee}(-1))= \mathrm{Ext}^{\bullet}(\Sigma^{2,1}\cU^{\vee}(3-k), \mathfrak{E}[8]).$$
Thus, the case $k=3$ immediately follows from the vanishing \eqref{21E} that was already shown. When $k=2$ we again apply Proposition \ref{proposition:21} and Corollary \ref{exceptionality1} and reduce the vanishing $\mathrm{Ext}^{\bullet}(\Sigma^{2,1}\cU^{\vee}(1), \mathfrak{E})=0$ to the vanishing~\eqref{21E} that was already shown. So we get the statement. 
\end{proof}
Now we consider $\Lambda^2\cQ$. We will see that it does not fit to the exceptional collection, but its right mutation $\mathbb{R}_{\Lambda^2\cU^{\vee}}\Lambda^2\cQ$ does, so the below computations will be useful.
\begin{lemma} \label{Q}
The vector bundle $\Lambda^2\cQ$ is exceptional.
\end{lemma}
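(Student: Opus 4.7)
The plan is to compute $\mathrm{Ext}^{\bullet}(\Lambda^2\cQ,\Lambda^2\cQ) = \mathrm{H}^{\bullet}(\CG,\Lambda^2\cQ\otimes\Lambda^2\cQ^{\vee})$ directly and show it equals $\Bbbk$ concentrated in degree zero. First I would decompose the endomorphism bundle using the representation theory of $\mathrm{GL}(\cU^{\perp})$ (recall $\cU^{\perp}$ has rank $4$):
\[
\Lambda^2\cQ\otimes\Lambda^2\cQ^{\vee} \simeq \cO \oplus \Sigma^{1,0,0,-1}\cU^{\perp} \oplus \Sigma^{1,1,-1,-1}\cU^{\perp}.
\]
Since $\mathrm{H}^{\bullet}(\CG,\cO)=\Bbbk$, the exceptionality of $\Lambda^2\cQ$ is equivalent to the vanishing of $\mathrm{H}^{\bullet}(\CG,\cE)$ for each of $\cE=\Sigma^{1,0,0,-1}\cU^{\perp}$ and $\cE=\Sigma^{1,1,-1,-1}\cU^{\perp}$.

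To establish these vanishings, for each such $\cE$ I would tensor the Koszul resolution \eqref{resolution} of $\varrho_*\cO_{\CG}$ on $\Gr(3,V)$ by $\cE$, obtaining a resolution whose terms are $\cE\otimes\Lambda^l\cQ(-l)$ for $l=0,1,2,3,4$. The associated spectral sequence, of the form \eqref{spectral_sequence}, reduces the problem to computing $\mathrm{H}^{\bullet}(\Gr(3,V),\cE\otimes\Lambda^l\cQ(-l))$. Each such tensor product decomposes, via Pieri's rule, into a direct sum of irreducible equivariant bundles $\Sigma^{b}\cU^{\perp}$, and I would apply Borel--Weil--Bott (Theorem \ref{theorem:bbw}) to each summand. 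The expectation is that every weight $(0,0,0,b_1,b_2,b_3,b_4)+\rho$ arising in these decompositions contains a repeated entry (in practice, one of the values $5$, $6$, or $7$ appears twice), so all cohomology groups vanish and the spectral sequence collapses to give $\mathrm{H}^{\bullet}(\CG,\cE)=0$.

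The main obstacle is the combinatorial bookkeeping: for $l=1,2,3$ the Pieri expansion of $\cE\otimes\Lambda^l\cQ(-l)$ produces several summands $\Sigma^{b}\cU^{\perp}$, and the weight-repetition condition of Borel--Weil--Bott must be verified for each one. This is a finite, systematic, but somewhat tedious check. Once it is carried out, combining the contributions from all five terms of the Koszul resolution gives $\mathrm{H}^{\bullet}(\CG,\mathrm{End}(\Lambda^2\cQ))\simeq\Bbbk$ in degree zero, establishing that $\Lambda^2\cQ$ is exceptional.
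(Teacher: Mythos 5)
Your proposal is correct, and I checked its key claims: the $\mathrm{GL}(\cQ)$-decomposition $\Lambda^2\cQ\otimes\Lambda^2\cQ^{\vee}\simeq\cO\oplus\Sigma^{1,0,0,-1}\cQ\oplus\Sigma^{1,1,-1,-1}\cQ$ is right, and for both nontrivial summands every irreducible piece of the five Koszul terms does acquire a repeated entry after adding $\rho$, so both summands are acyclic on $\CG$ and $\mathrm{Ext}^{\bullet}(\Lambda^2\cQ,\Lambda^2\cQ)=\Bbbk$. Your route, however, uses a genuinely different decomposition from the paper's. The paper never passes to $\mathrm{GL}_4$-Schur functors of $\cQ$: it presents one copy of $\Lambda^2\cQ$ as the cokernel in the four-term complex \eqref{Koszul} (with $n=0$) and the other as the kernel in \eqref{Koszul_dual} (with $n=1$), so that $\mathrm{Ext}^{\bullet}(\Lambda^2\cQ,\Lambda^2\cQ)$ is computed from $\mathrm{Ext}$'s between Schur functors of $\cU^{\vee}$ that are already tabulated in Proposition \ref{main_computations}; the cost is a double complex, the gain is that no new Borel--Bott--Weil computation is needed. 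Your version runs a single spectral sequence per irreducible summand of $\mathrm{End}(\Lambda^2\cQ)$, which is more direct and self-contained, but requires a fresh batch of $\mathrm{GL}_4$-weight checks lying outside the paper's table. Two small imprecisions, neither of which affects the argument: for the $l$-th Koszul term the relevant weight is $(-l,-l,-l,b_1,b_2,b_3,b_4)+\rho$ rather than $(0,0,0,b_1,b_2,b_3,b_4)+\rho$; and the repeated entry is not always one of $5,6,7$ --- for instance the summand $\Sigma^{0,0,-1,-1}\cU^{\perp}$ of $\Sigma^{1,0,0,-1}\cU^{\perp}\otimes\Lambda^2\cQ$, twisted by $\cO(-2)$, gives $(5,4,3,4,3,1,0)$, where $4$ and $3$ repeat.
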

\begin{proof}
We need to compute
$\mathrm{Ext}^{\bullet}(\Lambda^2\cQ, \Lambda^2\cQ)=
\mathrm{H}^{\bullet}(\CG,\Lambda^2\cU^{\perp}\otimes \Lambda^2\cQ)$.
For this we compute $\mathrm{Ext}$ from objects of the exact sequence \eqref{Koszul_dual} for $n=1$ to objects of the exact sequence \eqref{Koszul} for $n=0$.
Using Proposition \ref{main_computations} we obtain that $\mathrm{Ext}^{\bullet}(\Lambda^2\cQ, \Lambda^2\cQ)=
\mathrm{H}^{\bullet}(\CG,\Lambda^2\cU^{\perp}\otimes \Lambda^2\cQ)=\Bbbk$, so we deduce the statement. 
\end{proof}
\begin{lemma} \label{lemma_right_ort}
We have $\Lambda^2\cQ \in \langle \Sigma^{2,1}\cU^{\vee},\mathfrak{E}(1),\mathfrak{E}(2),\mathfrak{E}(3) \rangle^{\perp}$.
\end{lemma}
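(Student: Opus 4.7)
We need to verify the vanishings
\begin{gather*}
\mathrm{Ext}^{\bullet}(\Sigma^{2,1}\cU^{\vee},\Lambda^2\cQ)=\mathrm{H}^{\bullet}(\CG,\Sigma^{2,1}\cU\otimes \Lambda^2\cQ)=0,\\
\mathrm{Ext}^{\bullet}(\Sigma^{b}\cU^{\vee}(i),\Lambda^2\cQ)=\mathrm{H}^{\bullet}(\CG,\Sigma^{b}\cU\otimes \Lambda^2\cQ(-i))=0
\end{gather*}
for all dominant $b$ with $(0,0)\preceq b\preceq (1,1)$ and all $i=1,2,3$. The plan is to combine the two four-term resolutions \eqref{Koszul} and \eqref{Koszul_dual}, which express $\Lambda^2\cQ(n)$ and $\Lambda^2\cQ(n-1)$ in terms of $S^2\cU^{\vee}(n)$, $\cU^{\vee}(n)$, $\Lambda^2\cU^{\vee}(n-1)\simeq \cU(n)$, $S^2\cU(n)$ and trivial bundles, with the computations in Proposition~\ref{main_computations}.

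First I would tensor the required twist of one of the sequences \eqref{Koszul}, \eqref{Koszul_dual} with the relevant bundle $\Sigma^{b}\cU$ or $\Sigma^{2,1}\cU$, use the Littlewood--Richardson rule to decompose each term as a direct sum of bundles $\Sigma^{c_1,c_2,c_3}\cU^{\vee}(t)\simeq \Sigma^{c_1+t,c_2+t,c_3+t}\cU^{\vee}$ with $c_i$ constrained by Lemma~\ref{lemma:lrr}, and then read off the vanishing of each summand from Proposition~\ref{main_computations}. For the easiest cases, $i=1,2,3$ with $b=(0,0)$: choosing \eqref{Koszul_dual} with $n=1-i,2-i$ reduces $\mathrm{H}^{\bullet}(\CG,\Lambda^2\cQ(-i))$ to the vanishing of~$\mathrm{H}^{\bullet}$ of~$\cO(1-i)$, $\cU^{\vee}(1-i)$, and $S^2\cU^{\vee}(1-i)$, all of which appear (as twists of $\Sigma^{c_1,c_2,c_3}\cU^{\vee}$ with $c_1,c_2,c_3$ in the range of the table) among the zero entries of Proposition~\ref{main_computations}. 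For $b=(1,0)$ and $b=(1,1)$, tensoring \eqref{Koszul_dual} by $\cU$ or $\Lambda^2\cU$ and decomposing gives summands of the form $\Sigma^{c_1,c_2,c_3}\cU^{\vee}(t)$ where each triple $(c_1{+}t,c_2{+}t,c_3{+}t)$ falls into one of the rows of the table; for $i=3$ I would also invoke Serre duality $\omega_{\CG}\simeq \cO(-4)$ to reduce to the same range.

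The only potentially nontrivial case is $\mathrm{H}^{\bullet}(\CG,\Sigma^{2,1}\cU\otimes \Lambda^2\cQ)$. Here I would tensor \eqref{Koszul_dual} for $n=1$ with $\Sigma^{2,1}\cU=\Sigma^{0,-1,-2}\cU^{\vee}$, giving a four-term complex whose terms are $\Sigma^{2,1}\cU$, $\Sigma^{2,1}\cU\otimes \cO(1)$, $V^{\vee}\otimes \Sigma^{2,1}\cU\otimes \cU^{\vee}(1)$, $\Sigma^{2,1}\cU\otimes S^2\cU^{\vee}(1)$ (after identifying $\Lambda^2\cQ\simeq \Lambda^2\cU^{\perp}(1)$). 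Decomposing each tensor product with Littlewood--Richardson produces summands $\Sigma^{c_1,c_2,c_3}\cU^{\vee}$ with each~$c_i$ bounded as in Lemma~\ref{lemma:lrr}; all of the resulting summands are of types that appear with zero cohomology in Proposition~\ref{main_computations} (or by Serre duality reduce to such).

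The main obstacle will be bookkeeping: there are many Littlewood--Richardson summands, and one has to check carefully that each triple $(c_1,c_2,c_3)$ that appears lies in the range covered by the table and has vanishing cohomology there. Once this verification is completed term by term via the two hypercohomology spectral sequences associated to \eqref{Koszul}, \eqref{Koszul_dual}, all the required $\mathrm{Ext}$-groups vanish and the statement follows.
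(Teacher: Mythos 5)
Your overall toolkit (Koszul resolutions of $\Lambda^2\cQ$, Littlewood--Richardson, the table in Proposition~\ref{main_computations}, Serre duality, hypercohomology spectral sequences) is the right one, but the specific choice of resolution makes your key step fail: the termwise vanishing you assert is simply false in several of the required cases. Already for $b=(0,0)$ and $i=1$, the sequence \eqref{Koszul_dual} with $n=0$ resolves $\Lambda^2\cQ(-1)$ by $\Lambda^2V^{\vee}\otimes\cO$, $V^{\vee}\otimes\cU^{\vee}$ and $S^2\cU^{\vee}$, whose cohomology groups are $\Lambda^2V^{\vee}$, $V^{\vee}\otimes V^{\vee}$ and $S^2V^{\vee}$ --- all nonzero --- so nothing here ``appears among the zero entries'' of the table; you would instead have to prove that the induced three-term complex of global sections is exact. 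The situation is worse in the case you single out as the only nontrivial one: tensoring \eqref{Koszul_dual} for $n=1$ with $\Sigma^{2,1}\cU\simeq\Sigma^{0,-1,-2}\cU^{\vee}$ produces, among the Littlewood--Richardson summands of the second and third terms, the bundles $\cU^{\vee}$, $\Sigma^{1,1,-1}\cU^{\vee}$, $\Lambda^2\cU^{\vee}$, $S^2\cU^{\vee}$ and $\Sigma^{2,1,-1}\cU^{\vee}$, every one of which has \emph{nonzero} cohomology by Proposition~\ref{main_computations}, as well as $\Sigma^{3,0,-1}\cU^{\vee}$, which the table does not cover at all. The term-by-term check you describe therefore cannot be completed with this resolution, and you supply no mechanism (analysis of the $d_1$ differentials) for cancelling the surviving groups.

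The repair is to resolve $\Lambda^2\cQ$ from the other side, i.e.\ to use \eqref{Koszul} with $n=0$, which presents $\Lambda^2\cQ$ as the cokernel of a complex with terms $S^2\cU$, $V\otimes\cU$ and $\Lambda^2V^{\vee}\otimes\cO$. After tensoring with $\Sigma^{b}\cU(-i)$ (for $i\ge 1$) or with $\Sigma^{2,1}\cU$, every group that appears has the form $\mathrm{H}^{\bullet}(\CG,\Sigma^{b}\cU(-i)\otimes S^l\cU)$ or $\mathrm{H}^{\bullet}(\CG,\Sigma^{2,1}\cU\otimes S^l\cU)$ with $l\in[0,2]$; these are $\mathrm{Ext}$-groups between twists of members of the collection and are killed by the exceptionality statements already proved (Corollaries~\ref{exceptionality1} and~\ref{exceptionality2}) together with Serre duality and a few direct table look-ups. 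This is exactly what the paper does. Note that even this route is not entirely termwise: for $b=(1,1)$ and $i=3$ the $l=1$ and $l=2$ terms contribute $V$ and $V^{\vee}$ in the same total degree, and one must check that the differential between them is an isomorphism --- a cancellation you would have to perform in many more places if you insist on \eqref{Koszul_dual}.
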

\begin{proof}
We need to show that 
\begin{equation*}
    \mathrm{Ext}^{\bullet}(\Sigma^{2,1}\cU^{\vee},\Lambda^2\cQ)= \mathrm{H}^{\bullet}(\CG, \Sigma^{2,1}\cU\otimes \Lambda^2\cQ)=0 \quad \mbox{and} \quad  \mathrm{Ext}^{\bullet}(\Sigma^{a}\cU^{\vee}(k),\Lambda^2\cQ)=\mathrm{H}^{\bullet}(\CG,\Sigma^{a}\cU(-k)\otimes \Lambda^2\cQ)=0,
\end{equation*}
where $(0,0) \preceq \alpha \preceq(1,1)$ and $1 \le k \le 3$.

First, let us prove that $\mathrm{H}^{\bullet}(\CG,\Sigma^{2,1}\cU\otimes \Lambda^2\cQ)=0$. Twisting \eqref{Koszul} for $n=0$ by~$\Sigma^{2,1}\cU$ we get:
\begin{equation} \label{exact_sequence_lambda2Q1}
0\to S^2\cU\otimes \Sigma^{2,1}\cU \to V\otimes \cU \otimes \Sigma^{2,1}\cU \to \Lambda^2 V\otimes \cO \otimes \Sigma^{2,1}\cU \to \Lambda^2\cQ\otimes \Sigma^{2,1}\cU \to 0.
\end{equation}
The spectral sequence for the exact sequence \eqref{exact_sequence_lambda2Q1}:
\begin{equation*} 
    \mathbb{E}_{1}^{-l,t}=\mathrm{H}^t(\CG,\Sigma^{2,1}\cU\otimes S^l\cU \otimes \Lambda^{2-l}V)\Rightarrow \mathrm{H}^{t-l}(\CG, \Sigma^{2,1}\cU\otimes \Lambda^2 \cQ)
\end{equation*} 
shows that it is enough to prove vanishing $\mathrm{H}^{\bullet}(\CG,\Sigma^{2,1}\cU\otimes S^l\cU)=0$ for $l\in [0,2]$:
\begin{itemize}
    \item for $l=0$ we have 
\begin{equation*} \mathrm{H}^{\bullet}(\CG,\Sigma^{2,1}\cU\otimes\cO)=\mathrm{Ext}^{\bullet}(\Sigma^{2,1}\cU^{\vee},\cO)=0,
\end{equation*}
where the vanishing follows from Corrollary \ref{exceptionality2};
\item for $l=1$ we have 
\begin{equation*}
  \mathrm{Ext}^{\bullet}(\Sigma^{2,1}\cU^{\vee},\cU)=\mathrm{Ext}^{\bullet}(\Sigma^{2,1}\cU^{\vee}(1),\Lambda^2\cU^{\vee})=\mathrm{Ext}^{\bullet}(\Lambda^2\cU^{\vee}(3),\Sigma^{2,1}\cU^{\vee})[8]=0,
\end{equation*}
where the first equality follows the isomorphism \eqref{preliminaries_isomorphisms}, the second equality is Serre duality and the last one follows from Corollary \ref{exceptionality2}, so we get that $$\mathrm{H}^{\bullet}(\CG,\Sigma^{2,1}\cU\otimes\cU)=  \mathrm{Ext}^{\bullet}(\Sigma^{2,1}\cU^{\vee},\cU)=0;$$
\item for $l=2$ we have
\begin{equation*}
\mathrm{Ext}^{\bullet}(\Sigma^{2,1}\cU^{\vee},S^2\cU)= \mathrm{Ext}^{\bullet}(S^2\cU(4),\Sigma^{2,1}\cU^{\vee}[8])=\mathrm{H}^{\bullet}(\CG,\Sigma^{2,1}\cU^{\vee}\otimes S^2\cU^{\vee}(-4))[8],
\end{equation*}
where the first equality follows from Serre duality;
using the Littlewood–Richardson rule we decompose 
\begin{equation*}
\begin{split}
  \mathrm{H}^{\bullet}(\CG,\Sigma^{2,1}\cU^{\vee}\otimes & S^2\cU^{\vee} (-4))= \\
  & =\mathrm{H}^{\bullet}(\CG,\Sigma^{0,-3,-4}\cU^{\vee} \oplus \Sigma^{-1,-2,-4}\cU^{\vee} \oplus \Sigma^{-1,-3,-3}\cU^{\vee}\oplus \Sigma^{-2,-2,-3}\cU^{\vee}),   
\end{split}
\end{equation*}
so the vanishing
$$\mathrm{H}^{\bullet}(\CG,\Sigma^{2,1}\cU\otimes S^2\cU)= \mathrm{Ext}^{\bullet}(\Sigma^{2,1}\cU^{\vee},S^2\cU)= \mathrm{H}^{\bullet}(\CG,\Sigma^{2,1}\cU^{\vee}\otimes S^2\cU^{\vee}(-4)[-8])=0$$ follows from Proposition~\ref{main_computations}.
\end{itemize}
Hence we get that
$$\mathrm{Ext}^{\bullet}(\Sigma^{2,1}\cU^{\vee},\Lambda^2\cQ)= \mathrm{H}^{\bullet}(\CG,\Sigma^{2,1}\cU\otimes \Lambda^2\cQ)=0.$$

Now let us prove that $\mathrm{H}^{\bullet}(\Sigma^a\cU(-k)\otimes \Lambda^2\cQ)=0$ for $(0,0) \preceq a \preceq(1,1)$ and $1 \le k \le 3$. Twisting~\eqref{Koszul} for $n=0$ by $\Sigma^{a}\cU(-k)$ we get
\begin{equation} \label{exact_sequence_lambda2Q2}
0\to S^2\cU\otimes \Sigma^{a}\cU(-k) \to V\otimes \cU \otimes \Sigma^{a}\cU(-k) \to \Lambda^2 V\otimes \cO \otimes \Sigma^{a}\cU(-k) \to \Lambda^2\cQ \otimes \Sigma^{a}\cU(-k) \to 0.
\end{equation}
Using the spectral sequence for the exact sequence \eqref{exact_sequence_lambda2Q2}:
\begin{equation} \label{spectral_sequence_Lambda2Q}
    \mathbb{E}_{1}^{-l,t}=\mathrm{H}^t(\CG,\Sigma^{a}\cU(-k)\otimes S^l\cU\otimes \Lambda^{2-l}V)\Rightarrow \mathrm{H}^{t-l}(\CG, \Sigma^{a}\cU(-k) \otimes \Lambda^2 \cQ)
\end{equation} 
we see that we need to compute  $\mathrm{H}^{\bullet}(\CG, \Sigma^{a}\cU(-k)\otimes S^l\cU)=0$ for $l\in [0,2]$:
\begin{itemize}
    \item for $l=0$ we have 
\begin{equation*} \mathrm{H}^{\bullet}(\CG,\Sigma^{a}\cU(-k)\otimes\cO)= \mathrm{Ext}^{\bullet}(\Sigma^{a}\cU^{\vee}(k),\cO)=0,
\end{equation*}
where the vanishing follows from Corrollary \ref{exceptionality2};
\item for $l=1$ using the isomorphism $\cU\simeq \Lambda^2\cU^{\vee}(-1)$ and Corrollary \ref{exceptionality2} we get 
\begin{equation*}
\mathrm{H}^{\bullet}(\CG,\Sigma^{a}\cU(-k')\otimes\cU)=\mathrm{Ext}^{\bullet}(\Sigma^{a}\cU^{\vee}(k'),\cU)=0
\end{equation*}
for $k=1,2$. For $k=3$ by Serre duality we have
\begin{equation*}
 \mathrm{Ext}^{\bullet}(\Sigma^{a}\cU^{\vee}(3),\cU)= \mathrm{Ext}^{\bullet}(\Lambda^2\cU^{\vee},\Sigma^{a}\cU^{\vee}[8]),
\end{equation*}
so that by Corollary \ref{exceptionality1} we get
\begin{equation*}
\mathrm{H}^{\bullet}(\CG,\Sigma^{a}\cU(-3)\otimes\cU)=\mathrm{Ext}^{\bullet}(\Sigma^{a}\cU^{\vee}(3),\cU)=
\begin{cases}
0, &\text{if $(0,0) \preceq a \preceq(1,0)$};\\
\Bbbk[-8],&\text{if $a=(1,1,0)$};
\end{cases}
\end{equation*}
\item for $l=2$ we have
\begin{equation*}
     \mathrm{Ext}^{\bullet}(\Sigma^{a}\cU^{\vee}(k),S^2\cU)= \mathrm{Ext}^{\bullet}(S^2\cU(4-k),\Sigma^{a}\cU^{\vee}[8])=\mathrm{H}^{\bullet}(\CG,\Sigma^{a}\cU^{\vee}\otimes S^2\cU^{\vee}(k-4))[-8],
\end{equation*}
where the first equality follows from Serre duality; using the Littlewood–Richardson rule we decompose
\begin{equation*}
\Sigma^{a}\cU^{\vee}\otimes S^2\cU^{\vee}(k-4)= \bigoplus \Sigma^{c_1,c_2,c_3}\cU^{\vee}(k-4),
\end{equation*}
where by Lemma~\ref{lemma:lrr} we have $2\le c_1+c_2+c_3\le 4$ and $c_1\in [2,3],$ $c_2 \in [0,1],$ $c_3\in [0,1]$; by Proposition \ref{main_computations} we get 
\begin{equation*}
\mathrm{H}^{\bullet}(\CG,\Sigma^{a}\cU(-k)\otimes S^2\cU)=\mathrm{Ext}^{\bullet}(\Sigma^{a}\cU^{\vee}(k),S^2\cU)=
\begin{cases}
V^{\vee}[-8],&\text{if $a=(1,1)$ and $k=3$};\\
0, &\text{in all other cases}.\\
\end{cases}
\end{equation*}
\end{itemize}
Using the spectral sequence \eqref{spectral_sequence_Lambda2Q} we obtain the vanishing 
$$\mathrm{Ext}^{\bullet}(\Sigma^{a}\cU^{\vee}(k),\Lambda^2\cQ)=\mathrm{H}^{\bullet}(\CG, \Sigma^{a}\cU(-k)\otimes \Lambda^2 \cQ)=0$$
from the above computations.
\end{proof}

\begin{lemma} \label{lemma:Lambda^2Q}
For $(0,0) \preceq a \preceq(1,1)$ and $k=0,1$ we have
\begin{equation*}
\mathrm{Ext}^{\bullet}(\Lambda^2\cQ(k),\Sigma^{a}\cU^{\vee})=\begin{cases}
\Bbbk, &\text{if $a=(1,1)$ and $k=0$};\\
0, &\text{in all other cases}.
\end{cases}
\end{equation*}
Moreover, we have $\mathrm{Ext}^{\bullet}(\Lambda^2\cQ(1),\Sigma^{2,1}\cU^{\vee})=0$.
\end{lemma}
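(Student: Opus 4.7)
The plan is to mimic the proof of Lemma \ref{lemma_right_ort}. Using the isomorphism $\Lambda^2\cU^{\perp}\simeq \Lambda^2\cQ(-1)$ from \eqref{preliminaries_isomorphisms2}, I first rewrite
$$\mathrm{Ext}^{\bullet}(\Lambda^2\cQ(k),\Sigma^{a}\cU^{\vee})\simeq \mathrm{H}^{\bullet}(\CG,\Lambda^2\cU^{\perp}(-k)\otimes \Sigma^{a}\cU^{\vee}).$$
I then resolve $\Lambda^2\cU^{\perp}(-k)\simeq \Lambda^2\cQ(-k-1)$ by the dual Koszul sequence \eqref{Koszul_dual} specialised to $n=-k$ and tensor it with $\Sigma^{a}\cU^{\vee}$. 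The associated hypercohomology spectral sequence reduces the computation to the three cohomology groups $\mathrm{H}^{\bullet}(\CG,T_l)$, where
$$T_0 = \Lambda^2V^{\vee}\otimes \Sigma^{a}\cU^{\vee}(-k),\quad T_1 = V^{\vee}\otimes \cU^{\vee}\otimes \Sigma^{a}\cU^{\vee}(-k),\quad T_2 = S^{2}\cU^{\vee}\otimes \Sigma^{a}\cU^{\vee}(-k).$$

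For each pair $(a,k)$ in the statement, I would decompose each $T_l$ into irreducible Schur bundles $\Sigma^{c_{1}-k,c_{2}-k,c_{3}-k}\cU^{\vee}$ via the Littlewood--Richardson rule (Lemma \ref{lemma:lrr}) and then read the cohomology off Proposition \ref{main_computations}. In six of the seven cases to be treated---in particular the final assertion $\mathrm{Ext}^{\bullet}(\Lambda^{2}\cQ(1),\Sigma^{2,1}\cU^{\vee})=0$, which is simply the case $a=(2,1,0)$, $k=1$---every Schur summand in every $T_l$ will be of the form $\Sigma^{c_1,c_2,c_3}\cU^{\vee}$ with $(c_1,c_2,c_3)$ lying in the vanishing region of Proposition \ref{main_computations}, and the spectral sequence collapses immediately to zero.

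The main obstacle is the exceptional case $(a,k)=((1,1,0),0)$. Here the Pieri-type expansion $S^{2}\cU^{\vee}\otimes \Lambda^{2}\cU^{\vee}\simeq \Sigma^{3,1}\cU^{\vee}\oplus \cU^{\vee}(1)$, together with the analogous decompositions of $T_0=\Lambda^{2}V^{\vee}\otimes \Lambda^{2}\cU^{\vee}$ and $T_1=V^{\vee}\otimes(\Sigma^{2,1}\cU^{\vee}\oplus \cO(1))$, produces several non-zero entries on the $\mathbb{E}_1$-page (most prominently $\Lambda^{2}V^{\vee}\otimes \Lambda^{2}V^{\vee}$ coming from $T_0$). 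The job is to chase the differentials induced from those of the Koszul resolution and check that all contributions cancel except for a single one-dimensional class in cohomological degree zero. As an independent geometric sanity check, the short exact sequence \eqref{exact_sequence:R_Lambda^2Q} provides a canonical non-zero surjection $\Lambda^{2}\cQ\twoheadrightarrow \Lambda^{2}\cU^{\vee}$, which automatically forces the answer to be at least $\Bbbk$ in degree zero and so pins down the surviving class once the higher $\mathrm{Ext}$'s are shown to vanish via the spectral sequence.
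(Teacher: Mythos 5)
Your reduction to $\mathrm{H}^{\bullet}(\CG,\Lambda^2\cU^{\perp}(-k)\otimes \Sigma^{a}\cU^{\vee})$ is the same as the paper's, but your choice of resolution is not, and it breaks the argument. You coresolve $\Lambda^2\cU^{\perp}(-k)\simeq\Lambda^2\cQ(-k-1)$ by the \emph{dual} Koszul complex \eqref{Koszul_dual}, whose terms $\Lambda^2V^{\vee}\otimes\cO(-k)$, $V^{\vee}\otimes\cU^{\vee}(-k)$, $S^2\cU^{\vee}(-k)$ carry positive Schur functors of $\cU^{\vee}$. After tensoring with $\Sigma^{a}\cU^{\vee}$ these terms have abundant nonzero cohomology: already for $(a,k)=((0,0),0)$ you get $\mathrm{H}^0(T_0)=\Lambda^2V^{\vee}$, $\mathrm{H}^0(T_1)=V^{\vee}\otimes V^{\vee}$, $\mathrm{H}^0(T_2)=S^2V^{\vee}$; for $(a,k)=((1,1),1)$ the summand $\cO\subset\cU^{\vee}\otimes\Lambda^2\cU^{\vee}(-1)$ contributes $V^{\vee}$ to $T_1$; and in your ``moreover'' case $(a,k)=((2,1),1)$ the summands $\Sigma^{1,1,-1}\cU^{\vee}$ and $\cU^{\vee}$ of $\cU^{\vee}\otimes\Sigma^{2,1}\cU^{\vee}(-1)$ give $\mathrm{H}^0(T_1)\supset V^{\vee}\oplus (V^{\vee}\otimes V^{\vee})$, while $T_2$ contains $\Sigma^{3,0,-1}\cU^{\vee}$, whose cohomology is not even recorded in Proposition \ref{main_computations}. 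So your assertion that six of the seven cases ``collapse immediately to zero'' is false; in most cases you are left with a nontrivial $\mathbb{E}_1$-page whose differentials you would have to chase, using several cohomology groups ($S^3\cU^{\vee}$, $\Sigma^{3,1}\cU^{\vee}$, $\mathrm{H}^0(\cO(1))$, \dots) that the paper never computes. The same problem makes your treatment of the exceptional case $((1,1),0)$ only a statement of intent, not a proof.

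The fix is to resolve in the other direction: the paper uses \eqref{Koszul} at $n=-1$, i.e.\ $0\to S^2\cU(-1)\to V\otimes\cU(-1)\to\Lambda^2V\otimes\cO(-1)\to\Lambda^2\cU^{\perp}\to 0$, tensored with $\Sigma^{a}\cU^{\vee}(-k)$. There every term is a Schur bundle twisted by $\cO(-1-k)$, which lands in the acyclic range of Proposition \ref{main_computations} (or vanishes by the exceptionality already proved in Corollary \ref{exceptionality1}) in every case except the single summand $\Sigma^{0,0,-3}\cU^{\vee}\subset\Lambda^2\cU^{\vee}\otimes S^2\cU(-1)$, which contributes the $\Bbbk$ for $a=(1,1)$, $k=0$; no differential chase is needed. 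For the ``moreover'' statement the paper does not run a spectral sequence at all: it dualizes, $\mathrm{Ext}^{\bullet}(\Lambda^2\cQ(1),\Sigma^{2,1}\cU^{\vee})\simeq\mathrm{Ext}^{\bullet}(\Sigma^{2,1}\cU^{\vee}(-2),\Lambda^2\cQ(-2))=\mathrm{Ext}^{\bullet}(\Sigma^{2,1}\cU^{\vee},\Lambda^2\cQ)$, and quotes Lemma \ref{lemma_right_ort}. Your closing remark that the surjection \eqref{exact_sequence:R_Lambda^2Q} pins down the one-dimensional class is a fine consistency check but does not substitute for the missing vanishing arguments.
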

\begin{proof}
Using Lemma \ref{lemma_right_ort} and the isomorphism \eqref{preliminaries_isomorphisms2} we obtain 
\begin{equation*}
  \mathrm{Ext}^{\bullet}(\Lambda^2\cQ(1),\Sigma^{2,1}\cU^{\vee})\simeq \mathrm{Ext}^{\bullet}(\Sigma^{2,1}\cU, \Lambda^2\cU^{\perp}(-1))\simeq  \mathrm{Ext}^{\bullet}(\Sigma^{2,1}\cU^{\vee}(-2), \Lambda^2\cQ(-2))=0.  
\end{equation*}
Now we need to compute
\begin{equation*}
   \mathrm{Ext}^{\bullet}(\Lambda^2\cQ(k),\Sigma^{a}\cU^{\vee})=\mathrm{H}^{\bullet}(\CG,\Sigma^{a}\cU^{\vee}(-k)\otimes \Lambda^2\cU^{\perp})
\end{equation*}
for $(0,0) \preceq a \preceq(1,1)$ and $k=0,1$.
To do this let us consider the exact sequence \eqref{Koszul} for $n=-1$
\begin{equation} \label{exact_sequence_Q_dual}
    0\to S^2\cU(-1) \to V\otimes \cU(-1) \to \Lambda^2 V\otimes \cO(-1) \to \Lambda^2\cU^{\perp} \to 0,
\end{equation}
here we use the isomorphism \eqref{preliminaries_isomorphisms2}.
Tensoring the exact sequence \eqref{exact_sequence_Q_dual} by $\Sigma^{a}\cU^{\vee}(-k)$ we get the following spectral sequence:
\begin{equation} \label{spectral_sequence_dual}
    \mathbb{E}_{1}^{-l,t}=\mathrm{H}^t(\CG,\Sigma^{a}\cU^{\vee}(-k)\otimes S^l\cU \otimes \Lambda^{2-l}V)\Rightarrow \mathrm{H}^{t-l}(\CG, \Sigma^{a}\cU^{\vee}(-k)\otimes \Lambda^2 \cU^{\perp}),
\end{equation} 
which shows that we need to compute $\mathrm{H}^{\bullet}(\CG,\Sigma^{a}\cU^{\vee}(-k)\otimes S^l\cU(-1))$ for $l\in [0,2]$:
\begin{itemize}
    \item for $l=0$ we have 
    \begin{equation*}
        \mathrm{H}^{\bullet}(\CG,\Sigma^{a}\cU^{\vee}\otimes \cO(-1-k))=\mathrm{Ext}^{\bullet}(\cO(k+1),\Sigma^{a}\cU^{\vee})=0
    \end{equation*}
    by Corollary \ref{exceptionality1};
    \item for $l=1$ we have 
    \begin{equation*}
        \mathrm{H}^{\bullet}(\CG,\Sigma^{a}\cU^{\vee}\otimes \cU(-1-k))=\mathrm{Ext}^{\bullet}(\cU^{\vee}(k+1),\Sigma^{a}\cU^{\vee})=0
    \end{equation*}
    by Corollary \ref{exceptionality1};
    \item for $l=2$ we have 
        \begin{equation*}
        \mathrm{H}^{\bullet}(\CG,\Sigma^{a}\cU^{\vee}\otimes S^2\cU(-1-k))= \mathrm{H}^{\bullet}(\CG,\Sigma^{c_1,c_2, c_3}\cU^{\vee}(-k)),
         \end{equation*}
    where $c_1\in [-1,0]$, $c_2\in [-1,0]$, $c_3\in [-3,-2]$ and $-5\le c_1+c_2+ c_3\le -3$ by Lemma \ref{lemma:lrr}; so by Proposition \ref{main_computations} we get
    \begin{equation*}
\mathrm{H}^{\bullet}(\CG,\Sigma^{a}\cU^{\vee}\otimes S^2\cU(-1-k))=
\begin{cases}
\Bbbk[-2],&\text{if $a=(1,1)$ and $k=0$};\\
0, &\text{in all other cases}.\\
\end{cases}
\end{equation*}
\end{itemize}
Using the spectral sequence \eqref{spectral_sequence_dual} we obtain the required statement.
\end{proof}
\begin{corollary} \label{corollary:Lambda^2Q}
We have $\mathrm{Ext}^{\bullet}(\Lambda^2\cQ(1), \Lambda^2\cQ)=0$.
\end{corollary}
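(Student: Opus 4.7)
The plan is to use the Koszul-type resolution \eqref{Koszul_dual} of $\Lambda^2\cQ$ together with Lemma \ref{lemma:Lambda^2Q}, reducing the desired vanishing to a single extra cohomology computation on $\CG$ handled via a second Koszul-type resolution and Proposition \ref{main_computations}.

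Taking \eqref{Koszul_dual} with $n=1$ gives the four-term exact sequence
\begin{equation*}
 0\to \Lambda^2\cQ \to \Lambda^2V^{\vee}\otimes \cO(1)\to V^{\vee}\otimes \cU^{\vee}(1)\to S^2\cU^{\vee}(1)\to 0.
\end{equation*}
I would break it into two short exact sequences at the middle term and apply $\mathrm{Ext}^{\bullet}(\Lambda^2\cQ(1), -)$, reducing the desired vanishing to the vanishing of
\begin{equation*}
  \mathrm{Ext}^{\bullet}(\Lambda^2\cQ(1), \cO(1)), \quad \mathrm{Ext}^{\bullet}(\Lambda^2\cQ(1), \cU^{\vee}(1)) \quad \mbox{and} \quad \mathrm{Ext}^{\bullet}(\Lambda^2\cQ(1), S^2\cU^{\vee}(1)).
\end{equation*}
Twisting both arguments by $\cO(-1)$, the first two groups become $\mathrm{Ext}^{\bullet}(\Lambda^2\cQ, \cO)$ and $\mathrm{Ext}^{\bullet}(\Lambda^2\cQ, \cU^{\vee})$, which vanish by Lemma \ref{lemma:Lambda^2Q} (taking $a=(0,0)$ and $a=(1,0)$, respectively, with $k=0$).

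The main obstacle will be the third Ext group, which after twisting equals
\begin{equation*}
\mathrm{Ext}^{\bullet}(\Lambda^2\cQ, S^2\cU^{\vee}) = \mathrm{H}^{\bullet}(\CG, \Lambda^2\cU^{\perp} \otimes S^2\cU^{\vee}),
\end{equation*}
and is not covered by Lemma \ref{lemma:Lambda^2Q}, since $S^2\cU^{\vee}$ lies outside the range $(0,0) \preceq a \preceq (1,1)$. I would handle it by tensoring the Koszul-type resolution \eqref{exact_sequence_Q_dual} of $\Lambda^2\cU^{\perp}$ by $S^2\cU^{\vee}$:
\begin{equation*}
  0 \to S^2\cU(-1) \otimes S^2\cU^{\vee} \to V \otimes \cU(-1) \otimes S^2\cU^{\vee} \to \Lambda^2V \otimes S^2\cU^{\vee}(-1) \to \Lambda^2\cU^{\perp} \otimes S^2\cU^{\vee} \to 0,
\end{equation*}
and running the associated spectral sequence. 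Using the isomorphism $\cU \simeq \Lambda^2\cU^{\vee}(-1)$ from \eqref{preliminaries_isomorphisms} together with the Littlewood--Richardson decompositions
\begin{equation*}
\Lambda^2\cU^{\vee} \otimes S^2\cU^{\vee} \simeq \Sigma^{3,1,0}\cU^{\vee} \oplus \Sigma^{2,1,1}\cU^{\vee}, \quad S^2\cU \otimes S^2\cU^{\vee} \simeq \cO \oplus \Sigma^{1,0,-1}\cU^{\vee} \oplus \Sigma^{2,0,-2}\cU^{\vee},
\end{equation*}
and an additional twist by $\cO(-1)$, each of the three cohomology groups appearing in the spectral sequence decomposes into a direct sum of terms $\mathrm{H}^{\bullet}(\CG, \Sigma^{c_1,c_2,c_3}\cU^{\vee})$ already tabulated in Proposition \ref{main_computations}; a direct inspection of the table shows that all such terms vanish, which completes the argument.
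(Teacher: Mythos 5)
Your proposal is correct and follows essentially the same route as the paper: both reduce, via the Koszul-type sequence \eqref{Koszul_dual}, to the three groups $\mathrm{Ext}^{\bullet}(\Lambda^2\cQ,\cO)$, $\mathrm{Ext}^{\bullet}(\Lambda^2\cQ,\cU^{\vee})$, $\mathrm{Ext}^{\bullet}(\Lambda^2\cQ,S^2\cU^{\vee})$, dispose of the first two by Lemma \ref{lemma:Lambda^2Q}, and compute the last one by a second Koszul-type resolution together with Proposition \ref{main_computations}. Your use of \eqref{exact_sequence_Q_dual} tensored with $S^2\cU^{\vee}$ produces exactly the same Schur-bundle cohomology groups as the paper's use of \eqref{Koszul_dual} for $n=1$, and your Littlewood--Richardson decompositions check out.
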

\begin{proof}
We have $\mathrm{Ext}^{\bullet}(\Lambda^2\cQ(1), \Lambda^2\cQ)=\mathrm{Ext}^{\bullet}(\Lambda^2\cQ,\Lambda^2\cQ(-1))$ and to compute $\mathrm{Ext}^{\bullet}(\Lambda^2\cQ, \Lambda^2\cQ(-1))$ we use the exact sequence \eqref{Koszul_dual} for $n=0$. Applying Lemma \ref{lemma:Lambda^2Q} we see that it is enough to show  
$$\mathrm{Ext}^{\bullet}(\Lambda^2\cQ, S^2\cU^{\vee})=0.$$
Using the exact sequence \eqref{Koszul_dual} for $n=1$ and Proposition \ref{main_computations} we deduce the statement.
\end{proof}
Recall from the Introduction that we denote by $\cR$ the dual quotient bundle $(\Lambda^2\cU^{\perp}/ \Lambda^2\cU)^{\vee}$.
\begin{corollary} \label{mutation_R_Lambda2}
On $\CG$ we have $\mathbb{R}_{\Lambda^2\cU^{\vee}}\Lambda^2\cQ=\cR$ and conversely $\mathbb{L}_{\Lambda^2\cU^{\vee}}\cR=\Lambda^2\cQ$. The vector bundle~$\cR$ is exceptional. 
\end{corollary}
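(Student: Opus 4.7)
The plan is to read off both mutations directly from the short exact sequence \eqref{exact_sequence:R_Lambda^2Q} and then deduce exceptionality of $\cR$ from the general mutation formalism in Proposition~\ref{proposition:mutations}.

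First I would invoke Lemma~\ref{lemma:Lambda^2Q} (case $a=(1,1)$, $k=0$) to conclude
\[
\mathrm{Ext}^\bullet(\Lambda^2\cQ,\Lambda^2\cU^\vee)=\Bbbk,
\]
concentrated in degree $0$. Consequently the canonical surjection $\Lambda^2\cQ\twoheadrightarrow \Lambda^2\cU^\vee$ from \eqref{exact_sequence:R_Lambda^2Q} is, up to scalar, the unique nonzero morphism $\Lambda^2\cQ\to \Lambda^2\cU^\vee$; in particular it coincides with the coevaluation map appearing in the defining triangle \eqref{definition:mutation} of the right mutation. The exact sequence \eqref{exact_sequence:R_Lambda^2Q} then identifies the cone of this morphism with $\cR[1]$, so by definition
\[
\mathbb{R}_{\Lambda^2\cU^\vee}(\Lambda^2\cQ)=\mathrm{Cone}\bigl(\Lambda^2\cQ\to \Lambda^2\cU^\vee\bigr)[-1]=\cR.
\]
The converse equality $\mathbb{L}_{\Lambda^2\cU^\vee}(\cR)=\Lambda^2\cQ$ is then immediate from Proposition~\ref{proposition:mutations}, which states that $\mathbb{L}_{\Lambda^2\cU^\vee}$ and $\mathbb{R}_{\Lambda^2\cU^\vee}$ are mutually inverse equivalences between the two orthogonals of $\langle \Lambda^2\cU^\vee\rangle$; it suffices to remark that Lemma~\ref{lemma:Lambda^2Q} and the cohomological computation below place $\Lambda^2\cQ$ and $\cR$ in the appropriate orthogonals.

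For the exceptionality of $\cR$ I would apply the last sentence of Proposition~\ref{proposition:mutations} to the pair $(\Lambda^2\cQ,\Lambda^2\cU^\vee)$: if it is an exceptional pair, then so is its right mutation $(\Lambda^2\cU^\vee,\mathbb{R}_{\Lambda^2\cU^\vee}(\Lambda^2\cQ))=(\Lambda^2\cU^\vee,\cR)$, and in particular $\cR$ is exceptional. Exceptionality of the two building blocks is already available: $\Lambda^2\cQ$ is exceptional by Lemma~\ref{Q}, and $\Lambda^2\cU^\vee$ is exceptional by Corollary~\ref{exceptionality1}. The only missing semiorthogonality to check is
\[
\mathrm{Ext}^\bullet(\Lambda^2\cU^\vee,\Lambda^2\cQ)=\mathrm{H}^\bullet(\CG,\Lambda^2\cU\otimes \Lambda^2\cQ)=0.
\]

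This is the only piece of genuine computation. I would do it by tensoring the Koszul-type resolution \eqref{Koszul} for $n=0$ with $\Lambda^2\cU$ and running the associated hypercohomology spectral sequence. The three relevant terms decompose via Littlewood--Richardson into $\Sigma^{0,-1,-3}\cU^\vee\oplus \Sigma^{-1,-1,-2}\cU^\vee$, $V\otimes\Sigma^{0,-1,-2}\cU^\vee\oplus V\otimes\cO(-1)$, and $\Lambda^2V\otimes \Sigma^{0,-1,-1}\cU^\vee$, and the cohomology of each of these irreducible $\mathrm{GL}(3)$-summands on $\CG$ is zero by Proposition~\ref{main_computations}. No step is expected to be a serious obstacle; the main thing to be careful about is the identification of the canonical morphism with the coevaluation and the bookkeeping of the shift in \eqref{definition:mutation}.
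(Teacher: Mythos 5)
Your proposal is correct and follows essentially the same route as the paper: Lemma~\ref{lemma:Lambda^2Q} shows $\mathrm{Ext}^{\bullet}(\Lambda^2\cQ,\Lambda^2\cU^{\vee})=\Bbbk$, so the surjection in \eqref{exact_sequence:R_Lambda^2Q} is the coevaluation map and the two mutations are read off from that exact sequence, with exceptionality of $\cR$ coming from Proposition~\ref{proposition:mutations} and Lemma~\ref{Q}. The one point where you go beyond the paper's terser argument is the explicit verification that $\mathrm{Ext}^{\bullet}(\Lambda^2\cU^{\vee},\Lambda^2\cQ)=0$ (so that $(\Lambda^2\cQ,\Lambda^2\cU^{\vee})$ is genuinely an exceptional pair and $\Lambda^2\cQ$ lies in the right orthogonal where $\RR_{\Lambda^2\cU^{\vee}}$ is an equivalence); this vanishing is indeed needed, is not recorded elsewhere in the paper, and your Littlewood--Richardson decomposition and appeal to Proposition~\ref{main_computations} establish it correctly.
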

\begin{proof}
By Lemma \ref{lemma:Lambda^2Q} we have $\mathrm{Ext}^{\bullet}(\Lambda^2\cQ,\Lambda^2\cU^{\vee})=\Bbbk$ so we see that the mutation $\mathbb{R}_{\Lambda^2\cU^{\vee}}\Lambda^2\cQ$ is given by the exact sequence \eqref{exact_sequence:R_Lambda^2Q}, and similarly for $\mathbb{L}_{\Lambda^2\cU^{\vee}}\cR$. The last statement follows from Proposition \ref{proposition:mutations} and Lemma \ref{Q}.
\end{proof}
The main result of this section is the following corollary.
\begin{corollary} \label{corollary_exceptionality}
The collection $\mathfrak{E},\cR,\Sigma^{2,1}\cU^{\vee},\mathfrak{E}(1),\cR(1),\mathfrak{E}(2),\mathfrak{E}(3)$ is exceptional.
\end{corollary}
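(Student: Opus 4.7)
The approach is to reduce the exceptionality of the displayed collection to the already established exceptionality in Corollary \ref{exceptionality2} by inserting $\cR$ immediately after $\Lambda^2\cU^\vee$ and $\cR(1)$ immediately after $\Lambda^2\cU^\vee(1)$, and then verifying the requisite $\mathrm{Ext}$-vanishings. Exceptionality of $\cR$ itself is Corollary \ref{mutation_R_Lambda2}, and that of $\cR(1)$ follows by twisting.

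The main engine is the short exact sequence
\begin{equation*}
0 \to \cR \to \Lambda^2\cQ \to \Lambda^2\cU^\vee \to 0
\end{equation*}
from \eqref{exact_sequence:R_Lambda^2Q}, together with its twist by $\cO(1)$. Applying either $\mathrm{Hom}(-,X)$ or $\mathrm{Hom}(Y,-)$ to these sequences reduces every $\mathrm{Ext}$ involving $\cR$ or $\cR(1)$ to $\mathrm{Ext}$-groups among $\Lambda^2\cQ(m)$ and $\Lambda^2\cU^\vee(m)$ for $m\in\{0,1\}$ paired with the other objects of the collection; all of these are already computed in Lemma \ref{lemma:Lambda^2Q}, Lemma \ref{lemma_right_ort}, Corollary \ref{corollary:Lambda^2Q}, and Corollaries \ref{exceptionality1}, \ref{exceptionality2}.

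More precisely, I would check the following in turn. First, $\mathrm{Ext}^\bullet(\cR,X)=0$ for $X\in\{\cO,\cU^\vee\}$, via the sequence together with Lemma \ref{lemma:Lambda^2Q} and Corollary \ref{exceptionality1}. Second, $\mathrm{Ext}^\bullet(\cR,\Lambda^2\cU^\vee)=0$ automatically, because $\cR=\mathbb{R}_{\Lambda^2\cU^\vee}\Lambda^2\cQ$ lies in the right orthogonal of $\Lambda^2\cU^\vee$ by the general property of right mutation through an exceptional object. Third, $\mathrm{Ext}^\bullet(Y,\cR)=0$ for every object $Y$ lying to the right of $\cR$ in the collection (other than $\cR(1)$, which is treated separately), by combining Lemma \ref{lemma_right_ort} with Corollary \ref{exceptionality2}. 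Finally, the analogues for $\cR(1)$ go through the twisted sequence in exactly the same way; the only slightly delicate entry is the cross-term $\mathrm{Ext}^\bullet(\cR(1),\cR)=0$, which is handled by applying the twisted sequence on one side and the untwisted sequence on the other, the resulting four $\mathrm{Ext}$-groups among $\Lambda^2\cQ(m)$ and $\Lambda^2\cU^\vee(m)$ being covered by Lemma \ref{lemma:Lambda^2Q}, Lemma \ref{lemma_right_ort}, Corollary \ref{exceptionality1}, and Corollary \ref{corollary:Lambda^2Q}.

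There is no serious obstacle; the argument is a routine piece of bookkeeping. The only care required is to verify that each twist appears within the range covered by the relevant lemma, namely $k\in\{0,1\}$ for Lemma \ref{lemma:Lambda^2Q} and $k\in\{1,2,3\}$ for Lemma \ref{lemma_right_ort}; this is easily confirmed by inspection.
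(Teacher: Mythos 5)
Your proposal is correct and follows essentially the same route as the paper: the paper phrases the reduction through the mutation $\cR=\mathbb{R}_{\Lambda^2\cU^{\vee}}\Lambda^2\cQ$ (Corollary \ref{mutation_R_Lambda2}) while you unwind the same reduction through the defining sequence \eqref{exact_sequence:R_Lambda^2Q}, and both arguments rest on exactly the inputs you cite (Lemmas \ref{lemma_right_ort} and \ref{lemma:Lambda^2Q}, Corollaries \ref{corollary:Lambda^2Q}, \ref{exceptionality1}, \ref{exceptionality2}). The only blemish is terminological: the mutation $\mathbb{R}_{\Lambda^2\cU^{\vee}}\Lambda^2\cQ$ lands in the \emph{left} orthogonal ${}^{\perp}\langle\Lambda^2\cU^{\vee}\rangle$, which is the vanishing $\mathrm{Ext}^{\bullet}(\cR,\Lambda^2\cU^{\vee})=0$ you actually use.
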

\begin{proof}
By Corollary \ref{mutation_R_Lambda2} the vector bundle $\cR$ is exceptional, thus, by Corollary \ref{exceptionality2} we need to show that
\begin{align} \label{R1}
& \cR\in\  ^{\perp}\langle \mathfrak{E}\rangle,\\ \label{R2}
& \cR\in \langle \Sigma^{2,1}\cU^{\vee},\mathfrak{E}(1),\mathfrak{E}(2),\mathfrak{E}(3) \rangle^{\perp},\\ \label{R3}
& \cR(1)\in\  ^{\perp}\langle \mathfrak{E}, \cR, \Sigma^{2,1}\cU^{\vee} \rangle. 
\end{align}
Applying Corollary \ref{mutation_R_Lambda2}, Corollary \ref{exceptionality2} and Lemma \ref{lemma_right_ort} we obtain \eqref{R2}. Applying Corollary \ref{mutation_R_Lambda2}, Corollary \ref{exceptionality2} and Lemma \ref{lemma:Lambda^2Q} we get $\cR(1)\in\  ^{\perp}\langle \mathfrak{E}, \Sigma^{2,1}\cU^{\vee} \rangle$ and \eqref{R1}.

So the last thing we need to check is the vanishing $\mathrm{Ext}^{\bullet}(\cR(1), \cR)=0$. By Corollary \ref{mutation_R_Lambda2}, Lemma~\ref{lemma_right_ort} and Lemma \ref{lemma:Lambda^2Q} it is enough to show  $\mathrm{Ext}^{\bullet}(\Lambda^2\cQ(1), \Lambda^2\cQ)=0$ and this is the statement of Corollary~\ref{corollary:Lambda^2Q}.
\end{proof}

\subsection{Residual category}
In this subsection we compute the residual category for the Lefschetz collection 
\eqref{collection}. Recall Definition \ref{def:lefschetz}(iii). To describe the residual category for the the Lefschetz collection~\eqref{collection} we first compute several left mutations.
\begin{lemma} \label{lemma:mutation21}
On $\CG$ we have $\mathbb{L}_{\mathfrak{E}}\Sigma^{2,1}\cU^{\vee}\simeq\Sigma^{2,1}\cU^{\vee}(-1)[2]$.
\end{lemma}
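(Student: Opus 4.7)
The plan is to verify the two defining properties of the left mutation $\mathbb{L}_{\mathfrak{E}}\Sigma^{2,1}\cU^{\vee}$: namely, that $\Sigma^{2,1}\cU^{\vee}(-1)[2]$ lies in the right orthogonal $\mathfrak{E}^{\perp}$ and has the same class as $\Sigma^{2,1}\cU^{\vee}$ in the Verdier quotient $\Db(\CG)/\langle\mathfrak{E}\rangle$. The first is a direct cohomology check: decomposing $\Sigma^{a}\cU\otimes\Sigma^{2,1}\cU^{\vee}(-1)$ via Littlewood--Richardson for $\Sigma^{a}\in\mathfrak{E}$ reduces $\mathrm{Ext}^{\bullet}(\mathfrak{E},\Sigma^{2,1}\cU^{\vee}(-1))=0$ to the vanishings of the Schur bundles $\Sigma^{1,0,-1}$, $\Sigma^{1,-1,-1}$, $\cU$, $\Sigma^{1,-1,-2}$, $\Sigma^{0,0,-2}$, $\Sigma^{0,-1,-1}$ listed in Proposition~\ref{main_computations}.

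For the equivalence in the Verdier quotient, I would first construct a short exact sequence
\begin{equation*}
0\to\Sigma^{2,1}\cU^{\vee}(-1)\oplus\cO\to P\to\cE_{16}\to 0
\end{equation*}
with $P\in\langle\mathfrak{E}\rangle$, by taking the pullback of $\cE_{16}\to\cQ\otimes\cU^{\vee}$ (from the sequence $0\to\cU^{\vee}\oplus\cO\to\cE_{16}\to\cQ\otimes\cU^{\vee}\to 0$ appearing in the proof of Proposition~\ref{proposition:21}) and $V\otimes\cU^{\vee}\to\cQ\otimes\cU^{\vee}$ (obtained from $0\to\cU\to V\otimes\cO\to\cQ\to 0$ by tensoring with $\cU^{\vee}$ and using the decomposition $\cU\otimes\cU^{\vee}=\Sigma^{2,1}\cU^{\vee}(-1)\oplus\cO$). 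The vanishings $\mathrm{Ext}^{1}(V\otimes\cU^{\vee},\cU^{\vee}\oplus\cO)=0$, which follow from the exceptionality of $\cU^{\vee}$ and Corollary~\ref{exceptionality1}, force the pullback to be a split extension, so $P\simeq V\otimes\cU^{\vee}\oplus\cU^{\vee}\oplus\cO$.

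Second, I would build a companion sequence $0\to\cE_{16}\to M\to\Sigma^{2,1}\cU^{\vee}\to 0$ with $M\in\langle\mathfrak{E}\rangle$ by showing that the sheaf $\cK$ from \eqref{K_definition} embeds into $\cE_{16}$ as the preimage of $\cO(1)\subset\Lambda^{2}\cU^{\vee}\oplus\cO(1)$ under the surjection in \eqref{extension_1}, with quotient $\cE_{16}/\cK\simeq\Lambda^{2}\cU^{\vee}$. A short computation using the tautological sequence gives $\mathrm{Ext}^{1}(\cO(1),\cU^{\perp}\otimes\Lambda^{2}\cU^{\vee})=\mathrm{H}^{1}(\cU^{\perp}\otimes\cU)=\Bbbk$, so extensions of $\cO(1)$ by $\cU^{\perp}\otimes\Lambda^{2}\cU^{\vee}$ are classified up to isomorphism by their nontriviality. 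Both $\cK$ and the preimage of $\cO(1)$ in $\cE_{16}$ are nontrivial: $\cK$ because any splitting would produce a nonzero map $\cO(1)\to V^{\vee}\otimes\Lambda^{2}\cU^{\vee}$, contradicting $\mathrm{Hom}(\cO(1),V^{\vee}\otimes\Lambda^{2}\cU^{\vee})=V^{\vee}\otimes\mathrm{H}^{0}(\cU)=0$; the preimage because $\mathrm{Hom}(\cO(1),\cU^{\vee}\oplus\cO)=0$ and $\mathrm{Hom}(\cO(1),\cQ\otimes\cU^{\vee})=\mathrm{H}^{0}(\cQ\otimes\cU^{\vee}(-1))=0$ together imply $\mathrm{Hom}(\cO(1),\cE_{16})=0$ via the sequence for $\cE_{16}$, precluding $\cO(1)$ from splitting off. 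Hence the two extensions coincide, and pushing out $\cK\hookrightarrow\cE_{16}$ along $\cK\hookrightarrow V^{\vee}\otimes\Lambda^{2}\cU^{\vee}$ produces $M$ sitting in $0\to V^{\vee}\otimes\Lambda^{2}\cU^{\vee}\to M\to\Lambda^{2}\cU^{\vee}\to 0$, which splits by exceptionality of $\Lambda^{2}\cU^{\vee}$.

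Passing to $\Db(\CG)/\langle\mathfrak{E}\rangle$, the first sequence yields $\cE_{16}\simeq\Sigma^{2,1}\cU^{\vee}(-1)[1]$ and the second $\Sigma^{2,1}\cU^{\vee}\simeq\cE_{16}[1]$, so the two equivalences compose to $\Sigma^{2,1}\cU^{\vee}\simeq\Sigma^{2,1}\cU^{\vee}(-1)[2]$ in the quotient. Combined with the orthogonality already checked, this identifies $\Sigma^{2,1}\cU^{\vee}(-1)[2]$ as $\mathbb{L}_{\mathfrak{E}}\Sigma^{2,1}\cU^{\vee}$. The main obstacle is the construction of the embedding $\cK\hookrightarrow\cE_{16}$: one must recognise that both $\cK$ and the preimage of the $\cO(1)$-summand represent the unique nontrivial class in the one-dimensional $\mathrm{Ext}^{1}$ group, and verify the two nonsplittings via careful cohomology computations on $\CG$.
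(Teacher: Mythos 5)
Your proposal is correct, and the first half (checking $\Sigma^{2,1}\cU^{\vee}(-1)\in\langle\mathfrak{E}\rangle^{\perp}$ via Littlewood--Richardson and the cohomology table) matches what the paper does by citing Corollary~\ref{corollary_exceptionality}. The second half, however, takes a genuinely different route. The paper computes $\mathrm{Ext}^{\bullet}(\Sigma^{2,1}\cU^{\vee},\Sigma^{2,1}\cU^{\vee}(-1))=\Bbbk[-2]$ using the decomposition \eqref{computation:21} to produce the connecting morphism, and then simply invokes Proposition~\ref{proposition:21} (together with $\mathrm{Ext}^{\bullet}(\cO(1),\Sigma^{2,1}\cU^{\vee})=0$) to see that its cone lies in $\langle\mathfrak{E}\rangle$. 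You instead avoid that $\mathrm{Ext}$ computation entirely and realize the isomorphism in the Verdier quotient $\Db(\CG)/\langle\mathfrak{E}\rangle$ as the composition of two boundary maps: a pullback along $V\otimes\cU^{\vee}\twoheadrightarrow\cQ\otimes\cU^{\vee}$ giving $\cE_{16}\simeq\Sigma^{2,1}\cU^{\vee}(-1)[1]$, and a pushout along $\cK\hookrightarrow V^{\vee}\otimes\Lambda^{2}\cU^{\vee}$ giving $\Sigma^{2,1}\cU^{\vee}\simeq\cE_{16}[1]$. The one ingredient not present in the paper is your identification of $\cK$ with the preimage of $\cO(1)$ in $\cE_{16}$ via the one-dimensionality of $\mathrm{Ext}^{1}(\cO(1),\cU^{\perp}\otimes\Lambda^{2}\cU^{\vee})\simeq\mathrm{H}^{1}(\CG,\cU^{\perp}\otimes\cU)=\Bbbk$ and the two nonsplitting checks; I verified these and they are correct (the paper never needs this embedding because it only tracks membership in triangulated subcategories, at the cost of leaving the $\cO(1)$-term to be cancelled separately). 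What your approach buys is a self-contained argument that makes the shift $[2]$ visible as two successive extensions; what the paper's buys is brevity, since Proposition~\ref{proposition:21} is proved anyway for the fullness argument. Two cosmetic remarks: your list of Schur summands omits $\Sigma^{1,0,-2}\cU^{\vee}$ (which also vanishes by Proposition~\ref{main_computations}), and the two extension classes of $\cO(1)$ by $\cU^{\perp}\otimes\Lambda^{2}\cU^{\vee}$ agree only up to a nonzero scalar, which still yields the isomorphism of middle terms you need.
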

\begin{proof}
Using Proposition \ref{main_computations} and decomposition \eqref{computation:21} we obtain $$\mathrm{Ext}^{\bullet}(\Sigma^{2,1}\cU^{\vee}, \Sigma^{2,1}\cU^{\vee}(-1))=\mathrm{H}^{\bullet}(\Sigma^{2,1}\cU\otimes \Sigma^{2,1}\cU^{\vee}(-1))= \Bbbk[-2],$$
so that we have a morphism $\Sigma^{2,1}\cU^{\vee}\to \Sigma^{2,1}\cU^{\vee}(-1)[2]$. To prove the statement it is enough to check that 
\begin{enumerate}
    \item $\mathrm{Cone}(\Sigma^{2,1}\cU^{\vee}\to \Sigma^{2,1}\cU^{\vee}(-1)[2])\in \mathfrak{E}$;
    \item $\Sigma^{2,1}\cU^{\vee}(-1)\in \mathfrak{E}^{\perp}$.
\end{enumerate}
By Corollary \ref{corollary_exceptionality} we get (ii). By Proposition \ref{proposition:21} we have $\Sigma^{2,1}\cU^{\vee}\in \langle \Sigma^{2,1}\cU^{\vee}(-1),\mathfrak{E},\cO(1)\rangle$. By Corollary~\ref{corollary_exceptionality} we have $\mathrm{Ext}^{\bullet}(\cO(1), \Sigma^{2,1}\cU^{\vee})=0$ so we deduce that $\Sigma^{2,1}\cU^{\vee}\in \langle \Sigma^{2,1}\cU^{\vee}(-1),\mathfrak{E}\rangle$ and (i) follows.
\end{proof}
\begin{remark}
In fact, the mutation  $\mathbb{L}_{\mathfrak{E}}\Sigma^{2,1}\cU^{\vee}$ is given by the following self-dual exact sequence on~$\CG$
\begin{equation*}
    0\to \Sigma^{2,1}\cU^{\vee}(-1)\to (V\oplus \Bbbk)\otimes \cU^{\vee}\to (V^{\vee}\oplus \Bbbk)\otimes\Lambda^2\cU^{\vee}\to \Sigma^{2,1}\cU^{\vee}\to 0.
\end{equation*}
\end{remark}
\begin{lemma} \label{mutation:R}
We have $\cR(-1)\in \langle \mathfrak{E}, \mathfrak{E}(1), \cR(1)\rangle$.
\end{lemma}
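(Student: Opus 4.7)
The plan is to start from the defining exact sequence \eqref{exact_sequence:R_Lambda^2Q} twisted by $\cO(-1)$. Using the canonical isomorphisms $\Lambda^2\cU^{\vee}(-1)\simeq\cU$ from \eqref{preliminaries_isomorphisms} and $\Lambda^2\cQ(-1)\simeq\Lambda^2\cU^{\perp}$ from \eqref{preliminaries_isomorphisms2}, one obtains the short exact sequence
\[
0\to \cR(-1)\to \Lambda^2\cU^{\perp}\to \cU\to 0,
\]
which realizes $\cR(-1)$ as the fiber of a morphism $\Lambda^2\cU^{\perp}\to\cU$. The strategy is then to express both $\Lambda^2\cU^{\perp}$ and $\cU$ as complexes of objects in the target subcategory $\cS\coloneqq\langle \mathfrak{E},\mathfrak{E}(1),\cR(1)\rangle$; here ``complex of objects in $\cS$'' is essential, since none of $\cU,\cQ,\Lambda^2\cU^{\perp},S^2\cU^{\vee}$ lies in $\cS$ as an individual object (this is visible from the full exceptional collection~\eqref{collection}).

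The two main inputs are the positive-twist versions of the sequences around $\cR$. First, twisting~\eqref{exact_sequence:R_Lambda^2Q} by $\cO(1)$ places $\Lambda^2\cQ(1)=\Lambda^2\cU^{\perp}(2)$ in $\langle \Lambda^2\cU^{\vee}(1),\cR(1)\rangle\subset\cS$. Twisting the dual sequence \eqref{exact_sequence:R*(2)} by $\cO(2)$ and using $\Lambda^2\cU(2)=\cU^{\vee}(1)$ then places $\cR^{\vee}(2)$ in $\langle \cU^{\vee}(1),\Lambda^2\cU^{\perp}(2)\rangle\subset\mathfrak{E}(1)\cup\langle\cR(1)\rangle$. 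Second, the self-duality $\cE_{10}(1)\simeq\cE_{10}^{\vee}$ from Corollary~\ref{proposition_E_E} together with the extension~\eqref{extension_E} twisted by $\cO(1)$ shows $\cE_{10}(1)\in\cS$: indeed $\cU^{\perp}(1)$ is in $\mathfrak{E}(1)$ by the tautological sequence, and $S^2\cU(1)$ sits in $\cS$ by the Koszul complex~\eqref{Koszul} at $n=1$ combined with $\Lambda^2\cQ(1)\in\cS$. Using $\cE_{10}^{\vee}=\cE_{10}(1)$ in the sequence~\eqref{exact_sequence:E_dual} then relates $\cQ$ and $S^2\cU^{\vee}$ to objects in $\cS$.

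These ingredients are then spliced together with the Koszul complex~\eqref{Koszul_dual} at $n=0$, which resolves $\Lambda^2\cU^{\perp}$ in terms of $\Lambda^2V^{\vee}\otimes\cO$, $V^{\vee}\otimes\cU^{\vee}$ and $S^2\cU^{\vee}$, and with the tautological sequence~\eqref{tautological_exact_sequence}, which relates $\cU$, $\cO$, and $\cQ$. Carefully combining these resolutions with the short exact sequence for $\cR(-1)$ above, the terms $\cU,\cQ,S^2\cU^{\vee},\Lambda^2\cU^{\perp}$ cancel out against one another through the self-dual identification $\cE_{10}(1)\simeq\cE_{10}^{\vee}$, leaving a complex whose cohomology is $\cR(-1)$ and whose terms all lie in $\cS$. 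Equivalently, the composite of all these steps computes the mutation $\mathbb{L}_{\langle\mathfrak{E},\mathfrak{E}(1)\rangle}\cR(1)$ (note that $\cR(1)\in{}^{\perp}\langle\mathfrak{E},\mathfrak{E}(1)\rangle$ by the exceptionality of~\eqref{collection}) and identifies it, up to cohomological shift, with $\cR(-1)$; this gives the inclusion $\cR(-1)\in\langle \mathbb{L}_{\langle\mathfrak{E},\mathfrak{E}(1)\rangle}\cR(1)\rangle\subset\langle \mathfrak{E},\mathfrak{E}(1),\cR(1)\rangle$.

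The main obstacle is precisely this cancellation step: individually, none of $\cU,\cQ,\Lambda^2\cU^{\perp},S^2\cU^{\vee}$ belongs to $\cS$, so the argument cannot be a one-step reduction. The self-duality $\cE_{10}(1)\simeq\cE_{10}^{\vee}$, which bridges the $n=0$ and $n=1$ versions of the Koszul resolutions (and which is the nontrivial geometric content proved in Section~\ref{section:selfdualities}), is what makes the cancellation possible and is where the essential work of the proof concentrates.
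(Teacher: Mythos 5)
Your proposal assembles exactly the ingredients the paper uses -- the sequence \eqref{exact_sequence:R_Lambda^2Q} twisted by $\cO(-1)$, the Koszul complexes \eqref{Koszul}, \eqref{Koszul_dual}, the tautological sequences, the extension \eqref{extension_E} and its dual \eqref{exact_sequence:E_dual}, and above all the self-duality $\cE_{10}(1)\simeq\cE_{10}^{\vee}$ of Corollary \ref{proposition_E_E} -- but the final assembly is where the argument breaks down. The step you yourself flag as ``the main obstacle,'' namely that $\cU,\cQ,S^2\cU^{\vee},\Lambda^2\cU^{\perp}$ ``cancel out against one another,'' is asserted rather than proved. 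Generation statements do not cancel the way K-theory classes do: from $\cR(-1)\in\langle\cO,\cU^{\vee},S^2\cU^{\vee},\cQ\rangle$ and the identity $[\cQ]+[S^2\cU^{\vee}]=[\cE_{10}^{\vee}]$ with $\cE_{10}^{\vee}\in\cS$ one cannot conclude $\cR(-1)\in\cS$; one would have to verify that in the iterated cone construction the specific gluing datum between the $\cQ$-graded piece and the $S^2\cU^{\vee}$-graded piece is precisely the extension class of \eqref{exact_sequence:E_dual}, and that the remaining filtration can be rearranged accordingly. None of this is checked. The closing remark that the construction ``equivalently computes $\mathbb{L}_{\langle\mathfrak{E},\mathfrak{E}(1)\rangle}\cR(1)$ and identifies it with $\cR(-1)$'' is circular: that identification (Lemma \ref{lemma:mutationR(1)} in the paper) is a consequence of the present lemma together with the orthogonality $\cR(-1)\in\langle\mathfrak{E},\mathfrak{E}(1)\rangle^{\perp}$, not an independent route to it.

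The paper sidesteps the cancellation entirely by a simple device you are missing: it temporarily adjoins $\cU\simeq\Lambda^2\cU^{\vee}(-1)$ to the target subcategory. Inside $\langle\cU,\mathfrak{E},\mathfrak{E}(1),\cR(1)\rangle$ the problematic objects $\cQ$ and $S^2\cU^{\vee}$ do lie individually ($\cQ$ via \eqref{tautological_exact_sequence} once $\cU$ is available, and then $S^2\cU^{\vee}$ via \eqref{exact_sequence:E_dual} and the chain $\cE_{10}^{\vee}\simeq\cE_{10}(1)$, \eqref{extension_E}, \eqref{Koszul} at $n=1$, \eqref{exact_sequence:R_Lambda^2Q} twisted by $\cO(1)$), so the naive term-by-term reduction goes through and yields $\cR(-1)\in\langle\cU,\mathfrak{E},\mathfrak{E}(1),\cR(1)\rangle$. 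Since $\cU$ is the leftmost term of this exceptional collection and $\mathrm{Ext}^{\bullet}(\cR(-1),\cU)=\mathrm{Ext}^{\bullet}(\cR,\Lambda^2\cU^{\vee})=0$ by Corollary \ref{corollary_exceptionality}, the component of $\cR(-1)$ along $\cU$ vanishes and $\cU$ can be dropped. You should adopt this enlarge-then-project step; as written, your proof has a genuine gap at its central point. (Also, the observation that $\cR^{\vee}(2)\in\cS$ is never used and can be deleted.)
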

\begin{proof}
Using the exact sequence \eqref{exact_sequence:R_Lambda^2Q} twisted by $\cO(-1)$ and the exact sequence \eqref{Koszul_dual} for $n=0$ we get
\begin{equation} \label{Rin}
  \cR(-1)\in  \langle \cU, \mathfrak{E}, S^2\cU^{\vee} \rangle.  
\end{equation}
Using the exact sequences \eqref{exact_sequence:E_dual} and \eqref{tautological_exact_sequence} we obtain $S^2\cU^{\vee}\in \langle \cU, \cO, \cE^{\vee}\rangle$. By Corollary \ref{proposition_E_E} and the exact sequence \eqref{tautological_exact_sequence_1} twisted by $\cO(1)$ we get $S^2\cU^{\vee}\in \langle \cU, \cO, S^2\cU(1),  \cU^{\vee}(1)\rangle$. Using the exact sequence \eqref{Koszul} for $n=1$ and the exact sequence \eqref{exact_sequence:R_Lambda^2Q} twisted by $\cO(-1)$ we obtain $S^2\cU^{\vee}\in \langle \cU, \mathfrak{E}, \mathfrak{E}(1), \cR(1) \rangle$ so from~\eqref{Rin} we get $\cR(-1)\in \langle \cU, \mathfrak{E}, \mathfrak{E}(1), \cR(1)\rangle$.

By Corollary~\ref{corollary_exceptionality} we have $\mathrm{Ext}^{\bullet}(\cR(-1),\Lambda^2\cU^{\vee}(-1))=0$, so using the isomorphism \eqref{preliminaries_isomorphisms} we deduce the statement. 
\end{proof}
\begin{lemma} \label{lemma:mutationR(1)}
Up to shift we have $\mathbb{L}_{\mathfrak{E},\mathfrak{E}(1)}(\cR(1))\simeq \cR(-1).$
\end{lemma}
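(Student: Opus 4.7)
The plan is to combine Lemma \ref{mutation:R}, which places $\cR(-1)$ in $\langle \mathfrak{E},\mathfrak{E}(1),\cR(1)\rangle$, with an independent check that $\cR(-1)$ lies in the right orthogonal $\langle \mathfrak{E},\mathfrak{E}(1)\rangle^{\perp}$. Granted both, Proposition \ref{proposition:mutations} realises the subcategory $\langle \mathbb{L}_{\mathfrak{E},\mathfrak{E}(1)}(\cR(1))\rangle$ as the right orthogonal of $\langle \mathfrak{E},\mathfrak{E}(1)\rangle$ inside $\langle \mathfrak{E},\mathfrak{E}(1),\cR(1)\rangle$, so $\cR(-1)$ must belong to it. Since $\cR$ is exceptional by Corollary \ref{mutation_R_Lambda2} the twist $\cR(-1)$ is as well, and since $\langle \mathbb{L}_{\mathfrak{E},\mathfrak{E}(1)}(\cR(1))\rangle\simeq \Db(\Bbbk)$ the only exceptional objects in this subcategory up to isomorphism are shifts of the generator, which forces $\cR(-1)\simeq \mathbb{L}_{\mathfrak{E},\mathfrak{E}(1)}(\cR(1))[n]$ for some $n$.

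To prove $\cR(-1)\in \langle \mathfrak{E},\mathfrak{E}(1)\rangle^{\perp}$ I will twist the defining sequence \eqref{exact_sequence:R_Lambda^2Q} by $\cO(-1)$ and apply the canonical isomorphisms \eqref{preliminaries_isomorphisms} and \eqref{preliminaries_isomorphisms2} to obtain
\begin{equation*}
0\to \cR(-1)\to \Lambda^2\cU^{\perp}\to \cU\to 0.
\end{equation*}
For every $E\in \mathfrak{E}\cup \mathfrak{E}(1)$ the induced long exact sequence reduces the vanishing of $\mathrm{Ext}^{\bullet}(E,\cR(-1))$ to those of $\mathrm{Ext}^{\bullet}(E,\cU)$ and $\mathrm{Ext}^{\bullet}(E,\Lambda^2\cU^{\perp})$. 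Rewriting $\cU\simeq \Lambda^2\cU^{\vee}(-1)$ identifies the first with $\mathrm{Ext}^{\bullet}(E(1),\Lambda^2\cU^{\vee})$; for $E(1)\in \mathfrak{E}(1)\cup\mathfrak{E}(2)$ this vanishes by the exceptionality of the collection $\mathfrak{E},\mathfrak{E}(1),\mathfrak{E}(2)$ established in Corollary \ref{exceptionality1}. Rewriting $\Lambda^2\cU^{\perp}\simeq \Lambda^2\cQ(-1)$ identifies the second with $\mathrm{Ext}^{\bullet}(E(1),\Lambda^2\cQ)$, and this vanishes by Lemma \ref{lemma_right_ort}.

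I do not anticipate a serious obstacle: all inputs are already in hand. The only subtle point is the ``up to shift'' qualification in the statement: the iterated cone construction in \eqref{definition:mutation} defining $\mathbb{L}_{\mathfrak{E},\mathfrak{E}(1)}(\cR(1))$ fixes an explicit representative, but the uniqueness argument above only determines it up to a cohomological shift, which is exactly what the lemma asserts.
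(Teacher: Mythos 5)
Your proposal is correct and follows essentially the same route as the paper: combine Lemma \ref{mutation:R} with the right-orthogonality $\cR(-1)\in\langle\mathfrak{E},\mathfrak{E}(1)\rangle^{\perp}$, and identify $\cR(-1)$, being exceptional, with a shift of the generator of $\langle\mathbb{L}_{\mathfrak{E},\mathfrak{E}(1)}\cR(1)\rangle\simeq\Db(\Bbbk)$. The only divergence is that the paper gets the orthogonality immediately by twisting Corollary \ref{corollary_exceptionality} (which already gives $\mathrm{Ext}^{\bullet}(\mathfrak{E}(2),\cR)=\mathrm{Ext}^{\bullet}(\mathfrak{E}(2),\cR(1))=0$), whereas you re-derive it from the sequence \eqref{exact_sequence:R_Lambda^2Q} together with Corollary \ref{exceptionality1} and Lemma \ref{lemma_right_ort} --- a correct but slightly redundant detour through facts already packaged in Corollary \ref{corollary_exceptionality}.
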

\begin{proof}
By Corollary~\ref{corollary_exceptionality} we have $\cR(-1)\in \langle \mathfrak{E},\mathfrak{E}(1) \rangle^{\perp}$. Thus, from Lemma \ref{mutation:R} we deduce that there exits a morphism $\mathrm{Ext}^{\bullet}(\cR(1),\cR(-1))\otimes \cR(1) \to \cR(-1)$
and that for its cone we have $$\mathrm{Cone}(\mathrm{Ext}^{\bullet}(\cR(1),\cR(-1))\otimes \cR(1) \to \cR(-1))\in \langle \mathfrak{E},\mathfrak{E}(1)\rangle,$$
so we get the statement.
\end{proof}
Recall, see \cite[Theorem 2.8]{KS}, that the residual category $\mathsf{Res}$ for the Lefschetz decomposition \eqref{collection} admits the following autoequivalence:
\begin{equation*}
    \tau(-) \coloneqq \mathbb{L}_{\mathfrak{E}}(-\otimes \cO(1)).
\end{equation*}
\begin{theorem} \label{residual}
The residual category $\mathsf{Res}$ for the Lefschetz decomposition \eqref{collection} is generated by three completely orthogonal exceptional objects with the following action of $\tau$:
\begin{equation*}
\xymatrix@C=0.5pc{
     \mathsf{Res} \ar@{=}[r]&\langle \mathbb{L}_{\mathfrak{E}}\cR \ar@/_2.2pc/[rr]_{\tau},& \Sigma^{2,1}\cU^{\vee}(-1) \ar@(dl,dr)[]^{\tau},& \cR(-1) \ar@/_2pc/[ll]_{ \tau}\rangle.
     }
\end{equation*}
\end{theorem}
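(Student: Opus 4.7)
The plan is to describe $\mathsf{Res}$ as spanned by the left projections of the three non-rectangular objects of the full collection \eqref{collection}, then verify exceptionality, complete orthogonality, and the action of $\tau$. By Theorem \ref{theorem:main} the collection \eqref{collection} is full, giving $\Db(\CG)=\langle\mathfrak{E},\cR,\Sigma^{2,1}\cU^{\vee},\mathfrak{E}(1),\cR(1),\mathfrak{E}(2),\mathfrak{E}(3)\rangle$, whose rectangular part is $\langle\mathfrak{E},\mathfrak{E}(1),\mathfrak{E}(2),\mathfrak{E}(3)\rangle$. Consequently $\mathsf{Res}$ is generated by the left projections of the three non-rectangular objects. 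Using Corollary \ref{corollary_exceptionality}, both $\cR$ and $\Sigma^{2,1}\cU^{\vee}$ already lie in the right orthogonal to $\langle\mathfrak{E}(1),\mathfrak{E}(2),\mathfrak{E}(3)\rangle$, so their projections equal $\mathbb{L}_{\mathfrak{E}}\cR$ and $\mathbb{L}_{\mathfrak{E}}\Sigma^{2,1}\cU^{\vee}\simeq\Sigma^{2,1}\cU^{\vee}(-1)[2]$ respectively, the latter by Lemma \ref{lemma:mutation21}; similarly $\cR(1)$ is already right-orthogonal to $\langle\mathfrak{E}(2),\mathfrak{E}(3)\rangle$, so its projection equals $\mathbb{L}_{\langle\mathfrak{E},\mathfrak{E}(1)\rangle}\cR(1)\simeq\cR(-1)$ up to shift, by Lemma \ref{lemma:mutationR(1)}. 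Hence, up to shifts, $\mathsf{Res}=\langle\mathbb{L}_{\mathfrak{E}}\cR,\,\Sigma^{2,1}\cU^{\vee}(-1),\,\cR(-1)\rangle$.

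Next I would verify exceptionality and complete orthogonality. Exceptionality of each object is automatic from Proposition \ref{proposition:mutations}, since they arise from mutations of exceptional objects of \eqref{collection}. Similarly, after the mutations above we obtain a new full exceptional collection $\mathfrak{E},\mathfrak{E}(1),\mathfrak{E}(2),\mathfrak{E}(3),\mathbb{L}_{\mathfrak{E}}\cR,\Sigma^{2,1}\cU^{\vee}(-1)[2],\cR(-1)$ (up to shifts and possibly reordering inside the residual block), which delivers semiorthogonality in one direction. The remaining content is the vanishings in the opposite direction. For each pair, I would use the adjunction
\[
\mathrm{Ext}^{\bullet}(\mathbb{L}_{\mathcal{A}}F,G)=\mathrm{Ext}^{\bullet}(F,G)\qquad\text{for}\quad G\in\mathcal{A}^{\perp},
\]
with $\mathcal{A}$ a subcollection of the rectangular part, reducing the required vanishings to computations of $\mathrm{Ext}^{\bullet}$ between the unmutated objects $\cR$, $\Sigma^{2,1}\cU^{\vee}$, $\cR(1)$ twisted by $\cO(-1)$. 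Applying the defining sequence \eqref{exact_sequence:R_Lambda^2Q} for $\cR$ together with the isomorphisms \eqref{preliminaries_isomorphisms}, these Ext groups are expressed via cohomology of Schur functors of $\cU$, all of which are accessible from Proposition \ref{main_computations} via the Littlewood--Richardson rule.

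Finally, the action of $\tau(-)=\mathbb{L}_{\mathfrak{E}}(-\otimes\cO(1))$ follows directly from Lemmas \ref{lemma:mutation21} and \ref{lemma:mutationR(1)}. Indeed, $\tau(\Sigma^{2,1}\cU^{\vee}(-1))=\mathbb{L}_{\mathfrak{E}}\Sigma^{2,1}\cU^{\vee}\simeq\Sigma^{2,1}\cU^{\vee}(-1)[2]$, so $\tau$ fixes $\Sigma^{2,1}\cU^{\vee}(-1)$ up to shift; $\tau(\cR(-1))=\mathbb{L}_{\mathfrak{E}}\cR$ by definition; and using the tautological identity $\mathbb{L}_{\mathfrak{E}}\cR\otimes\cO(1)\simeq\mathbb{L}_{\mathfrak{E}(1)}\cR(1)$ one obtains $\tau(\mathbb{L}_{\mathfrak{E}}\cR)=\mathbb{L}_{\langle\mathfrak{E},\mathfrak{E}(1)\rangle}\cR(1)\simeq\cR(-1)$ up to shift, again by Lemma \ref{lemma:mutationR(1)}. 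This yields the swap $\mathbb{L}_{\mathfrak{E}}\cR\leftrightarrow\cR(-1)$ and the fixed loop at $\Sigma^{2,1}\cU^{\vee}(-1)$ displayed in the statement. The main obstacle is the complete orthogonality in the preceding step: one direction is automatic from the mutation construction, but the other requires an explicit unwinding of the defining triangles of $\mathbb{L}_{\mathfrak{E}}\cR$ and the repeated application of Proposition \ref{main_computations} to the resulting Schur-functor cohomologies, which is the most technical ingredient of the proof.
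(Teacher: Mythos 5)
Your proposal is correct, and for the identification of the generators of $\mathsf{Res}$ and the computation of the $\tau$-action it follows the paper's proof exactly: left-mutate the three non-rectangular objects into $\mathsf{Res}$ and invoke Lemmas \ref{lemma:mutation21} and \ref{lemma:mutationR(1)}. The one place where you genuinely diverge is complete orthogonality. The paper disposes of it in one line: the mutated collection is semiorthogonal, which kills all Homs in one direction, and since $\tau$ is an autoequivalence of $\mathsf{Res}$ swapping $\mathbb{L}_{\mathfrak{E}}\cR$ with $\cR(-1)$ and fixing $\Sigma^{2,1}\cU^{\vee}(-1)$ up to shift, applying $\tau$ converts each remaining ``forward'' Hom into one already known to vanish. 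You instead verify the forward vanishings by hand via the adjunction $\mathrm{Ext}^{\bullet}(\mathbb{L}_{\mathcal{A}}F,G)=\mathrm{Ext}^{\bullet}(F,G)$ for $G\in\mathcal{A}^{\perp}$. This works, and is in fact less work than you anticipate: the three groups you are reduced to, namely $\mathrm{Ext}^{\bullet}(\cR(1),\Sigma^{2,1}\cU^{\vee})$, $\mathrm{Ext}^{\bullet}(\cR(1),\cR)$ and $\mathrm{Ext}^{\bullet}(\Sigma^{2,1}\cU^{\vee},\cR)$, are precisely the vanishings \eqref{R3}, \eqref{R2} and the last check in the proof of Corollary \ref{corollary_exceptionality}, so no new Schur-functor cohomology is needed. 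Two small corrections: since $\mathsf{Res}$ is the \emph{right} orthogonal of the rectangular part, the mutated full exceptional collection has the residual block first, i.e.\ $\mathbb{L}_{\mathfrak{E}}\cR,\ \Sigma^{2,1}\cU^{\vee}(-1),\ \cR(-1),\ \mathfrak{E},\dots,\mathfrak{E}(3)$, not last as you wrote (this only swaps which direction is automatic, and the groups you plan to compute are the correct non-automatic ones); and since you compute the $\tau$-action in any case, you could have obtained complete orthogonality for free by the paper's symmetry argument, avoiding the step you flag as the most technical.
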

\begin{proof}
By definition, $\mathsf{Res}$ is the orthogonal to the collection $\langle \mathfrak{E}, \mathfrak{E}(1),\mathfrak{E}(2),\mathfrak{E}(3)\rangle$ in $\Db(\CG)$.
Therefore, it is generated by the exceptional collection $\langle \mathbb{L}_{\mathfrak{E}}\cR ,\ \mathbb{L}_{\mathfrak{E}}\Sigma^{2,1}\cU^{\vee},\ \mathbb{L}_{\mathfrak{E},\mathfrak{E}(1)}(\cR(1))\rangle$, i.e. by Lemma \ref{lemma:mutation21} and Lemma \ref{lemma:mutationR(1)},
\begin{equation*}
 \mathsf{Res}=\langle \mathbb{L}_{\mathfrak{E}}\cR ,\Sigma^{2,1}\cU^{\vee}(-1),\cR(-1)\rangle.
\end{equation*}
By definition, we have $\tau(\cR(-1))=\mathbb{L}_{\mathfrak{E}}\cR$. By Lemma \ref{lemma:mutation21} we get~$\tau(\Sigma^{2,1}\cU^{\vee}(-1))=\Sigma^{2,1}\cU^{\vee}(-1)[2]$. By Lemma \ref{lemma:mutationR(1)} we obtain~$\tau( \mathbb{L}_{\mathfrak{E}}\cR)=\cR(-1)$ up to shift . 

Semiorthogonality is evident. The complete orthogonality follows from the fact that $\tau$ is the autoequivalence. 
\end{proof}

\end{document}